\renewcommand{\equiv}{\simeq}
\renewcommand{\isom}{\equiv}
\DeclareMathName{\Fun}{Fun}
\newcommand{\FunAcc}{\Fun^\mathrm{acc}}
\newcommand{\FunR}{\Fun^\rmR}
\newcommand{\FunL}{\Fun^\rmL}
\DeclareMathName{\PSh}{PSh}
\DeclareMathName{\Sh}{Sh}
\newcommand{\PShAcc}{\PSh^\mathrm{acc}}
\DeclareMathName{\Sp}{Sp}
\DeclareMathName{\CMon}{CMon}
\DeclareMathName{\CAlg}{CAlg}
\DeclareMathName{\cCAlg}{cCAlg}
\DeclareMathName{\DAlg}{DAlg}
\DeclareMathName{\Mod}{Mod}
\DeclareMathName{\LMod}{LMod}
\DeclareMathName{\RMod}{RMod}
\DeclareMathName{\QCoh}{QCoh}
\DeclareMathName{\Perf}{Perf}
\DeclareMathName{\PStk}{PStk}
\DeclareMathName{\Stk}{Stk}
\DeclareMathName{\Sch}{Sch}
\DeclareMathName{\AffSch}{AffSch}
\DeclareMathName{\Top}{Top}
\DeclareMathName{\CGHaus}{CGHaus}
\DeclareMathName{\Cond}{Cond}
\DeclareMathName{\Ani}{Ani}
\DeclareMathName{\Set}{Set}
\DeclareMathName{\Prof}{Prof}
\DeclareMathName{\Lat}{Lat}
\newcommand{\aCAlg}{\operatorname{\fraka\CAlg}}
\newcommand{\CAlgwl}{\CAlg^\mathrm{wl}}
\DeclareMathOperator{\Tot}{Tot}
\DeclareMathOperator*{\flim}{``lim''\,}
\DeclareMathOperator{\sk}{sk}
\newcommand{\LGamma}{{\rmL\Gamma}}
\newcommand{\PSigma}{\calP_\Sigma}
\newcommand{\Cat}{\cat{Cat}}
\newcommand{\Finp}{{\cat{Fin_*}}}
\newcommand{\Einfty}{{\bbE_\infty}}
\DeclareMathName{\LSym}{LSym}
\newcommand{\proj}{\mathrm{proj}}
\newcommand{\et}{\acute{\mathrm{e}}\mathrm{t}}
\newcommand{\pet}{{p\mathhyphen\et}}
\newcommand{\dR}{\mathrm{dR}}
\newcommand{\FdR}{\mathrm{FdR}}
\newcommand{\Hodge}{\mathrm{Hdg}}
\newcommand{\dRc}{\mathrm{dR,c}}
\newcommand{\qFdR}{{q\mathhyphen\FdR}}
\newcommand{\shtuka}{{\cyrsymbol{\cyrsh}}}
\newcommand{\Prism}{\mathbbl{\Delta}}
\newcommand{\Prismhat}{{\widehat{\Prism}}}
\newcommand{\Prismtilde}{{\widetilde{\Prism}}}
\newcommand{\Nyg}{\calN}
\newcommand{\Nyghat}{{\widehat{\Nyg}}}
\newcommand{\NNyg}{\mathcal{NN}}
\newcommand{\Syn}{\mathrm{Syn}}
\newcommand{\ZpPrism}{\Zp^\Prism}
\newcommand{\ZpN}{\Zp^\Nyg}
\newcommand{\ZpSyn}{\Zp^\Syn}
\newcommand{\jdR}{j_\mathrm{dR}}
\newcommand{\jHT}{j_\mathrm{HT}}
\newcommand{\ZpNN}{\Zp^\NNyg}
\newcommand{\ZpqFdR}{\Zp^\qFdR}
\newcommand{\QFdR}{\bbQ^\FdR}
\newcommand{\Mfg}{\calM_\mathrm{fg}}
\newcommand{\Guniv}{\widehat{\bbG}_\mathrm{univ}}
\DeclareMathName{\THH}{THH}
\DeclareMathName{\TC}{TC}
\newcommand{\TCm}{\TC^-}
\DeclareMathName{\TP}{TP}
\DeclareMathName{\TR}{TR}
\newcommand{\TCmref}{\TC^{-,\mathrm{ref}}}
\DeclareMathName{\RingStk}{RingStk}
\DeclareMathName{\AlgStkplain}{AlgStk}
\newcommand*{\AlgStk}[1]{{#1\mathhyphen\AlgStkplain}}
\DeclareMathName{\AbMonStk}{AbMonStk}
\newcommand{\sharphat}{\widehat{\sharp}}
\newcommand{\Ga}[1][]{\bbG_{\mathrm{a}\ifthenelse{\equal{#1}{}}{}{,#1}}}
\newcommand{\Gm}[1][]{\bbG_{\mathrm{m}\ifthenelse{\equal{#1}{}}{}{,#1}}}
\newcommand{\BGm}{\bfB\Gm}
\newcommand{\Gasharp}{\Ga^\sharp}
\newcommand{\Gahat}{\widehat{\bbG}_\mathrm{a}}
\newcommand{\Gmhat}{\widehat{\bbG}_\mathrm{m}}
\newcommand{\Gasharphat}{\Ga^{\sharphat}}
\NewDocumentCommand{\Gmperf}{O{} O{}}{\Gm[#1]^{\perf\ifthenelse{\equal{#2}{}}{}{,#2}}}
\newcommand{\BGmperf}{\bfB\Gmperf}
\newcommand{\Mm}[1][]{\bbM_{\mathrm{m}\ifthenelse{\equal{#1}{}}{}{,#1}}}
\NewDocumentCommand{\Mmperf}{O{} O{}}{\Mm[#1]^{\perf\ifthenelse{\equal{#2}{}}{}{,#2}}}
\newcommand{\BMm}{\bfB\Mm}
\newcommand{\BMmperf}{\bfB\Mmperf}
\newcommand{\Gaflat}{\Ga^\flat}
\newcommand{\Gabar}[1][]{\overline{\bbG}_{\mathrm{a}\ifthenelse{\equal{#1}{}}{}{,#1}}}
\newcommand{\Fp}{\bbF_p}
\newcommand{\Fq}{\bbF_q}
\newcommand{\Zp}{\bbZ_p}
\newcommand{\Zpzetap}{{\Zp[\zeta_p]}}
\newcommand{\Fpbar}{\overline{\bbF}_p}
\newcommand{\Ainf}{A_\mathrm{inf}}
\newcommand{\QdR}{{\Zp\Brk{q-1}}}
\newcommand{\QdRtw}{{\Zp\Brk{q^{1/p}-1}}}
\newcommand{\QdRNyg}{{(\Zpzetap / \QdR)^\Nyg}}
\newcommand{\QdRNyghat}{{(\Zpzetap / \QdR)^\Nyghat}}
\newcommand{\ZpcycNyg}{\Zp^{\mathrm{cyc},\Nyg}}
\newcommand{\perf}{\mathrm{perf}}
\newcommand{\Aohat}{{\widehat{\bbA}^1}}
\newcommand{\Aoperf}{\bbA^{1,\perf}}
\newcommand{\AomodGm}{{\bbA^1 \!/\, \Gm}}
\newcommand{\AohatmodGm}{{\Aohat \!/\, \Gm}}
\newcommand{\AomodMm}{{\bbA^1 \!/\, \Mm}}
\newcommand{\Wperf}[1][]{{W^{\perf\ifthenelse{\equal{#1}{}}{}{,#1}}}}
\newcommand{\Wrat}[1][]{{W^{\mathrm{rat}\ifthenelse{\equal{#1}{}}{}{,#1}}}}
\newcommand{\Wratplus}{{\Wrat[+]}}
\newcommand{\Wbig}[1][]{W^{\mathrm{big}\ifthenelse{\equal{#1}{}}{}{,#1}}}
\newcommand{\Wbigperf}{{\Wbig[\perf]}}
\DeclareMathName{\MU}{MU}
\newcommand{\pdesc}{\mathrm{pdesc}}
\newcommand{\pinspec}[1]{\pi_0 \abs{\Spec #1}}
\newcommand{\abspec}[1]{\abs{\Spec #1}}
\newcommand{\Ntr}{\mathbb{N}_\mathrm{tr}}
\newcommand{\slashsub}[1]{\mathbin{/_{\!\!#1}}}
\title{Cohomology theories in the moduli of ring stacks}
\author{Dhilan Lahoti and Deven Manam}
\begin{document}

\vspace{-1.75em}
\begin{abstract}
We show that the natural map from the syntomification of a ring $R$ to the stack of $R$-algebra stacks is fully faithful, answering a question of Drinfeld, and we describe its essential image in terms of underlying monoid stacks.
We also give similar statements in the characteristic $0$ filtered de Rham, $\ell = p$ étale, and Betti settings.
\end{abstract}

\maketitle

\tableofcontents

\section*{Introduction}

The ``stacky'' approach to the cohomology of algebraic varieties, first introduced by Simpson \cite{simpsondRstack}, has recently attracted substantial interest in the $p$-adic setting due to work of Drinfeld \cite{stackycrystals,prismatization} and Bhatt--Lurie \cite{apc,blprismatization}.
A salient feature of this approach is the central role played by \emph{ring stacks}, which provide a convenient way of packaging a cohomology theory, together with its coefficients, into a single object.
The purpose of this paper is to see how much mileage one can get by treating ring stacks themselves as the central objects of study.

Below is our main theorem in the syntomic setting.

\begin{theorem*}[\Cref{ZpSynmainthm}, {\cite[Question 8.3.6]{prismatization}}]
For a ring $A$, the map
\[ A^\Syn \to \AlgStk{A} \]
from the syntomification of $A$ to the stack of $A$-algebra stacks (classifying $(\Ga[A])^\Syn$) is fully faithful.\footnotemark
\footnotetext{Note that $(\Ga[A])^\Syn$ is by definition an $A$-algebra stack over $A^\Syn$, so, explicitly, given a point $\Spec S \to A^\Syn$, one obtains a map $\Spec S \to \AlgStk{A}$ classifying the pullback of $(\Ga[A])^\Syn$ to $S$.}
An $A$-algebra stack is in the essential image of this map if and only if its underlying abelian monoid stack is in the image of the composite
\[ A^\Syn \to \AlgStk{A} \to \AbMonStk \mpunc. \]
\todo*[pointwise criterion?]%
\end{theorem*}

Following \cite{motivesandringstacks}, we view the full faithfulness statement above as evidence that prismatic $F$-gauges fully capture some portion of the theory of motives.

The appearance of the underlying monoid stack above may seem somewhat strange.
The authors were motivated to consider it by the fact that the monoid stack $\Mm^\Syn$ -- unlike the ring stack it underlies -- appears naturally over a deeper base than $\ZpSyn$ (see \Cref{rmk:qFdR} and \Cref{evenstacks}).

From another perspective, one can view the description of the essential image above as saying that the theory of syntomification is ``defined over $\bbF_1$'':\footnotemark{} just as $A^\Syn$ is, by definition, the stack of $A$-algebra structures on a $\Zp$-algebra stack over $\ZpSyn$, $\ZpSyn$ is the stack of $\Zp$-algebra structures on a certain ``$\bbF_1$-algebra stack''.
\footnotetext{This should not be confused with the existence of the stack $\bbF_1^{\fakeheight{\Syn}{S}}$ of \cite{f1syn}, whose name is somewhat misleading from this perspective.}

We also give analogues of the theorem above in the settings of characteristic $0$ filtered de Rham cohomology (\Cref{QFdRmainthm}), $\ell = p$ étale cohomology (\Cref{petmainthm}), and Betti cohomology (\Cref{Bettimainthm}).
Each variant requires its own methods,\footnotemark{} which we briefly overview below.
\footnotetext{Except $\ell = p$ étale, which follows very easily from the methods of \Cref{sec:syntomic}.}

The syntomic case composes most of the paper.
The argument for full faithfulness is fairly straightforward: it amounts to the claim that, given a filtered Cartier--Witt divisor $M \to W$, one can obtain the induced map $\Wperf \to W/M$ purely from the ring stack $W/M$.
Identifying the essential image, however, is much more difficult.
For this, we first engage in a general study of affine $W$-module schemes in \Cref{sec:Wmod}, then develop the notions of \emph{passable $W$-modules} and \emph{polyfiltered Cartier--Witt divisors}, which are generalizations of admissible $W$-modules and filtered Cartier--Witt divisors, respectively, in \Cref{sec:passable}.
In \Cref{sec:coveringMmN,sec:monoidstorings,sec:WperftoW} we prove a number of results which allow us to bootstrap up from monoid maps out of $\Mm$ to ring maps out of $W$.
Finally, in \Cref{sec:synmainresult} we assemble these ingredients to prove the main theorem.

The argument in the filtered de Rham case rests primarily on understanding maps between the group sheaves $\Gahat$ and $\Ga$.
From a sufficient understanding of these, we deduce that maps between the ring stacks in question are computed by maps between generalized Cartier divisors, which is exactly what is needed for full faithfulness.
Note that in this case we do not have a description of the essential image; see \Cref{rmk:QFdRmonoidstk}.

The Betti stack is of a somewhat different flavor from the others studied here, and the proof of full faithfulness in this case is correspondingly somewhat different from those in the other cases.
We begin by showing that the Betti stack construction produces a fully faithful map from compactly generated Hausdorff spaces over the profinite set $\pinspec{R}$ to stacks over $R$, partially generalizing a result of Gregoric \cite[Theorem C]{condensedstoneduality}.
This statement implies that the maps of relevant ring stacks are computed by maps of families of topological rings over $\pinspec{R}$.
From this the desired full faithfulness follows easily.

Finally, in \Cref{sec:complements} we suggest some directions for future work.
We begin by explaining why the naive derived analogue of our main theorem should not hold, and state an expectation as to how it can be repaired.
We then go on to state a number of other expectations and conjectures suggested by the theorem.
Among these are \Cref{conj:TCmref}, which describes Efimov's refined $\TCm$ of the rational numbers \cite{refinedTCm}, and \Cref{conj:f1syn}, which gives a moduli description of Lurie's stack $\bbF_1^\Syn$ \cite{f1syn}.

We also prove some results in \Cref{sec:LGamma} related to the derived divided powers functors which we expect to be of independent interest.
We show in particular that these functors preserve descendable morphisms of rings.\footnotemark
\footnotetext{In fact, our proof applies to any lax symmetric monoidal excisive functor, although there do not seem to be many natural examples of these besides the derived divided powers.}

\subsection*{Acknowledgments}
We thank Ben Antieau, Ko Aoki, Sanath Devalapurkar, Elden Elmanto, Lance Gurney, Ryomei Iwasa, Mitya Kubrak, Jacob Lurie, Akhil Mathew, Josh Mundinger, Arpon Raksit, Peter Scholze, Ferdinand Wagner, and Yuanning Zhang for many helpful conversations relevant to this work.
We are particularly grateful to Jacob Lurie for several fruitful suggestions, including the idea of reducing from $\Wbig$ to $\Mm$.
We also thank Peter Scholze for pointing out some mistakes in a previous version of this paper, and for suggesting the use of w-local rings in \Cref{sec:betti}.

The argument of \Cref{rmk:ZpSynff2} showing full faithfulness in \Cref{ZpSynmainthm} was independently discovered by Akhil Mathew.

This work was supported in part by the Simons Collaboration on Perfection.
DL was additionally supported by NSF grants DGE-2140743 and DMS-2305373, while DM was additionally supported by NSF grant DMS-2102010.
DM also thanks the Max Planck Institute for Mathematics and the University of Copenhagen for their hospitality.

\subsection*{Conventions}

All rings and monoids are by default commutative.

All of our categories are $\infty$-categories; specifically, we use the theory of quasicategories developed by Boardman--Vogt, Joyal, and Lurie.
However, our rings, schemes, etc.\ are by default classical.

For a morphism $Y \to X$ in a topos, we write $\RGamma((X, Y), -)$ for the ``relative sections''
\[ \fib(\RGamma(X, -) \to \RGamma(Y, -)) \mpunc. \]

We use $\Ga$ to refer to the scheme $\bbA^1$ with its additive group or ring structure,\footnotemark{} but we use $\Mm$ to refer to it with its multiplicative monoid structure.
\footnotetext{
  Lars Hesselholt has pointed out to us that the ring structure on $\bbA^1$ is more properly referred to as $\calO$.
  However, $\calO$ tends to get somewhat overloaded with meanings and decorations already, so we use $\Ga$ in accordance with the usual convention in the subject.
}

``Complete'' is always taken to mean ``derived complete'' unless otherwise specified.

We write $W$ for the ring of $p$-typical Witt vectors and $\Wbig$ for the ring of big Witt vectors.
We also write $\Wrat(R)$ for the subring of $\Wbig(R) = 1 + t R \Brk{t}$ consisting of rational functions, and $\Wratplus \subseteq \Wrat$ for the subrig consisting of polynomials.
We will also make use of the inverse limit perfection of $W$, resp.\ $\Wbig$, denoted $\Wperf$, resp.\ $\Wbigperf$.

For a coaccessible category $\calC$, we use $\PShAcc(\calC)$ to denote the category of accessible presheaves on $\calC$, as introduced in our context by Waterhouse \cite{waterhousefpqc}; see \cite[\S A]{dirac2} for an overview in the setting of higher categories.

When we refer to sheaves on the category of algebras over an (animated) ring, we use the fpqc topology unless otherwise specified.
For us, a stack is generally an accessible such sheaf (although we also say ``stack of ring stacks'' etc.\footnotemark).
\footnotetext{It follows easily from \cite[Corollary 4.7.5.3]{ha} that the functor sending a ring to the category of ring stacks over it is a sheaf (\Cref{ringstksheaf}), but it is valued in large categories, and it is not accessible.}
We write $\Stk_R$ for the category of stacks over a ring $R$.

We use the term ``ring stack'' to refer to an accessible sheaf of animated rings.
Similarly, for an animated ring $R$, we say ``$R$-algebra stack'' to refer to an accessible sheaf of animated $R$-algebras.

For a $p$-complete animated ring $R$, we write $R^\flat \defeq (R/p)^\perf$; that is, $R^\flat$ is the inverse limit perfection of $R/p$.
If $R$ is a sheaf of $p$-complete animated rings, we write $R^\flat$ for the pointwise application of $(-)^\flat$; note that by \Cref{Mmperfvsflat} this remains a sheaf.

Some stacks which we consider have natural enlargements to stacks of categories.
Following \cite{prismatization}, we refer to these enlargements as \emph{c-stacks}, and (when contrast is needed) to stacks of anima as \emph{g-stacks}.
However, unless immediately otherwise specified, we will always work only with underlying g-stacks.

We use ``commutative monoid'' etc.\ to refer to $\Einfty$-commutativity, while we say ``abelian monoid'' etc.\ to refer to what is sometimes called ``strict commutativity'' (see \Cref{abmon}).

\foralltheorems{\numberwithin{#1}{subsection}}

\section{Syntomic}
\label{sec:syntomic}

\subsection{Prismatization}

We begin by constructing some variants of the stacks of \cite{prismatization}.

\begin{recollection}
\label{rmk:deltaringNygmap}
For a $\delta$-ring $A$, there is a natural map of c-stacks
\[ \Spf A \times \ZpN \to A^\Nyg \]
defined by sending an $R$-point of $\Spf A \times \ZpN$, which consists of a filtered Cartier--Witt divisor $M \to W$ over $R$ together with a map of $\delta$-rings $A \to W(R)$, to the pair $(M \to W, A \to W(R) \to (W/M)(R))$.
\end{recollection}

\begin{remark}
\label{rmk:deltaringNygPrism}
Over the locus $\jHT : \ZpPrism \inj \ZpN$, the above construction yields the usual map
\[ \Spf A \times \ZpPrism \to A^\Prism \mpunc, \]
while over the locus $\jdR : \ZpPrism \inj \ZpN$ it yields its precomposite with the Frobenius on $A$: indeed, we have $W/\jdR(I) \isom F_*(W/I)$, so the map $A \to W/\jdR(I)$ factors as
\[ A \to W \xrightarrow{F} W \to W/I \mpunc, \]
which identifies with
\[ A \xrightarrow{F} A \to W \to W/I \mpunc. \]
\end{remark}

\begin{remark}
Note that the map $\Mm[\ZpN] \to \Mm^\Nyg$ of \Cref{rmk:deltaringNygmap} agrees by construction with the map $\Mm[\ZpN] \inj W_{\ZpN} \to \Mm^\Nyg$ induced by the Teichmüller embedding.
\end{remark}

\begin{construction}
\label{deltaringSynmap}
One can also give a syntomic variant of \Cref{rmk:deltaringNygmap} as follows.
We have two maps $\Spf A \times \ZpPrism \to \Spf A \times \ZpN$, one given by $\jdR$ and the other by the precomposite of $\jHT$ with the Frobenius on $A$.
By \Cref{rmk:deltaringNygPrism} these maps fit into commutative diagrams
\[\xymatrix{
  \Spf A \times \ZpPrism \ar[d] \ar[r] & \Spf A \times \ZpN \ar[d] \\
  A^\Prism \ar[r] & A^\Nyg
  \mpunc,
}\]
where the left vertical arrow is the precomposite of the usual map with the Frobenius on $A$.
We may therefore glue these two maps to obtain a c-stack $(A/A)^\Syn$ over $A^\Syn$.\footnotemark
\footnotetext{We expect that this stack is related to the relative syntomic cohomology defined in \cite[\S 7]{prismaticdelta}, but we do not pursue this here.}
\end{construction}

\begin{remark}
The c-stack $(A/A)^\Syn$ of \Cref{deltaringSynmap} is somewhat more tractable if $A$ is perfect, as the glued maps are then both open immersions.
\end{remark}

\begin{construction}
\label{prismNyg}
Let $(A, (d))$ be a prism, and write $A^{(1)}$ for $A$ considered as an $A$-algebra via the Frobenius.
Write $\phi^{-1}(d)$ for the natural distinguished element of $A^{(1)}$, so that $\phi : A \to A^{(1)}$ sends $d \mapsto \phi(\phi^{-1}(d))$.
We may associate to $A$ the stack
\[ (\overline{A} / A)^\Nyg \defeq \prn*{\Spf_{(p, d)} A^{(1)} \ang{u, t} / (u t - \phi^{-1}(d))} / \Gm \mpunc, \]
where $\overline{A} \defeq A/d$, and where $u$ is in weight $1$ and $t$ in weight $-1$.
Note that this stack comes with a canonical line bundle $L$ along with maps $u : \calO \to L$ and $t : L \to \calO$ whose composite is $\phi^{-1}(d)$.

We define as follows a natural filtered Cartier--Witt divisor $M \to W$ on $(\overline{A} / A)^\Nyg$.
The prism $(A^{(1)}, (\phi^{-1}(d)))$ gives us an invertible Cartier--Witt divisor $\phi^{-1}(d) W \to W$.
Pushing out along $u^\sharp : \phi^{-1}(d) \Gasharp \to L^\sharp$ yields
\[\xymatrix{
  0 \ar[r] & \phi^{-1}(d) \Gasharp \ar[d]^{u^\sharp} \ar[r] & \phi^{-1}(d) W \ar[d] \ar[r] & \phi^{-1}(d) F_* W \ar@{=}[d] \ar[r] & 0 \\
  0 \ar[r] & L^\sharp \ar@{-->}[d] \ar[r] & \pullbackcorner[ul][-1.8pc] M \ar@{-->}[d] \ar[r] & \phi^{-1}(d) F_* W \ar[d] \ar[r] & 0 \\
  0 \ar[r] & \Gasharp \ar[r] & W \ar[r] & F_* W \ar[r] & 0
  \mpunc.
}\]
Filling in the left dashed arrow with $t^\sharp$ makes the left vertical composite the map coming from $\phi^{-1}(d) W \to W$, so we obtain in the middle a factorization of $\phi^{-1}(d) W \to W$ through $M$.
It follows immediately that this is indeed a filtered Cartier--Witt divisor.

Note that as $W$ is an $A$-algebra scheme over $A$, the quotient $W/M$ is naturally an $\overline{A}$-algebra, so we have a natural map
\[ (\overline{A} / A)^\Nyg \to \fakeheight{\overline{A}}{A}^\Nyg \mpunc. \]
\end{construction}

\begin{remark}
\Cref{prismNyg} specializes in the perfect case to \cite[Example 5.5.6]{fgauges}.
\end{remark}

\begin{remark}
Note that the $u$-invertible locus of $(\overline{A} / A)^\Nyg$ is simply $\Spf A$, equipped with its usual Cartier--Witt divisor, while the $t$-invertible locus is also $\Spf A$, but with the Frobenius twist of its usual Cartier--Witt divisor.
We thus prefer to denote the latter by $\Spf A^{(1)}$, and we write
\begin{gather*}
  \jHT : \Spf A^{\phantom{(1)}} \inj (\overline{A} / A)^\Nyg \\
  \jdR : \Spf A^{(1)} \inj (\overline{A} / A)^\Nyg
\end{gather*}
for the corresponding inclusions.
\end{remark}

We have the following extension of \cite[Proposition 4.10]{prismaticdelta}.

\begin{lemma}
\label{relNygcompat}
For a prism $(A, (d))$, with $\overline{A} \defeq A/d$, the natural square
\[\xymatrix{
  (\overline{A} / A)^\Nyg \ar[d] \ar[r] & \fakeheight{\overline{A}}{A}^\Nyg \ar[d] \\
  \Spf A \times \ZpN \ar[r] & A^\Nyg
}\]
is a pullback.
\end{lemma}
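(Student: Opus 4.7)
The plan is to verify the pullback statement by carefully unwinding the functor of points on both sides. By \Cref{rmk:deltaringNygmap}, an $R$-point of $P \defeq (\Spf A \times \ZpN) \times_{A^\Nyg} \overline{A}^\Nyg$ consists of a filtered Cartier--Witt divisor $M \to W$ on $R$, a $\delta$-ring map $f \colon A \to W(R)$, and a factorization of the composite $A \to W(R) \to (W/M)(R)$ through $\overline{A}$; the last condition amounts to the datum of a lift of $f(d) \in W(R)$ to $M(R)$. On the other hand, an $R$-point of $(\overline{A}/A)^\Nyg$ is a $(p,d)$-complete $A^{(1)}$-algebra structure on $R$ together with a line bundle $L$ and sections $u\colon \calO \to L$, $t\colon L \to \calO$ satisfying $ut = \phi^{-1}(d)$. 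The natural map $(\overline{A}/A)^\Nyg \to P$ is essentially the one already built in \Cref{prismNyg}: the filtered CWD comes from the $3\times 3$ diagram there, and the $\delta$-ring map is $A \xrightarrow{\phi} A^{(1)} \to R \to W(R)$, with the factorization through $\overline{A}$ visible because the image of $d$ is forced to land in the filtration piece $L^\sharp \hookrightarrow M$.

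For the inverse, I would argue that from a point of $P$ one canonically recovers $(L, u, t)$ and the $A^{(1)}$-algebra structure. The line bundle $L$ and the section $t$ are intrinsic to the filtered CWD $M \to W$ (being visible as the graded piece $L^\sharp$ and the induced quotient map to the trivial bundle, respectively). The $A^{(1)}$-algebra structure comes from $f$ composed with $W(R) \to R$ together with the Frobenius-twist identification from \Cref{rmk:deltaringNygPrism}. The nontrivial piece is $u \colon \calO \to L$: one produces it from the prescribed lift of $f(d)$ into $M(R)$, using that $f$ being a $\delta$-ring map identifies this lift with data on $\phi^{-1}(d) \in A^{(1)}$ running through the invertible CWD $\phi^{-1}(d) W \to W$, so that it factors through $L^\sharp \to M$ and hence defines the desired section. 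The identity $ut = \phi^{-1}(d)$ in $R$ then follows tautologically from this construction.

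The main obstacle will be this last point: showing that the line-bundle-plus-section data extracted from the filtered CWD and the lift of $d$ is uniquely pinned down (so that the putative inverse is well-defined and unambiguous), and that the relation $ut = \phi^{-1}(d)$ is automatic. This reduces to the assertion that the two CWDs in play — the filtered CWD $M \to W$ on the $\ZpN$ side, and the invertible CWD $\phi^{-1}(d) W \to W$ arising from the $\delta$-ring map $f$ through the Frobenius-twisted prism $(A^{(1)}, \phi^{-1}(d))$ — are compatibly related via the same $3\times 3$ diagram as in \Cref{prismNyg}, uniquely determining how $L^\sharp$ sits inside $M$. Once this compatibility is nailed down, mutual inverse-ness of the two maps is a formal diagram chase, and the square is established as a pullback.
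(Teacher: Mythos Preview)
Your overall strategy—unwinding the functor of points and building a mutual inverse—is exactly the paper's approach, and your identification of the ``main obstacle'' is correct. However, your proposed resolution of that obstacle has a genuine gap.

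You claim that the lift of $f(d)$ to $M(R)$ ``factors through $L^\sharp \to M$ and hence defines the desired section'' $u \colon \calO \to L$. This does not work as stated. First, the lift $m \in M(R)$ does not land in $L^\sharp(R)$: under the admissible sequence $0 \to L^\sharp \to M \to F_* I \to 0$, the image of $m$ in $F_* I$ is the generator, not zero. Second, and more fundamentally, even an element of $L^\sharp(R)$ would not give a section of $L$: the PD envelope $L^\sharp$ has $L^\sharp(R) \neq L(R)$ unless $R$ is a $\bbQ$-algebra. So one cannot extract the line-bundle section $u$ directly from a single element in this way.

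The paper fills this gap with two specific external inputs. First, it invokes the prismatic analogue \cite[Proposition~4.10]{prismaticdelta} to identify the invertible Cartier--Witt divisor $I \to W$ sitting inside the admissible sequence with $\phi^{-1}(d) W \to W$; this is what produces the top row of the $3 \times 3$ diagram and hence the $W$-module map $u^\sharp \colon \phi^{-1}(d)\Gasharp \to L^\sharp$ (not merely a single element). Second, it invokes \cite[Construction~5.2.2(3)]{fgauges}, which says precisely that a $W$-module map between divided-power envelopes of line bundles lifts uniquely to a map of the underlying line bundles, to pass from $u^\sharp, t^\sharp$ to the genuine sections $u, t$. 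Neither of these steps is visible in your sketch, and they are exactly what is needed to close the argument.
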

\begin{proof}
A point of the pullback over an $A$-algebra $R$ consists of a filtered Cartier--Witt divisor $M \to W$ together with a factorization of the natural map $A \to W/M$ through $\overline{A}$.
Writing $M$ as an extension of $F_* I$ by $L^\sharp$, we find by \emph{loc.\ cit.\@} that $F_* I \to F_* W$ comes from an $R$-point of $\Spf_{(p, d)} A^{(1)}$, so we obtain from the factorization a diagram
\[\xymatrix{
  0 \ar[r] & \phi^{-1}(d) \Gasharp \ar[d]^{u^\sharp} \ar[r] & \phi^{-1}(d) W \ar[d] \ar[r] & \phi^{-1}(d) F_* W \ar@{=}[d] \ar[r] & 0 \\
  0 \ar[r] & L^\sharp \ar[d]^{t^\sharp} \ar[r] & \pullbackcorner[ul][-1.8pc] M \ar[d] \ar[r] & \phi^{-1}(d) F_* W \ar[d] \ar[r] & 0 \\
  0 \ar[r] & \Gasharp \ar[r] & W \ar[r] & F_* W \ar[r] & 0
}\]
as in \Cref{prismNyg}.
Now by \cite[Construction 5.2.2(3)]{fgauges} we uniquely lift
\[ \phi^{-1}(d) \Gasharp \xrightarrow{u^\sharp} L^\sharp \xrightarrow{t^\sharp} \Gasharp \]
to a sequence
\[ \phi^{-1}(d) \Ga \xrightarrow{u} L \xrightarrow{t} \Ga \mpunc, \]
yielding a lift of our $R$-point from $\Spf_{(p, d)} A^{(1)}$ to $(\overline{A} / A)^\Nyg$.
It is easy to see that this map is inverse to the natural map in the other direction.
\end{proof}

\begin{remark}
As a consequence of \Cref{relNygcompat}, given a $\delta$-pair $A \to R$, one can unambiguously define the relative filtered prismatization $(R/A)^\Nyg$ as the pullback
\[\xymatrix{
  (R/A)^\Nyg \pullbackcorner[dr][-1.6pc] \ar[d] \ar[r] & R^\Nyg \ar[d] \\
  \Spf A \times \ZpN \ar[r] & A^\Nyg
  \mpunc.
}\]
\end{remark}

\begin{remark}
It follows from \Cref{relNygcompat} that \Cref{prismNyg} is independent of the choice of $d$, and that it extends to non-orientable prisms.
One can also check this directly as in \cite[Example 5.5.6]{fgauges}.
\end{remark}

\begin{definition}[The complete filtered prismatization]
Recall that the filtered prismatization of a ring $R$ comes with the \emph{Rees map} $R^\Nyg \to \AomodGm$ encoding the Nygaard filtration.
We thus define the \emph{complete filtered prismatization} as the pullback
\[ R^\Nyghat \defeq R^\Nyg \times_{\AomodGm} \AohatmodGm \mpunc. \]
Note that the complement of $R^\Nyghat$ in $\ZpN$ is $\jdR(R^\Prism)$.

Similarly, given a $\delta$-pair $A \to R$, we define
\[ (R/A)^\Nyghat \defeq (R/A)^\Nyg \times_{\AomodGm} \AohatmodGm \mpunc. \]
\end{definition}

We will need the following result, which gives a very concrete description of $A^\Syn$ whenever $A$ is a perfect $\delta$-ring.
It is essentially equivalent (by \Cref{synrelaffine}) to \cite[Theorem 1.2(3)]{prismaticdelta},\footnotemark{} but the proof given below is instead adapted from \cite{shearedprismatization}, which proves the analogous result on the prismatic locus.
\footnotetext{This implies the g-stack version, from which the c-stack version follows, as $A^\Syn \to \ZpSyn$ is a relative g-stack.}
\begin{proposition}
\label{perfdeltaprismatization}
For a perfect $\delta$-ring $A$, the natural map of c-stacks
\[ (A/A)^\Syn \to A^\Syn \]
is an equivalence.
\end{proposition}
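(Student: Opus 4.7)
The plan is to compare the functors of points on both sides and to construct an explicit inverse to the comparison map, using the perfectness of $A$. Both stacks arise as coequalizers of their Nygaard-filtered analogues: $A^\Syn$ is the coequalizer of $\jdR, \jHT : A^\Prism \rightrightarrows A^\Nyg$, while $(A/A)^\Syn$ is the coequalizer of $\id \times \jdR$ and $F \times \jHT$ on $\Spf A \times \ZpPrism \rightrightarrows \Spf A \times \ZpN$, per \Cref{deltaringSynmap}. Since the comparison map respects these gluings, it will suffice to analyze the natural map $\Spf A \times \ZpN \to A^\Nyg$ pointwise and check that its failure to be an equivalence is precisely cancelled by the Frobenius twist in the coequalizer defining $(A/A)^\Syn$.

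Over a test ring $R$, an $R$-point of $A^\Nyg$ consists of a filtered Cartier--Witt divisor $M \to W_R$ together with a ring map $\beta : A \to (W/M)(R)$, and a lift to $\Spf A \times \ZpN$ is a $\delta$-ring map $\tilde\beta : A \to W(R)$ inducing $\beta$. The key technical claim I would establish is: for $A$ a perfect $\delta$-ring, such a $\delta$-lift exists and is canonical up to the Frobenius action on $A$. The plan is to prove this via the universal property of Witt vectors, writing $A = W(k)$ for $k$ a perfect $\Fp$-algebra so that $\delta$-maps $A \to W(R)$ correspond to ring maps $k \to R/p$, together with the observation that $(W/M)(R)$ carries a natural Frobenius-lift descending from $W(R)$. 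The ambiguity in the lift arises because $M$ interpolates between $\jHT$ and $\jdR$, and the two resulting natural Frobenius structures on $(W/M)(R)$ differ by the twist on $M$ exhibited in \Cref{prismNyg}; this discrepancy will manifest at the level of $\delta$-lifts precisely as precomposition with $F : A \to A$.

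The main obstacle will be the Frobenius bookkeeping needed to verify that this canonical lift assignment glues correctly. Following the strategy of \cite{shearedprismatization}, the idea is to construct the inverse $A^\Syn \to (A/A)^\Syn$ by producing canonical $\delta$-lifts over the two open strata $\jHT(\ZpPrism)$ and $\jdR(\ZpPrism)$ separately: on the $\jHT$-side the lift is determined by the identity on $A$, while on the $\jdR$-side it is determined by $F$ on $A$. Both become identified in the coequalizer defining $(A/A)^\Syn$ precisely because of the $F$ in the gluing datum of \Cref{deltaringSynmap}, so the assignment is well-defined. Finally, I would extend across the interpolating Nygaard filtration using the descent properties provided by \Cref{relNygcompat}, which in the case $A = W(k)$ applied to the distinguished element $p$ reduces the claim to the prismatic situation handled by \cite[Theorem 1.2(3)]{prismaticdelta} and its shearing-based reproof in \cite{shearedprismatization}.
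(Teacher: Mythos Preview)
Your proposal has a conceptual misunderstanding that makes the argument more complicated than necessary and leaves a genuine gap. You write that you will ``check that [the] failure [of $\Spf A \times \ZpN \to A^\Nyg$] to be an equivalence is precisely cancelled by the Frobenius twist,'' and later that the $\delta$-lift ``exists and is canonical up to the Frobenius action on $A$.'' But for perfect $A$, the map $\Spf A \times \ZpN \to A^\Nyg$ is \emph{already an equivalence}: there is no failure to cancel, and the $\delta$-lift is unique on the nose, not merely up to Frobenius. The paper proves exactly this, and then the syntomic statement follows immediately because both sides are coequalizers of equivalent diagrams.

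The paper's argument for the Nygaard-level equivalence is short: given a filtered Cartier--Witt divisor $M \to W$ over $R$ and a ring map $A \to (W/M)(R)$, one must produce a unique $\delta$-lift $A \to W(R)$. By \Cref{perfdeltamapout}, such maps out of a perfect $\delta$-ring are controlled entirely by the tilt, so it suffices to know that $W(R)^\flat \to (W/M)(R)^\flat$ is an equivalence; this is \Cref{WmodMflat}. Your proposal lacks this key input. Instead you propose to construct lifts over the open strata $\jHT$ and $\jdR$ separately and then ``extend across the interpolating Nygaard filtration using the descent properties provided by \Cref{relNygcompat}.'' But \Cref{relNygcompat} is a pullback identification for a \emph{fixed} prism, not a descent statement that lets you glue from the open strata to the whole of $\ZpN$; the two open copies of $\ZpPrism$ do not cover $\ZpN$, so this step does not go through as stated.
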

\begin{proof}
We may check this after pullback to $\ZpN$, where the statement is that
\[ \Spf A \times \ZpN \to A^\Nyg \]
is an equivalence.
We need to see that, given a filtered Cartier--Witt divisor $d : M \to W$ over a ring $R$, any map $A \to (W/M)(R)$ lifts uniquely to a $\delta$-ring map $A \to W(R)$.
By \Cref{perfdeltamapout} it will suffice to see that $W(R)^\flat \to (W/M)(R)^\flat$ is an equivalence, which follows from \Cref{WmodMflat}.
\end{proof}

\begin{proposition}
\label{WmodMflat}
If $M \to W$ is a filtered Cartier--Witt divisor, then the natural map
\[ W^\flat \to (W/M)^\flat \]
is an equivalence.
\end{proposition}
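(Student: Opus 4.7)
The plan is to reduce, via the cofiber sequence $M \to W \to W/M$ of $W$-module sheaves, to showing that $M/p$ has vanishing Frobenius perfection, whence $W^\flat \simeq (W/M)^\flat$ follows.

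To handle $M$, I would invoke the defining structure of a filtered Cartier--Witt divisor as an extension
\[
0 \to L^\sharp \to M \to F_*I \to 0,
\]
where $L$ is a line bundle and $I \to W$ is an invertible Cartier--Witt divisor, and treat the two pieces separately. For the PD piece $L^\sharp$ (Zariski-locally a twist of $\Gasharp$): its sections are PD-nilpotent, so $x^p \in pW$ (since $x^p = p!\,\gamma_p(x)$ and $p \mid p!$), whence the Frobenius on $L^\sharp/p$ vanishes and its inverse limit perfection is zero. For the invertible piece $F_*I$: the argument reduces to the invertible Cartier--Witt case $W^\flat \simeq (W/I)^\flat$, which rests on the identity $\phi(d) = d^p + p\,\delta(d)$ for a distinguished generator $d$ of $I$, giving $\phi(I) \subseteq I^p + (p)$. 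Iterating, $\phi^n(I) \subseteq I^{p^n} + (p)$, so compatible $F$-sequences in $I/p$ are forced to lie in $\bigcap_n I^{p^n}/p$, which vanishes.

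The main obstacle is that $(-)^\flat$ is naturally a construction on rings, while the vanishing above is most transparent on the module $M$. A clean way to bridge this is to observe that the Frobenius on $W/p$ eventually factors through $(W/M)/p$: the $L^\sharp$ contribution is killed immediately, since $F$ lands in $pW$ on PD-nilpotent sections, and the $F_*I$ contribution is shrunk to an arbitrarily small portion by iterated Frobenius. Passing to inverse limit perfection then produces an explicit inverse to $W^\flat \to (W/M)^\flat$, completing the argument.
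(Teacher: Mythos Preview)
Your ingredients are right, but two steps do not close as written.

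First, the PD justification for $L^\sharp$ is not the correct mechanism: the relation $x^p = p!\,\gamma_p(x)$ lives in a PD-envelope, but you are computing $x^p$ in $W$ via the embedding $L^\sharp \hookrightarrow M \to W$, and there is no PD-ideal structure on the image making this relation hold there. The correct reason is that $L^\sharp \to W$ lands in $\Gasharp = \ker F$, and since $F$ is a Frobenius lift one has $x^p \equiv F(x) = 0 \pmod{pW}$. This is exactly how the paper phrases it: the map $K \to F_* F^* K$ vanishes because $K$ is locally $\ker F$.

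Second, and more seriously, the bridging step is not correct as stated. The Frobenius on $W/p$ does \emph{not} factor through $(W/M)/p$, because the $I$-contribution is only shrunk, never killed; so there is no map $(W/M)/p \to W/p$ to serve as your ``explicit inverse''. What the $L^\sharp$ analysis actually buys you is that the Frobenius sends the image of $M$ in $W/p$ into that of $I$, yielding a factorization of the Frobenius $W/M \to W/F^*M$ through $W/I$. The paper packages this as the sandwich
\[
W/M \to W/I \to W/F^*M \to W/F^*I,
\]
in which each overlapping composite is a Frobenius, hence invertible on $(-)^\flat$; a two-out-of-three argument then reduces cleanly to the invertible case. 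Your sketch gestures at this but does not produce the needed maps.

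For the invertible case your argument addresses only injectivity of $W^\flat \to (W/I)^\flat$, and even that step needs $\bigcap_n I^{p^n}/p = 0$ in $W/p$, i.e.\ $I$-adic separatedness of $W(S)/p$ for an arbitrary $R$-algebra $S$, which you have not justified. The paper sidesteps this entirely: after a further Frobenius twist, iterating $\phi(d) = d^p + p\,\delta(d)$ shows $\phi^n(d)$ is a unit multiple of $p$ modulo $d^{p^n}$, so after localizing one may take $I = (p)$, where the claim is immediate since $W \to W/p$ is already an isomorphism on $\pi_0(-/p)$.
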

\begin{proof}
Recall that $M$ sits in an admissible sequence \cite[Remark 5.2.5]{fgauges}
\[ 0 \to K \to M \to F_* I \to 0 \]
where $I \to W$ is a Cartier--Witt divisor.
Note that the map $K \to F_* F^* K$ vanishes: as $K$ is locally isomorphic to $\ker F$ it suffices to show this for the latter, where it is clear.
The map $M \to F_* F^* M$ thus factors through $M \to F_* I$.
We therefore have maps
\[ W/M \to W/I \to W/F^*M \to W/F^*I \]
where the composites of the first two and of the second two both induce equivalences on $(-)^\flat$.\footnotemark
\footnotetext{This follows from \Cref{perfstatic} together with the fact that if $I \to R$ is a quasi-ideal in characteristic $p$, then we have a sequence $R/I \to R/\phi^* I \to R/I \to R/\phi^* I$ where the composites of the first two and second two are the Frobenius maps.}

We therefore reduce to the statement for $I$, which is contained in \cite{shearedprismatization}.
We sketch the argument here for convenience.
As above, it suffices check after twisting by Frobenius.
Localizing, we may then assume by \Cref{twistCWdivtop} that $I = (p)$.
But $W \to W/p$ induces an isomorphism on $\pi_0(-/p)$, so we win by \Cref{perfstatic}.
\end{proof}

\begin{lemma}
\label{twistCWdivtop}
In an oriented prism $(A, (d))$, the element $\phi^n(d)$ is a unit multiple of $p$ modulo $d^{p^n}$, for any $n \in \bbN$.
\end{lemma}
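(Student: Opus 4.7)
The plan is an induction on $n \geq 1$ with the slightly strengthened inductive hypothesis that $\phi^n(d) = p u_n + d^{p^n} v_n$ for some $u_n \in A^\times$ and $v_n \in A$. (I read the $\bbN$ in the statement as the positive integers, since at $n = 0$ the conclusion would force $p \in (d)$, which need not hold in an oriented prism.) The base case $n = 1$ is the definition of distinguishedness: $\delta(d)$ is a unit, and $\phi(d) = d^p + p\,\delta(d)$ gives the decomposition with $u_1 = \delta(d)$ and $v_1 = 1$.

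For the inductive step I would apply $\phi$ to the previous equation:
\[
\phi^{n+1}(d) = p\,\phi(u_n) + (d^p + p\,\delta(d))^{p^n}\,\phi(v_n),
\]
and then expand the middle factor by the binomial theorem. The $k = 0$ contribution is $d^{p^{n+1}}$; for $k \geq 1$, Kummer's theorem gives $v_p\binom{p^n}{k} = n - v_p(k)$, so the term $\binom{p^n}{k}\,p^k\,\delta(d)^k\,d^{p(p^n - k)}$ has $p$-adic valuation at least $n + k - v_p(k) \geq n + 1 \geq 2$. Collecting these remaining terms as $p^2 E$ for some $E \in A$ then yields
\[
\phi^{n+1}(d) = p\,\bigl(\phi(u_n) + p E\,\phi(v_n)\bigr) + d^{p^{n+1}}\,\phi(v_n),
\]
so the natural choice is $u_{n+1} \defeq \phi(u_n) + p E\,\phi(v_n)$ and $v_{n+1} \defeq \phi(v_n)$.

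To close the induction I need $u_{n+1}$ to again be a unit. For this I plan to invoke that an orientable prism $(A, (d))$ is $(p, d)$-complete, and hence $p$ lies in the Jacobson radical of $A$; in particular any element of $A^\times + pA$ is again a unit, so $u_{n+1}$ qualifies. I do not foresee a substantive obstacle here: the argument is essentially bookkeeping for powers of $p$ in the binomial expansion of $(d^p + p\,\delta(d))^{p^n}$. The two points warranting attention are the $p$-adic valuation bound (which is where the hypothesis $n \geq 1$ enters, matching the range in which the conclusion is nontrivial) and the appeal to the Jacobson radical to promote ``$u_{n+1}$ is a unit mod $p$'' to ``$u_{n+1}$ is a unit'' outright.
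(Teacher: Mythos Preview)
Your argument is correct and follows the same inductive strategy as the paper's. The paper organizes the step slightly differently---writing $\phi^n(d)=p\,\phi^{n-1}(\delta(d))+\phi^{n-1}(d)^p$ and then expanding only a $p$th power of the inductive hypothesis---so it gets by with $p\mid\binom{p}{k}$ rather than Kummer's theorem, and the unit emerges directly as $\phi^{n-1}(\delta(d))+p\cdot(\ldots)$; your caution about $n=0$ is well founded, as the lemma's content really is for $n\ge 1$.
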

\begin{proof}
The statement is clear for $n = 0$; assume by induction that $\phi^{n-1}(d) = p u + d^{p^{n-1}} r$ for some $u$.
Then we have
\begin{align*}
  \phi^n(d)
    &= \phi^{n-1}(p \delta(d) + d^p)
  \\&= p \phi^{n-1}(\delta(d)) + \phi^{n-1}(d)^p
  \\&= p \phi^{n-1}(\delta(d)) + d^{p^n} r^p + p^2 \cdot (\dots)
  \mpunc,
\end{align*}
so we conclude.
\end{proof}

\begin{lemma}
\label{perfdeltamapout}
For a perfect $\delta$-ring $A$ and $p$-complete animated ring $R$, the natural maps
\begin{align*}
  \Map_\delta(A, W(R))
    &\to \Map(A, W(R))
  \\&\to \Map(A, R)
  \\&\to \Map(A, R/p)
  \\&\to \Map_{\Fp}(A/p, R/p)
  \\&\from \Map_{\Fp}(A/p, R^\flat)
\end{align*}
are equivalences.
\end{lemma}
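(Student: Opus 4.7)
The plan is to verify each of the five maps is an equivalence by recognizing it as (a consequence of) a standard adjunction. The main inputs are: the Witt vector adjunction $\Map_\delta(A, W(R)) \simeq \Map(A, R)$, where $W$ is right adjoint to the forgetful functor from $\delta$-rings to animated rings; the Fontaine-style adjunction $\Map(W(S), R) \simeq \Map_{\Fp}(S, R^\flat)$ for $S$ a perfect $\Fp$-algebra and $R$ a $p$-complete animated ring, where $W$ is left adjoint to $(-)^\flat$ on this subcategory; and the identification $A \simeq W(A/p)$ with $A/p$ perfect, which holds since $A$ is a perfect $\delta$-ring.

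Arrows 4 and 5 are formal. Arrow 4, $\Map(A, R/p) \to \Map_{\Fp}(A/p, R/p)$, is the base-change adjunction along $\bbZ \to \Fp$ applied to the derived quotient $A/p = A \otimes_{\bbZ} \Fp$. Arrow 5, $\Map_{\Fp}(A/p, R^\flat) \to \Map_{\Fp}(A/p, R/p)$, is the reflection adjunction for perfect $\Fp$-algebras: since $A/p$ is perfect and $R^\flat = (R/p)^\perf$ is the inverse-limit perfection, which is right adjoint to the inclusion of perfect $\Fp$-algebras.

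For the first two arrows, the composite $\Map_\delta(A, W(R)) \to \Map(A, R)$ is an equivalence by the Witt adjunction. Arrow 1, $\Map_\delta(A, W(R)) \to \Map(A, W(R))$, is an equivalence because any ring map from a perfect $\delta$-ring to a $\delta$-ring is automatically $\delta$-compatible: as $A$ is $p$-torsion-free, its $\delta$-structure is encoded by the Frobenius lift $\phi_A$, which by perfectness is the unique lift of the Frobenius on $A/p$, a structure any ring map automatically respects. Arrow 2 is then an equivalence by two-out-of-three.

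It remains to treat arrow 3, $\Map(A, R) \to \Map(A, R/p)$. Applying the Fontaine adjunction together with $A \simeq W(A/p)$ yields $\Map(A, R) \simeq \Map_{\Fp}(A/p, R^\flat)$, while combining arrows 4 and 5 gives $\Map(A, R/p) \simeq \Map_{\Fp}(A/p, R^\flat)$. These two identifications are natural in $R$ and compatible with the map $R \to R/p$, so arrow 3 fits into a square of equivalences and is itself an equivalence. The main subtlety is verifying the Fontaine adjunction in the animated setting, which follows by reducing to the classical case via right Kan extension.
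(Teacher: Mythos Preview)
Your treatment of arrows 4 and 5 and of the composite of arrows 1 and 2 via the Witt adjunction is correct and matches the paper. The difference lies in how you handle arrows 1, 2, and 3 individually.

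The argument for arrow 1 has a gap. You assert that any ring map $f : A \to B$ from a perfect $\delta$-ring to a $\delta$-ring is automatically a $\delta$-map because $\phi_A$ is ``the unique lift of Frobenius on $A/p$, a structure any ring map automatically respects.'' But uniqueness of $\phi_A$ as a self-map of $A$ does not by itself force $\phi_B \circ f = f \circ \phi_A$: both are maps $A \to B$ lifting the same map $A/p \to B/p$, and to conclude they agree you need that $\Map(A,B) \to \Map(A,B/p)$ is an equivalence. That is precisely the deformation-theoretic input (vanishing of $L_A$ after $p$-completion) that the paper invokes for arrows 2 and 3. In the animated setting this is not merely a $\pi_0$ statement but a statement about anima, so the hand-wave is doing real work. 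Once you grant this input, your arrow 1 is fine and arrow 2 follows by two-out-of-three---but at that point you have essentially reproduced the paper's argument with the roles of arrows 1 and 2 swapped.

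Your approach to arrow 3 via the Fontaine adjunction $\Map(W(S),R) \simeq \Map_{\Fp}(S,R^\flat)$ is a legitimate alternative to the paper's direct appeal to deformation theory. However, the justification ``by reducing to the classical case via right Kan extension'' is not quite right: establishing this adjunction for animated $p$-complete $R$ again amounts to showing $\Map(W(S),R) \to \Map(W(S),R/p)$ is an equivalence, i.e.\ the same cotangent-complex vanishing. So the Fontaine adjunction is a clean repackaging, but not an independent route. The paper simply cites deformation theory once (for arrows 2 and 3) and deduces arrow 1 by two-out-of-three, which is marginally more direct.
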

\begin{proof}
The second and third maps are equivalences by deformation theory (see e.g.\ \cite[Proposition 2.3]{bhattbms1notes}), the fourth trivially, and the last by perfectness of $A/p$.
As the composite of the first two maps is an equivalence, so is the first.
\end{proof}

\subsection{\texorpdfstring{$W$}{W}-module schemes}
\label{sec:Wmod}

Here we prove a number of general results on affine $W$-module schemes.
We first identify them with a full subcategory of graded group schemes,\footnotemark{} and we show that this embedding intertwines $W$-module and Cartier duality.
\footnotetext{See \cite{GaperfdR} for a similar study of $\Ga$- and $\Gaflat$-modules.}
We then prove a number of results on extensions of $W$-module schemes.

In this section, gradings are by default indexed by $\bbN$, and all $\Wbig$- and $W$- module schemes are assumed affine.

\begin{remark}
Note that the category of commutative graded affine group schemes is equivalent to the category of $\Mm$-equivariant commutative affine group schemes.
We will often regard $W$-module schemes as graded group schemes via the action of the Teichmüller embedding $\Mm \inj \Mm(W)$.
\end{remark}

\begin{definition}
We say that an $\Mm$-equivariant affine scheme is \emph{pointed} if the unit map to the weight $0$ piece of its coordinate ring is an isomorphism.
\end{definition}

\begin{lemma}
An $\Mm$-equivariant affine $R$-scheme $X$ is pointed if and only if $0 \in \Mm$ acts by $X \to * \to X$ for some $R$-point $* \to X$.
\end{lemma}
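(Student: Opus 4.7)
The plan is to unwind the statement into a calculation on coordinate rings. Writing $X = \Spec A$, the $\Mm$-action corresponds to an $\bbN$-grading $A = \bigoplus_{n \geq 0} A^n$ with $R \to A$ landing in $A^0$, and the action of $0 \in \Mm$ on $A$ is by $0^n$ on $A^n$; that is, it is the composite $A \twoheadrightarrow A^0 \hookrightarrow A$ of the projection onto the weight-$0$ piece with its inclusion. The condition of being pointed is precisely that the structure map $R \to A^0$ is an isomorphism.

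For the forward direction, if $X$ is pointed, then $A^0 = R$ canonically, and the above factorization of the action of $0$ becomes $A \twoheadrightarrow R \hookrightarrow A$, which on spectra is of the form $X \to * \to X$ with $* = \Spec R$ and the second map given by a section of the structure map.

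For the backward direction, suppose the action of $0$ factors as $X \to * \to X$ for some $R$-point of $X$, which on coordinates gives $A \xrightarrow{\phi} R \xrightarrow{s} A$, where $s : R \to A$ is the structure map and $\phi$ is any $R$-algebra map (so $\phi \circ s = \mathrm{id}_R$). Since $s$ is $R$-linear, its image lies in $A^0$. On the other hand, this composite equals the projection-inclusion $A \twoheadrightarrow A^0 \hookrightarrow A$ by hypothesis, so its image is exactly $A^0$. Thus $s : R \to A^0$ is surjective, and it is split by $\phi|_{A^0}$, hence is an isomorphism, so $X$ is pointed.

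The argument is essentially a grading bookkeeping exercise; there is no real obstacle. The one subtle point is making sure in the backward direction that the image of $s$ equals $A^0$ (rather than being contained in some possibly smaller subring of $A^0$), which is immediate from comparing images with the projection-inclusion factorization of the action of $0$.
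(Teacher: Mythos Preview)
Your proof is correct and follows essentially the same approach as the paper: both arguments rest on the observation that the action of $0 \in \Mm$ on the coordinate ring is the projection onto the weight-$0$ piece, from which both directions follow by comparing with the factorization $A \to R \to A$. The paper's version is much terser (one sentence per direction), while yours spells out the image comparison explicitly; the only cosmetic point is that ``since $s$ is $R$-linear'' is not quite the right justification for $s(R) \subseteq A^0$ --- the real reason, which you already stated in your first paragraph, is that the grading is an $R$-algebra grading.
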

\begin{proof}
If $0$ acts in the specified way, then the $\Mm$-invariant functions are exactly the constants, which is exactly the definition of pointedness.
Conversely, note that the action of $0$ on $\calO(X)$ is given by the projection to weight $0$, as $t \in \Mm$ acts by $t^n$ in weight $n$.
Thus if $X$ is pointed then it factors through an $R$-point of $X$.
\end{proof}

The following result is a generalization of \cite[Proposition 2.1.14]{GaperfdR}.

\begin{lemma}
\label{Wbigmodstructureautomatic}
Then the forgetful functor from the category of $\Wbig$-module schemes to the category of $\Mm$-equivariant commutative affine group $R$-schemes is fully faithful, with $M$ in the essential image if and only if it is pointed.
\end{lemma}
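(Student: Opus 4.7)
The necessity of pointedness is immediate: the Teichm\"uller embedding carries $0 \in \Mm$ to $0 \in \Wbig$, which acts on any $\Wbig$-module $M$ as the zero map, factoring through the zero section of $M$. Hence the induced $\Mm$-action is also pointed (with basepoint given by the zero section).

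For essential surjectivity, given a pointed $\Mm$-equivariant commutative affine group $R$-scheme $M$ with graded Hopf coordinate ring $A = R \oplus A_1 \oplus A_2 \oplus \cdots$, I would construct a $\Wbig$-coaction $A \to \calO(\Wbig) \otimes A$. Recall that $\calO(\Wbig) = R[r_1, r_2, \ldots]$ with $r_n$ in weight $n$, and the Hopf structure coming from the additive group law of $\Wbig$ (power series multiplication) is $\Delta(r_n) = \sum_{i+j=n} r_i \otimes r_j$, matching that of the ring of symmetric functions. By $\Mm$-equivariance, the coaction sends $A_n$ into $\bigoplus_{k=0}^{n} \calO(\Wbig)_k \otimes A_{n-k}$, which is a finite sum, so no completion is needed. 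The coaction is built canonically from iterated comultiplications of $A$ paired with the symmetric-function structure on $\calO(\Wbig)$, and verification of the $\Wbig$-module axioms reduces to universal identities coming from the Witt polynomial formulas.

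Full faithfulness then follows almost automatically from the canonicity of this construction: any $\Mm$-equivariant homomorphism of commutative group schemes $f: M \to N$ between two $\Wbig$-modules respects both the grading and the Hopf structure, and hence commutes with the canonically-defined $\Wbig$-coaction. The main obstacle is the essential surjectivity step: writing down the explicit formula for the coaction and verifying the $\Wbig$-module axioms requires careful bookkeeping with the combinatorics of symmetric functions and Witt polynomials. Once this canonical coaction is constructed, both the essential image statement and the fully faithful statement drop out simultaneously.
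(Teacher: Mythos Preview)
Your outline points in the right direction, but the central step is not actually carried out: you assert that the coaction is ``built canonically from iterated comultiplications of $A$ paired with the symmetric-function structure on $\calO(\Wbig)$'' and that the module axioms ``reduce to universal identities,'' yet you give no formula and no verification. You even flag this yourself as ``the main obstacle.'' As written, this is a plan rather than a proof.

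The paper's argument avoids the combinatorics entirely by packaging your ``iterated comultiplications'' as a universal property. The free commutative monoid in ind-affine schemes on the pointed object $(\Mm,0)$ is exactly the rig $\Wratplus$ of positive rational Witt vectors (polynomials in $1+tR[t]$). Hence a pointed $\Mm$-action on an abelian group scheme extends, with no formulas needed, to a $\Wratplus$-module structure, and after group-completing to a $\Wrat$-module structure. The only remaining step is to pass from $\Wrat$ to $\Wbig$; this is done by the lemma that $\intHom_{\bbS[\Mm]}(\Wbig,M)\to\intHom_{\bbS[\Mm]}(\Wrat,M)$ is an equivalence for affine $M$, which follows from the observation that the natural map of graded rings $\calO((\Wbig)^n)\to\calO((\Wratplus)^n)$ is an isomorphism for every $n$ (both sides being symmetric functions). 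Uniqueness at each step gives full faithfulness for free. This is morally the same ``symmetric functions'' intuition you invoke, but organized so that nothing needs to be written down or checked by hand.
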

\begin{proof}
First note that any $\Wbig$-module scheme is pointed in the above sense, as $0 \in \Mm$ acts by the zero map.

Conversely, given a pointed $\Mm$-equivariant affine group scheme, the action of $\Mm$ extends uniquely in $\Ind(\AffSch)$ to an action of the free commutative monoid on $(\Mm, 0)$, which is the rig $\Wratplus$ of positive rational Witt vectors (see \Cref{rmk:Wratplus} or \cite[Construction A.4]{akhilrh}).
Group completing, we may regard this as a $\Wrat$-module structure in $\bbZ[\Mm]$-modules in abelian presheaves.
This structure is encoded by a map $\Wrat \tensor_{\bbZ[\Mm]} M \to M$ such that the two induced maps $\Wrat \tensor_{\bbZ[\Mm]} \Wrat \tensor_{\bbZ[\Mm]} M \rightrightarrows M$ coincide.
Applying \Cref{SMmWratW} and the tensor-$\Hom$ adjunction, we find that this is the same as the data of a $\Wbig$-module structure.
By naturality of the above constructions, we obtain a functor from pointed $\Mm$-equivariant affine group schemes to $\Wbig$-module schemes, which is by construction inverse to the forgetful functor.
\end{proof}

In the proof above we used only the affine scheme case of the following lemma, but we will need the general statement later.

\begin{lemma}
\label{SMmWratW}
If $M$ is a commutative group in $\Mm$-equivariant affine stacks (see \Cref{sec:affstk}), then the natural map
\[ \intHom_{\bbS[\Mm]}(\Wbig, M) \to \intHom_{\bbS[\Mm]}(\Wrat, M) \]
is an equivalence.
\end{lemma}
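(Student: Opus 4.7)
The plan is to analyze the inclusion $\Wrat \hookrightarrow \Wbig$ via the natural weight filtration coming from the $\Mm$-action, and to show that the cofiber in $\bbS[\Mm]$-modules has trivial internal Hom into any commutative group $M$ in $\Mm$-equivariant affine stacks.

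First, I would set up the filtration. Writing a general element of $\Wbig(R) = 1 + tR\llbracket t \rrbracket$ in coordinates $1 + \sum_{k \geq 1} a_k t^k$ with $a_k$ of weight $k$ under the $\Mm$-action, let $\Wbig_{\geq N} \subseteq \Wbig$ be the $\Mm$-stable subgroup of power series with $a_k = 0$ for $k < N$, and set $\Wrat_{\geq N} \defeq \Wrat \cap \Wbig_{\geq N}$. The crucial observation is that the quotients
\[
\Wbig / \Wbig_{\geq N+1} \quad \text{and} \quad \Wrat / \Wrat_{\geq N+1}
\]
are both canonically identified with the scheme of polynomials $1 + a_1 t + \cdots + a_N t^N$, i.e.\ with a piece of $\Wratplus$, which sits in both $\Wrat$ and $\Wbig$. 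In particular the inclusion $\Wrat \hookrightarrow \Wbig$ becomes an equivalence modulo $\Wbig_{\geq N+1}$ for every $N$, so the cofiber (as an $\bbS[\Mm]$-module) is ``concentrated in the tail'' $\bigcap_N \Wbig_{\geq N}$.

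Next, I would reduce the vanishing of $\intHom_{\bbS[\Mm]}(\Wbig/\Wrat, M)$ to a devissage on $M$. Since a commutative group in $\Mm$-equivariant affine stacks is built from weighted shifts of $\Ga$ under limits and cosimplicial constructions, and $\intHom_{\bbS[\Mm]}(-, M)$ sends colimits in its first argument to limits, it suffices to treat $M = \Ga[k]$ in a fixed weight $k$. In this case the two sides identify with the weight-$k$ primitive elements of the graded Hopf algebras $\calO(\Wbig) = \bbZ[a_1, a_2, \ldots]$ and $\calO(\Wrat)$ respectively, and the restriction map is induced by $\calO(\Wbig) \twoheadrightarrow \calO(\Wrat)$. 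The weight-$k$ primitives of $\calO(\Wbig)$ are spanned by the $k$-th ghost component $w_k$, a specific polynomial in $a_1, \ldots, a_k$ (hence in $\calO(\Wbig/\Wbig_{\geq k+1}) = \calO(\Wrat/\Wrat_{\geq k+1})$), and so the restriction lands nontrivially in $\calO(\Wrat)$. The comparison of primitives at weight $k$ therefore depends only on polynomials of degree $\leq k$, where $\Wbig$ and $\Wrat$ literally agree.

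The main obstacle is making the ``concentration in the tail'' argument rigorous when $M$ is not a fixed $\Ga[k]$ but a genuine affine stack, since one must interchange the inverse limit $\Wbig = \varprojlim_N \Wbig/\Wbig_{\geq N+1}$ with the internal Hom functor $\intHom_{\bbS[\Mm]}(-, M)$. The devissage above sidesteps this issue by handling one weight $k$ of $\calO(M)$ at a time: for each $k$, the weight-$k$ part of any $\Mm$-equivariant map is controlled by the weight-$\leq k$ part of $\Wbig$, which lives in $\Wrat$. Organizing this carefully then gives the equivalence on internal Homs.
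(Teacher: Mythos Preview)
Your proposal lands on the same underlying fact the paper uses---that $\calO(\Wbig)$ and $\calO(\Wratplus)$ coincide as graded rings, since in each fixed weight $k$ only the generators $a_1,\dots,a_k$ can appear---but the route you take has a genuine gap. The problematic step is the d\'evissage: you assert that any commutative group in $\Mm$-equivariant affine stacks is ``built from weighted shifts of $\Ga$ under limits and cosimplicial constructions,'' and then reduce to $M = \Ga[k]$. This claim is neither proved nor obvious; indeed, the paper elsewhere imposes the analogous condition as an explicit \emph{hypothesis} (see the assumption on $\Ga(\calR^\flat)$ in \Cref{WbigperftoMmperf} and \Cref{affstkWbigmap}), which strongly suggests it is not automatic. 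Without it, your reduction to a single $\Ga$ in a fixed weight does not go through, and the filtration argument you sketch afterwards---while morally correct for that special target---never engages with a general affine-stack $M$. Your own final paragraph essentially acknowledges that the limit/colimit interchange for general $M$ is the crux, and then defers to the d\'evissage to avoid it; but the d\'evissage is exactly what is missing.

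The paper's argument sidesteps this entirely. Since $M$ is grouplike, $\intHom_{\bbS[\Mm]}(\Wrat, M)$ identifies with the space of $\Mm$-equivariant $\Einfty$-\emph{monoid} maps $\Wratplus \to M$, and likewise for $\Wbig$. \Cref{mapobjcalg} then reduces the comparison to showing, for each $n$, that $(\Wratplus)^n \hookrightarrow (\Wbig)^n$ induces an equivalence on $\Mm$-equivariant maps of plain \emph{stacks} into $M$---no group structure on $M$ is needed at this stage. Now affineness of $M$ (via \Cref{gradedDAlgwelldefined}) says such maps are computed by maps of graded derived rings out of $\calO(M)$, so it suffices that $\calO((\Wbig)^n) \to \calO((\Wratplus)^n)$ is an isomorphism of graded rings. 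This is precisely your ``weight $k$ depends only on $a_1,\dots,a_k$'' observation, but packaged so that it applies to an arbitrary affine $M$ in one stroke rather than after a reduction to $\Ga$.
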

\begin{proof}
The groups in question compute $\Mm$-equivariant commutative monoid homomorphisms from $\Wbig$, resp.\ $\Wratplus$.
By \Cref{mapobjcalg} it will suffice to see for each $n$ that $(\Wratplus)^n \to (\Wbig)^n$ induces an equivalence after taking $\Mm$-equivariant maps of stacks to $M$.
\Cref{gradedDAlgwelldefined} and affineness of $M$ then reduce us to showing that the induced maps on graded global sections are equivalences.
But the natural map of graded rings $\calO((\Wbig)^n) \to \calO((\Wratplus)^n)$ is an isomorphism, as can be seen by the description of both sides in terms of symmetric functions.
\end{proof}

\begin{remark}
As the $n$-Frobenius on $\Wbig$ commutes with with the $n$-power map on $\Mm$, pushforward of a $\Wbig$-module along the $n$-Frobenius corresponds on the coordinate ring to multiplying the grading by $n$.
\end{remark}

\begin{remark}
\label{Wmodstructureautomatic}
Over a $p$-local base we have a natural sequence of ring schemes $W \to \Wbig \to W$ which splits off $W$ as a factor of $\Wbig$ \cite[Proposition 10]{larswitt}.
In particular, restriction of scalars along the second map is fully faithful.
As this map is $\Mm$-equivariant, we see from \Cref{Wbigmodstructureautomatic} that the forgetful functor from affine $W$-module schemes to pointed $\Mm$-equivariant affine group schemes is fully faithful.
Note, on the other hand, that the first map $W \to \Wbig$ is \emph{not} $\Mm$-equivariant.
\end{remark}

\begin{definition}
Duality gives an antiautoequivalence of the category of weightwise finite free graded Hopf algebras over a given base, which we refer to as \emph{(graded) Cartier duality}.
Note that we take the weightwise dual here, so our objects remain $\bbN$-graded.
\end{definition}

\begin{theorem}[Cartier \cite{cartierW}, {\cite[Lemma 2.4]{wickelgrenW}}]
\label{Wbigselfdual}
The graded affine group scheme $\Wbig$ is self-Cartier-dual.
\end{theorem}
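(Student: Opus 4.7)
The plan is to identify $\calO(\Wbig)$ with the ring of symmetric functions $\Lambda$ as a graded Hopf algebra, and then invoke the classical self-duality of $\Lambda$.

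First I would realize $\Wbig(R) = 1 + t R \Brk{t}$ with its additive group operation given by multiplication of power series, using the coefficients $h_1, h_2, \ldots$ of $1 + h_1 t + h_2 t^2 + \cdots$ as coordinates. This presents $\calO(\Wbig)$ as the polynomial ring $\bbZ[h_1, h_2, \ldots]$ with coalgebra structure $\Delta h_n = \sum_{i + j = n} h_i \tensor h_j$. For the $\Mm$-grading coming from the Teichmüller action, recall that Witt multiplication acts diagonally on ghost components and that $[r] \in \Wbig(R)$ has ghost components $r, r^2, r^3, \ldots$; relating the $h_n$ to the ghost components then shows that $[r] \cdot_{\Wbig} (-)$ scales $h_n$ by $r^n$, placing $h_n$ in weight $n$. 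This exhibits $\calO(\Wbig)$ as precisely the graded Hopf algebra $\Lambda$ of symmetric functions, with $h_n$ the complete homogeneous symmetric function of degree $n$.

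Next I would invoke the classical self-duality of $\Lambda$: the Hall inner product $\langle s_\lambda, s_\mu \rangle = \delta_{\lambda \mu}$ (equivalently, $\langle h_\lambda, m_\mu \rangle = \delta_{\lambda \mu}$) is a Hopf-algebra pairing under which the Schur basis is self-dual, inducing an isomorphism $\Lambda \simeq \Lambda^\vee$ of graded Hopf algebras. Weightwise finite freeness is automatic, since the weight $n$ component of $\Lambda$ has a basis indexed by partitions of $n$. Transporting the self-duality back through the identification above yields the desired graded self-Cartier-duality of $\Wbig$.

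The main obstacle is checking that the Hall pairing really is a Hopf-algebra pairing, i.e.\ that it intertwines the multiplication on one copy of $\Lambda$ with the comultiplication on the other. The essential input is Cauchy's identity $\sum_\lambda h_\lambda \tensor m_\lambda = \prod_{i, j}(1 - x_i y_j)^{-1}$, which is classical; beyond this, the bulk of the work is really the routine but slightly finicky Hopf-algebra identification of the first step.
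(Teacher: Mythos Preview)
The paper does not give its own proof of this theorem: it is stated with attributions to Cartier and to \cite[Lemma~2.4]{wickelgrenW}, and the text moves immediately on to the next lemma. So there is nothing in the paper to compare your argument against.

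Your proposal is correct and is essentially the classical argument. The identification of $\calO(\Wbig)$ with the ring of symmetric functions $\Lambda$, via the coefficients of the universal power series, is standard; both the comultiplication $\Delta(c_n)=\sum_{i+j=n} c_i\otimes c_j$ and the $\Mm$-grading (placing the $n$th coefficient in weight $n$) come out exactly as you say. The Hall pairing then furnishes the graded Hopf self-duality, and weightwise finite freeness is immediate. One small remark: depending on sign conventions, the coefficients of $1+a_1t+a_2t^2+\cdots$ are more naturally the elementary symmetric functions $e_n$ rather than the $h_n$, but since $\Delta e_n=\sum e_i\otimes e_j$ as well and $\Lambda=\bbZ[e_1,e_2,\ldots]=\bbZ[h_1,h_2,\ldots]$, this does not affect the argument. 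The one point worth spelling out carefully in a write-up is that the Hall pairing really intertwines multiplication with comultiplication (not just that it is nondegenerate), which is what makes the induced isomorphism $\Lambda\to\Lambda^\vee$ a map of Hopf algebras rather than merely of graded modules; you have correctly flagged this as the crux.
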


\begin{lemma}
\label{WbigmodCartierduality}
Let $M$ be a $\Wbig$-module scheme.
Then there is a natural isomorphism of $\Wbig$-modules
\[ M^\dual \bij \intHom_{\Wbig}(M, \Wbig) \mpunc, \]
where the $\Wbig$-module structure on the source is obtained from \Cref{Wbigmodstructureautomatic}.
\end{lemma}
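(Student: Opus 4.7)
The plan is to build the isomorphism as a chain of natural identifications using \Cref{Wbigmodstructureautomatic} and the self-duality of $\Wbig$ from \Cref{Wbigselfdual}, and then to verify compatibility with $\Wbig$-module structures.

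For an $R$-algebra $S$, an $S$-point of $\intHom_{\Wbig}(M, \Wbig)$ is a $\Wbig_S$-linear morphism $M_S \to \Wbig_S$. By \Cref{Wbigmodstructureautomatic}, this is the same as an $\Mm$-equivariant morphism of commutative affine group $S$-schemes. Applying graded Cartier duality (an antiautoequivalence, which preserves the $\Mm$-grading since it dualizes weightwise), such a map corresponds to an $\Mm$-equivariant group scheme morphism $\Wbig_S^\dual \to M_S^\dual$; using $\Wbig^\dual \isom \Wbig$ from \Cref{Wbigselfdual}, I rewrite this as an $\Mm$-equivariant map $\Wbig_S \to M_S^\dual$. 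Since $M^\dual$ is pointed (being the graded dual of a pointed Hopf algebra), \Cref{Wbigmodstructureautomatic} applies a second time to promote this to a $\Wbig_S$-linear map $\Wbig_S \to M_S^\dual$, and evaluation at $1$ then identifies such maps with $S$-points of $M^\dual$. The entire composite is natural in $S$ and so defines an isomorphism of sheaves $\intHom_{\Wbig}(M, \Wbig) \isom M^\dual$.

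To upgrade this to an isomorphism of $\Wbig$-modules, I would note that the $\Wbig$-module structure on $M^\dual$ is (by the statement of the lemma) the one extending its $\Mm$-action via \Cref{Wbigmodstructureautomatic}, while the $\Wbig$-action on $\intHom_{\Wbig}(M, \Wbig)$ comes from post-composition with the action on the target $\Wbig$. By full faithfulness in \Cref{Wbigmodstructureautomatic}, it then suffices to verify that the chain above is $\Mm$-equivariant, which is immediate since each step preserves the weight grading. The main point requiring care is that graded Cartier duality is an antiautoequivalence only on weightwise finite free objects; I expect this finiteness to be part of the implicit setup (as it is already needed for $M^\dual$ to make sense), and modulo this assumption the rest of the argument is purely formal.
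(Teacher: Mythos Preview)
Your proof is correct and follows essentially the same approach as the paper's: both build the bijection via the chain ``$S$-point of one side $\leftrightarrow$ $\Wbig$-linear map $\Wbig \to M^\dual$ (by \Cref{Wbigmodstructureautomatic}) $\leftrightarrow$ map $M \to \Wbig$ (by dualizing and \Cref{Wbigselfdual}) $\leftrightarrow$ $S$-point of the other side,'' just traversed in opposite directions. Your added paragraph verifying $\Wbig$-linearity by reducing to $\Mm$-equivariance via \Cref{Wbigmodstructureautomatic} is a welcome elaboration that the paper leaves implicit, and your remark about the weightwise-finite-free hypothesis is correct and consistent with the paper's standing conventions.
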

\begin{proof}
An $S$-point of $M^\dual$ is the same as a $\Wbig$-module homomorphism $\Wbig \to M^\dual$ over $S$; dualizing and applying \Cref{Wbigselfdual} and \Cref{Wbigmodstructureautomatic} we obtain a homomorphism $M \to \Wbig$.
The same steps in reverse give the inverse.
\end{proof}

\begin{corollary}
\label{WmodCartierduality}
Let $M$ be a $W$-module scheme over a $p$-local base.
Then there is a natural isomorphism of $\Wbig$-modules
\[ M^\dual \bij \intHom_W(M, W) \mpunc. \]
In particular, the source is obtained from restriction of scalars along $\Wbig \to W$.
\end{corollary}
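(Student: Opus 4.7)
The plan is to deduce this directly from \Cref{WbigmodCartierduality} together with the embedding of $W$-module schemes into $\Wbig$-module schemes given by \Cref{Wmodstructureautomatic}. First I would use \Cref{Wmodstructureautomatic} to view $M$ as a $\Wbig$-module scheme via restriction of scalars along the projection $\pi : \Wbig \to W$, so that \Cref{WbigmodCartierduality} immediately yields a natural isomorphism of $\Wbig$-modules
\[ M^\dual \bij \intHom_{\Wbig}(M, \Wbig) \mpunc, \]
where on the left the $\Wbig$-module structure is the one coming from the pointed graded structure.

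Next I would reduce the right-hand side to $\intHom_W(M, W)$. Over a $p$-local base, the splitting $W \to \Wbig \xrightarrow{\pi} W$ of \cite[Proposition~10]{larswitt} expresses $\Wbig$ as a product of rings $\Wbig \cong W \times W'$ where $W'$ is the kernel of $\pi$. Since the $\Wbig$-action on $M$ factors through $\pi$, the factor $W'$ acts as zero on $M$; consequently any $\Wbig$-linear map $M \to \Wbig$ must land in the $W$-factor, giving
\[ \intHom_{\Wbig}(M, \Wbig) \bij \intHom_{\Wbig}(M, W) \bij \intHom_W(M, W) \mpunc, \]
where the second identification uses that both $M$ and $W$ carry $\Wbig$-actions pulled back from $W$-actions along $\pi$, so that $\Wbig$-linearity is the same as $W$-linearity. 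The final statement follows because the resulting $\Wbig$-module structure on $M^\dual$ is obtained by postcomposing with multiplication on the target $W$, which evidently factors through $\pi$.

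The only step requiring any real care is the vanishing of $\intHom_{\Wbig}(M, W')$, but this is immediate from the idempotent decomposition $\Wbig \cong W \times W'$ together with the fact that the idempotent cutting out $W'$ acts as zero on $M$; I don't expect any serious obstacle.
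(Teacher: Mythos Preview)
Your proposal is correct and follows essentially the same approach as the paper: both use \Cref{WbigmodCartierduality} together with the product decomposition $\Wbig \cong W \times W'$ over a $p$-local base to reduce $\intHom_{\Wbig}(M,\Wbig)$ to $\intHom_W(M,W)$. The paper phrases this reduction as ``restriction of scalars along the projection onto a factor commutes with duality,'' which is exactly the content of your idempotent argument spelled out.
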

\begin{proof}
This follows from \Cref{WbigmodCartierduality} along with the observation that, as $\Wbig \to W$ is the projection onto a factor of a product, restriction of scalars commutes with duality.
\end{proof}

We will thus make no distinction between $W$-module and Cartier duality in the sequel.

\begin{recollection}
\label{VFdual}
By the results of \cite[\S 3.8]{prismatization}, $W$-module duality over a $p$-local base swaps the two exact sequences
\begin{gather*}
  0 \to F_* W \xrightarrow{V} W \to \Ga \to 0 \\
  0 \to \Gasharp \to W \xrightarrow{F} F_* W \to 0
  \mpunc.
\end{gather*}
In particular, we may identify $F_* W$ with its dual.
\end{recollection}

\begin{corollary}
\label{WmodExt1}
Over a $p$-local base, we have
\[ \intExt^1_W(\Ga, W) = \intExt^1_W(F_* W, W) = 0 \mpunc. \]
\end{corollary}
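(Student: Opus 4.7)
The plan is to derive both vanishings from the two admissible exact sequences recorded in \Cref{VFdual}, applying $\intHom_W(-,W)$ and using \Cref{WmodCartierduality} to identify duals. The main point that needs justification is that $\intHom_W(-,W)$ converts admissible short exact sequences of affine $W$-module schemes into fiber sequences of (internal-)Hom sheaves, so that the relevant long exact sequences of $\intExt$-groups exist; this should be formal from the interpretation of $\intExt$ as derived functors of $\intHom$ in sheaves of $W$-modules.

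For the vanishing of $\intExt^1_W(\Ga, W)$, I would apply $\intHom_W(-, W)$ to the sequence $0 \to F_*W \xrightarrow{V} W \to \Ga \to 0$ and read off the exact segment
\[ \intHom_W(W, W) \to \intHom_W(F_*W, W) \to \intExt^1_W(\Ga, W) \to \intExt^1_W(W, W) \mpunc. \]
The leftmost term is $W$; by the self-duality of $F_*W$ recorded in \Cref{VFdual} together with \Cref{WmodCartierduality}, the next term identifies with $F_*W$, and the map between them is Cartier-dual to $V$, hence is the Frobenius $F : W \to F_*W$ (again by \Cref{VFdual}). The second exact sequence of \Cref{VFdual} shows $F$ is surjective, and $\intExt^1_W(W, -) = 0$ since $W$ corepresents the underlying-module functor on $W$-modules. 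This gives the vanishing.

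For $\intExt^1_W(F_*W, W)$, I would run the same argument on the sequence $0 \to \Gasharp \to W \xrightarrow{F} F_*W \to 0$, obtaining
\[ \intHom_W(W, W) \to \intHom_W(\Gasharp, W) \to \intExt^1_W(F_*W, W) \to \intExt^1_W(W, W) \mpunc. \]
Since \Cref{VFdual} shows $\Gasharp$ is $W$-dual to $\Ga$, \Cref{WmodCartierduality} identifies $\intHom_W(\Gasharp, W) \equiv \Ga$ with the map from $W$ being Cartier-dual to the inclusion $\Gasharp \hookrightarrow W$, namely the surjection $W \twoheadrightarrow \Ga$ from the first exact sequence of \Cref{VFdual}. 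Combined again with $\intExt^1_W(W, W) = 0$, this yields the claim.
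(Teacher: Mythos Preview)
Your proposal is correct and follows essentially the same route as the paper. The paper's proof is terser: it simply says that applying $\intHom_W(-,W)$ to the two sequences of \Cref{VFdual} and reading off the long exact sequences shows the groups in question inject into $\intExt^1_W(W,W)=0$, leaving implicit the identifications of the $\intHom$ terms and the surjectivity of the maps between them (which are precisely the duality statements of \Cref{VFdual} that you spell out).
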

\begin{proof}
Applying $\intHom_W(-, W)$ to the short exact sequences of \Cref{VFdual}, we find via the long exact sequences that the two groups in question inject into $\intExt^1_W(W, W)$, which vanishes.
\end{proof}

\begin{lemma}
\label{wflfextn}
Let
\[ 0 \to K \to M \to N \to 0 \]
be an extension of $\Wbig$-module schemes over a ring $R$.
Then if $K$ and $N$ are weightwise finite locally free, then so is $M$.
In this case, the rank of $\calO(M)$ in a given weight is the same as that of $\calO(K \times N)$.
\end{lemma}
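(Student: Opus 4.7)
The plan is to show, working Zariski-locally on $R$, that $\calO(M)$ is isomorphic as a graded $\calO(N)$-module to $\calO(N) \otimes_R \calO(K)$, from which both claims follow immediately.

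By \Cref{Wbigmodstructureautomatic}, I may forget the $\Wbig$-module structure and regard the sequence as exact in the category of pointed $\Mm$-equivariant affine abelian group $R$-schemes. In particular, $M \to N$ is a $K$-torsor. Each weight piece of $\calO(K)$ is flat over $R$, so $\calO(K)$ is flat, and the augmentation $\calO(K) \to R$ exhibits $R$ as a retract (the inclusion of $\calO(K)_0$), so $\calO(K)$ is faithfully flat over $R$. Hence $M \to N$ is fpqc, so $\calO(M)$ is faithfully flat over $\calO(N)$. Since $N$ is pointed, its zero section $\Spec R \to N$ is $\Mm$-equivariant and so corresponds to the weight-$0$ projection $\calO(N) \twoheadrightarrow R$; pulling back along it identifies $\calO(M)/J_N \calO(M) \cong \calO(K)$, where $J_N = \calO(N)_{>0}$ is the augmentation ideal.

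The core step is a graded Nakayama argument. After Zariski-localization on $R$ we may assume $\calO(K)_m$ is free over $R$ for all $m$ below any desired bound. Pick a homogeneous $R$-basis $\{e_i\}$ of $\calO(K)$ (up to that bound) and lift each $e_i$ to a homogeneous element $\tilde e_i \in \calO(M)$ of the same weight, using graded surjectivity of $\calO(M) \twoheadrightarrow \calO(K)$. The induced graded $\calO(N)$-linear map $\bigoplus_i \calO(N) \cdot \tilde e_i \to \calO(M)$ becomes the basis isomorphism after reducing mod $J_N$. A weight-by-weight induction, using that $J_N$ is positively graded, shows it is surjective: in weight $n$, the image already generates modulo $(J_N \calO(M))_n$, and the latter is contained in the image by the inductive hypothesis applied to weights $< n$. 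Flatness of $\calO(M)$ over $\calO(N)$ forces the kernel $Q$ to satisfy $Q \otimes_{\calO(N)} R = 0$, and a second weight induction using $Q_n = (J_N Q)_n$ gives $Q = 0$.

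This yields $\calO(M) \cong \calO(N) \otimes_R \calO(K)$ as graded $\calO(N)$-modules, locally on $R$. Passing to weight $n$ gives $\calO(M)_n \cong \bigoplus_{a+b=n} \calO(N)_a \otimes_R \calO(K)_b$ locally, a finite direct sum of tensor products of finite locally free $R$-modules, hence finite locally free of total rank $\sum_{a+b=n} \rank \calO(N)_a \cdot \rank \calO(K)_b = \rank \calO(K \times N)_n$. The main subtlety is keeping track of the grading when setting up the Nakayama step—specifically the identification of the zero section with the weight-$0$ projection, which is what ensures the ``maximal ideal'' $J_N$ is positively graded and the induction goes through; the graded Nakayama itself and the fpqc-torsor preliminaries are then routine.
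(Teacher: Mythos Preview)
Your proof is correct. Both your argument and the paper's rest on the identification $\calO(M) \otimes_{\calO(N)} R \cong \calO(K)$ together with the positive grading of $J_N$, but the packaging differs: you run a graded Nakayama argument, lifting a local homogeneous basis of $\calO(K)$ to obtain $\calO(M) \cong \calO(N) \otimes_R \calO(K)$ as graded $\calO(N)$-modules, while the paper instead filters the truncation $\calO(N)_{\leq n}$ by weight and tensors with $\calO(M)$ to exhibit each $\calO(M)_m$ as an iterated extension of the pieces $\calO(K)_{m-k} \otimes_R \calO(N)_k$. Your route has two mild advantages: you make the flatness of $\calO(M)$ over $\calO(N)$ explicit via the $K$-torsor observation (the paper needs this too, so that tensoring preserves exactness of the weight filtration, but leaves it implicit), and you obtain the stronger intermediate statement that $\calO(M)$ is locally free as a graded $\calO(N)$-module. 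The paper's filtration argument, in exchange, avoids the Zariski localization and the basis-lifting step.
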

\begin{proof}
Write $\calO(N)_{\leq n}$ for the truncation of $\calO(N)$ to weights at most $n$.
Note that the natural map $\calO(M) \to \calO(M) \tensor_{\calO(N)} \calO(N)_{\leq n}$ is given by killing a collection of elements in weights greater than $n$, and is thus an isomorphism in weights up to $n$.
It thus suffices to see for each $n$ that $\calO(M) \tensor_{\calO(N)} \calO(N)_{\leq n}$ is weightwise finite locally free of the appropriate ranks.
The grading on the $\calO(N)$-module $\calO(N)_{\leq n}$ equips it with a finite filtration whose graded pieces are weight-shifts of restrictions of finite locally free $R$-modules along the augmentation $\calO(N) \to R$.
Weightwise finite local freeness of $\calO(M) \tensor_{\calO(N)} \calO(N)_{\leq n}$ thus reduces to the same for $\calO(M) \tensor_{\calO(N)} R \isom \calO(K)$, which holds by assumption.

To see the last statement, note that the same argument equips $\calO(K \times N) \tensor_{\calO(N)} \calO(N)_{\leq n}$ with a finite filtration with the same graded pieces as those of $\calO(M) \tensor_{\calO(N)} \calO(N)_{\leq n}$.
\end{proof}

\begin{lemma}
\label{Gasharpdualexact}
Suppose we are given an extension
\[ 0 \to \Gasharp \to M \to N \to 0 \]
of weightwise finite locally free $W$-modules over a $p$-local base $R$.
Then the dual sequence
\[ 0 \to N^\dual \to M^\dual \to \Ga \]
is right exact.
\end{lemma}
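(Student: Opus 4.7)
The content is the surjectivity of $M^\dual \to \Ga$, since left exactness of $\intHom_W(-, W)$ combined with \Cref{WmodCartierduality} gives exactness at $N^\dual$ and $M^\dual$ for free. My plan is to deduce this surjectivity by a direct appeal to weightwise Cartier duality.

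By \Cref{wflfextn}, the middle term $M$ is weightwise finite locally free. Fpqc-locally on $R$, the extension $0 \to \Gasharp \to M \to N \to 0$ trivializes as a $\Gasharp$-torsor over $N$, giving a grading-compatible cleft structure $\calO(M) \isom \calO(\Gasharp) \tensor_R \calO(N)$. Since weightwise dualization is exact on finite locally free $R$-modules and commutes with their tensor products, dualizing weight by weight yields a cleft structure $\calO(M^\vee) \isom \calO(N^\vee) \tensor_R \calO((\Gasharp)^\vee)$, and hence a short exact sequence of weightwise finite locally free graded commutative affine group schemes
\[ 0 \to N^\vee \to M^\vee \to (\Gasharp)^\vee \to 0 \mpunc. \]
By \Cref{WmodCartierduality}, these Cartier duals agree with the $W$-module duals.

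It remains to identify $(\Gasharp)^\dual$ with $\Ga$ and to check that the resulting map $M^\dual \to \Ga$ agrees with the one in the statement. For the identification, applying $\intHom_W(-, W)$ to the sequence $0 \to \Gasharp \to W \to F_* W \to 0$ and invoking self-duality of $W$ and $F_*W$ from \Cref{VFdual} together with the vanishing $\intExt^1_W(F_*W, W) = 0$ from \Cref{WmodExt1} produces a short exact sequence $0 \to F_*W \to W \to (\Gasharp)^\dual \to 0$, yielding $(\Gasharp)^\dual \isom W / F_* W = \Ga$; the compatibility of the resulting map is then a matter of unwinding definitions.

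The main obstacle is verifying that the weightwise dualization of a cleft Hopf algebra extension remains exact \emph{as a sequence of group schemes}, not merely of graded modules. Concretely, one must check that the torsor structure on $M \to N$ passes through duality to a torsor structure on $M^\vee \to (\Gasharp)^\vee$. This is classical for finite Hopf algebras, and the weightwise version should be a formal extension, but requires careful tracking of how multiplication and comultiplication interchange under Cartier duality in the graded, weightwise finite locally free setting.
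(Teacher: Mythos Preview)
Your approach is genuinely different from the paper's, and the gap you identify is real and not easily filled. The paper argues directly that $\calO(\Ga) = R[t] \to \calO(M^\dual)$ is faithfully flat by stratifying on $t$: modulo $t$ one lands on $\calO(N^\dual)$, which is faithfully flat over $R$; after inverting $t$ both sides are $\bbZ$-graded with an invertible weight-$1$ element, hence determined by their weight-$0$ parts. Invoking \Cref{connflatstrat}, it remains to show $t$ is a nonzerodivisor on $\calO(M^\dual)$, and this follows from a rank count: the sequence $\calO(M^\dual)_{n-1} \xrightarrow{t} \calO(M^\dual)_n \to \calO(N^\dual)_n \to 0$ is right exact, and by \Cref{wflfextn} the outer ranks sum to the middle one, forcing left exactness.

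Your cleft decomposition $\calO(M) \isom \calO(\Gasharp) \tensor_R \calO(N)$ can indeed be obtained Zariski-locally by lifting a graded basis of $\calO(\Gasharp)$ and using the rank equality from \Cref{wflfextn}, so that step is fine. But the decomposition is only an isomorphism of graded $\calO(N)$-modules, and dualizing it gives an isomorphism of graded $\calO(N^\vee)$-\emph{comodules}. What you need is control over the $\calO(\Ga)$-\emph{module} structure on $\calO(M^\vee)$, which arises instead from dualizing the $\calO(\Gasharp)$-coaction on $\calO(M)$. Unless the extension is split, this coaction is not simply $\id \tensor \Delta$ in your chosen basis, so the dual action is not visibly free. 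Showing that $\calO(M^\vee)$ is nonetheless faithfully flat over $\calO(\Ga)$ is precisely the content of the lemma; your reduction has not made progress on it. The classical finite case works because one can freely swap module and comodule perspectives via full dualization over $R$, but in the weightwise-infinite setting this exchange is exactly what is at stake.
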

\begin{proof}
We need to show that $\calO(\Ga) \to \calO(M^\dual)$ is faithfully flat.
Write $\calO(\Ga) = R[t]$.
Note that $\calO(M^\dual)/t \isom \calO(N^\dual)$ is faithfully flat over $R$.
Faithful flatness of $\calO(N)[\tfrac{1}{t}]$ over $R[t^{\pm 1}]$ follows from faithful flatness over $R$, since these are $\bbZ$-graded rings with invertible elements in weight $1$, hence tensored up from weight $0$.
It thus suffices by \Cref{connflatstrat} to show that multiplication by $t$ is injective on $\calO(M^\dual)$.
We have for each $n$ an exact sequence
\[ \calO(M^\dual)_{n-1} \xrightarrow{t} \calO(M^\dual)_n \to \calO(N^\dual)_n \to 0 \mpunc. \]
But by \Cref{wflfextn} the ranks of the left and right terms sum to that of the middle term, so the sequence is left exact.
\end{proof}

\begin{lemma}
\label{connflatstrat}
Let $R$ be a ring with an element $t \in \pi_0 R$.
Then $M \in \calD(R)^{\leq 0}$ is flat, resp.\ faithfully flat, if and only if it is after base change to $R/t \times R[\tfrac{1}{t}]$.
\end{lemma}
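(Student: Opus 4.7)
The plan is as follows. The forward direction is immediate from the fact that flatness is preserved under base change, so I focus on the reverse direction. The strategy is to verify flatness of $M$ by showing that $X \defeq M \otimes_R N$ is static for every static $R$-module $N$.

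I would exploit the two hypotheses separately. Flatness of $M[1/t]$ combined with staticness of $N[1/t]$ makes $X[1/t]$ static, yielding $\pi_i(X)[1/t] = 0$ for $i \geq 1$. In the complementary direction, $X \otimes_R R/t$ equals $(M/t) \otimes_{R/t} (N \otimes_R R/t)$; the second factor is a connective $R/t$-module concentrated in degrees $0$ and $1$ (with $\pi_0 = N/tN$ and $\pi_1 = N[t]$), so flatness of $M/t$ yields $\pi_i(X \otimes R/t) = 0$ for $i \geq 2$.

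To combine these, I would use the long exact sequence on $\pi_*$ coming from the cofiber sequence $X \xrightarrow{t} X \to X \otimes R/t$: vanishing of $\pi_i(X \otimes R/t)$ in degrees $\geq 2$ forces $\pi_j(X)[t] = 0$ for all $j \geq 1$, so each $\pi_j(X)$ is $t$-torsion-free. Combined with $\pi_j(X)[1/t] = 0$, and using that a $t$-torsion-free module injects into its $t$-inversion, this gives $\pi_j(X) = 0$ for $j \geq 1$, completing the proof of flatness.

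For the faithfully flat case I would argue analogously: if $M \otimes_R N = 0$ for a static $N$, then faithful flatness of $M[1/t]$ gives $N[1/t] = 0$, and faithful flatness of $M/t$ (together with the computation above) gives $N \otimes_R R/t = 0$, hence $N[t] = N/tN = 0$; combining forces $N = 0$. The main mild obstacle throughout is that $N \otimes_R R/t$ is not static even for static $N$, so controlling its homotopy after tensoring with $M/t$ relies on the fact that a flat module over the (possibly animated) ring $R/t$ has tensor product commuting with $\pi_*$.
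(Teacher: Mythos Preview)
Your argument is correct. The paper's own proof is essentially a pair of citations: flatness is delegated to the Stacks project, and faithfulness is deduced from flatness together with the conservativity of derived base change to $R {/^\rmL} t \times R[\tfrac{1}{t}]$. Your approach instead gives a self-contained computation via the long exact sequence of the cofiber $X \xrightarrow{t} X \to X/t$, which is precisely the kind of argument those references ultimately unwind to. The one place requiring care, as you flag, is that $R/t$ may be genuinely animated and $N \otimes_R R/t$ sits in homological degrees $[0,1]$; your handling of this via $t$-exactness of tensoring with a flat module is correct, and the same $t$-exactness justifies the deduction $N \otimes_R R/t = 0$ from faithful flatness of $M/t$ in your last step.
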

\begin{proof}
The ``only if'' direction is clear.
Flatness in the ``if'' direction is \citestacks{0H85}, while faithfulness follows from flatness along with \cite[Lemma 5.2.2]{purityflatcoh} and the fact that base change to $R {/^\rmL} t \times R[\tfrac{1}{t}]$ is conservative.
\end{proof}

\begin{proposition}
\label{GasharpExt1}
Over a $p$-local base, we have
\[ \intExt^1_W(\Gasharp, W) = 0 \mpunc. \]
\end{proposition}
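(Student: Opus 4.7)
The plan is to split any extension $\epsilon : 0 \to W \to E \to \Gasharp \to 0$ of $W$-module schemes by Cartier-dualizing and exploiting freeness of $W$ over itself. By \Cref{wflfextn}, $E$ is weightwise finite locally free, so the Cartier dual $E^\dual$ is defined. Applying $\intHom_W(-, W) = (-)^\dual$ (via \Cref{WmodCartierduality}), together with the identifications $\Gasharp^\dual = \Ga$ and $W^\dual = W$ from \Cref{VFdual}, produces a complex
\[ 0 \to \Ga \to E^\dual \to W, \]
whose rightmost cokernel is precisely $\intExt^1_W(\Gasharp, W)$.

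If this can be upgraded to a short exact sequence $0 \to \Ga \to E^\dual \to W \to 0$ -- equivalently, if $E^\dual \to W$ is surjective -- then it splits because $W$ is free as a $W$-module, whence $\intExt^1_W(W, \Ga) = 0$. Cartier-dualizing a splitting (using that $(-)^\dual$ is involutive on weightwise finite locally free objects) then produces a splitting of the original $\epsilon$, yielding the desired vanishing.

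The main obstacle is verifying surjectivity of $E^\dual \to W$, which is an analog of \Cref{Gasharpdualexact} but with $W$ (rather than $\Ga$) in the quotient position. The strategy would mirror that of \Cref{Gasharpdualexact}: show that $\calO(W) \to \calO(E^\dual)$ is faithfully flat by successively applying \Cref{connflatstrat}. The complication is that $\calO(W) = R[x_0, x_1, x_2, \ldots]$ has infinitely many generators (the Witt coordinates), whereas $\calO(\Ga) = R[t]$ has a single one. A natural way to handle this is to filter $W$ as the inverse limit $\varprojlim_n W_n$ of truncated Witt vector schemes, each $W_n$ being an iterated extension of copies of $\Ga$, apply \Cref{connflatstrat} successively along $x_0, \ldots, x_n$ at each finite stage, and then pass to the limit. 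Throughout, the weightwise rank matching of \Cref{wflfextn} secures injectivity of the relevant multiplication-by-$x_i$ maps; the most delicate point is ensuring that faithful flatness persists cleanly through the limit.
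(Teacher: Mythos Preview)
Your approach runs into a genuine obstacle that the paper's authors themselves flag as unresolved. The surjectivity of $E^\dual \to W$ is precisely the analogue of \Cref{Gasharpdualexact} with $W$ in place of $\Gasharp$, and the remark immediately following \Cref{GasharpExt1} in the paper says outright that they do not know this analogue. Your proposed workaround---filtering $W$ by truncated Witt vectors and iterating \Cref{connflatstrat}---does not go through as stated: the rank-counting trick in \Cref{Gasharpdualexact} works because one already knows that $\calO(M^\dual)/t \cong \calO(N^\dual)$ (this is the kernel identification), and only then does the rank equality force injectivity of multiplication by $t$. At intermediate stages of your filtration there is no clean candidate for $\calO(E^\dual)/(x_0,\dots,x_{i-1})$, so you cannot bootstrap the rank argument. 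The limit issue you mention is real, but the problem arises already at finite stages.

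The paper sidesteps this entirely by decomposing in the \emph{second} variable rather than dualizing the whole extension at once. Applying $\intHom_W(\Gasharp,-)$ to $0 \to \Gasharp \to W \xrightarrow{F} F_* W \to 0$ and using $\intHom_W(\Gasharp, F_* W) = 0$ reduces the problem to showing that $\intExt^1_W(\Gasharp, F_* W)$ and $\intExt^1_W(\Gasharp, \Gasharp)$ both vanish. After dualizing, each of these has something generated in a single weight in the quotient position (namely $F_* W$ or $\Ga$), rather than all of $W$. For $F_* W$, a rank-$1$-in-weight-$1$ observation on $\calO(M^\dual)$ produces a splitting directly. For $\Gasharp$, \Cref{Gasharpdualexact} itself applies (since $\Gasharp$ sits in the kernel), and then an explicit Hopf-algebra computation in weight $1$ splits the resulting extension of $\Ga$ by $\Ga$. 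The point is that the paper never needs right-exactness of the dual when $W$ is the target; it only ever dualizes extensions with $\Gasharp$ or $F_* W$ as the kernel.
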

\begin{proof}
We have an exact sequence
\[ \intHom_W(\Gasharp, F_* W) \to \intExt^1_W(\Gasharp, \Gasharp) \to  \intExt^1_W(\Gasharp, W) \to  \intExt^1_W(\Gasharp, F_* W) \mpunc, \]
where the first term vanishes by \cite[Proposition 5.2.1(3)]{fgauges}.
We claim that the last term vanishes as well.

To see this, suppose we have an extension
\[ 0 \to F_* W \to M \to \Gasharp \to 0 \mpunc. \]
Dualizing, we obtain a sequence
\[ 0 \to \Ga \to M^\dual \to F_* W \mpunc. \]
By \Cref{wflfextn}, $\calO(M^\dual)$ is of rank $1$ in weight $1$, so the map of graded rings $\calO(M^\dual) \to \calO(\Ga)$ admits a unique splitting, which is necessarily a Hopf map by grading considerations.
Dualizing back, we obtain a splitting of our original sequence.

We have thus reduced to showing that $\intExt^1_W(\Gasharp, \Gasharp)$ vanishes.
Suppose as before that we have an extension
\[ 0 \to \Gasharp \to M \to \Gasharp \to 0 \mpunc. \]
Dualizing, we obtain an exact sequence
\[ 0 \to \Ga \to M^\dual \to \Ga \to 0 \]
by \Cref{Gasharpdualexact}.
Note that the induced sequence of coordinate rings is exact in weight $1$, so the weight $1$ piece of $\calO(M^\dual)$ admits generators $x$ and $y$ such that
\begin{gather*}
  \Delta(x) = x \tensor 1 + 1 \tensor x \\
  \Delta(y) = y \tensor 1 + 1 \tensor y + a \cdot (x \tensor 1 + 1 \tensor x)
  \mpunc.
\end{gather*}
The map $\calO(\Ga) \to \calO(M^\dual)$ sending the generator to $y - a x$ therefore yields the desired splitting.
\end{proof}

\begin{remark}
We do not know whether the group $\intExt^1_W(M, W)$ vanishes whenever $M$ is weightwise finite locally free.
This would follow from the analogue of \Cref{Gasharpdualexact} for $W$ in place of $\Gasharp$.
\end{remark}

\begin{lemma}
\label{admissibleWmodinvopen}
Let $M$ be an extension of $F_* W$ by $\Gasharp$ in $W$-modules over a $p$-nilpotent ring $R$.
Then $M$ is isomorphic to $W$ if and only if its extension class lies in
\[ \Gm^\dR \inj \Ga^\dR \isom \intExt_W(F_* W, \Gasharp) \mpunc, \]
where the identification on the right is \cite[Proposition 5.2.1(2)]{fgauges}.
\end{lemma}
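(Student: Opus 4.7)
The plan is to leverage three inputs from \cite[Proposition 5.2.1]{fgauges}: first, the identification $\intHom_W(\Gasharp, \Gasharp) \simeq \Ga^\dR$ as ring schemes (so in particular $\mathrm{Aut}_W(\Gasharp) \simeq \Gm^\dR$); second, the vanishing $\intHom_W(\Gasharp, F_* W) = 0$; and third, the identification $\intExt_W(F_* W, \Gasharp) \simeq \Ga^\dR$, equivariantly for the natural left action of $\Gm^\dR$ on both sides (by pushout of extensions on the left, by multiplication on the right). I will also need to verify that the extension class $\xi_W$ of the standard Frobenius sequence $0 \to \Gasharp \to W \to F_* W \to 0$ lies in $\Gm^\dR$; under the normalization of the identification in \emph{loc.\ cit.}, I expect $\xi_W$ to be the unit element $1$.

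For the ``if'' direction, given $u \in \Gm^\dR(S)$, I consider the pushout of the standard sequence along the automorphism $u : \Gasharp \to \Gasharp$. The resulting extension has middle term isomorphic to $W$ as a $W$-module (via the pushout map), and its class is $u \cdot \xi_W \in \Gm^\dR(S)$. Since $\Gm^\dR$ acts transitively on itself by multiplication, every class in $\Gm^\dR$ arises this way, proving the ``if'' direction.

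For the converse, suppose $\phi : M \xrightarrow{\sim} W$ is an isomorphism of $W$-modules. The composite $\Gasharp \hookrightarrow M \xrightarrow{\phi} W \to F_* W$ vanishes by $\intHom_W(\Gasharp, F_* W) = 0$, so $\phi$ sends the subobject $\Gasharp \subset M$ into $\Gasharp \subset W$ and descends to a map $\beta : F_* W \to F_* W$ on quotients. The short five lemma forces both the restriction $\alpha : \Gasharp \to \Gasharp$ and the quotient map $\beta$ to be isomorphisms, and the resulting commutative diagram of extensions yields the relation $\alpha_* \xi_W = \beta^* \xi$ in $\intExt_W(F_* W, \Gasharp) \simeq \Ga^\dR$, i.e., $\xi = \alpha \cdot (\beta^{-1})^* \xi_W$. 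To conclude that $\xi \in \Gm^\dR$, it remains to check that the pullback action of $\mathrm{Aut}_W(F_* W) = (F_* W)^\times$ on $\Ga^\dR$ preserves the unit locus $\Gm^\dR$, which I expect to follow from an explicit computation identifying this action with multiplication by a unit (possibly a Frobenius twist).

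The main obstacle is pinning down the normalization of the identification $\intExt_W(F_* W, \Gasharp) \simeq \Ga^\dR$ carefully enough to verify both that $\xi_W \in \Gm^\dR$ and that the $\mathrm{Aut}_W(F_* W)$-pullback preserves units; the rest of the argument is formal manipulation of short exact sequences and the two actions on the $\Ext$ group.
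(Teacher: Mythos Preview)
Your ``if'' direction is essentially the paper's argument, but note a small slip: by \cite[Proposition 5.2.1(1)]{fgauges} one has $\intEnd_W(\Gasharp) \isom \Ga$, not $\Ga^\dR$, so $\mathrm{Aut}_W(\Gasharp) \isom \Gm$. This does not hurt you: a class in $\Gm^\dR$ lifts fpqc-locally to $\Gm$, and the paper makes exactly this move (``the preimage of $\Gm^\dR$ in $\Ga$ is $\Gm$'').

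For ``only if'', the paper takes a much shorter contrapositive route that sidesteps your acknowledged obstacle entirely. If the class does not lie in $\Gm^\dR$, then at some perfect-field-valued point of $R$ it becomes $0$, so the extension splits there; but over a perfect field $\Gasharp \oplus F_* W$ is not isomorphic to $W$ (for instance, $\calO(\Gasharp)$ has nilpotents in characteristic $p$ while $\calO(W)$ is a polynomial ring). Your direct five-lemma approach is sound in outline, and your first gap ($\xi_W = 1$) is immediate from the connecting map $\intHom_W(\Gasharp,\Gasharp) \to \intExt^1_W(F_*W,\Gasharp)$ sending $\mathrm{id}$ to $\xi_W$. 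The second gap, that pullback by $(F_*W)^\times$ preserves $\Gm^\dR$, is the real work: one must identify the connecting map $F_*W = \intHom_W(F_*W,F_*W) \to \Ga^\dR$ concretely enough to see that units land in $\Gm^\dR$. This is doable (the map turns out to be a ring homomorphism, essentially reduction mod $p$), but unwinding it is more effort than the paper's two-line field-point argument, and the quickest verification ends up reducing to field points anyway.
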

\begin{proof}
The argument of \emph{loc.\ cit.\@} shows that such an extension is given locally by pushing out the usual one (coming from the Frobenius) along an endomorphism of $\Gasharp$.
Such an endomorphism is given by an element of $\Ga$, and if the endomorphism is invertible then the pushout clearly remains isomorphic to $W$.
As $\Gm \inj \Ga$ consists of invertible endomorphisms, and the preimage of $\Gm^\dR$ in $\Ga$ is $\Gm$, we obtain the ``if'' direction.
For the ``only if'' direction, note that if the extension class does not lie in this locus, then after pullback to some perfect-field-valued point of $R$ the extension splits, in which case $M$ is not isomorphic to $W$.
\end{proof}

\begin{remark}
Note that $\calO(W)$ is a polynomial ring on a single generator of each $p$-power weight.
Indeed, this follows from the description of $W$ in terms of Witt coordinates, together with the identity $[a] \cdot V^n [b] = V^n [a^{p^n} b]$.
The Teichmüller map $\bbA^1 \to W$ then exhibits $\calO(\bbA^1)$ as the quotient of $\calO(W)$ by all polynomial generators of weight greater than $1$.
\end{remark}

The following \namecref{gradedpolyquotient} will be very useful for relating quasi-ideals over $W$ to their pullbacks along the Teichmüller map $\Mm \to W$.

\begin{lemma}
\label{gradedpolyquotient}
Let $R$ be a ring, $S$ a graded polynomial $R$-algebra with finitely many generators in each weight and none in weight $0$, and $M$ an object of the graded derived category of $S$.
Then $M$ is connective, resp.\ perfect over $R$, in each weight if and only if $M \tensor_S R$ is.
Furthermore, if $M \tensor_S R$ is of $\Tor$-amplitude $[m, n]$ over $R$ in each weight, then so is $M$.
\end{lemma}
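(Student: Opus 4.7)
The plan is to reduce to the case of finitely many generators by weightwise truncation, then induct on the number of generators, using the Koszul resolution of $R$ over $S$ throughout. For the only-if directions, the Koszul resolution expresses $M \tensor_S R$ as an iterated cofiber of weight shifts of $M$. In each fixed weight $w$, only the finitely many generators of weight $\leq w$ contribute, yielding $(M \tensor_S R)_w$ as a finite iterated cofiber of various $M_{w'}$ with $w' \leq w$. Since connectivity, perfectness over $R$, and Tor-amplitude $[m, n]$ are all stable under cofibers, each only-if direction follows.

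For the if directions, I would first reduce to finitely generated $S$. Fix $w$; by the same Koszul description, the weights $\leq w$ of $M \tensor_S R$ depend only on the weights $\leq w$ of $M$ together with its action by the subalgebra $S^{\leq w}$ generated by generators of weight $\leq w$, which is finitely generated by hypothesis. Hence it suffices to prove each statement with $S$ replaced by $S^{\leq w}$, reducing to the case $S = R[x_1, \ldots, x_n]$. I then induct on $n$: factoring $S = R[x_1, \ldots, x_{n-1}] \tensor_R R[x_n]$ and applying the identity $M \tensor_S R \simeq (M \tensor_{R[x_n]} R) \tensor_{R[x_1, \ldots, x_{n-1}]} R$, the inductive step reduces to the $n = 1$ case applied twice.

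For the base case $n = 1$, with $S = R[x]$ and $x$ in weight $d > 0$, the Koszul resolution yields a cofiber sequence $M_{w-d} \xrightarrow{x} M_w \to (M \tensor_S R)_w$ in each weight $w$. I then induct on $w$: for $0 \leq w < d$, we have $M_{w-d} = 0$ by $\bbN$-grading, so $M_w \simeq (M \tensor_S R)_w$ inherits the property by hypothesis; for $w \geq d$, both $M_{w-d}$ (by the inductive hypothesis in $w$) and $(M \tensor_S R)_w$ (by assumption) have the property, which is preserved by extensions in cofiber sequences.

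The main obstacle is the bookkeeping in the reduction from infinitely many to finitely many generators---verifying carefully that the Koszul description of $M \tensor_S R$ in weights $\leq w$ involves only generators of weight $\leq w$ and only the weights $\leq w$ of $M$. Once that is set up, the rest is a routine homological induction.
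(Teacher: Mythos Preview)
Your proof is correct. Both the forward direction (via the Koszul resolution) and the reverse direction (via reduction to finitely many generators, then a double induction on the number of generators and on the weight) go through as written. The bookkeeping you flag in the reduction step is genuinely fine: since the grading is by $\bbN$ and all generators have positive weight, only generators of weight $\leq w$ and only the pieces $M_{w'}$ with $w' \leq w$ can contribute to $(M \tensor_S R)_w$, exactly as you say.

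The paper takes a more direct route for the converse. Rather than inducting, it passes to the quotient $S_n = S/(S_{>n})$ and observes two things at once: first, $M \to M \tensor_S S_n$ is an equivalence in weights $\leq n$ (since a free resolution of $S_n$ over $S$ has all terms beyond $S$ itself in weight $> n$); second, the $S$-module $S_n$ carries a finite filtration (by weight) whose associated graded pieces are finite sums of weight-shifts of $R$. Tensoring with $M$, one gets $M_v = (M \tensor_S S_n)_v$ as a finite iterated extension of various $(M \tensor_S R)_{v-k}$, and the desired properties follow immediately. This avoids both the reduction to finitely many generators and the nested inductions; your argument unwinds essentially the same filtration one variable and one weight at a time. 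The paper's version is slicker, while yours is more explicit and makes the mechanism (the cofiber sequence $M_{w-d} \to M_w \to (M \tensor_S R)_w$) completely transparent in the one-variable case.
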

\begin{proof}
The tensor product is computed in each weight by a finite colimit of the graded pieces of $M$, implying the forward direction.
For the converse, write $S_n$ for the quotient of $S$ by the elements of weight greater than $n$.
Then $M \to M \tensor_S S_n$ is an equivalence in weights up to $n$.
But the $S$-module $S_n$ has a finite filtration whose graded pieces are finite sums of weight-shifts of $R$, yielding the result.
\end{proof}

\subsection{Passable \texorpdfstring{$W$}{W}-modules and polyfiltered Cartier--Witt divisors}
\label{sec:passable}

In this section we introduce variants of the notions of admissible $W$-module and filtered Cartier--Witt divisor which are necessary for the proof of \Cref{ZpSynmainthm} (see \Cref{rmk:polyfilteredexplanation}).

\begin{definition}
We say that an affine $W$-module scheme $M$ over a $p$-local base is \emph{$0$-passable} if it is invertible.
For $n > 0$, we say that it is \emph{$n$-passable} if it fits in an exact sequence
\[ 0 \to L^\sharp \to M \to F_* M' \to 0 \]
with $L$ a line bundle and $M'$ an $(n-1)$-passable $W$-module.
In this situation, we refer to a sequence of the form above as a \emph{passable sequence}.
We say that $M$ is \emph{passable} if it is $n$-passable for some $n$.
\end{definition}

\begin{lemma}
The passable sequence associated to a passable $W$-module is unique and functorial.
Passability is fpqc-local.
\end{lemma}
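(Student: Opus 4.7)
The plan is to establish a single vanishing statement from which both uniqueness/functoriality and fpqc-locality will quickly follow. The central claim is that the subobject $L^\sharp \hookrightarrow M$ appearing in any passable sequence is intrinsically determined by $M$; once this is known, $F_*M'$ is forced to be the quotient $M/L^\sharp$, and $M'$ itself is recovered via the (full) faithfulness of Frobenius pushforward on $W$-modules over a $p$-local base. The technical heart is the vanishing
\[ \intHom_W(L^\sharp, F_*M') = 0 \]
for any line bundle $L$ and any passable $W$-module $M'$.

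I would prove the vanishing by induction on the passability degree of $M'$. The base case $M'$ invertible reduces, after fpqc-localizing to trivialize both $L$ and $M'$, to $\intHom_W(\Gasharp, F_*W) = 0$ from \cite[Proposition 5.2.1(3)]{fgauges}. The inductive step applies $\intHom_W(L^\sharp, -)$ to the Frobenius pushforward $0 \to F_*L^{\prime\sharp} \to F_*M' \to F_*^2 M'' \to 0$ of the passable sequence for $M'$ and examines the long exact sequence. I expect the primary obstacle to be that the naive induction on the degree $n$ fails to close, since the right-hand term involves $F_*^2 M''$ rather than $F_*M''$. The fix is to carry along the stronger joint vanishing $\intHom_W(L^\sharp, F_*^k N) = 0$ for every passable $N$ and every $k \geq 1$, proved by simultaneous induction on $n$ and $k$ with the same fpqc-local reduction to the cited base vanishing.

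Given this, uniqueness and functoriality are essentially immediate. If $M$ admits two passable sequences $0 \to L_i^\sharp \to M \to F_*M_i' \to 0$ for $i = 1, 2$, the composite $L_1^\sharp \hookrightarrow M \twoheadrightarrow F_*M_2'$ vanishes, so $L_1^\sharp \subseteq L_2^\sharp$ inside $M$; symmetry yields equality of subobjects, and then full faithfulness of $(-)^\sharp$ on line bundles and of $F_*$ on $W$-modules identifies $L$ and $M'$ themselves canonically. Functoriality is analogous: for $f \colon M_1 \to M_2$ of passable $W$-modules, the composite $L_1^\sharp \to M_1 \to M_2 \to F_*M_2'$ vanishes as well, yielding compatible induced maps $L_1 \to L_2$ and $M_1' \to M_2'$.

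For fpqc-locality I would induct again on the passability degree $n$. The $n = 0$ case is fpqc descent for invertibility of $W$-modules. For $n \geq 1$, suppose $M_R$ has pullback $M_S$ that is $n$-passable over some fpqc cover $R \to S$; the canonicity just proven forces the two pullbacks of the $S$-level passable sequence to $S \tensor_R S$ to agree, so it descends to a short exact sequence $0 \to \calL \to M_R \to \calQ \to 0$ over $R$. Fpqc descent for line bundles combined with the inductive hypothesis for $(n-1)$-passable $W$-modules, and full faithfulness of $(-)^\sharp$ and $F_*$, then identifies $\calL = L_R^\sharp$ with $L_R$ a line bundle and $\calQ = F_*M_R'$ with $M_R'$ an $(n-1)$-passable $W$-module, so $M_R$ is $n$-passable.
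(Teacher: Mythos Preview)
Your argument is correct, and you have correctly identified the vanishing
\[ \intHom_W(L^\sharp, F_* N) = 0 \]
as the crux of the matter; the deduction of uniqueness, functoriality, and fpqc-locality from this vanishing is the same as in the paper (which simply refers to \cite[Remark 5.2.5]{fgauges} for that deduction).

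The difference lies in how the vanishing is proved. You argue by induction on the passability degree of $N$, strengthening to all $F_*^k$ in order to close the loop, and reducing to the cited base case $\intHom_W(\Gasharp, F_* W) = 0$. This works, though you should make explicit how the base case $\intHom_W(\Gasharp, F_*^k W) = 0$ for $k \geq 2$ is obtained: the injection $V : F_*^k W \hookrightarrow F_*^{k-1} W$ gives it by a separate induction on $k$. The paper instead proves the stronger statement that $\intHom_W(L^\sharp, F_* N)$ vanishes for \emph{any} weightwise finite locally free $W$-module $N$, in one stroke, via the Cartier duality developed earlier in \S\ref{sec:Wmod}: dualizing converts the problem to $\intHom_W((F_* N)^\dual, L^\dual) = 0$, which is immediate since the source has nothing in weight $1$ while the target is generated in weight $1$. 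Your route is more elementary in that it avoids invoking the duality machinery, but at the cost of a double induction and a weaker conclusion; the paper's route is a one-line weight argument once duality is available, and yields a cleaner statement that does not require $N$ to be passable.
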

\begin{proof}
This follows as in \cite[Remark 5.2.5]{fgauges} from the claim that the module $\intHom_W(L^\sharp, F_* N)$ vanishes for any weightwise finite locally free $W$-module $N$.
For the claim, it suffices by duality to see that $\intHom_W((F_* N)^\dual, L^\dual)$ vanishes.
But this is clear, as the source is trivial in weight $1$, while the target is generated in weight $1$.
\end{proof}

\begin{definition}
\label{vfCWdiv}
We say that a map of $W$-modules $M \to W$ over a $p$-nilpotent base is a \emph{$n$-polyfiltered Cartier--Witt divisor} if there exists some $n \geq 0$ such that:
\begin{enumerate}
\item
  $M$ is $n$-passable; and
\item
  \label{vfCWdiv:mapcond}
  the base is covered by finitely many open subsets along each of which the restriction of $M \to W$ either:
  \begin{enumerate}[ref=\alph*]
  \item
    \label{vfCWdiv:mapcond:filt}
    is a filtered Cartier--Witt divisor; or
  \item
    \label{vfCWdiv:mapcond:twist}
    fits into a pullback square
    \[\xymatrix{
      M \pullbackcorner \ar[d] \ar[r] & F_* M' \ar[d] \\
      W \ar[r]^F & F_* W
    }\]
    with $M' \to W$ an $(n-1)$-polyfiltered Cartier--Witt divisor.
  \end{enumerate}
\end{enumerate}
We say that $M \to W$ is a \emph{polyfiltered Cartier--Witt divisor} if it is $n$-polyfiltered for some $n$, and we let $\ZpNN$ denote the moduli stack of polyfiltered Cartier--Witt divisors.
It follows from \cite[Proposition 5.3.8]{fgauges} that a polyfiltered Cartier--Witt divisor is always a quasi-ideal, so we obtain for any ring $R$ a stack $R^\NNyg$ via the evident transmutation.
\end{definition}

\begin{remark}
Note that a $1$-passable $W$-module is simply an admissible $W$-module, and that a $1$-polyfiltered Cartier--Witt divisor is simply a filtered Cartier--Witt divisor.
\end{remark}

\begin{remark}
\label{rmk:polyfilteredexplanation}
The relevance of \Cref{vfCWdiv} is that there are many quasi-ideals in $W$ which do not come from $\ZpN$ but whose associated ring stacks do come from $\ZpSyn$.\footnotemark
\footnotetext{The methods of \Cref{ZpSynmainthm} can be used to show that $\ZpNN$ is indeed the full preimage of $\ZpSyn$ in the stack of $W$-algebra stacks.}
Similarly, and more directly relevant to our situation, $n$-polyfiltered Cartier--Witt divisors admit specializations to $(n+1)$-polyfiltered such, and some of these specializations have constant associated monoid stack.
We therefore have no choice but to consider $2$-polyfiltered Cartier--Witt divisors in the proof of \Cref{ZpSynmainthm}.
\end{remark}

\begin{remark}
One can check that $\ZpNN$ is an $\bbN$-indexed chain of copies of $\ZpN$ glued pairwise along $\ZpN \xleftarrow{\jHT} \ZpPrism \xrightarrow{\jdR} \ZpN$.
\end{remark}

\begin{remark}
A variant of $(-)^\NNyg$ in positive characteristic is independently studied in work-in-progress of Yuanning Zhang, who relates its cohomology to de Rham--Witt forms and thus topological restriction homology \cite{yuanningdRW}.
We expect that the cohomology of $(-)^\NNyg$ is similarly related to $\TR^{\rmh S^1}$.
\end{remark}

\begin{lemma}
If $M \to W$ is a polyfiltered Cartier--Witt divisor, then $W/M$ arises from a point of $\ZpSyn$.
\end{lemma}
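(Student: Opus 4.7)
The plan is to construct a natural map $\Phi : \ZpNN \to \ZpSyn$ along which the universal ring stack $W/M_{\text{univ}}$ on $\ZpNN$ agrees with the pullback of $\Ga^\Syn$. Since, by definition of $\ZpNN$, a polyfiltered Cartier--Witt divisor $M \to W$ on $R$ yields a tautological map $\Spec R \to \ZpNN$, its composition with $\Phi$ gives a point of $\ZpSyn$ whose associated ring stack is $W/M$, proving the lemma. The construction of $\Phi$ uses the above description of $\ZpNN$ as a chain of copies of $\ZpN$ glued along $\jHT$ and $\jdR$: since $\ZpSyn$ is itself constructed as a gluing of $\ZpN$ identifying $\jHT$ with $\jdR$, each $\ZpN \subset \ZpNN$ admits the canonical map to $\ZpSyn$, and these maps are compatible on the overlap $\ZpPrism$ loci.

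Alternatively, one may proceed by direct induction on $n$ in the $n$-polyfiltered condition. For the base case $n = 1$, \Cref{vfCWdiv:mapcond} provides a cover on which $M \to W$ is either a filtered Cartier--Witt divisor (a point of $\ZpN \to \ZpSyn$ directly) or of the form $W \times_{F_*W} F_*M'$ for $M'$ an invertible Cartier--Witt divisor, in which case a cofiber computation shows that $M$ fits in a sequence $0 \to \Gasharp \to M \to F_*M' \to 0$ (the $\jdR$-image filtered Cartier--Witt divisor) and so reduces to the former case. For the inductive step at $n \geq 2$, working in \Cref{vfCWdiv:mapcond:twist} gives $M = W \times_{F_*W} F_*M'$, so $W/M \simeq F_*(W/M')$ by taking cofibers; by induction $W/M'$ comes from a point $x : \Spec R \to \ZpSyn$, and one uses the Frobenius-twist operation on $\ZpSyn$ coming from the $\jHT \sim \jdR$ identification to produce a new point with ring stack $F_*(W/M')$.

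The main obstacle is verifying the ring-stack compatibility along $\Phi$, or equivalently formalizing the Frobenius-twist operation on $\ZpSyn$ used in the inductive approach. On the overlap $\ZpPrism$ between adjacent copies of $\ZpN$ in $\ZpNN$, the universal ring stack restricts via $\jHT$ to $W/I$ and via $\jdR$ to $W/\jdR(I) \simeq F_*(W/I)$ (using the identification of \Cref{rmk:deltaringNygPrism}); the compatibility amounts to checking that the gluing isomorphism in $\ZpSyn$ is precisely the Frobenius-induced identification between these two. This is essentially built into the definition of $\ZpSyn$ as the coequalizer of $\jHT, \jdR : \ZpPrism \rightrightarrows \ZpN$, but requires some unwinding of the gluing data to make precise.
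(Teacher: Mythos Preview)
Your inductive approach (the second one) is on the right track and mirrors the paper's argument, but you are making the problem harder than it is. The paper's proof is two lines: it suffices to work locally, and by unwinding the recursive definition one obtains a finite open cover on which $M \to W$ is an iterated Frobenius twist of a filtered Cartier--Witt divisor, whence the conclusion.

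The step you flag as the ``main obstacle''---producing a new point of $\ZpSyn$ via some Frobenius-twist operation---is not needed, and indeed no such global self-map of $\ZpSyn$ is available. The point you are missing is that in case~(\ref{vfCWdiv:mapcond:twist}) the identification $W/M \simeq F_*(W/M')$ is an identification of \emph{ring stacks}, and $F_*$ here is restriction of scalars along the Frobenius on $W$: it changes only the $W$-algebra structure, not the underlying ring stack. Thus $W/M$ and $W/M'$ are isomorphic as ring stacks over $R$, and the \emph{same} point of $\ZpSyn$ that witnesses $W/M'$ already witnesses $W/M$. With this observation the induction closes immediately, and both the global map $\Phi : \ZpNN \to \ZpSyn$ of your first approach and the gluing compatibilities you worry about become unnecessary for the lemma as stated (though the map $\Phi$ does exist and your description of it is correct).
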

\begin{proof}
It suffices to show this locally on the base.
By definition we have a finite cover of the base by open sets along each of which $M \to W$ is a Frobenius twist of a filtered Cartier--Witt divisor, so we conclude.
\end{proof}

\begin{lemma}
\label{fCWpointwise}
A quasi-ideal $M \to W$ over a $p$-nilpotent ring $R$ with $M$ admissible is a filtered Cartier--Witt divisor if and only if it is a polyfiltered Cartier--Witt divisor at every field-valued point of $R$.
\end{lemma}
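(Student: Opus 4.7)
The ``only if'' direction is immediate: condition~(\ref{vfCWdiv:mapcond:filt}) of \Cref{vfCWdiv} is preserved under arbitrary base change, so a filtered Cartier--Witt divisor remains a polyfiltered Cartier--Witt divisor at every field-valued point.

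For the converse, the first step is to upgrade the polyfiltered hypothesis at each field-valued point $\Spec k \to \Spec R$ to the filtered one. Since $M_k$ is admissible ($1$-passable) and the passable sequence is unique, any polyfiltered structure on $M_k$ must be $1$-polyfiltered. As $\Spec k$ is a single point, the cover appearing in \Cref{vfCWdiv}(\ref{vfCWdiv:mapcond}) may be taken to be $\Spec k$ itself, so globally either case~(\ref{vfCWdiv:mapcond:filt}) applies (giving filtered directly), or case~(\ref{vfCWdiv:mapcond:twist}) expresses $M_k$ as the pullback $W \times_{F_* W} F_* M'$ along $F$ for some invertible Cartier--Witt divisor $M' \to W$. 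In the latter case, the induced admissible sequence
\[ 0 \to \Gasharp \to M_k \to F_* M' \to 0 \]
identifies the associated $I_k$ with $M'$, which is already a Cartier--Witt divisor, so $M_k \to W$ is in fact a filtered Cartier--Witt divisor in this case as well.

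Next, by uniqueness of the passable sequence, the global admissible sequence $0 \to L^\sharp \to M \to F_* I \to 0$ pulls back at each field-valued point to the admissible sequence described above. In particular, the globally defined map $I \to W$ restricts at every field-valued point of $R$ to a Cartier--Witt divisor.

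The final ingredient is the pointwise nature of the Cartier--Witt condition: for an invertible $W$-module map $I \to W$, being a Cartier--Witt divisor amounts to requiring that the image of a local generator of $I$ in $\Ga \tensor I^{-1}$ (obtained by composing $I \to W \to \Ga$) be nilpotent over $R$, which in turn is equivalent to its vanishing at every field-valued point of $R$. Combined with the previous step, this shows that $I \to W$ is a Cartier--Witt divisor globally, so $M \to W$ is a filtered Cartier--Witt divisor. The most delicate input is this reduction of the Cartier--Witt condition to pointwise nilpotence; the rest is bookkeeping via uniqueness of the admissible sequence.
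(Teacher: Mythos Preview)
Your argument follows the same route as the paper's one-line proof (which just cites \cite[Remark~5.2.5]{fgauges} and \cite[Proposition~5.1.2]{fgauges}): use uniqueness of the admissible sequence to pass to the invertible piece $I \to W$, then invoke the pointwise nature of the Cartier--Witt condition to globalize. Two points deserve tightening.

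First, the sentence ``any polyfiltered structure on $M_k$ must be $1$-polyfiltered'' does not follow from uniqueness of the passable sequence alone. Uniqueness only tells you that in case~(\ref{vfCWdiv:mapcond:twist}) the module $M'$ agrees with $I_k$ and is therefore invertible; but an invertible $W$-module is $m$-passable for every $m \ge 0$, so nothing yet forces the polyfiltration level down to $1$. What you actually need is the auxiliary claim that over a field an \emph{invertible} polyfiltered Cartier--Witt divisor is already a genuine Cartier--Witt divisor. This is a short induction on $n$: in the recursive case~(b) the new $M''$ is again $I'_k$, hence invertible, and the base case amounts to the fact that a filtered Cartier--Witt divisor with invertible underlying module lies in $\jHT(\ZpPrism)$ (cf.\ \Cref{admissibleWmodinvopen}). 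Once that is in hand, your conclusion in case~(b) that $M' = I_k$ is a Cartier--Witt divisor is justified.

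Second, being a Cartier--Witt divisor is not \emph{only} nilpotence of the image of $I$ in $\Ga$; there is also a unit condition (on $\delta$ of a local generator, or equivalently on the next Witt component). Both conditions can indeed be checked at field-valued points, so your final step is correct, but the description of the ``most delicate input'' is incomplete as written.
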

\begin{proof}
This follows from \cite[Remark 5.2.5]{fgauges} and \cite[Proposition 5.1.2]{fgauges}.
\end{proof}

The following results are key to the proof of \Cref{ZpSynmainthm}, as they allow us to check stratum-by-stratum that a ring structure on a given monoid stack is of the desired form.

\begin{lemma}
\label{vfCWpointwise}
A map of $W$-modules $M \to W$ over a $p$-nilpotent ring $R$ with $M$ passable is a polyfiltered Cartier--Witt divisor if and only if it is at every field-valued point of $R$.
\end{lemma}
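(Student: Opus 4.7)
The ``only if'' direction is immediate. For ``if'', we induct on $n$ with $M$ being $n$-passable; the base case $n = 1$ is \Cref{fCWpointwise}.

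For the inductive step $n \geq 2$, first observe that the passable sequence $0 \to L^\sharp \to M \to F_* M' \to 0$ together with the quasi-ideal $M \to W$ canonically yields a $W$-module map $M' \to W$: the composite $M \to W \to F_* W$ sends $L^\sharp$ into $\ker F = \Gasharp$ and thus vanishes on further composition to $F_* W$, so it factors through $M \to F_* M'$, and the resulting $F_* M' \to F_* W$ corresponds by full faithfulness of $F_*$ to a unique $M' \to W$. At each field-valued point $x \in \Spec R$, the polyfiltered hypothesis on $M_x \to W_x$ forces $M'_x \to W_x$ to be $(n-1)$-polyfiltered: in the filtered-Cartier--Witt case of \Cref{vfCWdiv}, uniqueness of the passable sequence identifies $M'_x$ with a Cartier--Witt divisor (hence $0$-polyfiltered); in the Frobenius-pullback case, the same uniqueness identifies $M'_x$ with the given $(n-1)$-polyfiltered module. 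By the inductive hypothesis, $M' \to W$ is globally $(n-1)$-polyfiltered.

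Next, let $U_b \subseteq \Spec R$ be the locus where the natural comparison map $\phi \colon M \to W \times_{F_* W} F_* M'$ (assembled from $M \to W$ and $M \to F_* M'$) is an isomorphism. Both source and target are weightwise finite locally free $W$-modules by \Cref{wflfextn}, so $U_b$ is open, and on $U_b$ the restriction $M|_{U_b} \to W|_{U_b}$ exhibits the Frobenius-pullback form with the $(n-1)$-polyfiltered $M'|_{U_b} \to W|_{U_b}$. At each point $x$ of the closed complement $Z \defeq \Spec R \setminus U_b$, the Frobenius-pullback case fails at $x$, so the filtered-Cartier--Witt case must hold: $M_x$ is admissible, and $M'_x$ is a Cartier--Witt divisor. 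Since $M$ is admissible if and only if $M'$ is a Cartier--Witt divisor---and this admissibility locus is open, as one verifies stratum-by-stratum using the $(n-1)$-polyfiltered cover of $M' \to W$---there is an open neighborhood $V \supseteq Z$ on which $M|_V$ is admissible. On $V$, the pointwise polyfiltered hypothesis specializes to pointwise filtered-Cartier--Witt, and \Cref{fCWpointwise} gives that $M|_V \to W|_V$ is a filtered Cartier--Witt divisor. Combined with $U_b$, this yields the required finite open cover.

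\emph{Main obstacle:} The delicate step is verifying openness of the Cartier--Witt-divisor locus of $M'$ within its $(n-1)$-polyfiltered stratification. On each open stratum where $M'$ is either a filtered Cartier--Witt divisor or a Frobenius-pullback, one has to check that invertibility of $M'$ together with the distinguished-element condition on $M' \to W$ cut out an open subscheme, which requires carefully tracking how the line-bundle and $F_*$ components of the passable sequence of $M'$ can simultaneously degenerate.
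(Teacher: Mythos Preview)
Your overall architecture matches the paper's: induct on $n$, use \Cref{fCWpointwise} for the base case, extract $M' \to W$ and apply the inductive hypothesis globally, then exhibit two open sets realizing conditions (\ref{vfCWdiv:mapcond:filt}) and (\ref{vfCWdiv:mapcond:twist}). The five-lemma reduction of your $U_b$ to the locus where $L^\sharp \to \Gasharp$ is an isomorphism is exactly what the paper does, and your verification that $U_b$ and $V$ cover is correct.

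However, the ``main obstacle'' you flag --- openness of the locus where $M'$ is invertible --- is handled in the paper much more directly than your stratum-by-stratum sketch. One does not need the polyfiltered open cover of $M' \to W$ at all; only the passable structure of $M'$ is used. Write the chain $M' = M^{(1)}, M^{(2)}, \ldots, M^{(n)}$ of successive passable quotients, with $M^{(n)}$ invertible. On the locus where $M^{(k+1)}$ is invertible, $M^{(k)}$ is locally an extension of $F_* W$ by $\Gasharp$, and \Cref{admissibleWmodinvopen} shows that invertibility of $M^{(k)}$ is the open condition that the extension class lands in $\Gm^\dR \subset \Ga^\dR$. Iterating from the inside out ($n-1$ times) gives the openness directly. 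Your approach via the polyfiltered cover of $M'$ would ultimately unwind to the same computation, but routing through the cover adds a layer of bookkeeping that is unnecessary.

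There is also a small gap in your justification that $U_b$ is open: knowing that source and target of $\phi$ are weightwise finite locally free is not enough, since there are infinitely many weights and an intersection of countably many opens need not be open. The correct argument is that $\phi$ is an isomorphism if and only if the induced map $L^\sharp \to \Gasharp$ is, and since $\intHom_W(L^\sharp, \Gasharp)$ is a line bundle (by \cite[Proposition 5.2.1(1)]{fgauges}, locally $\intEnd_W(\Gasharp) \cong \Ga$), invertibility of this single section is an open condition.
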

\begin{proof}
The ``only if'' direction is obvious.
For the ``if'' direction, we suppose $M$ is $n$-polyfiltered and proceed by induction on $n$.
If $n = 1$ then the statement follows from \cite[Remark 5.2.5]{fgauges} and \cite[Proposition 5.1.2]{fgauges}.

Now suppose we have shown the statement for $(n-1)$-polyfiltered Cartier--Witt divisors.
The map $M \to W$ yields a map of passable sequences
\[\xymatrix{
0 \ar[r] & L^\sharp \ar[d] \ar[r] & M \ar[d] \ar[r] & F_* M' \ar[d] \ar[r] & 0 \\
0 \ar[r] & \Gasharp \ar[r] & W \ar[r] & F_* W \ar[r] & 0
\mpunc,
}\]
where $M' \to W$ is an $(n-1)$-polyfiltered Cartier--Witt divisor by the inductive hypothesis.
We need to produce an open cover of $R$ satisfying the conditions of \Cref{vfCWdiv}(\ref{vfCWdiv:mapcond}).

Satisfying (\ref{vfCWdiv:mapcond:filt}) amounts to the condition that $M'$ is actually invertible rather than simply passable.
This is an open condition by \Cref{admissibleWmodinvopen} (applied $n-1$ times, from the inside out).

Satisfying (\ref{vfCWdiv:mapcond:twist}) amounts to the condition that the left vertical map above is an isomorphism.
As we have $\intEnd_W(\Gasharp) \isom \Ga$ by \cite[Proposition 5.2.1(1)]{fgauges}, this is an open condition as well.

Each point of $\Spec R$ by assumption lies in one of these two open sets, so we conclude.
\end{proof}

\begin{proposition}
\label{passableWmodstrat}
Let $M$ be an affine $W$-module scheme over a $p$-local ring $R$, and take any element $t \in R$.
Suppose that $M$ is weightwise finite locally free and that $M|_{R/t}$ and $M|_{R[\tfrac{1}{t}]}$ are both $n$-passable.
Then so is $M$.
\end{proposition}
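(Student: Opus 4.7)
We proceed by induction on $n$, using that the passable sequence $0 \to L^\sharp \to M \to F_* M' \to 0$ for a passable $W$-module is unique and functorial. Write $R_1 \defeq R/t$ and $R_2 \defeq R[\tfrac{1}{t}]$, and note $R_1 \tensor_R R_2 = 0$, so any two pieces of data over the $R_i$ agree vacuously on the overlap.

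For the inductive step, uniqueness provides canonical sub-$W$-modules $L_i^\sharp \inj M|_{R_i}$, corresponding to Hopf-algebra quotients of the weightwise finite locally free $R_i$-modules $\calO(M|_{R_i})$. These quotients, being weightwise finite locally free themselves and agreeing vacuously on the overlap, can be glued weightwise by adapting the argument of \Cref{Gasharpdualexact}, which uses \Cref{connflatstrat} to detect kernels and cokernels in each weight along our cover. This yields a Hopf-algebra quotient of $\calO(M)$ and thus a sub-$W$-module $L^\sharp \inj M$ over $R$; the same descent argument shows $L$ is a line bundle. Setting $M'' \defeq M/L^\sharp$, the condition that $M'' = F_* M'$ for some $W$-module $M'$ is equivalent to the trivial action of $\Gasharp \subseteq W$ on $M''$, which is a property of maps of weightwise finite locally free schemes that descends along our cover. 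The resulting weightwise finite locally free $M'$ restricts on each $R_i$ to the $(n-1)$-passable module $M_i'$, so the inductive hypothesis applies and yields that $M'$ is $(n-1)$-passable; hence $M$ is $n$-passable.

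For the base case $n = 0$, we need invertibility of $M$ over $R$ given its invertibility over $R_1$ and $R_2$. Since invertibility is fpqc-local, it suffices to construct an fpqc-local isomorphism $\calO(M) \isom \calO(W)$ of graded Hopf algebras over $R$. For weightwise finite locally free $M$, this is again a weight-by-weight descent problem that can be handled by the same technique: one descends trivializations (or, equivalently, sections of the appropriate torsor scheme) using \Cref{connflatstrat} in each weight.

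The main obstacle is that the cover $R \to R_1 \times R_2$ is not flat, which rules out any direct appeal to fpqc descent. We circumvent this by leveraging the weight-by-weight structure of the coordinate rings throughout, where \Cref{connflatstrat} applies to finite locally free data. A subtler point, handled implicitly in the inductive step, is verifying that the sub-$W$-module $L^\sharp \inj M$ obtained by gluing actually has the structural form $L \tensor \Gasharp$; this uses the fpqc-locality of this condition together with the canonical identification of $L^\sharp$ coming from the uniqueness of the passable sequence.
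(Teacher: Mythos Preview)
Your proposal has a genuine gap: the ``gluing'' you describe along $R \to R/t \times R[\tfrac{1}{t}]$ is not available, because this map is neither flat nor a descent cover in any useful topology. \Cref{connflatstrat} lets you \emph{check} that a given map of $R$-modules is flat by checking after base change to $R/t$ and $R[\tfrac{1}{t}]$, but it does not let you \emph{construct} a submodule of $\calO(M)$ over $R$ from submodules over $R/t$ and $R[\tfrac{1}{t}]$; \Cref{Gasharpdualexact} uses \Cref{connflatstrat} only in this verification sense, on a map that is already built. Your base case has the same problem: ``descending trivializations'' of a weightwise free module along this non-cover is not a well-defined operation.

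The paper's proof avoids all gluing by finding the ingredients of the passable sequence \emph{intrinsically over $R$}. For the inductive step, the key observation is that the line bundle $L$ must be the weight-$1$ piece of $\calO(M)$ (this is forced, since $\calO(L^\sharp)$ is a divided-power algebra on a single weight-$1$ generator and $\calO(F_* M')$ lives in weights divisible by $p$). One then writes down the Hopf map $\Sym^* L^\dual \to \calO(M^\dual)$ directly over $R$; the passability hypothesis over $R/t$ and $R[\tfrac{1}{t}]$ is invoked only to verify, via \Cref{Gasharpdualexact} and \Cref{connflatstrat}, that this map is faithfully flat. The quotient $F_* M'$ then falls out, and one recovers the extension using the $\Ext^1$ vanishing of \Cref{WmodExt1}. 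For the base case, the paper localizes so that $t$ lies in the Jacobson radical, lifts generators to get a graded ring isomorphism $\calO(W) \isom \calO(M)$, then lifts the point $1 \in M|_{R/t}$ and uses Nakayama on the resulting $W$-module map $W \to M$.
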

\begin{proof}
First suppose $n = 0$, so the restrictions of $M$ are invertible.
Localizing, we may replace $R$ by $R_{(t)} \times R[\tfrac{1}{t}]$.
On the second factor $M$ is invertible by assumption, so it remains to prove the statement on the first.
Localizing again, we may assume by \cite[Proposition 3.2.3]{apc} that $M|_{R/t}$ is isomorphic to $W$.
Lifting the polynomial generators, we obtain a map of graded rings $\calO(W) \to \calO(M)$, which is an isomorphism by Nakayama's lemma.
Thus $M$ is an infinite-dimensional affine space, so we may pick a lift of the point $1 \in M|_{R/t}$ to $M$; the $W$-module structure then yields a homomorphism $W \to M$.
By construction this is an isomorphism on $R/t$, so Nakayama implies that it is an isomorphism on $R$ itself.

Now assume by induction that we have shown the statement for $(n-1)$-passable modules.
Note that the weight $1$ piece of $\calO(M)$ is a line bundle $L$.
We thus obtain a ring homomorphism $\Sym^* L^\dual \to \calO(M^\dual)$, which in fact must be a Hopf algebra homomorphism.
After restriction to $R/t$ and $R[\tfrac{1}{t}]$ this map is Cartier dual to the inclusion $L^\sharp \inj M$ coming from the passable sequence, so by \Cref{Gasharpdualexact} and \Cref{connflatstrat} it is faithfully flat.
Thus $\calO(M^\dual) \tensor^\rmL_{\Sym^* L^\dual} R$ is flat, hence weightwise finite locally free by \Cref{gradedpolyquotient}, and it is concentrated in weights divisible by $p$, as can be checked modulo $t$.
Now the inductive hypothesis (after dividing weights by $p$) tells us that its Cartier dual is the Frobenius twist of a passable $W$-module $M'$.
Dualizing back and applying \Cref{WmodExt1}, we obtain the desired passable sequence
\[ 0 \to L^\sharp \to M \to F_* M' \to 0 \mpunc. \qedhere \]
\end{proof}

\begin{corollary}
\label{vfCWdivstrat}
Let $M \to W$ be a quasi-ideal scheme over a $p$-nilpotent ring $R$, and take any element $t \in R$.
Suppose that $M$ is weightwise finite locally free and that $M|_{R/t}$ and $M|_{R[\tfrac{1}{t}]}$ are both polyfiltered Cartier--Witt divisors.
Then so is $M$.
\end{corollary}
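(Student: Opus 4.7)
The plan is to combine \Cref{passableWmodstrat}, which lifts passability from the two strata to $M$ itself, with \Cref{vfCWpointwise}, which upgrades a passable $W$-module to a polyfiltered Cartier--Witt divisor once the condition is verified at every field-valued point of the base.

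First I would show that $M$ is passable. Each of the restrictions $M|_{R/t}$ and $M|_{R[\tfrac{1}{t}]}$ is polyfiltered Cartier--Witt by hypothesis, so in particular $n_i$-passable for some $n_i$. After promoting both to a common passability level $n \geq \max(n_1, n_2)$ -- using uniqueness and functoriality of passable sequences together with \Cref{WmodExt1} and \Cref{GasharpExt1} to prepend a trivial top line bundle coming from the standard sequence $0 \to \Gasharp \to W \to F_* W \to 0$ -- \Cref{passableWmodstrat} then yields that $M$ itself is $n$-passable.

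Next I would invoke \Cref{vfCWpointwise}: it suffices to verify that at every field-valued point $R \to k$, the pullback $M_k \to W_k$ is a polyfiltered Cartier--Witt divisor. Any such map sends $t$ either to zero, in which case it factors through $R/t$, or to a unit, in which case it factors through $R[\tfrac{1}{t}]$. In either case $M_k \to W_k$ is the further pullback of one of the two hypothesized polyfiltered Cartier--Witt divisors, and is therefore itself polyfiltered Cartier--Witt.

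The main obstacle is the passability-level matching step: one needs both restrictions to be $n$-passable for a single $n$ to invoke \Cref{passableWmodstrat}, which rests on the auxiliary observation that any $n$-passable module can be promoted to an $(n+1)$-passable one. Beyond this, the proof is an essentially formal combination of the two preceding tools.
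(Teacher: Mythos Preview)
Your proof is correct and follows exactly the paper's approach: the paper's proof is the single line ``This follows immediately from \Cref{passableWmodstrat} and \Cref{vfCWpointwise}.'' Your explicit treatment of the level-matching is a detail the paper glosses over; note however that the citations to \Cref{WmodExt1} and \Cref{GasharpExt1} are not really what is needed there---the cleaner argument is that $n$-passable implies $(n+1)$-passable by induction, the base case being that an invertible $W$-module is locally $W$ and hence $1$-passable via the sequence $0 \to \Gasharp \to W \to F_* W \to 0$, together with fpqc-locality of passability.
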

\begin{proof}
This follows immediately from \Cref{passableWmodstrat} and \Cref{vfCWpointwise}.
\end{proof}

\subsection{Covering \texorpdfstring{$\Mm^\Nyg$}{M\_m{\textasciicircum}N}}
\label{sec:coveringMmN}

We collect here some results on the map $\Mm[\ZpN] \to \Mm^\Nyg$ which are needed for \Cref{ZpSynmainthm}.

\begin{proposition}
\label{MmMmNflatness}
The map $\Mm[\ZpN] \to \Mm^\Nyg$ is faithfully flat.
\end{proposition}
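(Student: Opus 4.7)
My plan is to reduce the claim to a concrete statement about a map of affine schemes attached to a single filtered Cartier--Witt divisor. Working fpqc-locally on $\ZpN$, I fix an arbitrary chart $\Spec R \to \ZpN$ classifying a filtered Cartier--Witt divisor $M \to W$ over $R$, so that the pulled-back map becomes the Teichmüller map $[\cdot] \colon \Mm_R \to W/M$ of stacks over $R$. Since $M \to W$ is a quasi-ideal, the natural map $W \to W/M$ is a faithfully flat $M$-torsor; by descent along it, the desired faithful flatness is equivalent to that of the base-changed addition map
\[
  \Mm_R \times_{W/M} W_R \isom \Mm_R \times_R M \longrightarrow W_R,\qquad (m, \mu) \longmapsto [m] + \mu,
\]
which is now a map of affine $R$-schemes.

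I would then prove this latter statement fpqc-locally on $R$. By admissibility of $M$, after a cover we may fit $M$ in a short exact sequence $0 \to L^\sharp \to M \to F_*I \to 0$ with $L^\sharp \isom \Gasharp$ trivialized, with $I$ trivialized by a distinguished element $d \in W(R)$, and with the extension split (using the vanishing results \Cref{WmodExt1} and \Cref{GasharpExt1} after a further localization). Combined with the Witt coordinate decomposition $W \isom \Mm \times V(F_*W)$ as $R$-schemes, this exhibits the addition map as the identity on the Teichmüller factor composed with a concrete map $\Gasharp \oplus F_*W \to V(F_*W)$ built from the canonical inclusion $\Gasharp \inj W$ and from $V \circ (\cdot d) \colon F_*W \to V(F_*W)$.

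The hard part will be the final verification: showing that this map $\Gasharp \oplus F_*W \to V(F_*W)$ is faithfully flat. This amounts to an explicit computation on coordinate rings, where one must handle the divided-power generators of $\calO(\Gasharp)$ alongside the Witt polynomial generators of $\calO(V(F_*W))$, in a manner that interacts well with multiplication by $d$ modulo nilpotents. A plausibly cleaner alternative is to pass to a perfectoid-style chart of $\ZpN$ on which $M$ and $W$ admit simpler descriptions and the faithful flatness can be verified more directly.
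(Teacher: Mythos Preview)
Your initial reduction is sound: pulling back along the faithfully flat $M$-torsor $W \to W/M$ does reduce the claim to faithful flatness of the map of affine $R$-schemes $\Mm_R \times_R M \to W_R$, $(m,\mu) \mapsto [m] + d(\mu)$.

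The subsequent analysis, however, has a genuine gap. The admissible sequence $0 \to \Gasharp \to M \to F_*W \to 0$ is classified by an element of $\intExt^1_W(F_*W, \Gasharp) \isom \Ga^\dR$ (this is the identification used in \Cref{admissibleWmodinvopen}), and this group does \emph{not} vanish fpqc-locally. The lemmas you cite compute the unrelated groups $\intExt^1_W(F_*W, W)$ and $\intExt^1_W(\Gasharp, W)$. In fact, over the open locus $\jHT(\ZpPrism) \subset \ZpN$ the extension class lies in $\Gm^\dR$, so no cover can split it. Moreover, even granting a scheme-level splitting of $M$, the map is not ``the identity on the Teichmüller factor'': the first Witt coordinate of $[m] + d(\mu)$ is $m + d(\mu)_0$, and $d(\mu)_0$ is generically nonzero (already the component $L^\sharp \to \Gasharp \inj W$ contributes in Witt weight $1$). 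So the promised reduction to a map $\Gasharp \oplus F_*W \to V(F_*W)$ is not correctly formulated.

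Your closing suggestion is essentially what the paper does. One passes to the explicit chart $\QdRNyg \times_{\BGm} \Spf \Zp$ and then invokes the stratification principle of \Cref{stackflatstrat}: since $\Mm^\Nyg = W/M$ admits the flat affine cover $W$, faithful flatness of a map from a flat affine scheme can be checked after base change to $R/t$ and $R[\tfrac{1}{t}]$ for any $t$. Cutting successively by $p$, by $q^{1/p}-1$, and by the Rees parameter reduces to the three strata $\jHT(\Fp^\dR)$, $\jdR(\Fp^\dR)$, and $\Fp^\Hodge$, on each of which the Teichmüller map is visibly faithfully flat. No direct computation with the addition map on $W$ is ever needed.
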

\begin{proof}
We may check this after pullback to $\QdRNyg \times_{\BGm} \Spf \Zp$.
As $\Mm^\Nyg$ can be written as a quotient $W/M$ of flat group schemes, we see by \Cref{stackflatstrat} that we can check the statement modulo $(p, q^{1/p} - 1)$, i.e.\ after base change to $\Fp^\Nyg \tensor \Fp$.
Again by \Cref{stackflatstrat} we reduce to checking separately on $\jHT(\Fp^\dR)$, $\jdR(\Fp^\dR)$, and $\Fp^\Hodge$, where the statement is clear from the definitions.
\end{proof}

\begin{corollary}
\label{MmMmNkerfiniteness}
The $\Mm$-equivariant $\ZpN$-scheme $\Mm[\ZpN] \times_{\Mm^\Nyg} \ZpN$ is weightwise finite locally free.
\end{corollary}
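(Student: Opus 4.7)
The plan is to first identify the pullback explicitly. Since the map $\Mm[\ZpN] \to \Mm^\Nyg$ factors as $\Mm[\ZpN] \inj W_{\ZpN} \to W_{\ZpN}/M = \Mm^\Nyg$ via the Teichmüller embedding (where $M \to W$ denotes the universal filtered Cartier--Witt divisor on $\ZpN$), we can rewrite
\[ Y \defeq \Mm[\ZpN] \times_{\Mm^\Nyg} \ZpN \isom \Mm[\ZpN] \times_{W_{\ZpN}} M \]
as $\Mm$-equivariant schemes over $\ZpN$, with $\Mm$ acting by scalars on the first factor.

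By \Cref{MmMmNflatness}, $\Mm[\ZpN] \to \Mm^\Nyg$ is faithfully flat, and so the base change $Y \to \ZpN$ is faithfully flat as well. The coordinate sheaf $\calO(Y)$ is thus a faithfully flat graded $\calO_{\ZpN}$-algebra, and each weight piece $\calO(Y)_n$ is flat (as a direct summand). For finite presentation, I would invoke the remark preceding \Cref{gradedpolyquotient}: $\calO(W)$ is a graded polynomial ring with a single generator in each $p$-power weight, and the Teichmüller closed embedding $\Mm \inj W$ is cut out by the generators of weight greater than $1$. Thus $\calO(Y)$ is identified with the quotient of $\calO(M)$ by the images of these higher-weight generators. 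Since $M$ is admissible, applying \Cref{wflfextn} to the defining sequence $0 \to L^\sharp \to M \to F_* I \to 0$ shows $\calO(M)$ is weightwise finite locally free, as both $\calO(L^\sharp)$ (a divided power algebra on a line bundle, rank one in each weight) and $\calO(F_* I)$ (locally a $p$-fold weight-reindexing of $\calO(W)$, since $I$ is invertible) are weightwise finite locally free. In each fixed weight $n$, only the finitely many generators of weight $\leq n$ contribute to relations, so $\calO(Y)_n$ is the cokernel of a map between finite locally free $\calO_{\ZpN}$-modules and is therefore finitely presented.

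Combining these two conclusions, each $\calO(Y)_n$ is flat and finitely presented over $\calO_{\ZpN}$, hence finite locally free, as desired. The main subtlety lies in verifying the identification of $\calO(Y)$ with the underived quotient of $\calO(M)$ by the higher-weight polynomial generators, as well as propagating admissibility of $M$ through \Cref{wflfextn} to weightwise finite local freeness of $\calO(M)$; both of these are essentially immediate from the structural results of \Cref{sec:Wmod} once the Teichmüller factorization is in hand.
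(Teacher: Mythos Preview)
Your proof is correct and follows essentially the same route as the paper's. Both arguments identify the fiber in question with $\Mm \times_W M$, invoke \Cref{wflfextn} to see that $\calO(M)$ is weightwise finite locally free, and use \Cref{MmMmNflatness} for flatness; the only difference is that where the paper cites \Cref{gradedpolyquotient} to get weightwise perfectness of the derived pullback (and then combines perfect with flat), you unpack that lemma by hand, observing directly that each weight piece of $\calO(M)/(x_1, x_2, \ldots)$ is a cokernel of a map between finite locally free modules, hence finitely presented. The ``subtlety'' you flag about the underived identification is resolved by the flatness from \Cref{MmMmNflatness}: since $\Mm \to \Mm^\Nyg$ is flat and $W \to \Mm^\Nyg$ is an $M$-torsor, the derived fiber product $\calO(\Mm) \tensor^\rmL_{\calO(W)} \calO(M)$ is flat over the base, hence discrete, so it agrees with the underived one.
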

\begin{proof}
First note that $W_{\ZpN} \times_{\Mm^\Nyg} \ZpN$ is an admissible $W$-module, hence weightwise finite locally free by \Cref{wflfextn}.
Thus by \Cref{gradedpolyquotient} we find that the derived pullback $\Mm[\ZpN] \times^\rmL_{\Mm^\Nyg} \ZpN$ is weightwise perfect.
But \Cref{MmMmNflatness} implies that this pullback is flat, so it must be weightwise finite locally free.
\end{proof}

\begin{remark}
\label{rmk:MmMmNflatdescendable}
Note that the map $\Mmperf[\ZpN] \to \Mm^\Nyg$ is countably-presented: as $\Mmperf \to W$ is, it suffices to see that $W_{\ZpN} \to \Mm^\Nyg$ is, which follows from the fact that the coordinate ring of an admissible $W$-module is.
Thus by \Cref{MmMmNflatness} and \cite[Corollary 3.33]{descendability} it is flat descendable.
\end{remark}

We used the following \namecref{stackflatstrat} above.

\begin{lemma}
\label{stackflatstrat}
Let $R$ be a ring with an element $t \in \pi_0 R$, and let $X$ be an $R$-stack which admits a faithfully flat cover by a flat affine $R$-scheme $Y$.
Then a map $Z \to X$ from a flat affine $R$-scheme is flat, resp.\ faithfully flat, if and only if it is after base change to $R/t \times R[\tfrac{1}{t}]$.
\end{lemma}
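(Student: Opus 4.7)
The only if direction is immediate. For the reverse, the plan is to pull back along $Y \to X$ and apply \Cref{connflatstrat} to the resulting map of affine schemes.

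First, since $Y \to X$ is a faithfully flat (representable) cover by an affine $R$-scheme and $Z$ is itself affine, the pullback $Z' \defeq Z \times_X Y$ is an affine $R$-scheme; moreover it is $R$-flat, since $Z$ is $R$-flat and $Y \to X$ is flat. By fpqc descent of flatness (resp.\ faithful flatness) on the target, $Z \to X$ has the property in question if and only if $Z' \to Y$ does. (For faithful flatness one additionally uses that surjectivity descends: if $Z' \to Y \to X$ is surjective, then so is $Z \to X$, as the first factor $Z' \to Z$ is the base change of the surjection $Y \to X$ along $Z \to X$.)

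Setting $B \defeq \calO(Y)$ and $C \defeq \calO(Z')$, the question becomes whether $C$ is flat (resp.\ faithfully flat) over $B$. I would now apply \Cref{connflatstrat} to the $B$-module $C$ using the image of $t$ in $\pi_0 B$: this reduces the question to checking the corresponding property of $C \tensor_B B/tB$ over $B/tB$ and of $C \tensor_B B[\tfrac{1}{t}]$ over $B[\tfrac{1}{t}]$. Since $B$ is $R$-flat, these agree with the base changes of $C$ along $R \to R/t$ and $R \to R[\tfrac{1}{t}]$, and so compute the maps $Z'|_{R/t} \to Y|_{R/t}$ and $Z'|_{R[1/t]} \to Y|_{R[1/t]}$. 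These are in turn pullbacks along $Y|_{R/t} \to X|_{R/t}$ and $Y|_{R[1/t]} \to X|_{R[1/t]}$ of the maps $Z|_{R/t} \to X|_{R/t}$ and $Z|_{R[1/t]} \to X|_{R[1/t]}$, which are flat (resp.\ faithfully flat) by hypothesis; both properties are preserved under arbitrary base change, giving the result.

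I expect no genuine obstacle here: the argument is bookkeeping that combines fpqc descent of flatness on the target with the purely module-theoretic content of \Cref{connflatstrat}. The one verification needing some care is that $Z \times_X Y$ is affine and $R$-flat, which is formal given representability and flatness of the cover $Y \to X$ and of $Z$, $Y$ over $R$.
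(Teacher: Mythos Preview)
Your proof is correct and follows exactly the same approach as the paper: pull back along the cover $Y \to X$, use fpqc descent to reduce to checking (faithful) flatness of $Z \times_X Y \to Y$, then invoke \Cref{connflatstrat} using flatness of $Y$ over $R$. You spell out a few more details (affineness and $R$-flatness of the pullback, the identification of the relevant base changes) than the paper does, but the argument is the same.
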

\begin{proof}
The ``only if'' direction is clear, so we prove the ``if'' direction.
Flatness of $Y \to X$ implies that the pullback $Y \times_X Z$ is flat over $Z$.
Since $Y \to Z$ is faithfully flat, flatness, resp.\ faithful flatness, of $Z \to X$ is equivalent to that of $Y \times_X Z \to Y$.
By \Cref{connflatstrat} and flatness of $S$ this can be checked after base change to $R/t \times R[\tfrac{1}{t}]$, where it holds by assumption.
\end{proof}

\subsection{From monoids to rings}
\label{sec:monoidstorings}

The results of this section will be very useful for bootstrapping up from monoid structures to ring structures.
The first three\todo*[double-check in case order changed] results below are well-known, but we record them here for convenience.
\Cref{Mmringunique,Mmperfringunique} show that the ring structures on $\Mm$ and $\Mmperf$ are unique, which is the first step of the bootstrap, while \Cref{WbigperftoMmperf} lets us pass from $\Mmperf$ to $\Wbigperf$.

\begin{lemma}
\label{ppowerhigherhomotopy}
For an animated ring $R$, the $p$-power map on $\Mm(R)$ induces a map divisible by $p$ on higher homotopy groups at each point.
\end{lemma}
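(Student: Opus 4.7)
The plan is to compute the induced map on $\pi_n$ directly via the product rule for $\Einfty$-monoids, and observe that by commutativity all $p$ terms coming from this rule coincide. First I would factor the $p$-power map $P : \Mm(R) \to \Mm(R)$ as the composition
\[ \Mm(R) \xrightarrow{\Delta_p} \Mm(R)^p \xrightarrow{m_p} \Mm(R) \]
of the iterated diagonal with the iterated multiplication.

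The key input is the product rule for an $\Einfty$-monoid $M$: for the multiplication $m : M \times M \to M$ and a basepoint $(x_1, x_2)$, the induced map on $\pi_n$ for $n \geq 1$ is
\[ \pi_n(M, x_1) \oplus \pi_n(M, x_2) \to \pi_n(M, x_1 x_2), \quad (\epsilon_1, \epsilon_2) \mapsto (\rho_{x_2})_*(\epsilon_1) + (\lambda_{x_1})_*(\epsilon_2), \]
where $\lambda_{x_1}, \rho_{x_2}$ denote left and right multiplication. This is the standard additivity of $\pi_n$ on a product (valid since $n \geq 1$) combined with the observation that $m$ restricted to the axes $M \times \{x_2\}$ and $\{x_1\} \times M$ recovers $\rho_{x_2}$ and $\lambda_{x_1}$ respectively.

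Iterating, at the diagonal basepoint $(x, \ldots, x) \in \Mm(R)^p$, the map $m_p$ induces on $\pi_n$ the assignment $(\epsilon_1, \ldots, \epsilon_p) \mapsto \sum_{i=1}^p (\lambda_{x^{i-1}} \rho_{x^{p-i}})_*(\epsilon_i)$. By commutativity of $\Mm$, each composite $\lambda_{x^{i-1}} \rho_{x^{p-i}}$ equals multiplication $\mu_{x^{p-1}}$ by $x^{p-1}$, so all $p$ summands agree. Precomposing with the diagonal, the $p$-power map induces on $\pi_n$ at $x$ the map $\epsilon \mapsto p \cdot (\mu_{x^{p-1}})_*(\epsilon)$, manifestly divisible by $p$.

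The main care needed is in formulating the product rule cleanly in the $\infty$-categorical setting and in iterating it; however, this reduces to standard additivity and decomposition properties of $\pi_n$ on products of anima, and presents no essential difficulty.
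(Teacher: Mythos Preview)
Your proof is correct and follows essentially the same approach as the paper: both factor the $p$-power map as the diagonal followed by the $p$-fold multiplication, then use commutativity to see that the $p$ summands in the resulting map on $\pi_n$ coincide, yielding $p$ times a single term. The only difference is cosmetic: you explicitly identify each summand as $(\mu_{x^{p-1}})_*(\epsilon)$ via the product rule, whereas the paper phrases this as the map $\pi_i(R,t)^{\oplus p} \to \pi_i(R,t^p)$ being symmetric, so that its value on $(x,\dots,x)$ agrees with that on $(px,0,\dots,0)$.
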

\begin{proof}
The $p$-power map on higher homotopy at $t \in R$ factors as
\[ \pi_i(R/p, t) \to \pi_i(R/p, t)^{\oplus p} \to \pi_i(R, t^p) \mpunc, \]
where the first map is the diagonal and the second is induced by the product.
But the second map is symmetric, so its value on $(x, \dots, x)$ agrees with that on $(p x, 0, \dots, 0)$.
\end{proof}

\begin{corollary}
\label{perfstatic}
If $R$ is an animated $\Fp$-algebra, then the map $R^\perf \to \pi_0(R)^\perf$ is an equivalence.
\end{corollary}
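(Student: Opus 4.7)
The plan is to reduce the statement to the fact that Frobenius acts by zero on the higher homotopy of $R$, after which the Milnor $\lim^1$ sequence gives the result immediately.

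First I would observe that, for an animated $\Fp$-algebra, the $p$-power map on the underlying anima of $R$ agrees with the ring-theoretic Frobenius $F \colon R \to R$. Since each $\pi_i(R)$ is a module over $\pi_0(R)$ and $p = 0$ in $\pi_0(R)$, these homotopy groups are all $p$-torsion. Combined with \Cref{ppowerhigherhomotopy}, which exhibits the $p$-power map on $\pi_i(\Mm(R), t)$ as divisible by $p$, this forces $F$ to act as the zero map on $\pi_i(R)$ for every $i \geq 1$ and every basepoint (in particular on the abelian group $\pi_i(R) = \pi_i(R, 0)$).

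Next I would apply the Milnor $\lim^1$ sequence to $R^\perf = \lim(R \xleftarrow{F} R \xleftarrow{F} \cdots)$. For each $i \geq 1$, the tower $(\pi_i(R), F)$ has zero transition maps, so both its $\lim$ and $\lim^1$ vanish. Hence $\pi_i(R^\perf) = 0$ for $i \geq 1$, while $\pi_0(R^\perf) = \lim_F \pi_0(R)$. The same Milnor argument applied to the tower of static rings $(\pi_0(R), F)$ shows that $\pi_0(R)^\perf$, computed as an inverse limit in animated rings, is itself discrete and agrees with the set-theoretic inverse limit. The Frobenius-equivariant truncation map $R \to \pi_0(R)$ then induces the desired equivalence $R^\perf \to \pi_0(R)^\perf$.

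I do not foresee any real obstacle here: the content lies entirely in the observation that $\Fp$-linearity collapses the ``divisible by $p$'' conclusion of \Cref{ppowerhigherhomotopy} to outright vanishing, and the rest is a routine Milnor-sequence computation that need not even be written out in full.
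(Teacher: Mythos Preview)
Your proposal is correct and follows essentially the same approach as the paper: use \Cref{ppowerhigherhomotopy} together with $\Fp$-linearity to see that Frobenius vanishes on higher homotopy, then conclude that the higher homotopy of the inverse limit vanishes. The paper compresses this into a single sentence (``so in the limit they vanish''), whereas you spell out the Milnor $\lim^1$ argument explicitly, but the content is the same.
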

\begin{proof}
By \Cref{ppowerhigherhomotopy} the Frobenius on $R$ induces the zero map on higher homotopy groups, so in the limit they vanish.
\end{proof}

\begin{lemma}
\label{Mmperfvsflat}
For a $p$-complete animated ring $R$, the natural map
\[ \Mm(R)^\perf \to \Mm(R/p)^\perf = \Mm(R^\flat) \]
is an equivalence.
In particular, the source is static.
\end{lemma}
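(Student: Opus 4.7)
My plan is to analyze the map on homotopy groups, deducing both the equivalence and the staticness of the source simultaneously. The equality $\Mm(R/p)^\perf = \Mm(R^\flat)$ is essentially tautological: since $R/p$ is an animated $\Fp$-algebra, the $p$-power map on the underlying anima $\Mm(R/p) = R/p$ agrees with the Frobenius, and both $\Mm(R/p)^\perf$ and $\Mm(R^\flat) = R^\flat$ compute $(R/p)^\perf$ by definition. By \Cref{perfstatic} this common target is static, so the ``in particular'' clause will follow once the main equivalence is established.

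For the main equivalence, I would work level-by-level on homotopy groups. At any compatible system $(x_n) \in \Mm(R)^\perf$ with $x_{n+1}^p = x_n$, the Milnor exact sequence for the defining inverse limit expresses $\pi_i(\Mm(R)^\perf, (x_n))$ for $i \geq 1$ as an extension of $\lim_n \pi_i(R, x_n)$ by $\lim\nolimits^1_n \pi_{i+1}(R, x_n)$, with transitions induced by the $p$-power map on $\Mm$. By \Cref{ppowerhigherhomotopy} each transition is divisible by $p$, so the $m$-fold composite is divisible by $p^m$. Combined with the derived $p$-completeness of each $\pi_j(R)$ (inherited from $p$-completeness of $R$), this forces both $\lim$ and $\lim\nolimits^1$ to vanish in positive degrees, so the source is static.

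It remains to check that the induced map on $\pi_0$ is an isomorphism. This amounts to $\lim_n \pi_0(R) \bij \lim_n \pi_0(R)/p$ under the $p$-power map, which is the classical tilting isomorphism for the $p$-complete ring $\pi_0(R)$: surjectivity, because the iterated $p$-th powers $y_n \defeq \lim_m \tilde{x}_{n+m}^{p^m}$ of arbitrary lifts of a Frobenius-compatible sequence $(\bar{x}_n)$ modulo $p$ converge $p$-adically in $R$ and define a coherent lift; and injectivity, because any two lifts of a single reduction differ by an element of $pR$, which is pushed into $p^{m+1}R$ after $m$ further $p$-powerings and so vanishes by $p$-completeness. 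The main obstacle I anticipate is the $\lim\nolimits^1$ vanishing in higher degrees: although \Cref{ppowerhigherhomotopy} supplies the required pointwise divisibility, extracting a clean $\lim\nolimits^1$-vanishing statement requires some care with the precise form of derived $p$-completeness on $\pi_j(R)$, likely handled by noting that derived $p$-complete abelian groups are $p$-adically separated and that successive transitions land in arbitrarily deep powers of $p$.
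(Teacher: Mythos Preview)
Your approach differs substantially from the paper's. The paper does not compute homotopy groups directly; instead it observes that the underlying anima of $\Mm(R)^\perf$ is corepresented by the perfect $\delta$-ring $\bbZ[t^{1/p^\infty}]$ (more generally, it suffices to show $\Map(\bbZ[M],R)\to\Map(\bbZ[M],R/p)$ is an equivalence for every perfect monoid $M$), and then invokes \Cref{perfdeltamapout}, which is a deformation-theoretic statement: for any perfect $\delta$-ring $A$ and $p$-complete animated $R$, the map $\Map(A,R)\to\Map(A,R/p)$ is an equivalence. This sidesteps all Milnor-sequence bookkeeping.

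Your strategy is reasonable in outline, but the step you yourself flag as the anticipated obstacle actually fails: derived $p$-complete abelian groups are \emph{not} $p$-adically separated in general. For example, $L_0\bigl(\bigoplus_{n\ge 1}\bbZ/p^n\bigr)$ is derived $p$-complete, yet the class of $(p,p^2,p^3,\dots)$ in it is nonzero and lies in $\bigcap_k p^k$. So knowing that each $a_n$ lies in $\bigcap_m p^m\pi_i R$ does not by itself force $a_n=0$. The same issue recurs in your $\pi_0$ argument: both the convergence of $\lim_m \tilde{x}_{n+m}^{p^m}$ and the injectivity step use \emph{classical} $p$-completeness of $\pi_0 R$, whereas the hypothesis (per the paper's conventions) only gives derived $p$-completeness, and $\pi_0 R$ can be derived $p$-complete without being classically complete.

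The higher-homotopy part can be repaired without separatedness: since each transition map on $\pi_i R$ is divisible by $p$, the tower becomes pro-zero after $-\otimes^{\rmL}\bbZ/p$; as $\Rlim$ commutes with $-\otimes^{\rmL}\bbZ/p$ and the $\Rlim$ of derived $p$-complete objects is derived $p$-complete, one concludes $\Rlim=0$ directly. The $\pi_0$ step, however, is a nonlinear inverse limit and does not reduce to $\Hom(\bbZ[1/p],-)$; it genuinely needs a different argument in the merely-derived-complete setting, which is precisely what the paper's deformation-theoretic route supplies.
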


Note that if $R$ is static then this is essentially \cite[Lemma 3.4(i)]{perfectoid}.

\begin{proof}
It suffices to see that for any perfect monoid $M$ the natural map
\[ \Map(\bbZ[M], R) \to \Map(\bbZ[M], R/p) \]
is an equivalence.
But this follows from \Cref{perfdeltamapout}.
\end{proof}

\begin{lemma}
\label{Mmringunique}
Over any base ring, the scheme $\bbA^1$ admits a unique group structure equivariant under the action of $\Gm$.
The monoid $\Mm$ therefore admits a unique ring structure.
\end{lemma}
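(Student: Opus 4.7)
The plan is to identify a $\Gm$-equivariant group structure on $\bbA^1 = \Spec R[t]$ with a graded Hopf algebra structure on $R[t]$ (with $t$ placed in weight $1$), and then to observe that the grading leaves essentially no freedom in choosing this structure.

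First I would handle the counit: a graded ring map $\varepsilon : R[t] \to R$ must send $t$ (weight $1$) into the weight-$1$ part of $R$, which vanishes, so $\varepsilon(t) = 0$. This pins the identity element of the group to the origin of $\bbA^1$. Next I would analyze the comultiplication: a graded ring map $\Delta : R[t] \to R[t] \otimes R[t]$ must send $t$ into the weight-$1$ piece of the codomain, which is $R \cdot (t \otimes 1) \oplus R \cdot (1 \otimes t)$. Writing $\Delta(t) = a\,(t \otimes 1) + b\,(1 \otimes t)$ for scalars $a, b \in R$, the counit axioms $(\varepsilon \otimes \operatorname{id})\Delta(t) = t = (\operatorname{id} \otimes \varepsilon)\Delta(t)$ force $a = b = 1$. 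Hence $\Delta$ agrees with the standard additive coproduct, and the remaining Hopf-algebra data (associativity, cocommutativity, antipode) is then automatically the standard one, giving both existence and uniqueness.

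The assertion about $\Mm$ then follows readily: a ring structure on $\Mm$ refining its given multiplicative monoid structure amounts to an abelian group structure on $\bbA^1$ such that multiplication by every section of $\Mm$ is additive. Restricting distributivity to sections in $\Gm \subset \Mm$ gives exactly $\Gm$-equivariance of the addition, so the first part applies and uniqueness follows.

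I do not expect any serious obstacle here; the whole argument is a brief computation in graded Hopf algebras. The one point to keep in mind is simply that $\Gm$-equivariance forces the identity element of the group to be a $\Gm$-fixed point, hence to be $0$, and it is this observation that lets the counit axiom immediately pin down $\Delta$.
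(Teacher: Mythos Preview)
Your proposal is correct and follows the same approach as the paper: translate $\Gm$-equivariance into a grading on the coordinate ring $R[t]$ and observe that the Hopf algebra axioms (specifically the counit axiom applied in weight $1$) leave no freedom in the coproduct. The paper's proof is a two-sentence sketch of exactly this; you have simply unpacked the computation.
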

\begin{proof}
The coproduct on the graded coordinate ring of $\bbA^1$ is determined by what it does in weight $1$.
As the weight $0$ and $1$ pieces are free of rank $1$, the map is uniquely determined by the axioms of a Hopf algebra.
\end{proof}

\begin{lemma}
\label{Mmperfringunique}
Over a $p$-complete ring $R$, the formal monoid scheme $\Mmperf$ admits a unique ring structure.
\end{lemma}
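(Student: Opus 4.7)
The approach is to use \Cref{Mmperfvsflat} to reduce the question to an analysis of natural operations on the multiplicative monoid functor $\Mm$ over perfect $\Fp$-algebras, and then to pin down the addition using the ring axioms.

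First, by \Cref{Mmperfvsflat}, the formal monoid scheme $\Mmperf$ factors as $\Mm \circ (-)^\flat$ as a functor on $p$-complete $R$-algebras. The tilt $(-)^\flat$ is right adjoint to the natural inclusion $i$ of perfect $\Fp$-algebras into $p$-complete rings, and its restriction along this inclusion is the identity. Standard adjunction nonsense then yields a natural bijection between natural $n$-ary operations on $\Mmperf$ (as a functor on $p$-complete $R$-algebras) and natural $n$-ary operations on $\Mm$ viewed as a functor on perfect $\Fp$-algebras. Since the ring axioms are themselves natural, this restricts to a bijection between ring structures on $\Mmperf$ extending its given multiplication and ring structures on $\Mm|_{\text{perf. }\Fp\text{-alg}}$ extending its given multiplication.

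Second, since $\Mm$ on perfect $\Fp$-algebras is corepresented by $\Fp[t^{1/p^\infty}]$, natural binary operations correspond by Yoneda to elements $f(t_1, t_2) \in \Fp[t_1^{1/p^\infty}, t_2^{1/p^\infty}]$. The distributivity constraint $r \cdot f(t_1, t_2) = f(r t_1, r t_2)$, compared as polynomials in an auxiliary variable $r$, forces every nonzero monomial of $f$ to have total exponent $1$, so $f = \sum c_{\alpha, \beta}\, t_1^\alpha t_2^\beta$ with $\alpha + \beta = 1$ and $\alpha, \beta \in \bbZ[1/p]_{\geq 0}$. The counit and cocommutativity axioms then pin down $c_{1, 0} = c_{0, 1} = 1$, $c_{\alpha, 0} = c_{0, \beta} = 0$ for other $\alpha, \beta$, and $c_{\alpha, \beta} = c_{\beta, \alpha}$, leaving only the ``interior'' coefficients $c_{\alpha, \beta}$ with $\alpha, \beta > 0$ to be determined.

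Finally, the coassociativity axiom $f(f(t_1, t_2), t_3) = f(t_1, f(t_2, t_3))$, expanded using the identity $(u + v)^{1/p^n} = u^{1/p^n} + v^{1/p^n}$ valid in perfect $\Fp$-algebras, produces explicit monomials in $t_1, t_2, t_3$ on each side whose comparison forces the remaining interior coefficients to vanish, leaving $f(t_1, t_2) = t_1 + t_2$. The main obstacle is organizing this associativity computation cleanly; the cleanest approach likely proceeds by induction on the smallest $p$-adic denominator appearing in a nonzero interior exponent, using the asymmetry in the iterated expansion to produce a monomial whose coefficient isolates the interior coefficient at that denominator and forces it to vanish.
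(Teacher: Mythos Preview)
Your endgame (the associativity computation pinning down the group law) is exactly the paper's \Cref{Mmperfringuniqueperf}, so the final step is fine. The gap is in your reduction step.

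The adjunction you invoke is the absolute one: the inclusion of perfect $\Fp$-algebras into $p$-complete rings is left adjoint to $(-)^\flat$, and is fully faithful. But the statement concerns $\Mmperf$ as a formal scheme over a \emph{fixed} $p$-complete base $R$, so the operations you must classify are natural transformations of functors on $p$-complete $R$-algebras. The absolute adjunction does not slice cleanly over $R$: the induced left adjoint sends a perfect $R^\flat$-algebra $T$ to $(R/p)\otimes_{R^\flat} T$, and one has $\bigl((R/p)\otimes_{R^\flat} T\bigr)^\perf \neq T$ in general. Concretely, by Yoneda the binary operations on $\Mmperf$ over $R$ are elements of $\bigl((R/p)[t_1^{1/p^\infty},t_2^{1/p^\infty}]\bigr)^\perf$, and this is strictly larger than $R^\flat[t_1^{1/p^\infty},t_2^{1/p^\infty}]$ whenever $R/p$ contains a nonzero nilpotent with a compatible system of $p$-power roots (e.g.\ $R/p=\Fp[x^{1/p^\infty}]/(x^p)$: the sequence $z_n=\sum_{k=0}^n x^{1/p^{n-k}} t^{\alpha_k/p^n}$ defines such an extra element). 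In particular your Yoneda identification with $\Fp[t_1^{1/p^\infty},t_2^{1/p^\infty}]$ is not valid over a general base, and \Cref{Mmperfringuniqueperf} itself genuinely uses perfectness of the base to know the comultiplication is determined by the image of $t$ alone.

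This is precisely what the paper's extra steps are for. It first passes to characteristic $p$, then uses Andr\'e's lemma together with \Cref{charpMm} to show that any ring structure $\calR$ on $\Mmperf$ is automatically an $\Fp$-algebra scheme; this gives that the $p$-power map on the monoid is a \emph{ring} endomorphism, and comparing it with the absolute Frobenius forces the addition law to be invariant under Frobenius on the base, hence defined over the perfect subring $R^{\phi=1}$. Only then does the associativity computation of \Cref{Mmperfringuniqueperf} apply. Your argument would go through unchanged for $R=\Zp$ (where the absolute adjunction is the relevant one), but over a general $R$ you need some substitute for this descent-to-a-perfect-base step.
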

\begin{proof}
Note that by \Cref{perfdeltamapout} the map $\Mmperf(S) \to \Mmperf(S/p)$ is an isomorphism for any $p$-complete $R$-algebra $S$, so we may replace $R$ by $R/p$.
We claim that there exists some $R \to S$ faithfully flat such that $S^\perf$ is absolutely integrally closed.
Indeed, by André's lemma \cite[Theorem 7.14]{prisms} there exists a faithfully flat map $R^\perf \to \tilde{S}_0$ with $\tilde{S}_0$ absolutely integrally closed; letting $S_0 \defeq \tilde{S}_0 \tensor_{R^\perf} R$, we find that $R^\perf \to S_0^\perf$ factors through $\tilde{S}_0$, so all monic polynomials in $R^\perf$ split in $S_0^\perf$.
Transfinitely iterating and taking the colimit along $\omega_1$, we obtain the desired faithfully flat $R$-algebra $S$.\footnotemark
\footnotetext{Note that perfection, as a countable limit, commutes with $\omega_1$-filtered colimits.}

Now suppose we have a ring structure $\calR$ on $\Mmperf[R]$.
Then, as $\Mmperf(S) = \Mm(S^\perf)$, we find by \Cref{charpMm} that $\calR(S)$ is an $\Fp$-algebra.
By faithful flatness, $\calR(R)$ injects into $\calR(S)$, so we find that $\calR$ is an $\Fp$-algebra scheme.

The $p$-power map and its inverse on $\calR$ are therefore ring endomorphisms, and in particular group endomorphisms.
Note that the Frobenius is a (Frobenius-linear) group endomorphism as well.
Thus the composite of the Frobenius with the inverse $p$-power map, which is the endomorphism of $\calR$ given by the Frobenius on $R$ and the identity on $\Mmperf$, is also a group endomorphism.
It therefore commutes with the addition map on $\calR$, which implies that the group structure is defined over the Frobenius-fixed points $R^{\phi = 1}$.
As this is a perfect ring, we conclude by \Cref{Mmperfringuniqueperf}.
\end{proof}

\begin{lemma}
\label{charpMm}
Let $R$ be a radically-closed\footnotemark{} perfect $\Fpbar$-algebra, and let $S$ be a ring equipped with an isomorphism $\Mm(R) \isom \Mm(S)$.
\footnotetext{That is, any polynomial $x^n - a$ in $R[x]$ splits into linear factors.}
Then $S$ is of characteristic $p$.
\end{lemma}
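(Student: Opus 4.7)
The plan is to argue by contradiction, extracting enough multiplicative consequences of the iso $\phi\colon \Mm(R) \isom \Mm(S)$ to force $p \cdot 1_S = 0$. First I would record the immediate consequences: since $0$ is the unique absorbing element and $1$ the multiplicative identity, $\phi(0) = 0$ and $\phi(1) = 1$; since $R$ is perfect, the $p$-th power map is a bijection on $\Mm(R)$, hence on $\Mm(S)$; since $R$ is a reduced characteristic $p$ ring we have $\mu_p(R) = \{1\}$ (as $x^p = 1$ gives $(x-1)^p = 0$), and transporting gives $\mu_p(S) = \{1\}$; and by radical closedness of $R$, every element of $\Mm(S)$ admits at least one $n$-th root for each $n \geq 1$, with counts $|\mu_n(S)| = |\mu_n(R)|$ matching in general.

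The key observation is then to exploit the Frobenius identity available in the characteristic $p$ ring $R$: for all $a, b \in R$ and $n \geq 0$, $(a+b)^{p^n} = a^{p^n} + b^{p^n}$. Applying $\phi$ (which is multiplicative) yields
\[ \phi(a+b)^{p^n} = \phi\bigl(a^{p^n} + b^{p^n}\bigr) \quad \text{in } S. \]
Combined with injectivity of the $p^n$-th power map on $\Mm(S)$ (from $\mu_p(S) = \{1\}$ and perfectness), this pins down $\phi(a+b)$ as the unique $p^n$-th root in $S$ of $\phi(a^{p^n} + b^{p^n})$, thereby giving a multiplicative handle on certain additive expressions transported from $R$.

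Using the $\Fpbar$-algebra structure of $R$ provides many concrete test elements. In particular, for each prime $\ell$ with $\ell \neq p$, $\mu_\ell(R) \supseteq \mu_\ell(\Fpbar)$ consists of $\ell$ elements summing to zero in $R$, and these relations transport through $\phi$ up to the Frobenius indeterminacy above. Using this, together with the Frobenius identity applied inductively to elements of the prime subring $\Fp = \bbZ \cdot 1_R \subseteq R$, I would aim to show that the image $\phi(\Fp) \subseteq S$ coincides with the prime subring $\bbZ \cdot 1_S \subseteq S$; since $|\phi(\Fp)| = p$, this would force $p \cdot 1_S = 0$.

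The main obstacle is precisely that $\phi$ does not preserve addition, so transporting the additive structure of $R$ to $S$ has to be done indirectly through the Frobenius constraint and the injectivity of $p^n$-th powers. Stitching these together to conclude that $\phi(\Fp) = \bbZ \cdot 1_S$ is the delicate step: one must argue that the rich supply of prime-to-$p$ roots of unity in $R$ provides enough multiplicative ``coordinates'' to detect the additive behaviour of the prime subring on the $S$-side, even though individual sums are not preserved.
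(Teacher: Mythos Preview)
Your proposal has a genuine gap: the ``key observation'' about the Frobenius identity is vacuous. The identity $\phi(a+b)^{p^n} = \phi(a^{p^n} + b^{p^n})$ is just the statement $\phi(c)^{p^n} = \phi(c^{p^n})$ for $c = a+b$, which is multiplicativity of $\phi$ and nothing more. It gives no purchase on how $\phi$ interacts with addition, because the right-hand side still involves $\phi$ applied to an $R$-sum. Consequently your plan to show $\phi(\Fp) = \bbZ \cdot 1_S$ has no engine behind it; you yourself flag the ``stitching together'' as the delicate step, but no mechanism is proposed, and the roots-of-unity heuristic is not developed into an argument. As stated, the target $\phi(\Fp) = \bbZ \cdot 1_S$ is essentially a restatement of the conclusion.

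The paper's argument is quite different and avoids any attempt to transport addition. First, reducedness of $S$ (visible from $\Mm(S)$) and localization at a minimal prime reduce to the case where $S$ is a field: the point is that multiplicative subsets and the monoid of the localization depend only on $\Mm$, so one can localize $R$ compatibly. One then has $\Fpbar^\times \cong \bbQ/\bbZ[\tfrac{1}{p}] \subseteq \Gm(S)$, which already forces $\operatorname{char} S \in \{0, p\}$ (a field of characteristic $\ell$ has no primitive $\ell$-th root of unity). To rule out characteristic $0$, note that radical closedness of $R$ makes $\Gm(S)$ divisible; together with the presence of all prime-to-$p$ roots of unity, Kummer theory shows $S$ has no nontrivial prime-to-$p$ solvable extensions. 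But $\bbQ(\zeta_p)/\bbQ$ is abelian of degree $p-1$, so $\zeta_p \in S$, contradicting $\mu_p(S) = \{1\}$ --- the one ingredient from your list that actually does the work.
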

\begin{proof}
Note that $S$ is reduced, as $R$ is, so by \citestacks{00EW} it suffices to show that $S_\frakp$ is of characteristic $p$ for each minimal prime ideal $\frakp$ of $S$.
Note also that the notion of a multiplicative subset of a ring and the corresponding localization depend only on the underlying multiplicative monoid, so we have an $R$-algebra $(R \setminus \frakp)^{-1} R$ whose multiplicative monoid is isomorphic to that of $S_\frakp$.
In particular, we find that $\Gm(S_\frakp)$ contains a copy of $\Fpbar^\times \isom \bbQ / \bbZ[\tfrac{1}{p}]$.
By \citestacks{00EU} we know that $S_\frakp$ is a field, so we immediately see that its characteristic is either $0$ or $p$.
Suppose it is of characteristic $0$.
Then it is closed under prime-to-$p$ radical extensions, as its multiplicative group is divisible and contains all prime-to-$p$ roots of unity, hence under prime-to-$p$ solvable extensions.
It must therefore contain a primitive $p$\textsuperscript{th} root of unity.
But this contradicts perfectness of $R$.
\end{proof}

\begin{lemma}
\label{Mmperfringuniqueperf}
Over a perfect ring $R$ of characteristic $p$, the scheme $\Aoperf$ admits a unique group structure equivariant under the action of $\Gmperf$.
\end{lemma}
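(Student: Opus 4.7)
The plan is to identify $\Gmperf$-equivariant group structures on $\Aoperf$ with $\bbZ[1/p]_{\geq 0}$-graded Hopf algebra structures on $\calO(\Aoperf) = R[t^{1/p^\infty}]$, with $\deg t = 1$. Since $R$ is perfect of characteristic $p$, the tensor square $R[s^{1/p^\infty}, t^{1/p^\infty}]$ is itself perfect, so the comultiplication $\Delta$ is determined by $\Delta(t)$ via $\Delta(t^{1/p^n}) = \Delta(t)^{1/p^n}$, the unique $p^n$-th root. I will write $\Delta(t) = \sum_{a \in \bbZ[1/p] \cap [0, 1]} c_a s^a t^{1-a}$ as a finite sum; counitality forces $c_0 = c_1 = 1$, so the task reduces to showing $c_a = 0$ for $a \in (0, 1)$, at which point $\Delta(t) = s + t$ and the group law is the additive one. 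Existence is clear by perfecting the additive structure on $\bbA^1$.

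The hard step is coassociativity. My plan is to argue by contradiction: supposing some $c_a \neq 0$ with $a \in (0, 1)$, choose $a_0 = r_0/p^K$ minimal among such $a$'s in lowest $p$-power terms, so $\gcd(r_0, p) = 1$ and $K \geq 1$. I will then extract the coefficient of the monomial $u^{r_0/p^{2K}} v^{r_0(p^K - 1)/p^{2K}} w^{(p^K - r_0)/p^K}$ in $\calO(\Aoperf)^{\otimes 3}$ from both sides of the coassociativity equation. On $(\mathrm{id} \otimes \Delta)\Delta(t) = \sum_a c_a u^a (\Delta(t)|_{vw})^{1-a}$ the $u$-exponent $r_0/p^{2K}$ is strictly below $a_0$ and so outside the support of $c$, giving coefficient zero. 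On $(\Delta \otimes \mathrm{id})\Delta(t) = \sum_a c_a (\Delta(t)|_{uv})^a w^{1-a}$ the $w$-exponent pins the outer $a$ to $a_0$, and the inner expansion $(\Delta(t)|_{uv})^{a_0} = \bigl(\sum_b c_b^{1/p^K} u^{b/p^K} v^{(1-b)/p^K}\bigr)^{r_0}$ restricts the contributing tuples $(b_1, \dots, b_{r_0})$ to those summing to $a_0$.

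The main obstacle is this combinatorial bookkeeping, but the crucial observation ensured by our choice of $a_0$ is that any sum of two or more positive support elements of $c$ strictly exceeds $a_0$, so the contributing tuples are exactly those with one $b_i = a_0$ and the rest zero. These number $r_0$, each contributing $c_{a_0}^{1/p^K}$, so the total LHS coefficient is $r_0 c_{a_0}^{1 + 1/p^K}$. Since $\gcd(r_0, p) = 1$ makes $r_0$ a unit in $R$, and since $R$ is reduced (being perfect of characteristic $p$), the vanishing of this coefficient forces $c_{a_0} = 0$, contradicting the choice and completing the argument.
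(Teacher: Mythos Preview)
Your proof is correct and follows essentially the same approach as the paper's: both write the comultiplication as a finite sum $\sum_a c_a\, s^a t^{1-a}$ over $a \in \bbZ[1/p] \cap [0,1]$, use counitality to get $c_0 = c_1 = 1$, take the minimal support element $a_0 = r_0/p^K \in (0,1)$, and extract the coefficient of $u^{a_0/p^K} v^{\ast} w^{1-a_0}$ from the coassociativity identity to obtain $r_0\, c_{a_0}^{1+1/p^K} = 0$, whence $c_{a_0} = 0$ since $r_0$ is a unit and $R$ is reduced. The only cosmetic difference is that the paper phrases things in terms of a formal group law $F(t_1,t_2)$ rather than a Hopf comultiplication, and argues directly that the minimal exponent must be $1$ rather than setting up a contradiction.
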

\begin{proof}
Because $R$ is perfect, a coproduct on the $\bbZ[\tfrac{1}{p}]$-graded ring $\calO(\Aoperf_R) \isom R[t^{1/p^\infty}]$ is determined by a group law
\[ F(t_1, t_2) = \sum_{i \in \bbN[\tfrac{1}{p}] \cap [0, 1]} a_i t_1^i t_2^{1-i} \]
where $a_0 = a_1 = 1$ and all but finitely many $a_i$ vanish.
Let $i$ be the smallest nonzero value such that $a_i$ is nonzero, and write $i = \tfrac{n}{p^k}$ with $p \ndivides n$.
Then in $F(F(t_1, t_2), t_3)$ the coefficient of $t_3^{1-i}$ is
\[ a_i F(t_1, t_2)^i = a_i t_2^i + a_i \cdot n a_i^{1 / p^k} t_1^{i / p^k} t_2^{(n - i) / p^k} + \cdots \mpunc, \]
so the coefficient of $t_1^{i / p^k}$ is nonzero.
On the other hand, in $F(t_1, F(t_2, t_3))$ no power of $t_1$ smaller than $t_1^i$ appears.
Thus by associativity we must have $i = 1$, so the group law is the additive one, as desired.
\end{proof}

\begin{proposition}
\label{WbigperftoA1perf}
For any $p$-complete animated ring $R$ and $R$-module $M$, the Teichmüller map $\Aoperf \to \Wbigperf$ induces an equivalence
\[ \RHom_{\Spf R}(\Wbigperf, \Ga \tensorhat_R M) \bij \RGamma((\Aoperf_{\Spf R}, 0), \Ga \tensorhat_R M) \mpunc. \]
\end{proposition}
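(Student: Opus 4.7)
The plan is to reduce the statement to a classical computation of primitive elements in the Hopf algebra of symmetric functions, and then pass to the perfection by taking Frobenius colimits.

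First, I would observe that both sides are exact, $p$-completion-compatible functors of $M$. For the RHS this is immediate from the definition of relative sections, and for the LHS it follows from the flatness of $\calO(\Wbigperf_R)$ over $R$ (which is itself visible as a filtered colimit of polynomial rings along Frobenius). It thus suffices to verify the statement for $M = R$. In this case the RHS is the augmentation ideal of $\calO(\Aoperf_R) = R[t^{1/p^\infty}]^\wedge_p$ at the zero section, namely $t \cdot R[t^{1/p^\infty}]^\wedge_p$, while the LHS, interpreted as derived maps of abelian group sheaves over $\Spf R$, computes the primitive elements of the Hopf algebra $\calO(\Wbigperf_R)$.

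The key unperfected input is classical: primitives of $\calO(\Wbig_R)$, identified with the ring of symmetric functions $\Lambda_R$, form a free $R$-module on the Newton power sums $\{p_n\}_{n \geq 1}$, and under the Teichmüller map $\bbA^1 \to \Wbig$ (which in symmetric-function coordinates specializes to $x_1 = t$, $x_i = 0$ for $i > 1$), one has $p_n \mapsto t^n$. This identifies primitives of $\calO(\Wbig_R)$ with the augmentation ideal of $R[t]$ via Teichmüller pullback, proving the statement before perfection.

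To pass to the perfection, I would use that the Frobenius on $\Wbig$ acts by $x_i \mapsto x_i^p$ on symmetric functions, hence sends $p_n \mapsto p_{pn}$ on primitives. The colimit $\calO(\Wbigperf_R) = \colim_F \calO(\Wbig_R)$ therefore enlarges the primitive module to the free $R$-module on $\{p_n : n \in \bbN[1/p]\}$; Teichmüller carries these to $\{t^n : n \in \bbN[1/p]\}$, which spans the augmentation ideal of $\calO(\Aoperf_R)$. The main obstacle I anticipate is handling the derived content of $\RHom$ correctly — specifically ensuring vanishing of higher Exts of abelian-sheaf Hom and keeping the $p$-completions on the two sides aligned. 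I would address this using the flatness of $\calO(\Wbigperf_R)$, together with the fact that an abelian-group-sheaf extension of $\Wbigperf$ by $\Ga \tensorhat M$ that splits on the zero section splits globally by a Frobenius averaging / perfection argument.
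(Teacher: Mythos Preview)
Your $\pi_0$ computation is correct and well-motivated: primitives of the symmetric-function Hopf algebra are spanned by power sums, and the Teichm\"uller map takes these to monomials in $t$. But this is not where the content of the proposition lies. The statement is about $\RHom$, and the hard part is showing that the higher $\Ext$ groups vanish (or more precisely that the comparison map is an equivalence in all degrees). You acknowledge this yourself and propose a ``Frobenius averaging / perfection argument'', but this is not a proof. In characteristic $p$, Breen's computations show that $\Ext^i(\Ga, \Ga)$ and hence $\Ext^i(\Wbig, \Ga)$ are highly nontrivial for $i > 0$, so one cannot simply compute primitives before perfecting and then take a colimit: the Frobenius colimit on the source does not commute with $\RHom(-, \Ga)$ without further argument.

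There is also a gap in your reduction to $M = R$. Exactness of both sides in $M$ lets you pass along fiber sequences, but it does not by itself reduce an arbitrary $R$-module to $R$: you need to know that the functors commute with filtered colimits (to reduce to perfect $M$) and with Postnikov limits (to reduce to bounded $M$). The paper handles this by first using adjunction to reduce to $R = \Fp$, then invoking the Breen--Deligne resolution to see that $\Wbigperf$ is pseudocoherent, which is exactly what is needed to commute $\RHom(\Wbigperf, -)$ past filtered colimits of coconnective modules. After these reductions the paper lands at $R = M = \Fp$, passes to the site of perfect $\Fp$-schemes (using perfectness of $\Wbigperf$ and $\Aoperf$), and cites Mathew's result for the remaining computation. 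Your symmetric-function picture is morally what underlies that cited result, but the reduction steps and the control of higher cohomology are essential and are missing from your outline.
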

\begin{proof}
By adjunction we may assume $R = \Zp$, and by $p$-completeness we may assume that $M$ is an $\Fp$-module, so by adjunction again we reduce to $R = \Fp$.
Taking the limit of the Postnikov tower, we reduce to $M$ coconnective.
The Breen--Deligne resolution \cite[Appendix to Lecture IV]{condensed} implies that $\Wbigperf$ is pseudocoherent, so we may commute out filtered colimits of coconnective modules $M$ and therefore assume $M = \Fp$.
Adjunction and perfectness of $\Wbigperf$ and $\Aoperf$ allow us to pass to the site of perfect $\Fp$-schemes, where the result follows by the argument of \cite[Theorem A.1]{akhilrh}.
\end{proof}

\begin{corollary}
\label{WbigperftoMmperf}
Let $\calR$ be a $1$-truncated $p$-complete ring in $p$-adic formal stacks over a ring $R$, and assume that $\Ga(\calR^\flat)$ is in the subcategory of abelian group stacks over $\Spf R$ generated under limits by $\bfB^n \Ga$ for $n \in \bbN$.
Then the Teichmüller map $\Mmperf \to \Wbigperf$ induces an equivalence
\[ \Map_{\Spf R}(\Wbigperf, \calR) \bij \Map_{\Spf R, *}(\Mmperf, \Mmperf(\calR)) \mpunc. \]
\end{corollary}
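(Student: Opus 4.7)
The plan is to deduce the statement from \Cref{WbigperftoA1perf}, which handles targets of the form $\Ga \tensorhat_R M$, by using the hypothesis on the additive structure of $\calR^\flat$ to reduce the general case to such targets.

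First I would reduce both sides to depend only on $\calR^\flat$. The right-hand side is automatically unchanged: \Cref{Mmperfvsflat} gives $\Mmperf(\calR) \simeq \Mmperf(\calR^\flat)$. For the left-hand side, the reduction combines the $p$-completeness and $1$-truncation of $\calR$ with the perfectness of $\Wbigperf$: a map $\Wbigperf \to \calR$ should be determined by its reduction mod $p$ via deformation theory (using compatible Frobenius lifts on the perfect $\Wbigperf$), and that reduction in turn factors through the tilt $\calR^\flat$ by perfectness. This is in the spirit of \Cref{perfdeltamapout}.

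Next, using the hypothesis on $\Ga(\calR^\flat)$, I would write it as an inverse limit $\lim_i \bfB^{n_i} \Ga$ of abelian group stacks. Forgetting down to the underlying pointed stacks, this limit presentation persists, and both sides of the claimed equivalence commute with arbitrary limits in the target. It thus suffices to prove the equivalence when the target is $\bfB^n \Ga$ (pointed at the zero section). In this case $\bfB^n \Ga \simeq \Ga \tensorhat_R R[n]$, so \Cref{WbigperftoA1perf} applies and yields
\[ \Map_{\Spf R}(\Wbigperf, \bfB^n \Ga) \simeq \RGamma\bigl((\Aoperf_{\Spf R}, 0), \bfB^n \Ga\bigr) \simeq \Map_{\Spf R, *}(\Mmperf, \bfB^n \Ga) \mpunc, \]
where the last equivalence uses that $\Aoperf$ and $\Mmperf$ have the same underlying stack and that relative sections agree with pointed maps.

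The main obstacle will be the first reduction to $\calR^\flat$ on the left-hand side: while the passage from $\calR$ to $\calR/p$ is a standard deformation-theoretic argument using the compatible Frobenius lifts on $\Wbigperf$, the further passage from $\calR/p$ to $\calR^\flat$ requires care in the $1$-truncated setting, and is likely where the hypothesis on the additive structure of $\calR$ (not merely on its mod-$p$ reduction) becomes essential.
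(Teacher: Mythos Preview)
Your reduction to $\calR^\flat$ is the same as the paper's, but the rest of the argument has a genuine gap: you are computing the wrong mapping spaces. The left-hand side $\Map_{\Spf R}(\Wbigperf, \calR)$ denotes \emph{ring} maps, and the right-hand side denotes pointed \emph{monoid} maps $\Mmperf \to \Mmperf(\calR)$; this is how the corollary is used in \Cref{Wperffactorization} and \Cref{affstkWbigmap}. Neither of these decomposes along a limit presentation of the underlying abelian group stack $\Ga(\calR^\flat)$: the terms $\bfB^n\Ga$ are not rings, so there is no meaning to ``ring maps $\Wbigperf \to \bfB^n\Ga$'', and likewise $\bfB^n\Ga$ carries no multiplicative monoid structure to receive a monoid map from $\Mmperf$. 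Your final displayed line is really invoking \Cref{WbigperftoA1perf} to compute abelian-group $\RHom$'s, which is not what either side of the corollary is.

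The missing ingredient is \Cref{mapobjcalg}. The paper passes to the $\Einfty$ setting (using $1$-truncatedness) and applies \Cref{mapobjcalg} to the map $\bbZ[\Mmperf]/([0]) \to \Wbigperf$ in $\CAlg(\calC)$ for $\calC$ the category of abelian group stacks: this reduces the comparison of $\Einfty$-ring maps to checking that
\[
\Map_\calC\bigl((\Wbigperf)^{\tensor n}, \calR^\flat\bigr) \to \Map_\calC\bigl((\bbZ[\Mmperf]/([0]))^{\tensor n}, \calR^\flat\bigr)
\]
is an equivalence for each $n$. These are now maps in $\calC$, and the tensor--$\Hom$ adjunction together with the hypothesis on $\Ga(\calR^\flat)$ (closed under limits, hence under internal $\Hom$ from a fixed object) lets one apply \Cref{WbigperftoA1perf} inductively. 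Your limit argument is essentially the $n=1$ case of this, but the passage from the $n=1$ case to ring maps is exactly what \Cref{mapobjcalg} provides and cannot be skipped.
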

\begin{proof}
By \Cref{perfdeltamapout} and \Cref{Mmperfvsflat} we may replace $\calR$ with $\calR^\flat$, and by truncatedness we may work in the $\Einfty$ setting.
The result then follows from \Cref{WbigperftoA1perf}, \Cref{mapobjcalg} (applied to $\bbZ[\Mmperf]/([0]) \to \Wbigperf$), and the tensor-$\Hom$ adjunction.
\end{proof}

\subsection{From \texorpdfstring{$\Wperf$}{W{\textasciicircum}perf} to \texorpdfstring{$W$}{W}}
\label{sec:WperftoW}

In this section, we use the results of \Cref{sec:LGamma} to give a criterion for $\Wperf$-algebra structures on ring stacks to factor through $W$.

\begin{proposition}
\label{affstkWbigmap}
Let $M \to \Wperf$ be a flat descendable affine quasi-ideal scheme over a $p$-nilpotent base such that $\calR \defeq \Wperf/M$ is a $p$-complete ring in affine stacks (see \Cref{sec:affstk}), and such that $\Ga(\calR^\flat)$ lies in the subcategory of abelian group stacks generated under limits by $\bfB^n \Ga$ for $n \in \bbN$.\todo*[may not need this assumption]
Suppose that $\calR \to \Wperf/M$ fits into a commutative diagram
\[\xymatrix{
  \Mmperf \ar[d] \ar[r] & \Wperf \ar[d] \\
  \Mm \ar[r] & \calR
}\]
of commutative monoids, where the upper horizontal and left vertical maps are the natural ones.
Then the lower horizontal map extends naturally to a map of ring stacks
\[ W \to \calR \]
factoring $\Wperf \to \calR$.
If $\calR$ is static, then this extension is unique.
\end{proposition}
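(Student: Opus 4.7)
The overall approach is to combine \Cref{WbigperftoMmperf}, which handles the ``perfect'' setting, with the theory of derived divided powers $\LGamma$ from \Cref{sec:LGamma}, using the flat descendability of $M \to \Wperf$ to perform descent. For the perfect part, I would first note that the splitting $\Wbig \to W$ from \Cref{Wmodstructureautomatic} passes to perfections to yield $\Wbigperf \to \Wperf$; composing with the given $\Wperf \to \calR$ gives a ring map $\Wbigperf \to \calR$ which, by \Cref{WbigperftoMmperf} (applicable under the hypotheses on $\calR$), is equivalent to a pointed monoid map $\Mmperf \to \Mmperf(\calR)$ matching the perfection of the given $\Mm \to \calR$ thanks to the commutative square.

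For the extension from $\Wperf$ to $W$, the idea is that $W$ is a divided-power thickening of its Teichmüller image in a sense captured by $\LGamma$. Since $\LGamma$ is excisive and preserves flat descendable morphisms, one can apply it to the Amitsur complex of the flat descendable quasi-ideal $M \to \Wperf$. Combining the resulting cosimplicial diagram with the monoid map $\Mm \to \calR$ and the compatibility datum on $\Mmperf$ should assemble into the desired ring map $W \to \calR$, with the factorization through $\Wperf$ built into the construction. For uniqueness when $\calR$ is static, I would argue that two candidate ring maps $W \to \calR$ that agree on the $\Wperf$-algebra structure and on the monoid $\Mm$ must coincide: every element of $W$ is a polynomial in Teichmüller lifts and their Verschiebung translates, and with no higher homotopy in $\calR$ the ring map is rigidly determined on these generators.

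The main obstacle is the non-perfect extension in the second step. Ring maps out of $W$, unlike those out of $\Wperf$, encode divided-power data on the Verschiebung ideal that is not directly visible from the multiplicative monoid $\Mm$ alone. The excisiveness of $\LGamma$ and its preservation of flat descendable maps, from \Cref{sec:LGamma}, are exactly what should allow this extra structure to be coherently assembled from the given data; absent these tools, the argument would stall at the transition between perfect and non-perfect Witt vectors.
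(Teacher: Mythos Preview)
Your proposal identifies the right ingredients but misses the actual mechanism by which the paper produces the map $W \to \calR$. The gap is in your second step: you say that applying $\LGamma$ to the Amitsur complex of $M \to \Wperf$ and ``combining'' with the monoid map $\Mm \to \calR$ should assemble into a ring map $W \to \calR$, but there is no explanation of why any construction built from $\LGamma$ applied to this nerve should produce $W$ on the nose, nor of how the monoid datum enters. The slogan that $W$ is a ``divided-power thickening of its Teichm\"uller image'' is not what is being used.

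What the paper actually does is route through the \emph{big} Witt vectors. The $\LGamma$ machinery of \Cref{sec:LGamma} is packaged into the functor $\Ntr(-,0)$, the free commutative monoid ind-scheme on a pointed affine scheme; \Cref{flatGammadesc} shows this is a cosheaf for the flat descendable topology, so applying it to the nerve of $\Wperf \to \calR$ yields a rig map $\Ntr(\calR,0) \to \calR$ in pseudodescendable sheaves. Since $\Ntr(\Mm,0) \isom \Wratplus$ (\Cref{rmk:Wratplus}), the given $\Mm \to \calR$ produces $\Wratplus \to \calR$, and \Cref{affringstkWratW} extends this uniquely to a ring map $\Wbig \to \calR$. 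Only then does \Cref{WbigperftoMmperf} enter, to check that this $\Wbig \to \calR$ is compatible with the given $\Wperf \to \calR$ via the square involving $\Wbigperf$; precomposing with the $p$-local section $W \to \Wbig$ gives the desired factorization. Your uniqueness sketch is also too loose: the paper obtains uniqueness in the static case from the universal property of $\Ntr$ into affine monoid schemes together with the uniqueness in \Cref{affringstkWratW} and \Cref{MmtoMmperfff}, not from an explicit generator argument.
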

\begin{proof}
Note that the nerve of $\Wperf \to \calR$ is a simplicial flat descendable affine ring scheme, so we obtain from \Cref{Ntrdef}, \Cref{rmk:Ntrmonoid}, and \Cref{rmk:affstkpdesc} a map $\Ntr(\calR, 0) \to \calR$ of $\Einfty$-rigs in pseudodescendable sheaves.
\Cref{rmk:Wratplus} yields a map $\Wratplus \to \Ntr(\calR, 0)$, so the composite is a map $\Wratplus \to \calR$ of $\Einfty$-rigs in pseudodescendable sheaves, hence of $\Einfty$-rig stacks.
By \Cref{affringstkWratW} this map extends uniquely to a map $\Wbig \to \calR$ of $\Einfty$-rings in commutative group stacks, which by $1$-truncatedness is in fact a map of ring stacks.

We then obtain a commutative diagram
\[\xymatrix{
  \Wbigperf \ar[d] \ar[r] & \Wperf \ar[d] \\
  \Wbig \ar[r] & \calR
}\]
from \Cref{WbigperftoMmperf}.
Precomposing $\Wbigperf \to \calR$ with $\Wperf \to \Wbigperf$ then yields our original map $\Wperf \to \calR$, so precomposing $\Wbig \to \calR$ with $W \to \Wbig$ yields a factorization of this map through $W$.
Our map $W \to \calR$ thus factors $\Mmperf \to \calR$, so by \Cref{MmtoMmperfff} and $1$-truncatedness it factors $\Mm \to \calR$ as well.

The uniqueness statement follows from the universal property of $\Ntr$ when mapping to affine monoid schemes, along with the uniqueness statements for the other results used.
\end{proof}

In fact, if the map $W \to \calR$ produced above ends up being the quotient by a polyfiltered Cartier--Witt divisor, then it is \emph{a posteriori} unique.\footnotemark
\footnotetext{This uniqueness is implicitly used in the proof of \cite[Proposition 8.9.3(i)]{prismatization}.}
This follows from the fact, proved below, that the kernel of $\Wperf \to W$ has no nontrivial maps to $F_*^n W$ or $F_*^n \Gasharp$ for any $n$.

\begin{lemma}
\label{kerWperfW}
The $\Wperf$-module $K \defeq \ker(\Wperf \to W)$ admits a decreasing $\bbN$-indexed filtration whose $n$\textsuperscript{th} graded piece is given by $F_*^{-n-1} \Gasharp$.
\end{lemma}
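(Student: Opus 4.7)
My plan is to use the Frobenius automorphism $\phi : \Wperf \to \Wperf$ together with the tower description $\Wperf = \lim_F W$. For each $n \geq 0$, let $K^{(n)} := \ker(p_n : \Wperf \to W)$, where $p_n$ is projection onto the $n$\textsuperscript{th} coordinate. Since $p_n = F \circ p_{n+1}$ as ring maps, we obtain inclusions $K^{(n+1)} \subseteq K^{(n)} \subseteq K^{(0)} = K$, yielding a decreasing $\bbN$-indexed filtration of $K$ by $\Wperf$-submodules. Equivalently, $K^{(n)} = \phi^n(K)$, using that $\phi^{-1}$ acts as the index shift $(w_m)_m \mapsto (w_{m+1})_m$ on $\Wperf$.

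To identify the first graded piece, I restrict $p_1$ to $K$: given $(w_m)_m \in K$ we have $w_0 = 0$, so $F(w_1) = 0$ forces $w_1 \in \Gasharp \subset W$. The kernel of the restricted map $K \to \Gasharp$ is exactly $K^{(1)}$. For fpqc surjectivity, I appeal to \Cref{VFdual}, which exhibits $F : W \to F_* W$ as fpqc-surjective; iterated $F$-preimages of any section $x \in \Gasharp(R)$ then exist over a suitable fpqc cover, producing an element of $K$ mapping to $x$. Tracking the $\Wperf$-module structure: for $r = (r_m)_m \in \Wperf$ and $k \in K$ one has $p_1(r k) = r_1 \, p_1(k)$, and since $r_1 = p_0(\phi^{-1}(r))$, the induced $\Wperf$-action on $\Gasharp$ is precomposition with $\phi^{-1}$, showing $K/K^{(1)} \equiv F_*^{-1} \Gasharp$ as $\Wperf$-modules.

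To handle the higher graded pieces, observe that $\phi^n : K \to K^{(n)}$ is $\phi^n$-semilinear and bijective, which corresponds to a $\Wperf$-linear isomorphism $F_*^{-n} K \equiv K^{(n)}$. Tracing the inclusion $K^{(n+1)} \hookrightarrow K^{(n)}$ under this identification, one checks that it becomes the map $\phi : F_*^{-n-1} K \to F_*^{-n} K$ (which is $\Wperf$-linear with these twists). Consequently $K^{(n)}/K^{(n+1)} \equiv F_*^{-n}(K/\phi K) \equiv F_*^{-n}(F_*^{-1} \Gasharp) = F_*^{-n-1} \Gasharp$, using that $F_*$ is an autoequivalence of the category of $\Wperf$-modules.

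The main obstacle I expect is the fpqc surjectivity of $K \to \Gasharp$, which requires organizing the iterated-$F$-preimage construction over a single fpqc cover rather than an inverse limit of refinements. This should succeed by taking a sufficiently large, perfectoid-style base change once and for all, exploiting the fpqc-surjectivity of the entire tower $\cdots \to W \xrightarrow{F} W$; the remaining steps are formal bookkeeping of Frobenius twists.
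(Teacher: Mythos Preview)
Your approach is essentially the paper's, just spelled out in more detail: the paper writes $\Wperf = \lim_F F_*^{-n} W$, sets $L_n = \ker(F_*^{-n} W \to W)$ so that $K = \lim_n L_n$, and observes that $\ker(L_{n+1} \to L_n) = \ker(F_*^{-n-1} W \to F_*^{-n} W) = F_*^{-n-1}\Gasharp$. Your filtration $K^{(n)} = \ker p_n$ is exactly $\ker(K \to L_n)$, and your use of $\phi^n$ to transport the computation from $K^{(0)}/K^{(1)}$ to higher graded pieces is a pleasant extra, but not needed.

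Your worry about fpqc surjectivity of $K \to \Gasharp$ is the one point the paper leaves implicit, and your proposed fix is heavier than necessary. All the objects in sight are affine schemes: each $L_{m+1} \to L_m$ is a torsor under the flat affine group scheme $F_*^{-m-1}\Gasharp$, hence faithfully flat on coordinate rings, and $\calO(K) = \colim_m \calO(L_m)$ is therefore faithfully flat over each $\calO(L_n)$ (filtered colimits of faithfully flat ring maps are faithfully flat). So $K \to L_n$ is an fpqc cover on the nose, no perfectoid constructions required.
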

\begin{proof}
We have $\Wperf = \lim_F F_*^{-n} W$, so $K = \lim \ker(F_*^{-n} W \to W)$.
As the kernel of $F_*^{-n} W \to F_*^{-n+1} W$ identifies with $F_*^{-n} \Gasharp$, we conclude.
\end{proof}

\begin{corollary}
Over a $p$-local base, $\Hom_{\Wperf}(K, F_*^n W)$ and $\Hom_{\Wperf}(K, F_*^n \Gasharp)$ vanish for all $n$.
\end{corollary}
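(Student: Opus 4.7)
The plan is to use the filtration of $K$ from \Cref{kerWperfW} to reduce everything to the vanishing $\intHom_W(\Gasharp, F_* N) = 0$ for weightwise finite locally free $W$-modules $N$. This vanishing was already established (via Cartier duality together with a weight-$1$ count on $(F_* N)^\dual$) at the start of \Cref{sec:passable} in the course of verifying uniqueness of passable sequences.

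Concretely, write the decreasing filtration as $K \supseteq F^1 K \supseteq F^2 K \supseteq \cdots$, with $K/F^m K$ a finite iterated extension of the pieces $F_*^{-j-1}\Gasharp$ for $0 \le j \le m-1$. Applying $\Hom_{\Wperf}(-, X)$ to the short exact sequences $0 \to F^m K \to K \to K/F^m K \to 0$ and passing to the limit, I would reduce to showing $\Hom_{\Wperf}(F_*^{-j-1}\Gasharp, X) = 0$ for every $j \ge 0$ and every target $X \in \{F_*^n W, F_*^n \Gasharp\}$. Since we are computing $\Hom$ rather than $\Ext$, vanishing on the graded pieces gives vanishing on each finite quotient $K/F^m K$, and the resulting tower of finite-stage Homs is identically zero, so the Milnor-type sequence yields vanishing on $K$ with no $\lim^1$ contribution to worry about.

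For the vanishing on each graded piece, I would use that the Frobenius is an isomorphism on $\Wperf$, so that $F_*$ is an autoequivalence of $\Wperf$-modules. Applying $F_*^{j+1}$ to both source and target identifies
\[ \Hom_{\Wperf}(F_*^{-j-1}\Gasharp, F_*^n X) \isom \Hom_{\Wperf}(\Gasharp, F_*^{n+j+1} X). \]
Since both $\Gasharp$ and $F_*^{n+j+1} X$ receive their $\Wperf$-action through the surjection $\Wperf \to W$, this in turn agrees with $\Hom_W(\Gasharp, F_*^{n+j+1} X)$. Now $F_*^{n+j} X$ is weightwise finite locally free for either choice of $X$ (its coordinate ring being generated in $p$-power weights of rank one), so the cited vanishing applies with $N = F_*^{n+j} X$, giving the desired conclusion as $n+j+1 \ge 1$.

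The hard part, such as it is, is purely notational: keeping track of the Frobenius twists and confirming that $\Wperf$-linearity for maps between modules pulled back along $\Wperf \twoheadrightarrow W$ agrees with $W$-linearity. The substantive mathematical content is entirely packaged into the vanishing $\intHom_W(\Gasharp, F_* N) = 0$, so once the reduction to graded pieces is in place, there is nothing more to prove.
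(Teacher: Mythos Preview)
Your reduction step has a genuine gap. The filtration of $K$ from the previous lemma presents $K$ as a \emph{limit} of its finite quotients $K/F^m K$ (equivalently, $\calO(K)$ is a filtered \emph{colimit} of the $\calO(K/F^m K)$), not as a colimit. The natural map therefore runs
\[
  \colim_m \Hom_{\Wperf}(K/F^m K, X) \longrightarrow \Hom_{\Wperf}(K, X),
\]
and there is no reason for it to be surjective. Your invocation of a ``Milnor-type sequence'' presupposes the identification $\Hom(K,X) \isom \lim_m \Hom(K/F^m K, X)$, which would require $K = \colim_m K/F^m K$; that is simply false here. Concretely, a map of affine schemes $K \to X$ is a ring map $\calO(X) \to \calO(K) = \colim_m \calO(K_m)$, and since $\calO(F_*^n W)$ is a polynomial ring on infinitely many generators, there is no compactness argument forcing such a map to factor through a finite stage.

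The paper circumvents exactly this problem by first filtering the \emph{target}: it reduces from $F_*^n W$ to $F_*^n \Ga$ via the $V$-adic filtration (using that $F_*^n \Gasharp \hookrightarrow F_*^n W$ handles the other case). A map to $F_*^n \Ga$ is then a single primitive element of weight $p^n$ in the graded Hopf algebra $\calO(K)$, and \emph{this} condition is compact, hence commutes with the filtered colimit $\calO(K) = \colim \calO(K_m)$. Only after that reduction does one land on $\Hom(F_*^{-m}\Gasharp, F_*^n \Ga)$, which is then killed by a Cartier-duality weight argument. Your steps 2--4 (Frobenius-twisting and identifying $\Wperf$-linear with $W$-linear maps) are fine, but they only become relevant once one has legitimately passed to the graded pieces of $K$, and that passage requires the target-side reduction you omitted.
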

\begin{proof}
As $F_*^n \Gasharp$ injects into $F_*^n W$ it suffices to prove the statement for the latter.
The $V$-adic filtration on $F_*^n W$ has graded pieces which are Frobenius twists of $\Ga$, so it suffices to see that $\Hom_{\Wperf}(K, F_*^n \Ga)$ vanishes for all $n$.
For this we may forget to $\Mmperf$-equivariant homomorphisms, where a map to $F_*^n \Ga$ is given by a primitive element in weight $p^n$ of the coordinate ring of the source (for the $\bbN[\tfrac{1}{p}]$-grading induced by the action of $\Mmperf$).
The choice of such an element commutes with filtered colimits along injective maps of Hopf algebras, so by \Cref{kerWperfW} we reduce to proving vanishing of $\Hom_{\Mmperf}(F_*^{-m} \Gasharp, F_*^n \Ga)$ for all $m, n \in \bbN$ with $m > 0$.
Taking graded Cartier duals, this amounts to vanishing of $\Hom_{\Mmperf}(F_*^n \Gasharp, F_*^{-m} \Ga)$.
The graded coordinate ring of the source is concentrated in integer weights, while that of the target is generated in weight $p^{-m}$, so any map must be trivial.
\end{proof}

The proof of \Cref{affstkWbigmap} used the following lemmas.

\begin{lemma}
\label{affringstkWratW}
If $\calR$ is an $\Einfty$-ring in commutative group affine stacks, then the natural map
\[ \Map(\Wbig, \calR) \to \Map(\Wrat, \calR) \]
is an equivalence.
\end{lemma}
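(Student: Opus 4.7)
The plan is to mirror the proof of \Cref{SMmWratW} almost verbatim. Since $\calR$ is group-complete under addition, the universal property of group completion identifies $\Einfty$-ring maps $\Wrat \to \calR$ with $\Einfty$-rig maps $\Wratplus \to \calR$; on the other side, $\Einfty$-ring maps $\Wbig \to \calR$ likewise compute $\Einfty$-rig maps from the underlying rig of $\Wbig$ (which coincides with the ring structure, since $\Wbig$ is already group-complete). This reduces the comparison of $\Einfty$-ring mapping spaces to a comparison of $\Einfty$-rig mapping spaces.

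Next, I would apply \Cref{mapobjcalg} to reduce, for each $n \geq 0$, to showing that the natural map $(\Wratplus)^n \to (\Wbig)^n$ induces an equivalence after taking maps of (non-rig) affine stacks to $\calR$. Since $\calR$ is assumed affine, this further reduces via \Cref{gradedDAlgwelldefined} (applied without a grading) to the claim that the induced map of coordinate rings $\calO((\Wbig)^n) \to \calO((\Wratplus)^n)$ is an isomorphism. The latter is the familiar identification of both sides with the ring of symmetric functions in $n$ sequences of variables, already invoked in the proof of \Cref{SMmWratW}.

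The main obstacle is verifying the applicability of \Cref{mapobjcalg} in the present $\Einfty$-rig setting: in \Cref{SMmWratW} it is invoked for $\Mm$-equivariant commutative monoid maps into a group, whereas here we want it for $\Einfty$-rig maps into an $\Einfty$-ring in commutative group affine stacks. Since the relevant bar construction only sees the additive commutative monoid structure of the target, and since $\calR$ is additively group-complete, the same argument should transfer with essentially no change; the multiplicative structure on $\calR$ plays no role in the reduction, appearing only to ensure the existence of the compatible simplicial resolution used by \Cref{mapobjcalg}.
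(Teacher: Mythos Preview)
There is a genuine gap in the reduction step. In \Cref{SMmWratW}, \Cref{mapobjcalg} is applied with $\calC$ equal to the Cartesian category of $\Mm$-equivariant affine stacks, and the $\CAlg$-structure being peeled off is the \emph{additive} commutative monoid structure; this is why one lands on Cartesian powers $(\Wbig)^n$ and $\Mm$-equivariant maps of plain stacks. In your setting the $\CAlg$-structure you want to peel off is the \emph{multiplicative} one: an $\Einfty$-rig (or $\Einfty$-ring) is an object of $\CAlg(\CMon,\tensor)$, where $\tensor$ is the tensor product of commutative monoids (resp.\ groups), not the Cartesian product. So \Cref{mapobjcalg} reduces you to comparing $\Hom_{\CMon}(\Wbig^{\tensor n},\calR)$ with $\Hom_{\CMon}(\Wrat^{\tensor n},\calR)$, which are additive-monoid maps out of additive tensor powers, not plain stack maps out of Cartesian powers. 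Your claimed reduction to ``maps of (non-rig) affine stacks from $(\Wratplus)^n$'' does not follow.

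Even if one could reach such a reduction, the final step would fail: the identification $\calO((\Wbig)^n)\isom\calO((\Wratplus)^n)$ invoked in \Cref{SMmWratW} is an isomorphism of \emph{graded} rings (this is the word the paper uses). Ungraded, $(\Wratplus)^n$ is an ind-scheme, so its global sections are the corresponding inverse limit, a completion strictly containing $\calO((\Wbig)^n)$. The $\Mm$-equivariance is precisely what lets one work weightwise and thereby ignore this discrepancy; dropping it, as you propose via ``\Cref{gradedDAlgwelldefined} applied without a grading'', loses exactly the structure that makes the comparison go through. The paper's proof instead applies \Cref{mapobjcalg} in the tensor-monoidal category of commutative groups, and then uses the tensor--$\Hom$ adjunction to feed the resulting tensor powers back into \Cref{SMmWratW}, where the $\Mm$-action on $\Wbig$ and $\Wrat$ restores the grading needed for the coordinate-ring argument.
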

\begin{proof}
This follows from \Cref{SMmWratW}, \Cref{mapobjcalg}, and the tensor-$\Hom$ adjunction.
\end{proof}

\begin{lemma}
\label{MmtoMmperfff}
Restriction of scalars from $\bbZ[\Mm]$ to $\bbZ[\Mmperf]$ induces a fully faithful functor from $\Einfty$-ring affine stacks under the former to those under the latter.
The essential image of this functor consists of those ring affine stacks whose $\bbN[\tfrac{1}{p}]$-graded coordinate ring vanishes in weights outside of $\bbN$.
\end{lemma}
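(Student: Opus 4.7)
The plan is to reduce everything to the elementary fact that the natural projection $\Mmperf \to \Mm$ (coming from the inverse-limit description of perfection) is faithfully flat, corresponding dually to the filtered-colimit-of-finite-free inclusion of coordinate rings $\calO(\Mm) = \bbZ[t] \inj \bbZ[t^{1/p^\infty}] = \calO(\Mmperf)$. In particular, this projection is an effective epimorphism of sheaves, and a fortiori of sheaves of commutative monoids. The induced map $\bbZ[\Mmperf] \to \bbZ[\Mm]$ on $\Einfty$-monoid algebras is the one along which we restrict scalars to produce the functor under consideration.

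For full faithfulness, I would unwind what it means for a morphism $f : \calR_1 \to \calR_2$ of $\Einfty$-ring affine stacks (between two objects already under $\bbZ[\Mm]$) to be $\bbZ[\Mm]$-linear versus $\bbZ[\Mmperf]$-linear. The former asserts equality of two monoid maps $\Mm \to \calR_2$ -- the given structure map and the composite $\Mm \to \calR_1 \to \calR_2$ -- while the latter asserts only the equality of these two maps after precomposition with $\Mmperf \to \Mm$. Since $\Mmperf \to \Mm$ is an effective epimorphism of commutative monoid sheaves, these two conditions coincide, establishing full faithfulness.

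For the essential image, a $\bbZ[\Mmperf]$-algebra $\calR$ lies in the essential image precisely when its structure map factors through the natural map $\bbZ[\Mmperf] \to \bbZ[\Mm]$, i.e., through some monoid map $\Mm \to \calR$. In terms of the $\bbN[\tfrac{1}{p}]$-graded coordinate ring of $\calR$ (the grading coming from the $\Gmperf \inj \Mmperf$-equivariance built into the $\bbZ[\Mmperf]$-algebra structure), this factorization happens precisely when the grading is concentrated in $\bbN$-weights, since the image of the coordinate ring in $\bbZ[t^{1/p^\infty}]$ then lands in the $\bbN$-graded subring $\bbZ[t]$. This is the stated characterization.

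I expect the main subtlety to be the bookkeeping around the exact meaning of ``ring affine stack under $\bbZ[\Mm]$'' and the induced $\bbN[\tfrac{1}{p}]$-grading on the coordinate ring, since these notions interact in slightly nontrivial ways with the affine-stacks formalism. Once these are pinned down, both halves of the statement follow from the faithful flatness of $\calO(\Mm) \inj \calO(\Mmperf)$ by a short formal argument.
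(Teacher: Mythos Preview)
Your intuition is right, but the full-faithfulness step has a genuine gap in the $\infty$-categorical setting.  You argue that since $\Mmperf \to \Mm$ is an effective epimorphism of monoid sheaves, two monoid maps $\Mm \rightrightarrows \calR_2$ agree as soon as they agree after precomposition with $\Mmperf$.  In a $1$-category this is correct, but the targets here are $\Einfty$-ring \emph{affine stacks}, which are generally not $0$-truncated (e.g.\ $\Ga/\Gasharp$).  For such targets, an effective epimorphism $A \twoheadrightarrow B$ gives only that $\Map(B,\calR_2)$ is the totalization of $\Map(A^{\times_B \bullet+1},\calR_2)$; the map to the $0$th stage $\Map(A,\calR_2)$ is \emph{not} a monomorphism of anima in general, so you cannot conclude that a homotopy between the two composites $\Mmperf \to \calR_2$ lifts uniquely to a homotopy between the two maps $\Mm \to \calR_2$.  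The same issue infects the essential-image argument: ``factoring through $\Mm$'' is a structure, and its uniqueness is exactly full faithfulness again.

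The paper's proof avoids this by passing to coordinate rings.  Via \Cref{gradedDAlgwelldefined}, the relevant $\Mmperf$-equivariant mapping anima are computed as maps of $\bbN[\tfrac{1}{p}]$-graded derived rings, and \Cref{gradedDAlgrightadjoint} says the inclusion of $\bbN$-graded into $\bbN[\tfrac{1}{p}]$-graded derived rings is fully faithful with right adjoint given by restricting the grading.  This gives the equivalence
\[
  \intMap_{\Mmperf}(\bbA^1,\calR)\ \xrightarrow{\ \sim\ }\ \intMap_{\Mmperf}(\Aoperf,\calR)
\]
for any $\calR$ whose coordinate ring is concentrated in $\bbN$-weights, and \Cref{mapobjcalg} together with tensor--Hom then upgrades this from module maps to $\Einfty$-algebra maps.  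The graded-ring translation is precisely what lets one control the higher homotopy that your effective-epimorphism argument loses.
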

\begin{proof}
Clearly this functor lands in the specified subcategory.
To see that it induces an equivalence, note first that the natural map of $\bbZ[\Mmperf]$-module mapping anima
\[ \intHom_{\bbZ[\Mmperf]}(\bbZ[\bbA^1], \calR) \to \intHom_{\bbZ[\Mmperf]}(\bbZ[\Aoperf], \calR) \]
identifies with the map
\[ \intMap_{\Mmperf}(\bbA^1, \calR) \to \intMap_{\Mmperf}(\Aoperf, \calR) \mpunc, \]
which is an equivalence by \Cref{gradedDAlgwelldefined} and \Cref{gradedDAlgrightadjoint}.
The desired statement now follows from \Cref{mapobjcalg} and the tensor-$\Hom$ adjunction.
\end{proof}

\subsection{The main result}
\label{sec:synmainresult}

The following result is the main input to the full faithfulness statement of \Cref{ZpSynmainthm}.

\begin{proposition}
\label{Wperffactorization}
For a $p$-nilpotent ring $R$ over $\ZpN$, any $\Wperf$-algebra structure on the ring stack $(\Ga^\Nyg)_R$ factors uniquely through the canonical one.
\end{proposition}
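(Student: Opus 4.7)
The plan is to reduce the statement, via the monoid-to-ring dictionary of \Cref{WbigperftoMmperf}, to a comparison of pointed monoid maps $\Mmperf \to \Mm^\Nyg|_R$, and then to verify this comparison using the descent-theoretic results of \Cref{sec:coveringMmN}.

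First, I would verify the hypotheses of \Cref{WbigperftoMmperf} for $\calR \defeq W/M$. This ring stack is $1$-truncated and $p$-complete (as $R$ is $p$-nilpotent), and by \Cref{WmodMflat} $\calR^\flat \equiv W^\flat$ lies in the subcategory of abelian group stacks over $\Spf R$ generated by $\bfB^n \Ga$ under limits: indeed, $W^\flat$ is an inverse limit under Frobenius of $W/p$, and the $V$-adic filtration on $W/p$ has Frobenius-twisted $\Ga$-pieces. Hence $\Wbigperf$-algebra structures on $\calR$ are in bijection with pointed monoid maps $\Mmperf \to \Mm(\calR) \equiv \Mm^\Nyg|_R$.

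Next, given any $\Wperf$-algebra structure $\phi\colon \Wperf \to W/M$, I would extend it to a $\Wbigperf$-algebra structure $\tilde\phi$ via the retraction $\Wbigperf \to \Wperf$ coming from the decomposition $\Wbig \equiv W \times \Wbig'$. Because the inclusion $\Wperf \inj \Wbigperf$ is a section of this retraction, restricting $\tilde\phi$ recovers $\phi$, so $\phi$ is determined by $\tilde\phi$. By the first step, $\tilde\phi$ is in turn determined by its induced pointed monoid map $\Mmperf \to \Mm^\Nyg|_R$, which is simply the composition $\Mmperf \xrightarrow{\mathrm{Teich.}} \Wperf \xrightarrow{\phi} W/M$. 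Thus $\phi$ is determined by this monoid map, and to prove the proposition it suffices to show the induced monoid map coincides with the canonical $\Mmperf \to \Mm \to W \to W/M$.

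The main obstacle is precisely this monoidal comparison. Here my plan is to invoke the flat descendability of $\Mmperf[\ZpN] \to \Mm^\Nyg$ from \Cref{rmk:MmMmNflatdescendable}, combined with the weightwise finiteness of \Cref{MmMmNkerfiniteness}: both monoid maps $\Mmperf \to \Mm^\Nyg|_R$ in question factor, after base change along this cover, through the identity on $\Mmperf|_R$, so the common flat descent datum they produce forces them to agree. Granted this monoidal equality, the preceding steps yield $\phi$ equal to the canonical $\psi\colon \Wperf \to W \to W/M$, giving both existence and uniqueness of the desired factorization.
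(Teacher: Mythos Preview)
Your reduction via \Cref{WbigperftoMmperf} and the $\Wbigperf/\Wperf$ retraction matches the paper's approach, and your verification of the hypotheses of \Cref{WbigperftoMmperf} is fine. However, there are two genuine problems.

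\textbf{You are proving the wrong statement.} You aim to show that any ring map $\phi\colon \Wperf \to (\Ga^\Nyg)_R$ \emph{equals} the canonical one. But the proposition only asserts that $\phi$ factors uniquely as $\phi = (\text{canonical}) \circ \psi$ for some ring endomorphism $\psi$ of $\Wperf$; it does not assert $\psi = \id$. Indeed, precomposing the canonical map with any power of Frobenius on $\Wperf$ gives a different $\Wperf$-algebra structure on $(\Ga^\Nyg)_R$, so your target claim is false. What must actually be shown is that the map
\[
  \Map_{R,*}(\Mmperf, \Mmperf) \longrightarrow \Map_{R,*}(\Mmperf, \Mm^\Nyg)
\]
(postcomposition with the canonical map) is an \emph{equivalence}, not that every element of the target equals the image of the identity.

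\textbf{The monoidal comparison step is a gap.} Your plan to deduce equality of the two monoid maps from flat descendability of $\Mmperf[\ZpN] \to \Mm^\Nyg$ and \Cref{MmMmNkerfiniteness} does not work: descendability of a cover $Y \to X$ says nothing about lifting an arbitrary map $Z \to X$ to a map $Z \to Y$, let alone that such a lift is the identity. The paper's argument is entirely different: since $\Mmperf$ is perfect, maps $\Mmperf \to \Mm^\Nyg$ factor uniquely through the inverse-limit perfection $(\Mm^\Nyg)^\perf$; one then identifies $(\Mm^\Nyg)^\perf$ with $(\Mmperf)^\Nyg$, and finally invokes \Cref{perfdeltaprismatization} (for the perfect $\delta$-ring $\bbZ[\Mmperf]/([0])$) to conclude that $\Mmperf \to (\Mmperf)^\Nyg$ is an equivalence over $\ZpN$. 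This perfection-and-\Cref{perfdeltaprismatization} step is the missing idea in your argument.
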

\begin{proof}
We first check the statement for $\Wbigperf$, where the canonical $\Wbigperf$-algebra structure on $(\Ga^\Nyg)_R$ is obtained by restricting along $\Wbigperf \to \Wperf$.
Here, by \Cref{WbigperftoMmperf}, it suffices to show that the map
\[ \Map_R(\Mmperf, \Mmperf) \to \Map_R(\Mmperf, \Mm^\Nyg) \]
is an equivalence.
Now by perfectness we may replace the target with $(\Mm^\Nyg)^\perf$,\footnotemark{} which is equivalent to $\Mmperf[][\Nyg]$ by the natural map.
\footnotetext{
  It is not clear to the authors whether the ``left perfection'' of an $\Einfty$-monoid is always computed as the limit along the $p$\textsuperscript{th}-power map, but whenever the latter is perfect it is indeed the perfection.
  This in particular holds for any $\Einfty$-monoid admitting an abelian monoid structure.
  See \cite{monoidperfection} for the dual case.
}
But by \Cref{perfdeltaprismatization} the map $\Mmperf \to \Mmperf[][\Nyg]$ is an equivalence, yielding the result.

The statement for $\Wperf$ now follows from the fact that it is a retract of $\Wbigperf$: given a map $\Wperf \to (\Ga^\Nyg)_R$, we obtain from the above a factorization of the composite $\Wperf \to \Wbigperf \to \Wperf \to (\Ga^\Nyg)_R$.
Uniqueness follows similarly.
\end{proof}

\begin{remark}
\label{rmk:ZpSynff2}
One can give a slightly different proof of \Cref{Wperffactorization} using \Cref{WmodMflat} and \Cref{perfdeltamapout}.
\end{remark}

We will also need the following result.

\begin{proposition}
\label{MmNpcompletenessautomatic}
Let $R$ be a $p$-nilpotent ring over $\ZpN$.
Then any ring structure on the monoid stack $(\Mm^\Nyg)_R$ is $p$-complete.
\end{proposition}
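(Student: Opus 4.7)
The strategy is to show that in any ring structure $\calR$ extending the multiplicative monoid structure on $(\Mm^\Nyg)_R$, the element $p_\calR = 1 +_\calR 1 +_\calR \cdots +_\calR 1$ (with $p$ summands) is topologically nilpotent under the fixed multiplication. Since $R$ is $p$-nilpotent, topological nilpotence of $p_\calR$ immediately yields derived $p$-completeness of the ring stack.

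The plan is as follows. First, I reduce the statement to showing that for each $R$-algebra $S$, the animated ring $\calR(S)$ is derived $p$-complete. Since the multiplicative monoid is $\Mm^\Nyg(S) = (W/M)(S)$, which under the natural ring structure is derived $p$-complete by $p$-adic completeness of $W$ over $p$-nilpotent bases, there is a natural ``augmentation locus'' inside $\Mm^\Nyg(S)$ — the kernel of the natural augmentation $(W/M)(S) \twoheadrightarrow S/p$, say — every element of which is topologically nilpotent (its multiplicative powers tend to $0$ in the $V$-adic topology). Second, I would argue that this locus is characterized purely in terms of the multiplicative monoid structure (as the set of elements with $x^{p^n} \to 0$), and hence is detected by any ring structure. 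Third, any ring structure yields a ring map $\calR(S) \to S/p$ sending $p_\calR$ to the nilpotent $p \in S/p$, placing $p_\calR$ in the topologically nilpotent locus.

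The main obstacle is the second step: justifying that the topologically nilpotent locus of $\Mm^\Nyg(S)$ depends only on the multiplicative monoid and not on the addition. A cleaner alternative, which sidesteps this issue, is to invoke the faithful flatness of $\Mm[\ZpN] \to \Mm^\Nyg$ from \Cref{MmMmNflatness} and the uniqueness of the ring structure on $\Mm$ from \Cref{Mmringunique}. Under such a descent, the ring structure on $(\Mm^\Nyg)_R$ pulls back to the canonical $\Ga$-structure on $(\Mm)_R$, in which $p = 1+1+\cdots+1$ is simply $p \in R$, which is nilpotent; fpqc descent of derived $p$-completeness for ring stacks then delivers the result. Making precise what it means to ``pull back a ring structure'' along this map of monoid stacks is the most delicate point to verify.
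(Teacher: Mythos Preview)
Your second approach gestures toward the right ingredients but contains a genuine gap that you yourself flag: there is no way to ``pull back a ring structure along a monoid map''. A ring structure on $(\Mm^\Nyg)_R$ is an additional operation $+ : \Mm^\Nyg \times \Mm^\Nyg \to \Mm^\Nyg$, and precomposing with $\Mm \times \Mm \to \Mm^\Nyg \times \Mm^\Nyg$ does not give you a map landing in $\Mm$. Thus you never obtain a ring structure on $\Mm$ to which \Cref{Mmringunique} could be applied, and the descent step (``fpqc descent of derived $p$-completeness'') has no content: you have not related $\calR$ to any ring stack over $\Mm$. Your first approach has the same underlying problem in step 3 --- the ``augmentation'' $\calR(S) \to S/p$ you want comes from the \emph{canonical} ring structure on $W/M$, not from the arbitrary one $\calR$ --- and step 2 is left as a wish.

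The paper exploits \Cref{MmMmNflatness} quite differently. Surjectivity of $\Mm(S) \to \calR(S)$ on $\pi_0$ is used not to transport the addition but to lift \emph{multiplicative subsets}: localization at a minimal prime of $\calR(S)$ depends only on $\Mm(\calR(S))$, so one may lift the relevant multiplicative set to $S$ and replace $S$ by a localization on which $\calR(S)$ has a unique point. One then maps $S$ to a perfect field $k$; on perfect $k$-algebras the monoid $\Mm^\Nyg$ becomes $\Mmperf$, and \Cref{Mmperfringuniqueperf} (not \Cref{Mmringunique}) forces $\calR(k) \isom k$, which has characteristic $p$. The missing idea, then, is that one should pass to a setting (perfect fields) where the monoid itself pins down the ring structure, rather than attempting to descend the addition.
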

\begin{proof}
Suppose we have such a ring structure $\calR$.
By \Cref{MmMmNflatness} and accessibility of $\Mm$ and $\Mm^\Nyg$, any $R$-algebra admits a flat hypercover by $R$-algebras $S$ for which $\Mm(S) \to \Mm^\Nyg(S)$ is surjective on $\pi_0$.
Since $\Mm^\Nyg$ is a hypersheaf, and since $p$-completeness is preserved under limits, it suffices to check for such $S$.

For any minimal prime of $\calR(S)$, we may lift the corresponding multiplicative subset along $\Mm(S) \to \calR(S)$ to obtain a multiplicative subset of $S$, localization at which is necessarily nonzero.
Note that evaluating $\calR$ at this localization of $S$ yields the corresponding localization of $\calR(S)$: this is equivalent to the corresponding fact for $\Ga^\Nyg$, which follows from \cite[Remark 3.9]{blprismatization} along with the description of $\Ga^\Nyg$ as a pullback as in \cite[Definition 6.4.5]{moduliofFgauges}.
As $p$-nilpotence may be checked after localization at each minimal prime, we may assume by localizing on $S$ that $\calR(S)$ has a unique point.

Let $S \to k$ be a map to a perfect field.
Then $\Mm(\calR)$ is given on perfect $k$-algebras by $\Mmperf$, so by \Cref{Mmperfringuniqueperf} we have $\calR(k) \isom k$.
Thus the unique point of $\calR(S)$ is of characteristic $p$, as desired.
\end{proof}

\begin{theorem}
\label{ZpSynmainthm}
For a ring $A$, the natural map of c-stacks
\[ A^\Syn \to \AlgStk{A} \]
is fully faithful.
Furthermore, given an $A$-algebra stack $\calR$ over a $p$-nilpotent ring $R$, the following are equivalent: \begin{enumerate}
\item
  \label{ZpSynmainthm:taut}
  $\calR$ comes from an $R$-point of $A^\Syn$;
\item
  \label{ZpSynmainthm:def}
  $\calR$ is locally given by $W/M$ for some flat affine quasi-ideal scheme $M \to W$ which is a polyfiltered Cartier--Witt divisor modulo a finitely-generated nilpotent ideal of $R$;
\item
  \label{ZpSynmainthm:monoid}
  $\Mm(\calR)$ comes locally from an $R$-point of $\ZpSyn$.
\end{enumerate}
\end{theorem}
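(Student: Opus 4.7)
The plan is to prove the cycle (\ref{ZpSynmainthm:taut}) $\Rightarrow$ (\ref{ZpSynmainthm:def}) $\Rightarrow$ (\ref{ZpSynmainthm:monoid}) $\Rightarrow$ (\ref{ZpSynmainthm:def}) $\Rightarrow$ (\ref{ZpSynmainthm:taut}), with the full faithfulness claim bundled in along the way. The equivalence (\ref{ZpSynmainthm:taut}) $\Leftrightarrow$ (\ref{ZpSynmainthm:def}) should essentially unwind from the definitions of $A^\Syn$ and $\ZpNN$, together with the observation that $\ZpSyn$ is recovered from $\ZpNN$ by gluing strata with constant Frobenius twist; the gluing accounts for the ``modulo a finitely-generated nilpotent ideal'' slack in (\ref{ZpSynmainthm:def}). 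The implication (\ref{ZpSynmainthm:def}) $\Rightarrow$ (\ref{ZpSynmainthm:monoid}) is tautological after applying $\Mm$ to $W/M$.

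The main content is (\ref{ZpSynmainthm:monoid}) $\Rightarrow$ (\ref{ZpSynmainthm:def}). Working fpqc-locally, I would use the hypothesis to assume $\Mm(\calR) \isom (\Mm^\Nyg)_R$ coming from a specific filtered Cartier--Witt divisor. \Cref{MmNpcompletenessautomatic} then ensures $\calR$ is $p$-complete, and the resulting pointed monoid map $\Mmperf \to \Mm(\calR)$ lifts via \Cref{WbigperftoMmperf} to a ring map $\Wbigperf \to \calR$; restricting along the splitting $\Wbigperf \to \Wperf$ yields a $\Wperf$-algebra structure on $\calR$. Applying \Cref{affstkWbigmap}, whose flat-descendability hypothesis is supplied by \Cref{rmk:MmMmNflatdescendable}, I extend this to a ring map $W \to \calR$ presenting $\calR$ as a quotient $W/M$ by an affine quasi-ideal.

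The main obstacle is then to verify that $M \to W$ is a polyfiltered Cartier--Witt divisor modulo a finitely-generated nilpotent ideal of $R$. By \Cref{vfCWpointwise} this reduces to checking the condition at every field-valued point of $R$; at such a point, the prescribed shape of $\Mm(\calR)$ as a Frobenius twist of $\Mm^\Nyg$ pins down the monoid-stack quotient of $W$ by $M$, which in turn forces $M$ itself to be a Frobenius twist of a filtered Cartier--Witt divisor. \Cref{vfCWdivstrat}, applied along a finite stratification of $\Spec R$ on which the appropriate Frobenius twist is locally constant, then globalizes this pointwise statement to give the polyfiltered condition as required.

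Finally, full faithfulness of $A^\Syn \to \AlgStk{A}$ follows by combining the above with the uniqueness portion of \Cref{Wperffactorization}. Given two $R$-points of $A^\Syn$ sharing a common image $\calR$ in $\AlgStk{A}$, their induced $\Wperf$-algebra structures on $\calR$ must coincide, so the defining polyfiltered Cartier--Witt divisors (recovered as kernels of the two $W \to \calR$ constructed above) agree, and the two lifts coincide. The $A$-algebra structures are already determined by $\calR$ viewed as an $A$-algebra stack.
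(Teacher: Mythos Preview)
Your outline has the right shape, but there are two substantive gaps in (\ref{ZpSynmainthm:monoid}) $\Rightarrow$ (\ref{ZpSynmainthm:taut}), and both the full-faithfulness argument and (\ref{ZpSynmainthm:def}) $\Rightarrow$ (\ref{ZpSynmainthm:taut}) are treated too lightly.

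The first gap: \Cref{rmk:MmMmNflatdescendable} does not supply the hypothesis of \Cref{affstkWbigmap}. That remark concerns the monoid map $\Mmperf \to \Mm^\Nyg$, whereas \Cref{affstkWbigmap} needs the kernel of the \emph{ring} map $\Wperf \to \calR$ you have constructed to be a flat affine quasi-ideal. Nothing so far ties the unknown additive structure on $\calR$ to the known one on $\Mm^\Nyg$. This is the heart of the argument and requires real work: the paper first restricts to the Nygaard-complete locus, reduces modulo $(p, q^{1/p}-1, t)$ to land over $\Fp^\dRc$, and there uses \Cref{Mmringunique} together with \cite[Construction 2.7.11]{fgauges} to pin down $\pi_0(\calR) \isom \Ga$ and $\pi_1(\calR) \isom L^\sharp$, whence $\ker(\Wperf \to \calR)$ is flat as an extension of flat group schemes. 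On the complementary $\jdR$-locus a Frobenius untwist reduces to the same computation, and \Cref{stackflatstrat} glues. Only after flatness is established does countable presentation (which is what \Cref{rmk:MmMmNflatdescendable} actually provides) give descendability.

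The second gap: your invocation of \Cref{vfCWpointwise} presupposes that $M$ is passable, which you have not shown; at this stage you only know $M$ is a flat affine quasi-ideal. The paper instead first proves $M$ is weightwise finite locally free over all of $R$ via \Cref{MmMmNkerfiniteness} and \Cref{gradedpolyquotient}, then verifies directly on the mod-$t$ and $t$-inverted strata (reusing the $\Fp^\dRc$ identification and the Frobenius untwist) that $M \to W$ is a filtered, respectively $2$-polyfiltered, Cartier--Witt divisor there, and only then concludes with \Cref{vfCWdivstrat}. The ``pointwise'' reduction you sketch does not engage with this.

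For full faithfulness, the paper does not bundle it into the essential-image argument: it is proved first and separately from \Cref{Wperffactorization}, Drinfeld's \cite[Theorem 8.4.7]{prismatization}, and \Cref{transff}. Your approach of recovering the Cartier--Witt divisor as a kernel would require the a-posteriori uniqueness discussed after \Cref{affstkWbigmap} (the plain uniqueness clause of \Cref{affstkWbigmap} needs $\calR$ static, which it is not), and even then does not obviously handle automorphisms. Likewise, (\ref{ZpSynmainthm:def}) $\Rightarrow$ (\ref{ZpSynmainthm:taut}) is not merely ``unwinding definitions'': extending the polyfiltered structure across the nilpotent thickening is precisely what \Cref{passableWmodstrat} and \Cref{vfCWdivstrat} are for.
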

\begin{proof}
The first statement follows from \Cref{Wperffactorization}, \cite[Theorem 8.4.7]{prismatization}, and \Cref{transff}.
For the second, it is clear that (\ref{ZpSynmainthm:taut}) implies (\ref{ZpSynmainthm:def}) and (\ref{ZpSynmainthm:monoid}), and (\ref{ZpSynmainthm:def}) implies (\ref{ZpSynmainthm:taut}) by \Cref{passableWmodstrat} and \Cref{vfCWdivstrat}.
It remains to see that (\ref{ZpSynmainthm:monoid}) implies (\ref{ZpSynmainthm:taut}).


Localizing on $R$, we may assume that $\Mm(\calR)$ comes from an $R$-point of the stack $\QdRNyg$, so in particular it receives a map $\Mm \inj \Mm(W) \to \Mm(\calR)$.
By \Cref{MmNpcompletenessautomatic}, \Cref{WmodMflat}, \Cref{Mmperfringunique}, and \Cref{perfdeltamapout} we obtain a diagram of commutative monoids
\[\xymatrix{
  \Mmperf \ar[d] \ar[r] & \Wperf \ar[d] \\
  \Mm \ar[r] & \calR
}\]
where the right vertical map has the structure of a ring map.

Now assume further that $\Spec R \to \QdRNyg$ factors through the Nygaard-complete locus $\QdRNyghat$.
Then on $R / (p, q^{1/p} - 1, t)$ the map factors through $\Fp^\dRc$.
Thus on this locus, by \cite[Construction 2.7.11]{fgauges} and \Cref{Mmringunique}, we have $\pi_0(\calR) \isom \Ga$ and $\pi_1(\calR) \isom L^\sharp$ for some line bundle $L$ on $R / (p, q^{1/p} - 1, t)$.
Note that $\ker(\Wperf \to \calR)$ is then an extension of $\ker(\Wperf \to \Ga)$ by $L^\sharp$, so, as each of these is flat, it is as well.

If $\Spec R \to \QdRNyg$ instead factors through the complement
\[ \jdR(\Spf \QdRtw) \mpunc, \]
we can in fact apply the same argument because of the equivalence
\[ \jdR(\Spf \QdRtw) \isom \jHT(\Spf \QdR) \mpunc, \]
noting that the induced maps from $\Wperf$ and $\Mm$ will differ from the given ones by a Frobenius untwist.
Thus $\Wperf \to \calR$ is flat on $R [\tfrac{1}{t}] / (p, q^{1/p} - 1)$ as well, so by \Cref{stackflatstrat} we conclude that this map is flat on $R$ itself.

By \Cref{rmk:MmMmNflatdescendable} this map is also descendable, so by \Cref{GaSynaffineness} and \Cref{affstkWbigmap} we find that the map $\Mm \to \Mm(\calR)$ extends to a ring map $W \to \calR$.
By the naturality and uniqueness statement of \Cref{affstkWbigmap} this map, restricted to the mod-$t$ locus of $R / (p, q^{1/p} - 1)$, induces on $\pi_0$ the usual map $W \to \Ga$, so it is the quotient by a filtered Cartier--Witt divisor.

On the $t$-inverted locus, we can apply the same argument after Frobenius-untwisting as above to obtain a map $W \to \calR$ which is the quotient by a filtered Cartier--Witt divisor.
It follows from the construction of the map in \Cref{affstkWbigmap} that our original map $W \to \calR$ is the Frobenius twist of this one.
As such, it is the quotient by a $2$-polyfiltered Cartier--Witt divisor.

Finally, we know by \Cref{MmMmNkerfiniteness} and \Cref{gradedpolyquotient} that the kernel of $W \to \calR$ is weightwise finite locally free over all of $R$, so \Cref{vfCWdivstrat} allows us to conclude.
\end{proof}

\begin{remark}
By virtue of \Cref{ZpSynmainthm}, one can view $\ZpSyn$ as the moduli stack of ring structures on a certain monoid stack (see also \Cref{conj:ZpSynSSyn}).
\end{remark}

\begin{remark}
We would like to have a ``pointwise'' description of the essential image in \Cref{ZpSynmainthm}, in terms of ring stacks satisfying some suitable conditions which at each field-valued point arise from $\Fp^\Syn / p$.
Some form of such a description can be obtained from the methods used to prove the \namecref{ZpSynmainthm}, but the conditions most naturally required are somewhat cumbersome and thus unsatisfactory.
\todo*[would be nice to resolve this]
\end{remark}

\begin{remark}
In the situation of \Cref{ZpSynmainthm}(\ref{ZpSynmainthm:monoid}), we are given a ring stack $\calR$ over a ring $R$ together with a map $\Mm \to \calR$ of monoids with $0$.
This yields not just a cohomology theory associated to $\calR$ but also its variants relative to monoid rings.
Indeed, suppose we have an affine scheme $X$ over $A \defeq \bbZ[M]/([0])$, where $M$ is a monoid with $0$.
We can then define the relative transmutation as the pullback
\[ \prn*{X / A}^\calR \defeq X^\calR \times_{A^\calR} \Spec (A \tensor R) \mpunc, \]
where $(-)^\calR$ denotes the usual transmutation by $\calR$ and the right map is obtained from the map $\Mm \to \calR$ above.

From this point of view, one of the main difficulties in the proof of \Cref{ZpSynmainthm} is to extend this to allow $A$ to be a $\delta$-ring: this exactly corresponds to the extension from $\Mm$ to $W$.
\end{remark}

\begin{remark}
\label{rmk:qFdR}
The work \cite{evenstack} (see also \cite[\S II.7]{sanaththesis}) constructs a monoid stack $\Mm^\qFdR$ over the stack $\ZpqFdR \defeq (\Spf \Zp\Brk{\beta}\ang{t}) / \Gm$, transmutation by which computes ``$q$-Hodge filtered $q$-de Rham cohomology'' of monoid rings.\footnotemark
\footnotetext{It seems likely that this stack can also be constructed using the ideas of \cite{pridhamqdR}.}
Further, the pullback of this monoid stack along the (surjective) map
\[ \QdRNyg \to \ZpqFdR \]
sending $t \mapsto t$ and $\beta \mapsto (q^{1/p} - 1) u$ yields the stack $(\Mm[\Zpzetap] / \QdR)^\Nyg$.
\Cref{ZpSynmainthm} therefore implies that $\ZpSyn$ is the stack of ring stacks with underlying monoid stack locally equivalent to $\Mm^\qFdR$.
Thus any sufficiently-structured refinement of $q$-Hodge filtered $q$-de Rham cohomology of monoid rings to a cohomology theory for all rings must come from a point of $\ZpSyn$.
\end{remark}

\begin{remark}
We do not know a moduli description of the map
\[ \QdRNyg \to \ZpqFdR \]
along the lines of \Cref{ZpSynmainthm}, but we expect that the source is \emph{not} simply the stack of ring structures on $\Mm^\qFdR$.
\end{remark}

\begin{remark}[Thick prismatization]
\label{rmk:shearedprismatization}
The work \cite{shearedprismatization} defines a certain ``thickening'' of the prismatization modeled on the difference between the ``unipotent'' de Rham stack $\Ga/\Gasharp$ and the true de Rham stack $\Ga/\Gasharphat$.
The authors of \emph{op.\ cit.\@} prove the analogue of \Cref{WmodMflat} for their ring stack $\Ga^\Prismtilde$, so we expect the argument of \Cref{ZpSynmainthm} to similarly show full faithfulness of the map
\[ A^\Prismtilde / \phi^\bbN \to \AlgStk{A} \]
of c-stacks, where $(-)^\Prismtilde$ is the functor of \emph{op.\ cit.}

Identifying the essential image, on the other hand, seems more difficult, as in \Cref{rmk:QFdRmonoidstk}.
\end{remark}

\foralltheorems{
  \numberwithin{#1}{section}
  \numbernotwithin{#1}{subsection}
}

\section{De Rham}

Here we will show that the filtered de Rham stack of an animated $\bbQ$-algebra $R$ embeds fully faithfully into the stack of $R$-algebra stacks.
Our strategy for doing so is to use some $\RHom$ computations in abelian sheaves to reduce from maps of ring stacks to maps of generalized Cartier divisors.

In this section, all stacks are derived, but we restrict to the pro-fppf topology rather than the fpqc topology, as we do not know fpqc acyclicity of $\Gahat$ on affines.

The following definition is inspired by \cite[\S 6.4]{moduliofFgauges}.

\begin{definition}
Write $\QFdR$ for the c-stack $(\AomodMm)_\bbQ$.
We have a virtual Cartier divisor $\Ga \to \Gabar$ in ring stacks over $\QFdR$ (see \cite[\S 3.2]{gcds}), and we define the $\bbQ$-algebra stack $\Ga^\FdR \to \QFdR$ as the pullback
\[\xymatrix{
  \Ga^\FdR \pullbackcorner \ar[d] \ar[r] & \Ga^\dR \ar[d] \\
  \Gabar \ar[r] & \Gabar \tensor_{\Ga} \Ga^\dR
  \mpunc,
}\]
where $\Ga^\dR$ is the usual de Rham stack of $\Ga$, given by $\Ga^\dR(R) \defeq \pi_0(R)_\red$.
Note that the restriction of $\Ga^\FdR$ to $1 \in \QFdR$ is simply $\Ga^\dR$.

For any animated $\bbQ$-algebra $R$, we now define the c-stack $R^\FdR$ as the transmutation of $R$ by $\Ga^\FdR$, and we define $R^\dR$ and $R^\Hodge$ as its fibers over $1$ and $0$, respectively, in $\QFdR$.\footnotemark
\footnotetext{%
  Note that this does not generally give the correct de Rham stack for infinite-type schemes; see \cite[\S 7.1]{stacksatiyahsegal1} for a discussion of this point.
  One can obtain the correct transmutation by regarding $\Ga^\FdR$ as a stack of ind-$\bbQ$-algebras; it is then natural to ask for full faithfulness of the map from $\QFdR$ to the stack of ind-$\bbQ$-algebra stacks, but we do not pursue this here.\todo*[it might be easy, but it's unclear]
}
\end{definition}

The use of the word ``stack'' above to describe $\Ga^\FdR$ is justified by the following \namecref{GahatsheafQ}.

\begin{proposition}
\label{GahatsheafQ}
The assignment $R \mapsto \Gahat(R)$ defines a $\calD(\bbQ)$-valued pro-fppf hypersheaf on animated $\bbQ$-algebras.
Thus $R \mapsto \Ga^\dR(R) \defeq R_\red$ does as well.
\end{proposition}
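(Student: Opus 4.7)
The plan is to first reduce the claim about $\Ga^\dR$ to the one about $\Gahat$, and then to attack the hyperdescent for $\Gahat$ using its natural presentation as a filtered colimit of finite affine schemes.

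For the reduction, observe that we have a short exact sequence of abelian-group presheaves
\[ 0 \to \Gahat \to \Ga \to \Ga^\dR \to 0 \]
on animated $\bbQ$-algebras: $\Gahat(R)$ consists of the nilpotent elements of $\pi_0 R$ (with appropriate derived structure), $\Ga(R) = R$ is tautologically a pro-fppf hypersheaf, and $R_\red = \pi_0(R)/\nil(\pi_0 R)$ realizes the cokernel. Hence once we know $\Gahat$ is a pro-fppf hypersheaf valued in $\calD(\bbQ)$, the quotient $\Ga^\dR$ is as well, as the cokernel of a map of hypersheaves of abelian groups in a Grothendieck topos is again a hypersheaf.

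For the main claim, I would write
\[ \Gahat \simeq \colim_n \Spec \bbQ[x]/x^n \]
as a filtered colimit of affine $\bbQ$-schemes, so that on animated $\bbQ$-algebras $R$,
\[ \Gahat(R) \simeq \colim_n \fib_0\bigl(R \xrightarrow{(-)^n} R\bigr) \mpunc. \]
Each $\Spec \bbQ[x]/x^n$ is representable and hence an fpqc hypersheaf. The one substantive obstacle is that $\Tot$ (the totalization controlling hyperdescent) does not commute with filtered colimits in general, so one cannot immediately conclude that the colimit is a hypersheaf.

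To overcome this, I would exploit the specific shape of the pro-fppf topology (this is exactly why the paper replaces fpqc by pro-fppf). Concretely, pro-fppf covers of $R$ can be refined to cofiltered limits of finitely-presented faithfully flat covers, and a key property in this setting is that for a pro-fppf hypercover $R \to R^\bullet$, the totalization $\Tot \fib_0(R^\bullet \xrightarrow{(-)^n} R^\bullet)$ can be controlled by a finite partial totalization up to suitably large cofinal index, so that the filtered colimit over $n$ commutes with $\Tot$. Combined with faithfully flat descent for the (static) ideal of nilpotents on $\bbQ$-algebras, this gives the desired hyperdescent. The hardest step is precisely this interchange of filtered colimit and totalization, and this is where the restriction away from fpqc (cf.\ the footnote about unknown fpqc acyclicity of $\Gahat$) becomes essential.
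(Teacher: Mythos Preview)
Your reduction of $\Ga^\dR$ to $\Gahat$ via the fiber sequence $\Gahat \to \Ga \to \Ga^\dR$ is fine and matches the paper's ``Thus $R \mapsto \Ga^\dR(R)$ does as well.''

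The main argument, however, has a genuine gap. You correctly identify that commuting the filtered colimit $\colim_n \Spec \bbQ[x]/x^n$ with $\Tot$ is the crux, but your proposed fix---``controlled by a finite partial totalization up to suitably large cofinal index''---is not a proof; nothing about pro-fppf covers gives you a uniform bound on the relevant $\Tot$-convergence, and the refinement of a pro-fppf cover as a cofiltered limit of fppf covers does not by itself let you swap the two limits.

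The paper avoids this problem entirely by splitting $\Gahat$ along its Postnikov tower rather than along the filtration by nilpotence order. The key observation is that for $i \geq 1$, $\pi_i \Gahat(R)$ is governed by $\pi_i R$ (formal smoothness of $\Gahat$ over $\bbQ$), so the presheaf $\tau^{\leq -1}\Gahat$ is a hypersheaf by descent for modules. It then remains only to handle $\pi_0 \Gahat$, which is the static presheaf of nilpotent ideals. Here staticness and finitarity reduce pro-fppf \emph{hyper}descent to ordinary fppf descent, and that is the result of de Jong you allude to at the end. So you already have one of the two ingredients; what you are missing is the observation that the derived part of $\Gahat$ is module-valued and therefore poses no difficulty, making the colimit/$\Tot$ interchange unnecessary.
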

\begin{proof}
The statement for $\tau^{\leq -1} \Gahat$ follows from descent for modules, so it suffices to show the statement in degree $0$.
By staticness and finitarity of $\Gahat$ it suffices to prove fppf descent, which is due to de Jong \cite[Remark 2.2.18]{fgauges}.
\end{proof}

\begin{proposition}[folklore]
\label{QRHomGaGabase}
The natural map
\[ \RHom_\bbQ(\Ga, \Ga) \to \RHom_\bbQ(\bbZ, \Ga) \isom \bbQ \]
is an equivalence.
\end{proposition}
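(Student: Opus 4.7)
The map is restriction along the canonical unit $\underline{\bbZ} \to \Ga$ sending $1$ to $1 \in \Ga(\bbQ)$. The identification $\RHom_\bbQ(\bbZ, \Ga) \isom \bbQ$ reflects that $\RHom(\underline{\bbZ}, -)$ computes sections over the terminal object $\Spec \bbQ$, where $\Ga$ has no higher cohomology.

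In degree zero, I would invoke Yoneda in abelian sheaves to identify $\Hom_\bbQ(\Ga, \Ga)$ with the primitive elements of $\Ga(\bbA^1_\bbQ) = \bbQ[t]$ under the Hopf comultiplication $\Delta(t) = t \otimes 1 + 1 \otimes t$; these are exactly the linear polynomials $\bbQ \cdot t$, and restriction along $\underline{\bbZ} \to \Ga$ sends $ct \mapsto c$, so induces the identity $\bbQ \to \bbQ$.

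For the vanishing of $\RHom_\bbQ(\Ga, \Ga)$ in positive degrees, the first reduction is that $\Ga$ is already a sheaf of $\bbQ$-vector spaces, so multiplication by any nonzero integer acts invertibly; consequently $\RHom$ of abelian sheaves over $\bbQ$ agrees with $\RHom$ computed in sheaves of $\bbQ$-modules. In the latter category I would use a Breen--Deligne style resolution of $\Ga$ by free $\bbQ$-module sheaves on powers of $\Ga$, so that $\RHom(\Ga, \Ga)$ is computed by the additive cochain complex with $n$\textsuperscript{th} term $\Ga(\Ga^n) = \bbQ[t_1, \dots, t_n]$ and differentials given by alternating sums of addition maps. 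The diagonal $\Gm^n$-action is exact and in characteristic zero restricts the complex to its multilinear weight-$(1, \dots, 1)$ part, which an elementary analysis shows is acyclic in positive degrees and has $H^0 \isom \bbQ$.

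The hard part is this final Ext-vanishing, essentially the classical computation of Breen; characteristic zero is essential, since in characteristic $p$ the Frobenius-twist primitives $t^{p^k}$ would contribute additional endomorphisms and nontrivial higher classes.
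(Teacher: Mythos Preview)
Your degree-zero argument is fine, but the higher-degree argument has a genuine gap: the complex you write down does not compute $\RHom_{\mathrm{Ab}}(\Ga,\Ga)$. The complex with $n$\textsuperscript{th} term $\Ga(\Ga^n)=\bbQ[t_1,\dots,t_n]$ and bar-type differentials is the cochain complex of the simplicial object $[n]\mapsto\Ga^n$, i.e.\ it computes $\RGamma(B\Ga,\calO)=\Ext^*_{\Rep(\Ga)}(\bbQ,\bbQ)$. Over $\bbQ$ this is $\bbQ\oplus\bbQ[-1]$ (use the injective resolution $0\to\bbQ\to\bbQ[t]\xrightarrow{d/dt}\bbQ[t]\to 0$ in $\Rep(\Ga)$), not $\bbQ$ in degree $0$. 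The actual Breen--Deligne resolution of $\Ga$ has terms which are \emph{direct sums} $\bigoplus_j\bbZ[\Ga^{n_{ij}}]$, and the resulting complex after applying $\Hom(-,\Ga)$ is not the bar complex. Your ``diagonal $\Gm^n$-action'' is also problematic as stated: the simplicial face maps only respect the single diagonal $\Gm$, not the full torus, so there is no multigrading to restrict to weight $(1,\dots,1)$.

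For comparison, the paper takes an entirely different route: it replaces $\Ga$ by $\Wbig$ (an infinite product of $\Ga$'s in characteristic $0$, and $\RHom(-,\Ga)$ commutes with cofiltered limits of affine group schemes), then by $\Wrat$ via a cited result of Mathew, then passes from the pro-fppf to the $h$-topology (where the two agree for $\Ga$ on smooth schemes in characteristic $0$), and finally identifies $\RHom(\Wrat,\Ga)$ with $\RGamma((\bbA^1,0),\Ga)$ using the free-commutative-monoid description of $\Wratplus$. A salvage of your idea would need to work with the genuine Breen--Deligne complex and use only the single $\Gm$-action by conjugation, arguing that the resulting weight decomposition over $\bbQ$ forces the cohomology into the linear (weight-$0$) part; this can be made to work, but requires justifying that the $\Gm$-action on $\RHom(\Ga,\Ga)$ is trivial, which is not automatic from what you wrote.
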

\begin{proof}
The statement in degree $0$ is easy; for the full statement it remains to show that the left-hand side has no higher cohomology.
For this we may instead consider
\[ \RHom_\bbQ(\Wbig, \Ga) \mpunc, \]
as in characteristic $0$ the group scheme $\Wbig$ is an infinite product of copies of $\Ga$, and the relevant functor commutes with cofiltered limits of affine group schemes.
By \cite[Proposition A.6]{akhilrh} we may further replace $\Wbig$ by the rational Witt vectors $\Wrat$.
As $h$- and pro-fppf cohomology of $\Ga$ on smooth schemes in characteristic $0$ agree by \cite[Corollary 6.5]{hdiffforms}, and as $\bbA^1$ and $\Wratplus$ are ind-smooth, we may pass to $h$-topology.
We then conclude by the argument of \cite[Proposition A.15]{akhilrh} that the natural map
\[ \RHom_\bbQ(\Wrat, \Ga) \to \RGamma((\bbA^1, 0), \Ga) \]
is an equivalence, whence the result follows.\footnotemark
\footnotetext{
  One can give a slightly different version of the last step as follows.
  The Breen--Deligne resolution implies that it suffices to check the analogous statement for $\Hom_R(\Wrat, \Ga)$, where $R = \bbQ[\epsilon]/\epsilon^2$ with $\epsilon$ in degree $n$, for each $n > 0$.
  This statement then follows from the fact that $\Wratplus$ is the free commutative monoid ind-(affine derived scheme) on $(\bbA^1, 0)$ in characteristic $0$.
}
\end{proof}

\begin{corollary}
\label{QRHomGaGa}
For an animated $\bbQ$-algebra $R$ and $R$-module $M$, the natural map
\[ \RHom_R(\Ga, \Ga \tensor M) \to \RHom_R(\bbZ, \Ga \tensor M) \isom M \]
is an equivalence.
\end{corollary}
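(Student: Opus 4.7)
The strategy closely follows that of \Cref{WbigperftoA1perf}, with the absolute input now provided by \Cref{QRHomGaGabase} in place of \cite[Theorem A.1]{akhilrh}.

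First I would reduce to the case $R = \bbQ$. Writing $\pi : \Spec R \to \Spec \bbQ$ for the structure map, the affine line on $R$ is the pullback $\pi^* \Ga$, so the adjunction between $\pi^*$ and $\pi_*$ on pro-fppf sheaves of abelian groups gives
\[ \RHom_R(\Ga, \Ga \tensor_R M) \equiv \RHom_\bbQ(\Ga, \pi_*(\Ga \tensor_R M)). \]
A direct computation identifies $\pi_*(\Ga \tensor_R M)$ with the pro-fppf abelian sheaf $T \mapsto T \tensor_\bbQ M$ on $\bbQ$-algebras (for a $\bbQ$-algebra $T$, one has $(\Ga \tensor_R M)(T \tensor_\bbQ R) = (T \tensor_\bbQ R) \tensor_R M = T \tensor_\bbQ M$). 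Naturality reduces the problem to showing that for any $\bbQ$-module $N$ the map
\[ \RHom_\bbQ(\Ga, \Ga \tensor_\bbQ N) \to \RHom_\bbQ(\bbZ, \Ga \tensor_\bbQ N) \equiv N \]
is an equivalence.

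Now I would mirror the strategy of \Cref{WbigperftoA1perf}. Both sides commute with limits in $N$: the right trivially, and the left because $\Ga \tensor_\bbQ -$ is exact (every $\bbQ$-module is flat) and $\RHom_\bbQ(\Ga, -)$ preserves limits. Taking the limit of the Postnikov tower of $N$ thus reduces us to the coconnective case. The pseudocoherence of $\Ga$ in the pro-fppf topology in characteristic $0$—which, as in the proof of \Cref{QRHomGaGabase}, follows from the Breen--Deligne resolution—lets us commute $\RHom_\bbQ(\Ga, -)$ past filtered colimits of coconnective modules. Writing $N$ as a filtered colimit of finite direct sums of copies of $\bbQ$, we are therefore reduced to $N = \bbQ$, which is \Cref{QRHomGaGabase}.

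The main subtle point is ensuring that the precise flavor of pseudocoherence needed is available for $\Ga$ as a pro-fppf abelian sheaf over $\bbQ$, rather than over a perfect base of positive characteristic. Once the Breen--Deligne argument is set up in this context exactly as in \Cref{QRHomGaGabase}, the rest of the proof is a formal adaptation of the base-change-then-Postnikov-then-filtered-colimit pattern of \Cref{WbigperftoA1perf}.
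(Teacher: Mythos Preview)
Your proposal is correct and follows essentially the same approach as the paper: the paper's proof is the one-liner ``As in \Cref{WbigperftoA1perf}, we may reduce to the case $R = M = \bbQ$, which is \Cref{QRHomGaGabase}'', and you have simply unwound what that reduction entails (adjunction to pass to $R = \bbQ$, Postnikov tower to reduce to coconnective $N$, Breen--Deligne pseudocoherence to commute out filtered colimits, then the base case).
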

\begin{proof}
As in \Cref{WbigperftoA1perf}, we may reduce to the case $R = M = \bbQ$, which is \Cref{QRHomGaGabase}.
\end{proof}

\begin{lemma}
\label{Gareduced}
For an animated ring $R$, the natural map
\[ \calO(\bbA^n_R)_\red \to \calO(\bbA^n_{R_\red}) \]
is an isomorphism.
\end{lemma}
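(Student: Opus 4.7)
The plan is to reduce the statement to a classical computation about commutative rings. By the paper's conventions, schemes are classical, so $\bbA^n_R$ depends only on $\pi_0 R$; thus $\calO(\bbA^n_R) = \pi_0(R)[x_1, \ldots, x_n]$ and $R_\red = \pi_0(R)_\red$. Setting $A \defeq \pi_0(R)$, the claim reduces to showing that for any classical commutative ring $A$, the natural map
\[ (A[x_1, \ldots, x_n])_\red \to A_\red[x_1, \ldots, x_n] \]
is an isomorphism. By induction on $n$, it suffices to treat $n = 1$.

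The map $A[x] \to A_\red[x]$ is surjective with kernel $\nil(A) \cdot A[x]$, so the question is exactly whether $\nil(A[x]) = \nil(A) \cdot A[x]$. For the containment $\supseteq$, note that any polynomial whose coefficients are nilpotent is a finite sum of nilpotent elements of the commutative ring $A[x]$, hence nilpotent. For the containment $\subseteq$, I would use that $\nil(A[x])$ equals the intersection of all primes: for each prime $\frakp \subset A$, the quotient $(A/\frakp)[x]$ is a domain (as $A/\frakp$ is), so any nilpotent polynomial must have all coefficients in $\frakp$; intersecting over all primes of $A$ places all coefficients in $\nil(A)$.

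There is no serious obstacle here; the only thing to watch is the animated-versus-classical distinction, which is dispatched by the paper's convention at the outset. The argument is entirely elementary once that reduction is made.
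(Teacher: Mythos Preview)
Your proof is correct and follows the same overall reduction as the paper: pass to $\pi_0$, induct on $n$ to reduce to $n=1$, and show $\nil(A[x]) = \nil(A)\cdot A[x]$ for a classical ring $A$. The only genuine difference is in this last step. The paper argues directly and constructively: given a nilpotent polynomial, the leading coefficient is nilpotent (since it appears as the leading coefficient of every power), so subtract it off and induct on the number of nonzero terms. You instead invoke the characterization of the nilradical as the intersection of all primes and use that $(A/\frakp)[x]$ is a domain. Your argument is cleaner and more conceptual; the paper's is more elementary in that it avoids the implicit appeal to Zorn's lemma needed to know every nonzero ring has a prime ideal. Either is perfectly adequate here.
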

\begin{proof}
It suffices by induction to prove the statement for $n = 1$.
Both sides send $R \to \pi_0(R)$ to an equivalence, so it suffices to check the statement for $R$ static.
In this case, the statement reduces to the claim that every nilpotent element of $R[t]$ is of the form $\sum_{i} x_i t^i$ with $x_i$ nilpotent.
To see this, let $x$ be a nilpotent element of $R[t]$.
We will show the statement by induction on the number of nonzero terms of $x$.
Consider the highest degree term $a_i t^i$ of $x$, and note that any power $x^n$ of $x$ will have highest degree term $a_i^n t^{i n}$.
Since $x$ is nilpotent, it follows that $a_i$ must be nilpotent, so $x - a_i t^i$ is also nilpotent with strictly fewer terms than $x$.
By the inductive hypothesis, $x - a_i t^i$ is of the desired form, so $x$ is as well.
\end{proof}

\begin{corollary}
\label{QRHomGaGadR}
For an animated $\bbQ$-algebra $R$, the natural map
\[ \RHom_R(\Ga, \Ga^\dR) \to \RHom_R(\bbZ, \Ga^\dR) \isom R_\red \]
is an equivalence.
\end{corollary}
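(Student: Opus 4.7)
The plan is to adapt the argument of \Cref{QRHomGaGa} to this setting, using \Cref{Gareduced} to control $\Ga^\dR$ on smooth test objects. As a first step, I would reduce to the case $R = \bbQ$ exactly as in the proof of \Cref{QRHomGaGa}: this is legitimate because the assignment $S \mapsto \pi_0(S)_\red$ does not depend on whether $S$ is viewed as a $\bbQ$-algebra or an $R$-algebra, so $\Ga^\dR$ is preserved under base change along $\bbQ \to R$.

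Next, I would mimic the chain of reductions in the proof of \Cref{QRHomGaGabase}. The crucial observation is that $\Ga^\dR$ coincides with $\Ga$ on the full subcategory of reduced $\bbQ$-algebras: if $S$ is reduced then $\Ga^\dR(S) = S_\red = S = \Ga(S)$, and \Cref{Gareduced} further ensures that $\Ga^\dR$ agrees with $\Ga$ on all polynomial $\bbQ$-algebras, and hence on any ind-smooth $\bbQ$-scheme. In particular, on the objects $\bbA^1$ and $\Wratplus$ that drive the proof of \Cref{QRHomGaGabase}, the two sheaves are indistinguishable. Therefore the successive replacements in that proof -- the passage from $\Ga$ to $\Wbig$ to $\Wrat$ in the first variable, the passage from the pro-fppf to the $h$-topology via \cite[Corollary 6.5]{hdiffforms}, and the final appeal to \cite[Proposition A.15]{akhilrh} -- can be carried out with $\Ga^\dR$ replacing $\Ga$ as the target, yielding
\[ \RHom_\bbQ(\Ga, \Ga^\dR) \isom \RHom_\bbQ(\Ga, \Ga) \isom \bbQ. \]
The corresponding identification with $\RHom_\bbQ(\bbZ, \Ga^\dR) = \bbQ_\red = \bbQ$ is then immediate; descending back to $R$ via base change yields the desired equivalence with $R_\red$.

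The main obstacle I anticipate is justifying the descent properties of $\Ga^\dR$ required for these reductions. Pro-fppf hyperdescent for $\Ga^\dR$ is already provided by \Cref{GahatsheafQ}. For the transition to the $h$-topology on smooth $\bbQ$-schemes I would use the agreement $\Ga^\dR|_{\mathrm{sm}} \isom \Ga|_{\mathrm{sm}}$ together with the known $h$-descent of $\Ga$ in characteristic $0$, so that the pro-fppf cohomology of $\Ga^\dR$ on a smooth $\bbQ$-scheme can be computed as $h$-cohomology of $\Ga$. A secondary point to verify is that the Breen--Deligne-style argument of the footnote to \Cref{QRHomGaGabase} -- which checks the requisite vanishing against dual numbers $\bbQ[\epsilon]/\epsilon^2$ in positive homotopical degree -- remains valid when the target is $\Ga^\dR$; this again reduces to the agreement with $\Ga$ on reduced bases.
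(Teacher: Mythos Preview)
Your reduction to $R = \bbQ$ is where the argument breaks. In \Cref{QRHomGaGa} the reduction works because the target $\Ga \tensor_R M$ has the property that its pushforward along $f : \Spec R \to \Spec \bbQ$ is again of the form $\Ga \tensor_\bbQ M$ (with $M$ now regarded as a $\bbQ$-module). For $\Ga^\dR$ this fails: $f_*(\Ga^\dR)_R$ is the sheaf $S \mapsto (S \tensor_\bbQ R)_\red$, which is not $(\Ga^\dR)_\bbQ$. Knowing merely that $(\Ga^\dR)_R = f^*(\Ga^\dR)_\bbQ$ gives you nothing here, since the adjunction runs the other way. Consequently there is no ``descending back to $R$ via base change'' step that produces $R_\red$ from the $\bbQ$-case; naive base change of the $\bbQ$-answer would give $R$, not $R_\red$.

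The paper's proof sidesteps this by never reducing to $\bbQ$. One observes that \Cref{Gareduced} identifies the Breen--Deligne spectral sequence computing $\RHom_R(\Ga, \Ga^\dR)$ --- whose terms are built from $\Ga^\dR((\Ga^n)_R) = R[x_1,\dots,x_n]_\red = R_\red[x_1,\dots,x_n]$ --- with the Breen--Deligne spectral sequence computing $\RHom_{R_\red}(\Ga, \Ga)$, and then invokes \Cref{QRHomGaGa} with $R_\red$ in place of $R$. Your instinct to use \Cref{Gareduced} and compare to $\Ga$ is correct, but the right move is to pass from $R$ to $R_\red$ (changing the target from $\Ga^\dR$ to $\Ga$ simultaneously), not from $R$ to $\bbQ$. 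Once that identification is made, the result follows in one line, with no need to rerun the longer argument of \Cref{QRHomGaGabase}.
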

\begin{proof}
By \Cref{Gareduced}, the Breen--Deligne spectral sequences computing the objects $\RHom_R(\Ga, \Ga^\dR)$ and $\RHom_{R_\red}(\Ga, \Ga)$ are isomorphic by the natural map, so we obtain
\[ \RHom_R(\Ga, \Ga^\dR) \isom \RHom_{R_\red}(\Ga, \Ga) \isom R_\red \mpunc. \qedhere \]
\end{proof}

\begin{lemma}
For an animated $\bbQ$-algebra $R$, the objects
\[ \RHom_R(\Ga^\dR, \Ga) \quad \textrm{and} \quad \RHom_R(\Gahat, \Ga^\dR) \]
vanish.
\end{lemma}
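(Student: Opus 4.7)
The plan is to leverage the short exact sequence of pro-fppf abelian sheaves
\[ 0 \to \Gahat \to \Ga \to \Ga^\dR \to 0, \]
which is valid because $\Ga \to \Ga^\dR$ is already surjective on sections (one may take any set-theoretic representative of a reduced element) with kernel precisely the nilpotent elements, i.e., $\Gahat$ by definition. Applying suitable $\RHom$ functors, both vanishing statements reduce to computations of $\RHom_R(\Gahat, \Ga)$ and $\RHom_R(\Gahat, \Gahat)$.

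For the first vanishing, applying $\RHom_R(-, \Ga)$ yields the fiber sequence
\[ \RHom_R(\Ga^\dR, \Ga) \to \RHom_R(\Ga, \Ga) \to \RHom_R(\Gahat, \Ga). \]
By \Cref{QRHomGaGa} the middle term is $R$. It therefore suffices to show that $\RHom_R(\Gahat, \Ga) \simeq R$ and that the restriction map is the identity under this identification; the latter is automatic once the identification is constructed naturally, since the identity on $\Ga$ restricts to the inclusion $\Gahat \inj \Ga$, which corresponds under the identification to the scalar $1 \in R$.

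For the second vanishing, applying $\RHom_R(\Gahat, -)$ yields the fiber sequence
\[ \RHom_R(\Gahat, \Gahat) \to \RHom_R(\Gahat, \Ga) \to \RHom_R(\Gahat, \Ga^\dR). \]
Here I would observe that any natural scalar-multiplication map $\Gahat \to \Ga$ automatically factors through $\Gahat$, since scalar multiplication preserves nilpotents; granted the $\Ext$ computations above, this identifies $\RHom_R(\Gahat, \Gahat) \simeq R$ compatibly with $\RHom_R(\Gahat, \Ga) \simeq R$, so the first map is an equivalence and the vanishing follows.

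The technical heart of the argument is therefore the computation $\RHom_R(\Gahat, \Ga) \simeq R$ (from which the variant $\RHom_R(\Gahat, \Gahat) \simeq R$ follows as above). I would follow the strategy of \Cref{QRHomGaGabase}: first reduce to the universal case $R = \bbQ$ using pseudocoherence considerations and a Breen--Deligne spectral sequence argument. Then, exploiting the ind-affine structure $\Gahat = \colim_n \Spec \bbQ[x]/x^n$, the terms $\Ga(\Gahat^k)$ of the Breen--Deligne resolution identify with formal power series rings $\bbQ[[x_1, \dots, x_k]]$, and the cohomology of the resulting complex computes to $\bbQ$ concentrated in degree zero. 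I expect the vanishing of higher $\Ext$ groups in this computation to be the main obstacle; the characteristic-zero hypothesis enters essentially here, as the analogous vanishing would fail in positive characteristic where $\Gahat$ has nontrivial formal-group structure.
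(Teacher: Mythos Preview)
Your reduction is logically sound but leaves the hard work undone, and the ``hard work'' you isolate is not actually easier than the original lemma. Via the fiber sequence you invoke, the statement $\RHom_R(\Gahat,\Ga)\simeq R$ is \emph{equivalent} to $\RHom_R(\Ga^\dR,\Ga)=0$, so you have merely restated the first vanishing. Your proposed attack on it via Breen--Deligne has $E_1$-terms $\bbQ[[x_1,\dots,x_n]]$, and showing that this complex of power series rings has cohomology $\bbQ$ is exactly the difficulty you flag as ``the main obstacle''; nothing in your sketch addresses it. For the second vanishing, the observation that scalar multiples of nilpotents are nilpotent only controls $\pi_0$ of the map $\RHom_R(\Gahat,\Gahat)\to\RHom_R(\Gahat,\Ga)$; it says nothing about higher Ext, so the deduction that this map is an equivalence is circular (it is equivalent to $\RHom_R(\Gahat,\Ga^\dR)=0$, which is what you want).

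The paper's proof avoids all of this by applying Breen--Deligne \emph{directly} to the two objects in the lemma rather than to $\RHom_R(\Gahat,\Ga)$. The point is that for these particular pairings the $E_1$-terms are constant in $n$: one has $\RGamma((\Ga^\dR)^n,\Ga)\simeq R$ (functions on the de Rham stack of $\bbA^n$) and $\RGamma((\Gahat)^n,\Ga^\dR)\simeq R$ (the reduction of any nilpotent thickening of $R$ is $R_\red$, independent of the thickening). Hence each spectral sequence is naturally identified with the one computing $\RHom_R(*,\Ga)$, which is zero. The insight you are missing is that $\Ga^\dR$ and $\Gahat$ are ``transparent'' to one another in this sense, so the direct computation is trivial, whereas $\RHom_R(\Gahat,\Ga)$ has genuinely large $E_1$-terms and no such shortcut.
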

\begin{proof}
As the natural maps $R \to \RGamma((\Ga^\dR)^n)$ and $R \to \RGamma((\Gahat)^n, \Ga^\dR)$ are equivalences for all $n$, the $\bbE_1$ pages of the Breen--Deligne spectral sequences computing the objects in question agree naturally with that computing $\RHom_R(*, \Ga)$.
The result follows immediately.
\end{proof}

\begin{corollary}
\label{QRHomGaGahatetc}
For an animated $\bbQ$-algebra $R$, the natural maps
\begin{align*}
  \RHom_R(\Ga, \Gahat) &\to \RHom_R(\bbZ, \Gahat) \isom \Gahat(R) \mpunc, \\
  \RHom_R(\Ga, \Ga) &\to \RHom_R(\Gahat, \Ga) \mpunc{, and} \\
  \RHom_R(\Gahat, \Gahat) &\to \RHom_R(\Gahat, \Ga)
\end{align*}
are equivalences.
\end{corollary}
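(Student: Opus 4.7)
The plan is to deduce all three equivalences from the short exact sequence
\[ 0 \to \Gahat \to \Ga \to \Ga^\dR \to 0 \]
of abelian sheaves on animated $\bbQ$-algebras, in conjunction with \Cref{QRHomGaGa}, \Cref{QRHomGaGadR}, and the vanishing results of the preceding unnamed lemma. This sequence is exact on the nose in characteristic $0$: pointwise $\Gahat(R)$ picks out the nilpotents in $\pi_0(R)$, while $\Ga^\dR(R) = \pi_0(R)_\red$, and the quotient map $R \twoheadrightarrow R_\red$ ensures surjectivity already at the presheaf level.

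For the second equivalence I would apply $\RHom_R(-,\Ga)$ to this exact sequence to obtain a fiber sequence
\[ \RHom_R(\Ga^\dR,\Ga) \to \RHom_R(\Ga,\Ga) \to \RHom_R(\Gahat,\Ga), \]
whose leftmost term vanishes by the preceding lemma; the remaining map is the one claimed to be an equivalence. For the third equivalence I would dually apply $\RHom_R(\Gahat,-)$ to the same sequence, yielding
\[ \RHom_R(\Gahat,\Gahat) \to \RHom_R(\Gahat,\Ga) \to \RHom_R(\Gahat,\Ga^\dR), \]
whose rightmost term vanishes, also by the preceding lemma.

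For the first equivalence I would compare the two fiber sequences obtained by applying $\RHom_R(\Ga,-)$ and $\RHom_R(\bbZ,-)$ to the exact sequence, with the comparison map induced by the unit $\bbZ \to \Ga$. In the resulting commutative ladder, the comparison on the $\Ga$-slot is the equivalence of \Cref{QRHomGaGa} (taking $M = R$), while the comparison on the $\Ga^\dR$-slot is the equivalence of \Cref{QRHomGaGadR}. By the $\infty$-categorical five lemma, the induced map on the remaining fibers $\RHom_R(\Ga,\Gahat) \to \RHom_R(\bbZ,\Gahat) \isom \Gahat(R)$ must be an equivalence as well.

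The proof is essentially a formal diagram chase assembling the previously established inputs, so there is no serious obstacle. The only point requiring mild vigilance is verifying that the displayed sequence is genuinely short exact in the pro-fppf topology used in this section, so that the $\RHom$ functors produce honest fiber sequences; given the definition of $\Ga^\dR$ via pointwise reduction and the acyclicity of $\Gahat$ established in \Cref{GahatsheafQ}, this is immediate.
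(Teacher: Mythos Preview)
Your proposal is correct and matches the paper's approach exactly: the paper's proof is a one-liner stating that the result follows from the fiber sequence $\Gahat \to \Ga \to \Ga^\dR$ together with the preceding results, and you have simply spelled out the three applications of this fiber sequence that the paper leaves implicit. The only minor quibble is your description of $\Gahat(R)$ as ``the nilpotents in $\pi_0(R)$'', which is not quite right for animated $R$ (there is higher homotopy, as \Cref{GahatsheafQ} already indicates); but this does not affect the argument, since all you need is the fiber sequence itself.
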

\begin{proof}
This follows from the results above together with the fiber sequence
\[ \Gahat \to \Ga \to \Ga^\dR \mpunc. \qedhere \]
\end{proof}

\begin{lemma}
\label{QEndGa}
For an animated $\bbQ$-algebra $R$, the group
\[ \Map_{\Sh(R; \aCAlg_\bbQ)}(\Ga, \Ga) \]
is trivial.
\end{lemma}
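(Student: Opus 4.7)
The plan is to combine \Cref{QRHomGaGa} with the requirement that $\bbQ$-algebra maps preserve the unit. First, forgetting the $\bbQ$-algebra structure yields a comparison map
\[
\Map_{\Sh(R; \aCAlg_\bbQ)}(\Ga, \Ga) \to \Omega^\infty \RHom_R(\Ga, \Ga),
\]
and by \Cref{QRHomGaGa} applied to $M = R$, the target is identified with the underlying anima of $R$ via evaluation at $1 \in \Ga$. Any $\bbQ$-algebra endomorphism $\phi$ must satisfy $\phi(1) = 1$, so under this identification, its underlying additive endomorphism corresponds to $1 \in R$, hence must be the identity.

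It then remains to check that the fiber of this comparison map over the identity additive endomorphism is contractible, so that the identity admits an essentially unique $\bbQ$-algebra refinement. On $\pi_0$, this is immediate from the static case: by Yoneda, a natural $\bbQ$-algebra endomorphism of $\Ga$ is given by a polynomial $p \in R[x]$, and the axioms of additivity, multiplicativity, and unitality force $p(x) = x$. The hard part will be verifying contractibility in the animated setting, but this should follow formally from further applications of \Cref{QRHomGaGa} at higher Postnikov stages of $R$, since the additional coherence data for a $\bbQ$-algebra refinement of the identity is controlled by the same $\RHom$ spectrum which is concentrated in a single degree.
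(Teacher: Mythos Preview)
Your first two steps are fine: forgetting to additive maps and using \Cref{QRHomGaGa} shows that any $\bbQ$-algebra endomorphism has underlying additive map equal to the identity. But step 5 is not a proof, and the sketch you give for it does not work as stated.

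The issue is that the fiber of the forgetful map over the identity is governed not by $\RHom_R(\Ga,\Ga)$ alone but by the multiplicative coherences, which involve $\RHom_R(\Ga^{\tensor n},\Ga)$ for all $n$. Your remark about ``higher Postnikov stages of $R$'' is a red herring: the animation of $R$ is not the source of the difficulty; the $\Einfty$-coherences are. Knowing that $\RHom_R(\Ga,\Ga)$ is concentrated in a single degree does not by itself force the space of algebra refinements of the identity to be contractible --- you need control over maps out of the higher tensor powers as well.

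The paper handles this by applying \Cref{mapobjcalg} to the unit map $\bbQ \to \Ga$: one checks via the tensor--$\Hom$ adjunction and iterated use of \Cref{QRHomGaGa} that
\[
\RHom_R(\Ga^{\tensor n}, \Ga) \to \RHom_R(\bbQ^{\tensor n}, \Ga)
\]
is an equivalence for every $n$, whence $\Map_{\aCAlg_\bbQ}(\Ga,\Ga) \isom \Map_{\aCAlg_\bbQ}(\bbQ,\Ga)$, which is a point since $\bbQ$ is initial. This is exactly the missing ingredient in your argument: you have the $n=1$ case, but you need all $n$.
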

\begin{proof}
By \Cref{mapobjcalg} it suffices to see for each $n$ that the natural map
\[ \Hom_R(\Ga^{\tensor n}, \Ga) \to \Hom_R(\bbQ^{\tensor n}, \Ga) \]
is an equivalence, which follows from \Cref{QRHomGaGa} and the tensor-$\Hom$ adjunction.
\end{proof}

\begin{proposition}
\label{QFdRGaunique}
Suppose $R$ is an animated $\bbQ$-algebra over $\QFdR$.
Then the $\Ga$-algebra structure on $(\Ga^\FdR)_R$ is unique.
\end{proposition}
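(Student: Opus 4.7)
The plan is to mimic the proof of \Cref{QEndGa}. By \Cref{mapobjcalg}, showing that the space of $\Ga$-algebra structures on $(\Ga^\FdR)_R$ is contractible amounts to showing that for each $n \geq 0$ the natural map
\[ \Hom_R(\Ga^{\tensor n}, \Ga^\FdR) \to \Hom_R(\bbQ^{\tensor n}, \Ga^\FdR) \]
is an equivalence. Because the defining square of $\Ga^\FdR$ is a pullback (and remains one after forgetting the ring structure), and because $\Hom_R(\Ga^{\tensor n}, -)$ commutes with pullbacks, this reduces to establishing the analogous equivalence with $\Ga^\FdR$ replaced separately by each of $\Gabar$, $\Ga^\dR$, and $\Gabar \tensor_\Ga \Ga^\dR$.

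The case of $\Ga^\dR$ follows immediately from \Cref{QRHomGaGadR} together with the tensor-$\Hom$ adjunction, exactly as in the proof of \Cref{QEndGa}. For the target $\Gabar$, I would exploit the fact that the universal virtual Cartier divisor $\Ga \to \Gabar$ over $\QFdR$ fits into a fiber sequence whose fiber is a twist of $\Gahat$ by the tautological line bundle on $\QFdR$ (in the sense of \cite[\S 3.2]{gcds}). Applying $\RHom_R(\Ga^{\tensor n}, -)$ to this sequence and invoking \Cref{QRHomGaGa} and \Cref{QRHomGaGahatetc} then reduces the required equivalence to the corresponding matchings for $\Ga$ and $\Gahat$, both of which have already been established. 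The third target $\Gabar \tensor_\Ga \Ga^\dR$ is handled by combining these two computations, noting that it fits into a compatible pushout square of ring stacks obtained by tensoring the above fiber sequence with $\Ga^\dR$ over $\Ga$.

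The main obstacle I anticipate is pinning down the precise fiber-sequence presentation of $\Gabar$ over $\QFdR$ and verifying that the Breen--Deligne-style arguments underlying \Cref{QRHomGaGahatetc} carry over in the relative, twisted setting over $\QFdR$. Once this is settled, the rest of the argument is essentially formal from the pullback decomposition, and the resulting contractibility yields the uniqueness of the $\Ga$-algebra structure.
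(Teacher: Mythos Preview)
Your reduction via \Cref{mapobjcalg} is exactly what the paper does. After that, however, the paper takes a shorter path: rather than decomposing $(\Ga^\FdR)_R$ through the defining pullback square, it simply uses that $(\Ga^\FdR)_R$ is locally of the form $\Ga \slashsub{s} \widehat{L}$, i.e.\ sits in a single fiber sequence $\widehat{L} \to \Ga \to (\Ga^\FdR)_R$ with $\widehat{L}$ a line-bundle twist of $\Gahat$. The required equivalence $\RHom_R(\Ga^{\tensor n}, (\Ga^\FdR)_R) \bij \RHom_R(\bbZ^{\tensor n}, (\Ga^\FdR)_R)$ then follows in one step from \Cref{QRHomGaGa} (for the middle term) and the first equivalence of \Cref{QRHomGaGahatetc} (for the fiber), via the tensor--$\Hom$ adjunction. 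No appeal to \Cref{QRHomGaGadR} or to the corners $\Gabar$, $\Gabar \tensor_{\Ga} \Ga^\dR$ is needed.

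Your longer route through the pullback square would also succeed, but note a misidentification: the fiber of the virtual Cartier divisor $\Ga \to \Gabar$ is a line-bundle twist of $\Ga$, not of $\Gahat$ --- it is $\Ga^\FdR = \Ga \slashsub{s} \widehat{L}$, not $\Gabar = \Ga \slashsub{s} L$, whose fiber involves $\Gahat$. This does not actually break your argument, since \Cref{QRHomGaGa} handles the $\Ga$-fiber just as well, but it means \Cref{QRHomGaGahatetc} is irrelevant at that corner. The paper's version is cleaner precisely because it goes directly to the one fiber sequence where $\Gahat$ genuinely appears.
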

\begin{proof}
As above, by \Cref{mapobjcalg} it suffices to see for each $n$ that the natural map
\[ \RHom_R(\Ga^{\otimes n}, (\Ga^\FdR)_R) \to \RHom_R(\bbZ^{\otimes n}, (\Ga^\FdR)_R) \]
is an equivalence, which follows from \Cref{QRHomGaGahatetc} and the tensor-$\Hom$ adjunction, as $(\Ga^\FdR)_R$ is locally a quotient of $\Ga$ by $\Gahat$.
\end{proof}

Below, for an animated $\bbQ$-algebra $R$ equipped with a generalized Cartier divisor $s : L \to R$, we write $\Ga \slashsub{s} L$ for the tensor product of the ring stack $\Ga[R]$ over $R$ with the virtual Cartier divisor associated to $s$, and we write $\Ga \slashsub{s} \widehat{L}$ for the pullback of $\Ga^\FdR$ along the induced $R$-point of $\QFdR$.

\begin{proposition}
\label{QMapdRvsgCd}
For an animated $\bbQ$-algebra $R$ with generalized Cartier divisors $s : L \to R$ and $t : L' \to R$, the natural maps
\begin{align*}
  \Map_{\Ga}(\Ga \slashsub{s} \widehat{L}, \Ga \slashsub{t} \widehat{L}')
    &\to \Map_{\Ga}(\Ga \slashsub{s} \widehat{L}, \Ga \slashsub{t} L')
  \\&\from \Map_{\Ga}(\Ga \slashsub{s} L, \Ga \slashsub{t} L')
\end{align*}
are equivalences.
\end{proposition}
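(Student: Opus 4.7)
The plan is to reduce, via \Cref{mapobjcalg} and the tensor-$\Hom$ adjunction, to termwise $R$-linear $\RHom$ computations handled by \Cref{QRHomGaGahatetc}, following the pattern of the proofs of \Cref{QEndGa} and \Cref{QFdRGaunique}. Concretely, maps of $\Ga$-algebra stacks $\Map_\Ga(A, B)$ are computed as $\Tot_n \RHom_R(A^{\tensor_\Ga (n+1)}, B)$, so it suffices to show that both comparison arrows are levelwise equivalences in this cosimplicial object.

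First, I would use that, locally on $\QFdR$, the ring stacks $\Ga \slashsub{s} L$ and $\Ga \slashsub{s} \widehat{L}$ are built as iterated extensions involving $\Ga$ and $L$ (resp.\ $\Ga$ and $\widehat{L}$), with $\Ga \slashsub{s} \widehat{L}$ in particular a local quotient of $\Ga$ by $\Gahat$ (tensored with $L$), exactly as used in the proof of \Cref{QFdRGaunique}; similarly for the target. Their tensor powers over $\Ga$ are built analogously, so each termwise $\RHom$ reduces via fiber sequences and the tensor-$\Hom$ adjunction to the base cases $\RHom_R(X, Y)$ with $X, Y \in \{\Ga, \Gahat\}$, tensored with appropriate line bundles.

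The first arrow then follows from $\RHom_R(\Gahat, \Gahat) \simeq \RHom_R(\Gahat, \Ga)$ of \Cref{QRHomGaGahatetc}: the source $\Ga \slashsub{s} \widehat{L}$ is $\Gahat$-completed in the $\widehat{L}$ direction, and the target's difference $\widehat{L}' \to L'$ is also in the $\Gahat$-completion direction, so mapping from the completed source cannot distinguish them. The second arrow follows from $\RHom_R(\Ga, \Ga) \simeq \RHom_R(\Gahat, \Ga)$: the target $\Ga \slashsub{t} L'$ is not $\Gahat$-completed, so completing the source from $L$ to $\widehat{L}$ does not affect the mapping anima.

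The main obstacle will be carefully tracking the fiber sequences presenting $\Ga \slashsub{s} L$, $\Ga \slashsub{s} \widehat{L}$, and their tensor powers over $\Ga$, so that the reductions to \Cref{QRHomGaGahatetc} land in the right base cases. In particular, since $\RHom_R(\Ga, \Gahat) \simeq \Gahat(R)$ differs in general from $\RHom_R(\Ga, \Ga) \simeq R$, one must verify that this discrepancy does not obstruct either comparison; this should follow from the fact that the $\Ga$-parts of source and target are common to both sides being compared, so the $\Ga$-to-$\Gahat$ contributions cancel in the cofibers computing the ring stacks as quotients.
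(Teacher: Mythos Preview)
Your overall strategy is right: reduce via \Cref{mapobjcalg} to termwise $\RHom$'s handled by \Cref{QRHomGaGahatetc}. But the specific mechanism you propose for handling the ``under $\Ga$'' structure is where the gap lies, and it is exactly the point at which the paper's argument differs.

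You write $\Map_\Ga(A,B) = \Tot_n \RHom_R(A^{\tensor_\Ga (n+1)}, B)$, and then want to check levelwise equivalences. This formula is not what \Cref{mapobjcalg} gives; that lemma is a termwise criterion, not a totalization formula. If you try to apply \Cref{mapobjcalg} to $\CAlg(\Mod_\Ga)$ you would need to control $\Hom_{\Mod_\Ga}(A^{\tensor_\Ga n}, B)$, and computing the relative tensor powers $A^{\tensor_\Ga n}$ over the ring stack $\Ga$ is itself a nontrivial bar construction. The fiber-sequence bookkeeping you allude to then becomes genuinely unpleasant, and your own final paragraph shows you have not resolved how the discrepancy $\RHom_R(\Ga, \Gahat) \not\simeq \RHom_R(\Ga, \Ga)$ cancels.

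The paper sidesteps all of this with one trick: it applies \Cref{mapobjcalg} not to $\CAlg_{\Ga/}$ but to $\CAlg(\Sh(R;\bbQ)^{\Delta^1})$, the arrow category. There the tensor product is componentwise, so $(\Ga \to A)^{\tensor n} = (\Ga^{\tensor n} \to A^{\tensor n})$, and a map of arrows into $(\Ga \to B)$ is the fiber product
\[
\Hom_R(A^{\tensor n}, B) \times_{\Hom_R(\Ga^{\tensor n}, B)} \Hom_R(\Ga^{\tensor n}, \Ga).
\]
Maps of arrows over $\Ga$ and maps under $\Ga$ then agree because $\End_{\CAlg_\bbQ}(\Ga) = *$ by \Cref{QEndGa}. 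The termwise comparisons now involve only absolute tensor powers $A^{\tensor n}$, $\Ga^{\tensor n}$ over $R$, and the factor $\Hom_R(\Ga^{\tensor n}, \Ga)$ is common to both sides of each map being compared --- which is exactly what makes the problematic $\RHom_R(\Ga, \Gahat)$ term cancel. The remaining check is then a direct application of \Cref{QRHomGaGahatetc}.

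So: keep your reduction to \Cref{QRHomGaGahatetc}, but replace the relative-tensor-over-$\Ga$ maneuver with the arrow-category formulation and invoke \Cref{QEndGa}.
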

\begin{proof}
By \Cref{mapobjcalg} (applied to $\CAlg(\Sh(R; \bbQ)^{\Delta^1})$) and \Cref{QEndGa} it is enough to see that the natural maps
\begin{align*}
  & \Hom_R((\Ga \slashsub{s} \widehat{L})^{\tensor n}, \Ga \slashsub{t} \widehat{L}') \times_{\Hom_R(\Ga^{\tensor n}, \Ga \slashsub{t} \widehat{L}')} \Hom_R(\Ga^{\tensor n}, \Ga) \\
  & \quad \to \Hom_R((\Ga \slashsub{s} \widehat{L})^{\tensor n}, \Ga \slashsub{t} L') \times_{\Hom_R(\Ga^{\tensor n}, \Ga \slashsub{t} L')} \Hom_R(\Ga^{\tensor n}, \Ga) \\
  \shortintertext{\centering and}
  & \Hom_R((\Ga \slashsub{s} L)^{\tensor n}, \Ga \slashsub{t} L') \times_{\Hom_R(\Ga^{\tensor n}, \Ga \slashsub{t} L')} \Hom_R(\Ga^{\tensor n}, \Ga) \\
  & \quad \to \Hom_R((\Ga \slashsub{s} \widehat{L})^{\tensor n}, \Ga \slashsub{t} L') \times_{\Hom_R(\Ga^{\tensor n}, \Ga \slashsub{t} L')} \Hom_R(\Ga^{\tensor n}, \Ga)
\end{align*}
are equivalences for all $n$, which follows from \Cref{QRHomGaGahatetc}.
\end{proof}

\begin{corollary}
\label{QFdRmainthm}
For an animated $\bbQ$-algebra $R$, the natural map from the c-stack $R^\FdR$ to the stack of $R$-algebra stacks is fully faithful.
\end{corollary}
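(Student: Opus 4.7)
The plan is to deduce the corollary by combining \Cref{QFdRGaunique} with \Cref{QMapdRvsgCd}, which together provide essentially all of the needed input.

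First, I would unwind the statement. For an animated $\bbQ$-algebra $S$, an $S$-point of the c-stack $R^\FdR$ consists of a pair $(s, \phi)$ where $s\colon \Spec S \to \QFdR$ classifies a generalized Cartier divisor over $S$ and $\phi\colon R \to \Gamma(S, \Ga^\FdR_s)$ is a ring map; a morphism $(s_1, \phi_1) \to (s_2, \phi_2)$ in $R^\FdR(S)$ is a morphism $m\colon s_1 \to s_2$ in $\QFdR(S)$ whose induced map $m_*\colon \Ga^\FdR_{s_1} \to \Ga^\FdR_{s_2}$ intertwines the $\phi_i$. The image of $(s, \phi)$ in $\AlgStk{R}(S)$ is the ring stack $\Ga^\FdR_s$ endowed with its $R$-structure coming from $\phi$, and morphisms between two such images are $R$-algebra maps $\Ga^\FdR_{s_1} \to \Ga^\FdR_{s_2}$ intertwining the $\phi_i$. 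Full faithfulness is thus the assertion that these two mapping spaces coincide.

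The next step is to pass from $R$-algebra maps to morphisms of generalized Cartier divisors. By \Cref{QFdRGaunique}, the space of $\Ga$-algebra structures on each $\Ga^\FdR_{s_i}$ is contractible, so any ring map $f\colon \Ga^\FdR_{s_1} \to \Ga^\FdR_{s_2}$ is automatically $\Ga$-linear (composing $f$ with the tautological $\Ga$-structure on the source produces another $\Ga$-structure on the target, which by uniqueness must agree with the tautological one). \Cref{QMapdRvsgCd} then identifies the space of $\Ga$-algebra maps with $\Map_{\QFdR(S)}(s_1, s_2)$, the space of morphisms of generalized Cartier divisors. Imposing the compatibility $m_* \phi_1 \simeq \phi_2$ cuts this down to exactly $\Map_{R^\FdR(S)}((s_1, \phi_1), (s_2, \phi_2))$, yielding the desired equivalence.

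Since both key ingredients are already established, the argument is essentially formal, and I do not anticipate any substantive new obstacle. The only care required is in ensuring that the identifications provided by \Cref{QFdRGaunique} and \Cref{QMapdRvsgCd} are suitably natural in $(s_i, \phi_i)$, so that the compatibility datum is matched correctly across the equivalence.
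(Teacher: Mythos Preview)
Your approach is essentially the paper's, and the overall strategy is sound. There is one small gap, however.

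You write that \Cref{QMapdRvsgCd} ``identifies the space of $\Ga$-algebra maps with $\Map_{\QFdR(S)}(s_1, s_2)$, the space of morphisms of generalized Cartier divisors.'' But \Cref{QMapdRvsgCd} only establishes that
\[
  \Map_{\Ga}(\Ga \slashsub{s_1} \widehat{L}_1,\; \Ga \slashsub{s_2} \widehat{L}_2)
  \;\simeq\;
  \Map_{\Ga}(\Ga \slashsub{s_1} L_1,\; \Ga \slashsub{s_2} L_2),
\]
i.e.\ it passes from the completed quotients to the uncompleted ones. You still need to identify $\Ga$-algebra maps between the uncompleted quotients $\Ga \slashsub{s} L$ with maps of generalized Cartier divisors in $\QFdR(S)$. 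The paper invokes (the proof of) \cite[Proposition 3.2.6]{gcds} for precisely this step. This is not hard, but it is not content-free either, and it is not contained in \Cref{QMapdRvsgCd}.

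A minor organizational point: the paper first reduces to $R = \bbQ$ via \Cref{transff}, whereas you carry the $R$-structure through and impose the compatibility $m_* \phi_1 \simeq \phi_2$ at the end. These are equivalent; your version is essentially an inline unwinding of \Cref{transff}.
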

\begin{proof}
By \Cref{transff} we may assume $R = \bbQ$.
The statement then follows from \Cref{QMapdRvsgCd}, \Cref{QFdRGaunique}, and (the proof of) \cite[Proposition 3.2.6]{gcds}.
\end{proof}

\begin{remark}
\label{rmk:QFdRmonoidstk}
We do not know whether the essential image in \Cref{QFdRmainthm} is determined by the underlying monoid stacks, although it is easy to see that it is on the de Rham locus.
\end{remark}

\begin{remark}[The de Rham stack in mixed characteristic]
\label{rmk:dRnotff}
We expect that the de Rham stack in mixed characteristic is \emph{not} fully faithful in ring stacks: for the unipotent de Rham stack this follows from the existence of extra automorphisms of de Rham cohomology, as in \cite[Remark 4.7.18]{apc} and \cite{enddR}, and for the true de Rham stack it would follow from an analogue thereof in the context of \cite{shearedprismatization}.
However, in characteristic $p$ the only failure of full faithfulness should be the Frobenius endomorphism; see \Cref{rmk:shearedprismatization}.
\end{remark}

\section{Étale}

\begin{definition}
\label{petdef}
Given a $p$-complete ring $R$, we define its \emph{$p$-adic étale stack} as
\[ R^\pet \defeq \prn*{\Spf W((R/p)_\perf)} / \phi \mpunc. \]
\end{definition}

Note that the results of \cite{blrh} imply that sheaves on $R^\pet$ do in fact naturally fully faithfully contain $p$-adic étale sheaves on $R$.

\begin{warning}
\Cref{petdef} does not include any information about the generic fiber, and is therefore best applied to $p$-nilpotent rings.
We allow for $R$ not-necessarily-nilpotent in order to more easily describe $(-)^\pet$ by transmutation.
\end{warning}

\begin{theorem}
\label{petmainthm}
The $p$-adic étale stack is transmuted, and the natural map from $R^\pet$ to the stack of $R$-algebra stacks is fully faithful.
Any ring stack over a $p$-nilpotent ring with underlying monoid locally isomorphic to $\Mm^\pet$ comes from a point of $\Zp^\pet$.
\end{theorem}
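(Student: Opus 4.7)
The plan is to reduce directly to the syntomic case via a pullback. First, I would check that $R^\pet$ fits as the pullback $R^\Syn \times_{\ZpSyn} \Zp^\pet$, where the map $\Zp^\pet \to \ZpSyn$ is obtained by applying \Cref{perfdeltaprismatization} to the perfect $\delta$-ring $\Zp = W(\Fp)$ and passing to the $\phi$-quotient. To verify this pullback identification, one checks using \Cref{perfdeltamapout} that, for $\Spec S \to \Zp^\pet$, both sides classify $\phi$-compatible ring maps $R/p \to S^\flat$. Transmutation is then witnessed by setting $\Ga^\pet$ to be the pullback of $\Ga^\Syn$ along $\Zp^\pet \to \ZpSyn$.

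For full faithfulness of $R^\pet \to \AlgStk{R}$, the map factors as $R^\pet \to R^\Syn \to \AlgStk{R}$. The second map is fully faithful by \Cref{ZpSynmainthm}, while the first is a pullback of the inclusion $\Zp^\pet \inj \ZpSyn$ of the perfect locus, which is a monomorphism (perfect prisms being uniquely determined).

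For the essential image statement, given a ring stack $\calR$ over a $p$-nilpotent $R$ with $\Mm(\calR)$ locally isomorphic to $\Mm^\pet$, the underlying monoid stack in particular comes locally from an $R$-point of $\ZpSyn$ (namely the composite through $\Zp^\pet \to \ZpSyn$), so \Cref{ZpSynmainthm}(\ref{ZpSynmainthm:monoid}) lifts this to show $\calR$ itself comes locally from $\ZpSyn$. The corresponding classifying map $R \to \ZpSyn$ then factors through $\Zp^\pet$: its underlying monoid stack is $\Mm^\pet$, which classifies precisely the perfect locus by the pullback identification above. The main obstacle, though minor, lies in this last step --- rigorously verifying that the $\ZpSyn$-point classifying $\calR$ factors through $\Zp^\pet$ as soon as its underlying monoid does, which one handles by pointwise analysis of the Cartier--Witt divisor structure (noting that a Cartier--Witt divisor whose associated monoid stack is $\Mm^\pet$ at each field-valued point must be Frobenius-compatibly trivial).
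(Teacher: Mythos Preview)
Your reduction to the syntomic case via a pullback $R^\pet \simeq R^\Syn \times_{\ZpSyn} \Zp^\pet$ does not go through: the map $\Zp^\pet \to \ZpSyn$ you invoke does not exist. Applying \Cref{perfdeltaprismatization} to $A = \Zp$ is tautological---it only says $\ZpSyn \simeq \ZpSyn$---and there is no ``perfect locus'' of $\ZpSyn$ identifying with $\Zp^\pet$. Concretely, the ring stack over $\Zp^\pet$ is $\Gaflat$ (sending $S \mapsto S^\flat$), and this is not of the form $W/M$ for any filtered Cartier--Witt divisor, so it does not arise from a point of $\ZpSyn$. Likewise $\Mm^\pet \simeq \Mmperf$ is the \emph{perfection} of $\Mm^\Nyg$ rather than a fiber of it, so the hypothesis of \Cref{ZpSynmainthm}(\ref{ZpSynmainthm:monoid}) is never satisfied by $\Mm^\pet$. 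The entire argument therefore rests on a nonexistent map.

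The paper's approach is direct and much shorter. One first checks that $R \mapsto R^\pet$ preserves limits and hence is transmuted (\Cref{translim}) by the ring stack $\Gaflat$; then one computes that the ring endomorphisms of $\Gaflat$ over any $p$-nilpotent base are exactly the integer powers of Frobenius (this is elementary once one restricts to perfect test rings), which matches the $\phi$-quotient in the definition of $\Zp^\pet$ and yields full faithfulness via \Cref{transff}. For the essential image, one notes that $\Mm^\pet \simeq \Mmperf$ and invokes \Cref{Mmperfringunique} to see that any ring structure on this monoid is the standard one. The footnote in the introduction saying the \'etale case ``follows very easily from the methods of \Cref{sec:syntomic}'' refers to reusing lemmas like \Cref{Mmperfringunique} and \Cref{perfdeltamapout}, not to pulling back from \Cref{ZpSynmainthm} itself.
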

\begin{proof}
Exercise.\footnotemark
\footnotetext{Hint: compute the ring endomorphisms of $\Gaflat$, then apply \Cref{translim}, \Cref{transff}, and \Cref{Mmperfringunique}.}
\end{proof}

\section{Betti}
\label{sec:betti}

Here we study the Betti stack construction, first as a functor from compactly generated Hausdorff spaces to stacks, then as a functor from schemes to stacks.
Throughout this section we fix a field $K$, assumed to be either $\bbR$ or $\bbC$.

We first recall the definition of the Betti stack, after \cite[Exercise 1.7]{6functors}.

\begin{definition}
Given a compactly generated Hausdorff space $X$, we define its \emph{Betti stack} as
\[ X^\rmB(R) \defeq \Map_{\Top}(\abspec{R}, X) \mpunc, \]
where $\Top$ is the category of topological spaces.

If $X$ is an affine scheme over a subring of $K$, we define its Betti stack as $X^\rmB \defeq X(K)^\rmB$.
\end{definition}

The Betti stack of a variety is in fact given by transmutation:
\begin{proposition}
For $X$ an affine scheme over $K$, there is a natural isomorphism
\[ X^\rmB(R) \isom X(\Ga^\rmB(R)) \mpunc. \]
\end{proposition}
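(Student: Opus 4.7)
The plan is to identify both sides with the set of $K$-algebra homomorphisms $A \to \Ga^\rmB(R)$, where $A = \calO(X)$ and the target $\Ga^\rmB(R) = \Map_{\Top}(\abspec{R}, K)$ is regarded as a topological $K$-algebra under pointwise operations. The identification $X(\Ga^\rmB(R)) = \Hom_K(A, \Ga^\rmB(R))$ is immediate from the definition of $X$ as $\Spec A$, so the real content lies in exhibiting $X^\rmB(R) = \Map_{\Top}(\abspec{R}, X(K))$ as the same set, naturally in $R$.

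First I would reduce to the case where $X$ is of finite type over $K$: any affine $K$-scheme is a cofiltered limit of finite type affines $X_\alpha = \Spec A_\alpha$ (indexed by the finitely generated subalgebras of $A$), and both functors send such a limit to the corresponding limit of sets -- for the right-hand side because $\Hom_K(-, \Ga^\rmB(R))$ carries filtered colimits of algebras to cofiltered limits of sets, and for the left-hand side because $X(K) \isom \lim_\alpha X_\alpha(K)$ as topological spaces, so $\Map_{\Top}(\abspec{R}, -)$ preserves the limit.

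With $X = \Spec K[x_1, \dots, x_n]/I$ of finite type, the set $X(K) \subseteq K^n$ carries the subspace topology. A continuous map $\abspec{R} \to X(K)$ is then the same data as an $n$-tuple of continuous functions $f_i : \abspec{R} \to K$ such that $p(f_1(r), \dots, f_n(r)) = 0$ for all $r \in \abspec{R}$ and all $p \in I$, by the universal property of the product topology on $K^n$. A $K$-algebra homomorphism $K[x_1, \dots, x_n]/I \to \Ga^\rmB(R)$ is, similarly, the same as an $n$-tuple of continuous functions $f_i$ satisfying the same relations in $\Ga^\rmB(R)$; since the ring operations on $\Ga^\rmB(R)$ are pointwise, this is identical data, and the bijection is manifestly natural in $R$. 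There is no real obstacle here once the topologies are correctly set up -- the entire statement reduces to unwinding the product topology on $K^n$ and the pointwise nature of the ring structure on $\Ga^\rmB(R)$.
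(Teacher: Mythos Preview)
Your proof is correct and takes a more hands-on route than the paper's. The paper argues abstractly: it cites that the functor $X \mapsto X(K)$ from affine $K$-schemes to topological spaces preserves all limits along affine maps, observes that the Betti stack construction $(-)^\rmB$ preserves limits of topological spaces, and then invokes \Cref{translim}, which says that any limit-preserving functor $\CAlg_K^\opp \to \PStk$ arises by transmutation. Your argument instead reduces to finite type by a cofiltered-limit argument and then verifies the finite-type case explicitly via the subspace topology on $X(K) \subseteq K^n$.

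Both approaches ultimately rest on the same nontrivial input, namely that $X(K) \isom \lim_\alpha X_\alpha(K)$ holds as \emph{topological} spaces (this is what the paper cites as \cite[Proposition 5.4]{ksWrat}); you invoke it in your reduction step without proof, just as the paper does. The paper's route is shorter and more conceptual, since \Cref{translim} packages the whole computation into a single adjoint-functor statement; your route has the advantage of being self-contained and making the bijection completely explicit, which is arguably clearer for a reader who has not internalized the transmutation formalism.
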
\begin{proof}
Note that taking the topological space of $K$-points of an affine scheme commutes with limits along affine transition maps; in the case $K  = \bbC$ this is \cite[Proposition 5.4]{ksWrat}, and the case $K = \bbR$ follows similarly.
The result now follows from \Cref{translim}, as the Betti stack construction preserves limits of topological spaces.
\end{proof}

\begin{recollection}
\label{wlocal}
Recall that any ring admits a proétale cover by a ring which is \emph{w-local} in the sense of \cite[Definition 2.2.1]{proetale}.
By \cite[Lemma 2.1.4]{proetale}, any connected component of the spectrum of a w-local ring $R$ has a unique closed point; any continuous map from $\abspec{R}$ to a Hausdorff space thus factors through $\abspec{R} \to \pinspec{R}$.

Given a ring $R$, we write $\CAlgwl_R$ for the category of w-local $R$-algebras.
\end{recollection}

\begin{definition}
For a w-local ring $R$ and compactly generated Hausdorff space $X$ over $\pinspec{R}$, we define the \emph{relative Betti stack} of $X$ over $R$ as
\[ X^{\rmB/R}(S) \defeq \Map_{\Top_{/\pinspec{R}}}(\abspec{S}, X) \mpunc. \]
Note in the above that $\pinspec{R}$ is a profinite set by \citestacks{0906}.
\end{definition}

\begin{remark}
As in \cite[Proposition 2.20]{condensedstoneduality}, it follows from \cite[Proposition 1.6.2.4(2')]{sag} that the Betti stack and relative Betti stack are indeed sheaves.
The argument of \cite[Proposition 2.15]{condensed} shows that they are accessible.
\end{remark}

\begin{definition}
We write $\CGHaus$ for the category of compactly generated Hausdorff spaces, and we write $\Prof$ for the full subcategory of profinite sets.
\end{definition}

\begin{lemma}
\label{contfuncSpec}
For any profinite set $X$ and ring $R$, the natural map\todo*[what is the natural map, if not just defined by the limit over finite quotients of $X$ in the target?]
\[ \abspec{C(X, R)} \to X \times \abspec{R} \]
is an isomorphism, and the map $\Spec C(X, R) \to \Spec R$ induces isomorphisms on residue fields at each point.
\end{lemma}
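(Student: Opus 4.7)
The plan is to reduce to the case of finite $X$ via a colimit argument. Write $X = \lim_i X_i$ with $X_i$ ranging over the finite quotients of $X$; because $R$ is discrete and $X$ is profinite, every continuous function $X \to R$ is locally constant, so $C(X, R) = \colim_i C(X_i, R) = \colim_i R^{X_i}$ as a filtered colimit of rings.

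For a finite set $X_i$, the ring $R^{X_i}$ decomposes via its natural idempotents, giving $\abspec{R^{X_i}} \isom X_i \times \abspec{R}$ as topological spaces, with residue field at each point $(x, q)$ canonically identified with $\kappa(q)$ via the $x$-coordinate projection $R^{X_i} \to R$. Now I would invoke the classical fact that $\Spec$ of a filtered colimit of rings is the cofiltered limit of spectra as topological spaces (\citestacks{01YX}). Combined with the fact that $- \times \abspec{R}$ commutes with cofiltered limits of finite sets (limits commute with limits), this gives
\[ \abspec{C(X, R)} \isom \lim_i \abspec{R^{X_i}} \isom \lim_i (X_i \times \abspec{R}) \isom X \times \abspec{R} \mpunc, \]
compatibly with the natural map (which is built from the constant-function inclusion $R \to C(X, R)$ and the maps to the idempotent components in each $R^{X_i}$).

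For the residue field statement, the same filtered-colimit description applies: localization and quotient by maximal ideals commute with filtered colimits of rings, so the residue field at a point corresponding to $(x, q) \in X \times \abspec{R}$ is the filtered colimit of the residue fields at the images $(x_i, q) \in X_i \times \abspec{R}$. Each such residue field is canonically $\kappa(q)$, and the transition maps $R^{X_i} \to R^{X_j}$ (for $X_j \to X_i$) restrict on each factor to the identity on $R$, so the transition maps on residue fields are the identity on $\kappa(q)$. The colimit is therefore $\kappa(q)$, as claimed.

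I don't expect serious obstacles here: the main technical input is the compatibility of $\Spec$ with filtered colimits (for both the underlying space and the residue-field structure), which is standard. The only point requiring care is verifying that the natural map in the statement really does agree with the isomorphism assembled from the finite-case identifications; this is a matter of tracking the definitions of the two projections (to $X$ via idempotents, to $\abspec{R}$ via constants) through the colimit.
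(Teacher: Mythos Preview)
Your proposal is correct and follows essentially the same approach as the paper: reduce to finite $X$ by writing $C(X,R)$ as a filtered colimit and invoking that $\abspec{(-)}$ turns filtered colimits into limits (the paper cites \citestacks{0CUF} rather than \citestacks{01YX}), and handle the residue field claim via the analogous colimit description of residue fields (the paper cites \citestacks{0CUG}). Your version simply spells out more of the details in the finite case.
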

\begin{proof}
By \citestacks{0CUF} the functor $\abspec{(-)}$ sends filtered colimits to limits, so it suffices for the first claim to consider the case of finite $X$, which is obvious.
For the second claim, note that the residue field of a point of $\Spec C(X, R)$ is the colimit of the residue fields of its images in $\Spec C(Y, R)$ for finite quotients $Y$ of $X$ by \citestacks{0CUG}, so we again reduce to the case of $X$ finite.
\end{proof}

\begin{lemma}
\label{contfuncffZ}
The functor
\[ C(-, \bbZ) : \Prof^\opp \to \CAlg \mpunc, \]
is fully faithful and has a right adjoint given by
\[ S \mapsto \pinspec{S} \mpunc. \]
\end{lemma}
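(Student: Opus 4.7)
The plan is to establish the adjunction directly by reducing to the case of finite profinite sets, and then deduce full faithfulness from \Cref{contfuncSpec} together with the connectedness of $\Spec \bbZ$.

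First I would verify the adjunction. For a finite set $X$, $C(X, \bbZ) = \bbZ^X$, and a ring homomorphism $\bbZ^X \to S$ is determined by a complete orthogonal system of idempotents in $S$ indexed by $X$, hence by a decomposition of $\abspec{S}$ into $X$-indexed clopen subsets; since $X$ is discrete, this is the same data as a continuous map $\pinspec{S} \to X$ (using that $\pinspec{S}$ is the universal profinite-set quotient of $\abspec{S}$ by \citestacks{0906}). For a general profinite $X \isom \lim_Y Y$ written as a cofiltered limit of finite quotients, one has $C(X, \bbZ) = \colim_Y C(Y, \bbZ)$, so
\[ \Hom_{\CAlg}(C(X, \bbZ), S) = \lim_Y \Hom_{\CAlg}(C(Y, \bbZ), S) = \lim_Y C(\pinspec{S}, Y) = C(\pinspec{S}, X) \mpunc, \]
which gives the desired adjunction.

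For full faithfulness it then suffices to check that the unit, viewed as a map $\pinspec{C(X, \bbZ)} \to X$ in $\Prof$, is an isomorphism. This follows immediately from \Cref{contfuncSpec} applied with $R = \bbZ$, which identifies $\abspec{C(X, \bbZ)}$ with $X \times \abspec{\bbZ}$: since $\bbZ$ is a domain, $\Spec \bbZ$ is irreducible and in particular connected, so applying $\pi_0$ to the product recovers $X$.

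The only genuine subtlety is matching naturality, i.e., verifying that the isomorphism $\pinspec{C(X, \bbZ)} \isom X$ produced in this way really is the adjunction unit. This can be checked by direct inspection in the finite case (where it is tautological from the idempotent-decomposition description) and then by passing to cofiltered limits, so the argument is essentially routine and I do not expect any real obstacle.
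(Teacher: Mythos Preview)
Your argument is correct and closely parallels the paper's. Both proofs establish the adjunction by reducing to finite sets via the idempotent description of ring maps out of $\bbZ^X$; you pass to general profinite $X$ by writing $C(X,\bbZ)$ as a filtered colimit, while the paper phrases the same reduction as ``$\Prof^\opp$ is generated under colimits by finite sets'' together with compactness (finite presentation) of $C(X,\bbZ)$ for $X$ finite.

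The only real difference is in the full faithfulness step. The paper reduces to the case where the target is a singleton (using that $C(-,\bbZ)$ takes finite coproducts to products) and then simply observes $\pinspec{\bbZ} = *$. You instead invoke \Cref{contfuncSpec} with $R=\bbZ$ to identify $\abspec{C(X,\bbZ)}$ with $X \times \abspec{\bbZ}$ and apply $\pi_0$ using connectedness of $\Spec\bbZ$. Your route is perfectly valid and has the mild advantage of reusing the preceding lemma; the paper's route is marginally more self-contained. The naturality check you flag at the end is indeed routine.
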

\begin{proof}
Because $\Prof^\opp$ is generated under colimits by finite sets, which are compact, it suffices to verify the claims for finite sets and show that $C(X, \bbZ)$ is compact for finite $X$.

To this end, let $S$ be a ring and let $X$ be a finite set with $n$ elements.
Compactness of $C(X, \bbZ)$ follows from the fact that it is finitely presented.
A homomorphism of abelian groups $f : C(X, \bbZ) \to S$ is freely determined by its values on the indicator functions of points of $X$.
For $f$ to be a ring map, these values must be idempotent, multiply pairwise to 0, and sum to 1, so such $f$ correspond to partitions of $\abs{\Spec S}$ into $n$ clopen subsets.
These are exactly in correspondence with maps $\abs{\Spec S} \to X$ and therefore with maps $\pinspec{S} \to X$, proving the adjunction.

Because $C(-, \bbZ)$ takes coproducts of finite sets to products, it suffices to verify full faithfulness when the target is a singleton.
This follows from the unit of the adjunction being an isomorphism on singletons, since $\pinspec{\bbZ} \isom \{*\}$.
\end{proof}

\begin{corollary}
\label{contfuncff}
The functor
\[ R \otimes_{C(\pinspec{R}, \bbZ)} C(-, \bbZ) : \Prof_{/\pinspec{R}}^\opp \to \CAlg_R \]
is fully faithful with right adjoint given by
\[ S \mapsto \pinspec{S} \mpunc. \]
Further, if $R$ is w-local, then the functor above is valued in w-local $R$-algebras.
\end{corollary}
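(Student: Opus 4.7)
The plan is to bootstrap from the absolute case \Cref{contfuncffZ}. For the adjunction, given $X \in \Prof_{/\pinspec{R}}$ and $T \in \CAlg_R$, the universal property of the relative tensor product yields
\[ \Hom_{\CAlg_R}(R \otimes_{C(\pinspec{R}, \bbZ)} C(X, \bbZ),\, T) \isom \Hom_{\CAlg_{C(\pinspec{R}, \bbZ)}}(C(X, \bbZ),\, T) \mpunc, \]
and \Cref{contfuncffZ} then identifies the right side with $\Hom_{\Prof_{/\pinspec{R}}}(\pinspec{T}, X)$, where the structure map $\pinspec{T} \to \pinspec{R}$ is the one induced by $R \to T$ (by naturality of the unit in the $R = \bbZ$ adjunction).

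For full faithfulness, it suffices to check that the unit $\pinspec{F(X)} \to X$ of this adjunction is an isomorphism, where $F(X) \defeq R \otimes_{C(\pinspec{R}, \bbZ)} C(X, \bbZ)$. The key step is to identify $\abspec{F(X)}$ with the fiber product $\abspec{R} \times_{\pinspec{R}} X$ at the level of underlying topological spaces; this follows from \Cref{contfuncSpec} together with the standard fact that the spectrum of a tensor product of rings is the fiber product of spectra on underlying spaces (the $\abspec{\bbZ}$ factors produced by \Cref{contfuncSpec} cancel against the structure maps). Since the fibers of $\abspec{R} \to \pinspec{R}$ are by definition connected, pulling back along the profinite map $X \to \pinspec{R}$ preserves this property, so $\pi_0$ of the fiber product is $X$ and $\pinspec{F(X)} \isom X$ naturally.

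For the w-locality claim, suppose $R$ is w-local. By \cite[Lemma 2.1.4]{proetale}, the set $(\abspec{R})^{\mathrm{cl}} \subseteq \abspec{R}$ of closed points is closed and maps homeomorphically onto $\pinspec{R}$. Using the identification $\abspec{F(X)} \isom \abspec{R} \times_{\pinspec{R}} X$ from the previous paragraph, together with the residue field claim of \Cref{contfuncSpec} (which transfers under base change), the closed points of $\abspec{F(X)}$ correspond to $(\abspec{R})^{\mathrm{cl}} \times_{\pinspec{R}} X \isom X$, a closed subset of $\abspec{F(X)}$, and each connected component of $\abspec{F(X)}$ is homeomorphic to a connected component of $\abspec{R}$ and hence contains a unique closed point. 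The main subtlety will be pinning down carefully how closed points behave under the tensor product---in particular, verifying that a point of $\abspec{F(X)}$ is closed if and only if its image in $\abspec{R}$ is, which requires understanding the profinite fiber structure of $\abspec{F(X)} \to \abspec{R}$ via \Cref{contfuncSpec}.
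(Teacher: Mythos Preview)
Your adjunction argument is correct and matches the paper's.

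There is a genuine gap in the full faithfulness step. The ``standard fact that the spectrum of a tensor product of rings is the fiber product of spectra on underlying spaces'' is false in general: already $\abspec{(\bbC \otimes_\bbR \bbC)}$ has two points while $\abspec{\bbC} \times_{\abspec{\bbR}} \abspec{\bbC}$ has one. What makes the identification
\[
\abspec{\bigl(R \otimes_{C(\pinspec{R}, \bbZ)} C(X, \bbZ)\bigr)} \isom \abspec{R} \times_{\pinspec{R}} X
\]
work here is precisely the \emph{residue field} clause of \Cref{contfuncSpec}: the map $C(\pinspec{R}, \bbZ) \to C(X, \bbZ)$ induces isomorphisms on residue fields at every point, and the paper invokes \citestacks{01JT} to conclude that base change along such a map commutes with passage to underlying spaces. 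Your parenthetical about ``$\abspec{\bbZ}$ factors cancelling'' does not address this; you need to use the residue field statement, not just the homeomorphism statement, of \Cref{contfuncSpec}.

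For the w-locality claim your approach diverges from the paper's and, as you note yourself, is left incomplete. Rather than analyzing closed points directly, the paper observes that the category of w-local spectral spaces with w-local spectral maps is closed under limits (\cite[Theorem 11.1.5]{spectralspaces} and \cite[Lemma 2.1.9]{proetale}), so it suffices to check that $\abspec{R} \to \pinspec{R}$ and $X \to \pinspec{R}$ are w-local spectral maps. Since every point of a profinite set is closed, w-locality is automatic, and spectrality follows because a continuous map from a quasicompact space to a Hausdorff space is proper. This is both shorter and avoids the case analysis you anticipate.
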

\begin{proof}
It follows from \Cref{contfuncffZ} that for any $R$-algebra $S$ there is a natural isomorphism
\[ \Map_{\Prof_{/\pinspec{R}}}(\pinspec{S}, X) \isom \Hom_{C(\pinspec{R}, \bbZ)}(C(X, \bbZ), S) \mpunc, \]
so the adjunction follows by base changing to $R$.

For full faithfulness it remains to verify that the unit map
\[ \pinspec{(R \otimes_{C(\pinspec{R}, \bbZ)} C(Y, \bbZ)} \to Y \]
is an isomorphism.
For this, first note that, as the map $C(\pinspec{R}, \bbZ) \to C(Y, \bbZ)$ induces isomorphisms on residue fields by \Cref{contfuncSpec}, the map
\begin{align*}
  \abspec{(R \otimes_{C(\pinspec{R}, \bbZ)} C(Y, \bbZ))}
    &\to \abspec{R} \times_{\abspec{C(\pinspec{R}, \bbZ})} \abspec{C(Y, \bbZ)}
  \\&\isom \abspec{R} \times_{\pinspec{R}} Y
\end{align*}
is an isomorphism by \citestacks{01JT}.
Thus, as the fibers of the map $\abspec{R} \to \pinspec{R}$ are connected, those of
\[ \abs{\Spec(R \otimes_{C(\pinspec{R}, \bbZ)} C(Y, \bbZ))} \to Y \]
are as well.
The claim now follows from the fact that $Y$ is totally disconnected.

To see w-locality, note that by the isomorphism above, \cite[Theorem 11.1.5]{spectralspaces}, and \cite[Lemma 2.1.9]{proetale} it suffices to show that the maps $\abspec{R} \to \pinspec{R}$ and $Y \to \pinspec{R}$ are w-local.
As any point of a profinite set is closed, it in fact suffices to see that these maps are spectral.
This follows from the fact that, as maps from quasicompact spaces to Hausdorff spaces, they are proper.
\end{proof}

\begin{proposition}
\label{bettiff}
For a w-local ring $R$, the functor sending a compactly generated Hausdorff space over $\pinspec{R}$ to its relative Betti stack is fully faithful.
\end{proposition}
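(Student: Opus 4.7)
The plan is to compare both sides by evaluating on the family of w-local $R$-algebras $R'_T \defeq R \otimes_{C(\pinspec{R}, \bbZ)} C(T, \bbZ)$ constructed in \Cref{contfuncff}, indexed by $T \in \Prof_{/\pinspec{R}}$. First, since $T$ is itself totally disconnected and Hausdorff, any continuous map $\abspec{S} \to T$ factors through $\pinspec{S}$, so the adjunction in \Cref{contfuncff} identifies $T^{\rmB/R}$ with $\Spec R'_T$. More generally, for any $X \in \CGHaus_{/\pinspec{R}}$ and w-local $S$, \Cref{wlocal} yields $X^{\rmB/R}(S) = \Map_{\Top_{/\pinspec{R}}}(\pinspec{S}, X)$, so in particular $X^{\rmB/R}(R'_T) = \Map_{\Top_{/\pinspec{R}}}(T, X)$.

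Since relative Betti stacks satisfy fpqc (hence proétale) descent and every $R$-algebra admits a proétale cover by a w-local one, a map $X^{\rmB/R} \to Y^{\rmB/R}$ of stacks is determined by its restriction to $\CAlgwl_R$. On this subcategory both functors factor through the right adjoint $\pinspec{(-)}$ of \Cref{contfuncff}, whose left adjoint $T \mapsto R'_T$ is fully faithful. A formal manipulation with this adjunction identifies $\Map_{\Stk_R}(X^{\rmB/R}, Y^{\rmB/R})$ with the set of natural transformations of the presheaves $T \mapsto \Map_{\Top_{/\pinspec{R}}}(T, X)$ and $T \mapsto \Map_{\Top_{/\pinspec{R}}}(T, Y)$ on $\Prof_{/\pinspec{R}}$.

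It then suffices to show that $X \mapsto \Map_{\Top_{/\pinspec{R}}}(-, X)$ is fully faithful as a functor from $\CGHaus_{/\pinspec{R}}$ into $\Fun(\Prof_{/\pinspec{R}}^\opp, \Set)$, a relative version of the classical full faithfulness of the embedding of $\CGHaus$ into condensed sets. Given a natural transformation $\alpha$, I would reconstruct $f : X \to Y$ by evaluating $\alpha$ on singleton profinite sets (which is automatically compatible with the projections to $\pinspec{R}$), and verify continuity using that every compact Hausdorff subset $K \subseteq X$ is a continuous quotient of a profinite set such as $\beta K^{\mathrm{disc}}$, reducing continuity of $f|_K$ to continuity of $\alpha$ applied to this profinite set. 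The main obstacle is this last step: the relativization over $\pinspec{R}$ is routine but slightly delicate, whereas everything preceding it is essentially formal from \Cref{contfuncff} and proétale descent.
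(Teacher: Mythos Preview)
Your approach is essentially the same as the paper's: reduce to w-local $R$-algebras, use the adjunction of \Cref{contfuncff} to pass to presheaves on $\Prof_{/\pinspec{R}}$, and invoke full faithfulness of $\CGHaus_{/\pinspec{R}} \to \PSh(\Prof_{/\pinspec{R}})$ (which the paper cites from \cite[Proposition 1.7]{condensed} rather than sketching directly). One small wrinkle: your descent argument only shows that restriction to $\CAlgwl_R$ is \emph{injective} on maps of stacks, not bijective as your phrase ``identifies'' suggests --- but this is harmless, since once the composite $\Map_{\CGHaus} \to \Map_{\Stk_R} \hookrightarrow \Nat_{\PSh(\Prof_{/\pinspec{R}})}$ is bijective and the second arrow is injective, bijectivity of the first arrow follows formally.
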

\begin{proof}
Note that by \Cref{contfuncff} the left Kan extension of the functor
\[ R \otimes_{C(\pinspec{R}, \bbZ)} C(-, \bbZ) : \Prof_{/\pinspec{R}} \to (\CAlgwl_R)^\opp \]
to a functor
\[ \PShAcc(\Prof_{/\pinspec{R}}) \to \PShAcc((\CAlgwl_R)^\opp) \]
is fully faithful.
It follows from \cite[Proposition 1.7]{condensed} (and \cite[Proposition 2.15]{condensed}) that the restricted Yoneda functor
\[ \CGHaus_{/\pinspec{R}} \to \PShAcc(\Prof_{/\pinspec{R}}) \]
is also fully faithful.
Denote the image of $X \in \CGHaus_{/\pinspec{R}}$ under the composite of these two functors by $X^{\rmB'/R}$.
By \Cref{contfuncff} and the full faithfulness above, we have a natural identification
\[ X^{\rmB'/R}(S) \isom \Map_{\CGHaus_{/\pinspec{R}}}(\pinspec{S}, X) \mpunc. \]
Therefore, by the discussion in \Cref{wlocal}, $X^{\rmB'/R}$ is simply the restriction of $X^{\rmB/R}$ to w-local $R$-algebras.
Thus, as the forgetful functor from $0$-truncated $R$-stacks to $\PShAcc((\CAlgwl_R)^\opp)$ is fully faithful, the desired statement follows from full faithfulness of $(-)^{\rmB'/R}$.
\end{proof}

\begin{theorem}
\label{Bettimainthm}
Let $R$ be an algebra over a subring $k \subseteq K$.
Then the map from $(R \tensor_k K)^\rmB$ to the stack of $R$-algebra stacks is fully faithful if $K = \bbR$ or if $k \subseteq \bbC$ is not contained in $\bbR$.
If $K = \bbC$ and $k$ is contained in $\bbR$, then this map exhibits its image as the quotient of its source by the natural action of $\Gal(\bbC/\bbR)$.
%
\todo*[would be nice to have the monoid stack statement]
\end{theorem}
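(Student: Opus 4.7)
The plan is to deduce the theorem from \Cref{bettiff} combined with a computation of continuous $k$-algebra endomorphisms of $K$. First I would reduce to the case that $R$ is w-local: both source and target are sheaves, and by \Cref{wlocal} every ring admits a proétale cover by a w-local one, so it suffices to check full faithfulness and the image description after base change to such a cover.

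In the w-local case, I would identify the relevant $R$-algebra stacks—namely the pullbacks of $\Ga^\rmB \tensor_k R$ along $S$-points of $(R \tensor_k K)^\rmB$—as relative Betti stacks over $\pinspec{R}$ of natural families of topological $R$-algebras, whose fibers over closed points look like $K$ sitting over $\Hom_k(R,K)$. By \Cref{bettiff}, maps between such stacks over $R$ are exactly continuous maps of the underlying families of compactly generated Hausdorff spaces over $\pinspec{R}$. The next step would be to upgrade this to the statement that $R$-algebra stack maps between two such objects are exactly continuous $R$-algebra maps of the associated topological $R$-algebra families, using that the $R$-algebra structure on both stacks is the one transported from the topological ring structure on $K$.

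Granting that upgrade, the question reduces fiberwise over $\pinspec{R}$ to the computation of continuous $k$-algebra endomorphisms of $K$. If $K = \bbR$, such an endomorphism is automatically $\bbR$-linear and hence the identity. If $K = \bbC$, the continuous ring endomorphisms of $\bbC$ are $\{1,c\}$ with $c$ complex conjugation, and $c$ fixes $k$ precisely when $k \subseteq \bbR$. This gives full faithfulness in the first two cases of the theorem directly; in the remaining case $K = \bbC$, $k \subseteq \bbR$, the extra endomorphism $c$ produces the natural $\Gal(\bbC/\bbR)$-action, and the map exhibits its image as the corresponding quotient.

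The main obstacle will be the upgrade in the second step: showing that an $R$-algebra stack map between two ring stacks of the form $f^*(\Ga^\rmB \tensor_k R)$ is the same data as a continuous $R$-algebra map between the underlying families of topological $R$-algebras over $\pinspec{R}$. Existence of the underlying space map is already given by \Cref{bettiff}; the additional content is that compatibility with the ring structure on ring stacks translates exactly to compatibility with the topological ring structures, which should follow from the fact that both sides are entirely controlled by the topological $k$-algebra structure on $K$ together with the base $R$.
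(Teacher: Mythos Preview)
Your overall strategy is the same as the paper's, and the endgame computation of continuous $k$-algebra endomorphisms of $K$ is exactly right. Two points deserve comment.

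First, there is a variable mix-up in your reduction step. The ring you want to assume w-local is the \emph{test ring} $S$ on which you evaluate both stacks, not the fixed coefficient ring $R$ from the theorem statement. Full faithfulness of a map of stacks is checked on $S$-points, and it is $S$ that admits a pro\'etale cover by a w-local ring; replacing $R$ by a cover does not obviously reduce the problem. The paper in fact does something sharper here: before touching w-locality at all, it invokes \Cref{transff} to reduce to the case $R = k$, so that $(R \otimes_k K)^\rmB = \Spec \bbZ$ and one only needs to compute the $k$-algebra stack \emph{endomorphisms} of $(\Ga^\rmB)_S$, rather than maps between two different $R$-algebra structures on it. Only after that reduction does the paper pass to w-local $S$. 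Your approach without \Cref{transff} still works, but it forces you to carry the two $S$-points $f,g$ through the computation and check the intertwining condition by hand; \Cref{transff} packages that bookkeeping once and for all.

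Second, the ``upgrade'' you flag as the main obstacle is handled in one line in the paper: the Betti stack functor preserves finite products. Since a commutative $k$-algebra object in either $\CGHaus_{/\pinspec{S}}$ or $\Stk_S$ is described entirely by finite-product diagrams, full faithfulness of the relative Betti stack on underlying objects (\Cref{bettiff}) automatically upgrades to full faithfulness on $k$-algebra objects. Concretely, one gets
\[
\End_{\CAlg_k(\Stk_S)}((\Ga^\rmB)_S) \;\isom\; \End_{\CAlg_{k \times \pinspec{S}}(\CGHaus_{/\pinspec{S}})}(K \times \pinspec{S}),
\]
and the right-hand side is computed fiberwise as you describe. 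So your worry is well-placed but easily resolved; you should state the product-preservation explicitly rather than leaving it as a black box.
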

\begin{proof}
Because the Betti stack is transmuted (as is the variant in the second statement above), it suffices by \Cref{transff} to show the claims when $R = k$, so we have $R^\rmB = \Spec \bbZ$.

The statement is then equivalent to the claim that, for any ring $S$, the anima of endomorphisms of $\Ga[S]^\rmB$ as a $k$-algebra stack over $S$ identifies via the natural map with
\[ \Gamma(\Spec S, \End_{\CAlg_k(\CGHaus)}(K)) \mpunc, \]
where $k$ is given the discrete topology and $K$ the usual topology.
Since $\Ga^\rmB$ and the stack of ring stacks are both fpqc sheaves, it suffices to verify the claim for $S$ w-local.
By \Cref{bettiff} together with the fact that the Betti stack preserves products, we have an isomorphism
\[ \End_{\CAlg_k(\Stk_S)}((\Ga^\rmB)_S) \isom \End_{\CAlg_{k \times \pinspec{S}}(\CGHaus_{/\pinspec{S}})}(K \times \pinspec{S}). \]
The desired statement is now clear.
\end{proof}

\begin{remark}
We expect that the essential image of $K^\rmB$ in the stack of ring stacks is determined by the underlying monoid stack, as in \Cref{ZpSynmainthm}, but we do not pursue this here.
\end{remark}

\section{Complements}
\label{sec:complements}

\begin{warning}
\label{derivedcounterexample}
The analogue of \Cref{ZpSynmainthm} in the derived setting seems to be \emph{false}, at least if one takes a ring stack to simply be a sheaf of animated rings.
Indeed, we expect that one can give a counterexample as follows.

By \Cref{transff} it will suffice to see that the map of c-stacks
\[ \Fp^\Hodge / \phi^\bbN \to \AlgStk{\Fp} \]
is not fully faithful.\todo*[check quotient]
For this, it is enough to find an animated $\Fp$-algebra $R$ over which the natural map
\[ \underline{\bbN} \to \intEnd_{\Sh(R; \, \aCAlg_{\Fp})}(\Ga) \]
is not an equivalence.\todo*[double-check that this is enough]

Let $R \defeq \Fpbar[\epsilon] / \epsilon^2$, with $\epsilon$ in degree $3$, and assume $p \geq 5$, so we may compute in $\Einfty$- instead of animated rings.
We have by \Cref{BreenREndGafiltn} a fiber sequence
\[ \Ga{[F]} \to \intHom_{\Sh(R; \Fp)}(\Ga, \Ga) \to \bfB \pi_3 \Ga \]
when restricting to flat $R$-algebras.\footnotemark
\footnotetext{As $\Ga$ is flat, there is no harm in making this restriction.}
On the other hand, the composite
\[ \Ga{[F]} \to \intHom_{\Sh(R; \Fp)}(\Ga, \Ga) \to \intHom_{\Sh(R; \Fp)}(\Fpbar, \Ga) \]
is an equivalence,\todo*[check] so the fiber sequence above splits.
The tensor-$\Hom$ adjunction then implies that we have
\[ \intHom_R(\Ga^{\tensor S}, \Ga) \isom \Ga{[\{F_i\}_{i \in S}]} \oplus \bfB \pi_3 \prn*{\Ga{[\{F_i\}_{i \in S}]} / \prn*{\textstyle\sum_{i \in S} F_i}}^{\oplus S} \]
for any finite set $S$.\todo*[check naturality]
Computing algebra maps as in \Cref{mapobjcalg},
we have a fiber sequence of sheaves of anima
\[ X \to \intEnd_{\Sh(R; \, \aCAlg_{\Fp})}(\Ga) \to \intMap_{\Sh(R; \, \aCAlg_{\Fp})}(\Fpbar, \Ga) \isom \bbN \]
over any flat $R$-algebra, where $X$ is obtained from the second term above.
We expect that $X$ is nontrivial, obstructing full faithfulness.\footnotemark
\footnotetext{Specifically, the second term above should naturally have $\intHom_{\Sh(R; \Fp)}(\Fp^{\oplus S}, \bfB \pi_3 \Ga)$ as a retract, so that $X$ has $\bfB \pi_3 \Ga$ as a retract.}
\end{warning}

\begin{lemma}
\label{BreenREndGafiltn}
For $R$ an animated $\Fp$-algebra and $M$ an eventually-coconnective $R$-module, there is a natural increasing $\bbN$-indexed filtration of
\[ \RHom_{\Sh(R; \Fp)}(\Ga, \Ga \tensor_R M) \]
with $n$\textsuperscript{th} graded piece given by $R[F] \tensor_R M$ for $n = 0$ and $R[F]/F^{v_p(n) + 1} \tensor_R M [-2n]$ for $n > 0$.
\end{lemma}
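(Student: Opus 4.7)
The plan is to compute $\RHom_{\Sh(R;\Fp)}(\Ga, \Ga \tensor_R M)$ via the Breen--Deligne resolution of $\Ga$. First, both sides are exact in the eventually-coconnective $R$-module $M$ and compatible with base change along $\Fp \to R$ (as $\Ga$ is defined over $\Fp$), so one reduces to the universal case $R = M = \Fp$.

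The resolution of \cite[Appendix to Lecture IV]{condensed} expresses $\Ga$ functorially as a chain complex in $\Sh(\Fp;\Fp)$ whose terms are finite direct sums of $\Fp[\Ga^{\times k}]$. Applying $\RHom(-, \Ga)$ termwise and using affineness of $\Ga^{\times k}$ gives
\[ \RHom_{\Sh(\Fp;\Fp)}(\Fp[\Ga^{\times k}], \Ga) \isom \RGamma(\Ga^{\times k}, \Ga) \isom \Fp[t_1, \dots, t_k] \mpunc, \]
so $\RHom(\Ga, \Ga)$ is computed by the totalization of a cochain complex of polynomial rings. Each $\Fp[t_1, \dots, t_k]$ carries the natural $\bbN$-grading by total degree in the $t_i$, and this grading is preserved by the Breen--Deligne differentials, since they are induced by maps $\Ga^{\times k} \to \Ga^{\times k'}$ built from projections, diagonals, the zero section, and the addition $\Ga^2 \to \Ga$, all of which are homogeneous of weight $1$. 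This yields the advertised $\bbN$-indexed filtration.

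It remains to identify the $n$-th graded piece. For $n = 0$ the subcomplex of constants computes the ordinary $\Fp$-linear endomorphisms $\Hom_{\Sh(\Fp;\Fp)}(\Ga, \Ga) \isom \Fp[F]$ in cohomological degree $0$, recovering the $0$-th piece. For $n > 0$, a Dold--Kan / Dold--Puppe identification for polynomial functors shows that the total-degree-$n$ subcomplex computes the $\Fp$-dual of the derived symmetric power $\LSym^n_{\Fp}(\Ga)$, shifted into cohomological degree $2n$. Illusie's computation of $\LSym^n$ over $\Fp$ exhibits a filtration of length $v_p(n) + 1$ whose graded pieces are Frobenius twists of $\Ga$; dualizing and tracking the Frobenius action then produces $\Fp[F]/F^{v_p(n)+1}$ in cohomological degree $2n$.

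The main obstacle will be this final identification: aligning the Frobenius action coming from pre- and post-composition on $\RHom$ with the Frobenius layers in $\LSym^n(\Ga)$ requires care. An alternative route is to invoke the classical computation of $\Ext^*_{\Sh(\Fp;\Fp)}(\Ga, \Ga)$ due to Breen, from which the graded pieces can be read off directly. Once the case $R = M = \Fp$ is settled, the general statement follows by tensoring up with $M$ over $R$ and transporting the filtration termwise.
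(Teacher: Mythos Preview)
Your proposal is correct in outline and lands in the same place as the paper, but the paper's proof is much terser: it simply cites Breen's Th\'eor\`eme~1.3 for the base case $R = M = \Fp$, then extends to arbitrary $M$ by using the Breen--Deligne resolution to commute filtered colimits, and to arbitrary $R$ by the adjunction argument already used in \Cref{WbigperftoA1perf} and \Cref{QRHomGaGa}. Your reduction step is the same idea, though phrased less precisely: ``exact in $M$'' is automatic for $\RHom$; what you actually need is pseudocoherence of $\Ga$ (from Breen--Deligne) to commute filtered colimits and Postnikov limits, together with an adjunction to pass from $R$ to $\Fp$.

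Where you genuinely diverge is in the base case. You construct the filtration explicitly as the total-degree grading on the Breen--Deligne complex, which is a correct and useful observation since all the structural maps (addition, projections, diagonals, zero sections) induce weight-$1$ maps on coordinate rings. The paper does not make this explicit and simply attributes the whole statement, filtration included, to Breen. Your attempted identification of the $n$th graded piece via $\LSym^n_{\Fp}(\Ga)$ and Illusie's filtration is a reasonable heuristic, but, as you yourself note, aligning the Frobenius bookkeeping and the shift by $2n$ is nontrivial and would essentially amount to reproving Breen's computation. The alternative you mention---citing Breen directly---is exactly what the paper does, and is the practical choice here.
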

\begin{proof}
For $R = M = \Fp$ this is \cite[Théorème 1.3]{breenext}.
This immediately implies the case of arbitrary $M$, as by the Breen--Deligne resolution we may commute out the relevant filtered colimits.
The case of arbitrary $R$ follows immediately, as in \Cref{WbigperftoA1perf} and \Cref{QRHomGaGa}.
\end{proof}

\begin{remark}
\label{rmk:stringstacks}
We expect that the failure in \Cref{derivedcounterexample} comes from using the ``wrong'' definition of ring stack.
The correct definition should be obtained by animating, in a suitable manner, the functor sending a polynomial $R$-algebra $S$ to the free ``strict rig scheme'' on $S$, given by
\[ \bigsqcup_{m, n} \Spec \Gamma_R^m \Gamma_R^n(S) \mpunc. \]
Full faithfulness should then hold for the obtained theory of ``strict ring stacks''.
From a Tannakian point of view, it seems plausible that this structure on a ring stack should correspond to some sort of ``derived commutative'' structure on the associated symmetric monoidal $2$-category of kernels.\todo*[hopefully this isn't nonsense]
\end{remark}

The notion of strict ring stack above is motivated by the definition of a presheaf with transfers along multiplicative polynomial laws \cite{robypls,robympls,mpls} (see also \cite{akhilmpls}).
This is a replacement for the usual notion of abelian presheaf which in particular kills the problematic $\Ext$-groups of \cite{breenext}.

\subsection{Outlook}

\begin{remark}[Shtukas with multiple legs]
\Cref{ZpSynmainthm} suggests a way to define ``$p$-adic shtukas with multiple legs'' over $\Spf \Zp$.
Namely, for any $n$ we can consider the $n$-fold power of $\ZpSyn$ over the stack of monoid stacks, and declare sheaves on this stack to be shtukas with $n+1$ legs.
Assuming \Cref{conj:SSyn}, this construction should be compatible with that of \cite{berkeleylectures}, as the induced formal group on $\Ainf[\tfrac{1}{\mu}]$ is a cyclotomic twist of $\Gmhat$, with section given by $1 + \xi$.
This construction would therefore produce a natural deperfected integral variant of that of \emph{op.\ cit.}\footnotemark
\footnotetext{%
  Beware that in this imperfect setting one cannot pass freely between all untilts of a given perfectoid ring of characteristic $p$, as the section of the formal group remembers some information about where an untilt lives in $\Ainf$.
  Inverting the Frobenius forgets this information.
}
\end{remark}

\begin{remark}[The function field case]
Given a smooth affine curve $X$ over $\Fq$, one can define a stack $X^\shtuka$, the ``shtukification'' of $X$, as
\[ X^\shtuka \defeq \coeq \prn[\Big]{X \times X \setminus \Delta \xrightrightarrows[\phi_q \times \id]{\id \times \id} X \times X} \mpunc[-1.1ex], \]
where $\phi_q$ is the $q$-power Frobenius.\footnotemark
\footnotetext{A more sophisticated construction would incorporate some sort of filtration data along $\Delta$, as in the syntomification.}
A sheaf on this stack is essentially a shtuka with a ``universal'' leg on $X$.

We expect that $X^\shtuka$ admits a similar moduli description to the stacks studied in this paper, but with usual ring stacks replaced with some sort of ``$X$-strict ring stacks'', which should be closely related to Faltings's ``strict $\calO$-modules'' \cite{faltingsstrictmodules}.
\end{remark}

We will now state some conjectures related to \Cref{ZpSynmainthm}, but we first need to recall some results of \cite{evenstack}.

\begin{recollection}[The prismatization of $\bbS$]
\label{evenstacks}
Following \cite{evenstack}, we define the stack $\bbS^\Nyghat$ to be the moduli stack of ($1$-dimensional, commutative) formal groups with a section (which identifies with the universal formal group $\Guniv$), and we define
\[ \Mm[\bbS]^\Nyghat \defeq \colim_{\Delta^\opp} \prn*{\Spec \pi_{2*} \TCm(\Mm[\MU^{\tensor (\bullet + 1)}])} / \Gm \mpunc. \]
As the notation suggests, these objects are completions of a stack $\bbS^\Nyg$ and a monoid stack $\Mm[\bbS]^\Nyg \to \bbS^\Nyg$ defined in \emph{op.\ cit.\@} (see also \cite[\S II.7]{sanaththesis}). 
As in the usual prismatic story, this descends to a monoid stack $\Mm[\bbS]^\Syn \to \bbS^\Syn$, and there is in fact a map $\ZpSyn \to \bbS^\Syn$ which pulls $\Mm[\bbS]^\Syn$ back to $\Mm^\Syn$.
There is also a map from $\bbS^\Syn$ to the moduli stack of formal groups $\Mfg$, and the composite map $\ZpSyn \to \Mfg$ classifies the Drinfeld formal group of \cite[Construction 4.1]{devenfg}.
\end{recollection}

\begin{remark}
Some of the conjectures below make use of the theory of analytic stacks of \cite{analyticstacks}.
We regard ordinary rings (potentially with an adic topology) as analytic rings via $R \mapsto (R, R)_\blacksquare$, and similarly regard suitably geometric stacks as analytic stacks.

We expect that the strict ring stacks of \Cref{rmk:stringstacks} can be formulated in this setting as well; note that it is necessary to do so, as the stacks of \cite{analyticstacks} are always derived.
\end{remark}

\begin{conjecture}[Refined $\TCm$ of $\bbQ$]
\label{conj:TCmref}
Regard $\bbS^\Nyghat$ and $\Mm[\bbS]^\Nyghat$ as analytic stacks (in the sense of \cite{analyticstacks}) in the natural way.
Then the graded cohomology ring of the analytic stack of $\bbQ$-algebra structures on $\Mm[\bbS]^\Nyghat$ is given by
\[ \lim_\Delta \pi_{2*} \prn*{\TCmref(\bbQ) \tensor \MU^{\tensor (\bullet + 1)}} \mpunc, \]
where $\TCmref$ denotes Efimov's refined $\TCm$ \cite{refinedTCm}.\footnotemark
\footnotetext{%
  It seems plausible that, for the conjecture to hold, one needs to modify the definition of $\Mm[\bbS]^\Nyghat$ slightly to make it more analytically sensible.
  It is also not clear to the authors whether one needs to ask for a strict ring structure, or whether this is instead obtained automatically from $\bbQ$-linearity.
}
\end{conjecture}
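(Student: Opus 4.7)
The plan is to exploit the cosimplicial presentation
\[ \Mm[\bbS]^\Nyghat \isom \colim_{\Delta^\opp} \prn*{\Spec \pi_{2*} \TCm(\Mm[\MU^{\tensor (\bullet + 1)}])} / \Gm \]
built into the very definition of $\Mm[\bbS]^\Nyghat$ in order to reduce the conjecture to a termwise identification over each $\MU^{\tensor (n+1)}$, and then to assemble the resulting statements by taking $\lim_\Delta$. In line with \Cref{ZpSynmainthm}, the goal at each level is to show that the analytic moduli of $\bbQ$-algebra structures on a single $\Mm[\MU^{\tensor n}]^\Nyghat$ is itself an analytic ring stack whose graded cohomology computes $\pi_{2*}\TCmref$ of the corresponding coefficients.

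First, I would set up the analytic stack of $\bbQ$-algebra structures on $\Mm[\bbS]^\Nyghat$ and use that forming such a stack converts the colimit above into a $\lim_\Delta$ of analytic stacks of $\bbQ$-algebra structures on the individual $\Mm[\MU^{\tensor (\bullet + 1)}]^\Nyghat$. Taking graded cohomology should exchange with this $\lim_\Delta$: the relevant convergence should follow from the fact that $\TCm$ of an even ring spectrum is concentrated in even degrees, so the cosimplicial diagram of cohomology rings is cohomologically bounded.

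The central step is the termwise identification: for each $n$, the graded cohomology ring of the analytic stack of $\bbQ$-algebra structures on $\Mm[\MU^{\tensor n}]^\Nyghat$ should coincide with $\pi_{2*}(\TCmref(\bbQ) \tensor \MU^{\tensor n})$. This should be a topological avatar of \Cref{ZpSynmainthm} combined with \Cref{QFdRmainthm}: passing from $\bbS$ to $\MU$ orients the universal formal group, so $\Mm[\MU]^\Nyghat$ specializes to a concrete ``$\MU$-linear syntomification'' whose ring structures should be controlled by a direct analogue of the main theorem, while $\bbQ$-linearity should collapse the Frobenius twists as in \Cref{QFdRmainthm}. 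The reason one obtains Efimov's refinement $\TCmref$ rather than a naive rationalization is that the analytic-stack framework enforces enough nuclear/dualizable structure on the coefficient ring.

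The main obstacle is precisely this termwise identification. As observed in \Cref{derivedcounterexample} and \Cref{rmk:stringstacks}, the naive derived analogue of \Cref{ZpSynmainthm} fails, so one almost certainly must work with strict ring stacks (or some analytic analogue) throughout, and verifying that the strict refinement interacts correctly with the analytic framework so as to reproduce $\TCmref$ --- rather than, say, some coarser evenly-filtered invariant --- is where essentially all the difficulty lies. A secondary issue, flagged by the footnote to the conjecture, is to confirm that $\bbQ$-linearity is enough to obviate the strictness obstruction, and to precisely formalize $\Mm[\bbS]^\Nyghat$ as an analytic stack in such a way that the various cosimplicial identifications above actually go through.
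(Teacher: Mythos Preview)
The statement you are attempting to prove is labeled a \emph{Conjecture} in the paper, and the paper gives no proof of it whatsoever: it appears in the ``Outlook'' subsection of \Cref{sec:complements}, among ``a number of other expectations and conjectures suggested by the theorem.'' So there is no paper proof to compare against.

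Your proposal is a reasonable outline of how one might hope to attack the conjecture, and you are candid about where the difficulty lies, but it is not a proof. The ``termwise identification'' step --- that the graded cohomology of the analytic stack of $\bbQ$-algebra structures on $\Mm[\MU^{\tensor n}]^\Nyghat$ coincides with $\pi_{2*}(\TCmref(\bbQ) \tensor \MU^{\tensor n})$ --- is essentially the entire content of the conjecture, and you provide no argument for it beyond an analogy with \Cref{ZpSynmainthm} and \Cref{QFdRmainthm}. Those theorems concern ordinary (non-derived, non-analytic) ring stacks over $\Zp$ and $\bbQ$ respectively, and the paper itself stresses in \Cref{derivedcounterexample} and \Cref{rmk:stringstacks} that the derived analogue fails and would require a yet-undeveloped theory of strict ring stacks. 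You also assert that forming the stack of $\bbQ$-algebra structures converts the defining $\Delta^\opp$-colimit into a $\lim_\Delta$, and that taking graded cohomology commutes with this limit; neither is justified, and the footnote to the conjecture already flags that making $\Mm[\bbS]^\Nyghat$ analytically sensible may require modifying its definition. In short, your proposal correctly identifies the shape of a plausible argument and its main obstructions, but those obstructions are exactly why the statement remains a conjecture.
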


\begin{conjecture}
\label{conj:ZpSynSSyn}
The map $\ZpSyn \to (\bbS^\Syn)_{\Spf \Zp}$ identifies the source with the stack of ring structures on the monoid stack $(\Mm[\bbS]^\Syn)_{\Spf \Zp}$.
\end{conjecture}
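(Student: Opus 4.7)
The plan is to reduce the conjecture to \Cref{ZpSynmainthm}. Write $\pi : \ZpSyn \to (\bbS^\Syn)_{\Spf \Zp}$ and let $\calM$ denote the stack over $(\bbS^\Syn)_{\Spf \Zp}$ whose $R$-points are pairs $(x, \calR)$ with $x$ an $R$-point of $(\bbS^\Syn)_{\Spf \Zp}$ and $\calR$ a ring structure on $x^* \Mm[\bbS]^\Syn$. By \Cref{evenstacks}, the pullback $\pi^* \Mm[\bbS]^\Syn$ is canonically $\Mm^\Syn$, which inherits a ring structure from $\Ga^\Syn$; together these data define the factorization $\tilde\pi : \ZpSyn \to \calM$ whose being an equivalence we wish to prove.

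Full faithfulness of $\tilde\pi$ is immediate from \Cref{ZpSynmainthm}, since any equivalence $\tilde\pi(y_1) \isom \tilde\pi(y_2)$ in particular induces an equivalence $\Ga^\Syn|_{y_1} \isom \Ga^\Syn|_{y_2}$ of $\Zp$-algebra stacks, whence $y_1 \isom y_2$ as points of $\ZpSyn$.

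For essential surjectivity, fix a $p$-nilpotent $R$ and a point $(x, \calR) \in \calM(R)$. Viewing $\calR$ as a $\Zp$-algebra stack over $R$ with underlying monoid stack $x^* \Mm[\bbS]^\Syn$, the implication $(\ref{ZpSynmainthm:monoid}) \Rightarrow (\ref{ZpSynmainthm:taut})$ of \Cref{ZpSynmainthm} will produce an $R$-point $y$ of $\ZpSyn$ with $\Ga^\Syn|_y \isom \calR$ as soon as one verifies that $x^* \Mm[\bbS]^\Syn$ locally comes from a point of $\ZpSyn$. Such a $y$ automatically satisfies $\pi(y)^* \Mm[\bbS]^\Syn \isom x^* \Mm[\bbS]^\Syn$, so upgrading this to an equivalence $\pi(y) \isom x$ in $(\bbS^\Syn)_{\Spf \Zp}$, compatibly with ring structures, would complete the proof.

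The hard part will be the two monoid-level tasks above. For the first, one should reconstruct a polyfiltered Cartier--Witt divisor directly from the ring structure on $x^* \Mm[\bbS]^\Syn$ by mimicking \Cref{sec:synmainresult}: sphere-level analogues of \Cref{Wperffactorization} and \Cref{affstkWbigmap} should produce first a compatible map $\Wperf \to \calR$ and then bootstrap via \Cref{sec:monoidstorings,sec:WperftoW} to a ring map $W \to \calR$, yielding the required Cartier--Witt data. For the second, one needs a monoid-level full faithfulness statement for $(\bbS^\Syn)_{\Spf \Zp}$ in abelian monoid stacks, analogous to the essential image description in \Cref{ZpSynmainthm}, which should follow from exploiting the formal group $\Guniv$ carried by $\Mm[\bbS]^\Syn$. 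The principal technical difficulty is that, unlike $\Mm^\Nyg$, the stack $\Mm[\bbS]^\Nyghat$ has no transparent description in terms of admissible $W$-modules --- its coordinate ring is given only via $\TCm$ of $\MU^{\tensor \bullet}$ --- so essentially all the arguments of \Cref{sec:Wmod,sec:passable} must be re-derived in this less-tractable setting.
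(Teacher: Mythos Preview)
This statement is labeled a \emph{conjecture} in the paper; the paper does not prove it. What the paper offers instead is a conditional reduction immediately following the statement: in light of \Cref{ZpSynmainthm}, the conjecture would follow from \Cref{conj:SSyn} (full faithfulness of $\bbS^\Syn \to \AbMonStk$) together with the claim that Raksit's height $\geq 1$ deformed filtered de Rham complexes are not defined functorially for rings.

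Your outline partially recovers this reduction but misplaces and misidentifies the ingredients. First, your full faithfulness argument only establishes that $\tilde\pi$ is essentially injective, not fully faithful: an isomorphism in $\calM$ includes an isomorphism in $(\bbS^\Syn)_{\Spf \Zp}$, and without knowing that maps there are detected by maps of monoid stacks (i.e.\ without \Cref{conj:SSyn}) you cannot conclude that the space of such isomorphisms agrees with the space of ring-stack isomorphisms. Your ``second monoid-level task'' is essentially \Cref{conj:SSyn}, so you have located this input, but it is needed already for full faithfulness, not only for essential surjectivity.

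Second, your first hard task is aimed in the wrong direction. Over a point $x$ of $(\bbS^\Syn)_{\Spf \Zp}$ not in the image of $\pi$ --- a point of finite height in the sense of formal groups --- the monoid stack $x^* \Mm[\bbS]^\Syn$ is genuinely not of the form $\Mm^\Syn$, and the $W$-module machinery of \Cref{sec:Wmod,sec:passable,sec:synmainresult} has no purchase on it. The paper's reduction handles this locus not by extracting Cartier--Witt data from a hypothetical ring structure, but by asserting that \emph{no} ring structure exists there: that is exactly the content of the Raksit claim. Your proposal to ``mimic \Cref{sec:synmainresult}'' over such points presupposes structure (a map from $\Mm$, flat descendability over $\Wperf$, etc.) that is simply absent, and does not engage with the actual obstruction.
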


In light of \Cref{ZpSynmainthm}, \Cref{conj:ZpSynSSyn} would follow from the following conjecture (which should be quite approachable) together with the claim that Raksit's height $\geq 1$ deformed filtered de Rham complexes (see \cite{deformeddR}) are not defined functorially for rings.

\begin{conjecture}
\label{conj:SSyn}
The map from $\bbS^\Syn$ to the stack of abelian monoid stacks classifying $\Mm[\bbS]^\Syn$ is fully faithful.
\end{conjecture}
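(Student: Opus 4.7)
My approach would closely mimic the proof of \Cref{ZpSynmainthm}, with the role of $W$ (and its perfection $\Wperf$) taken over by an appropriate spherical analogue arising from the cosimplicial $\TCm(\MU^{\tensor(\bullet+1)})$-description of $\Mm[\bbS]^\Nyghat$ in \Cref{evenstacks}. The goal is to verify, for any test ring $R$ mapping to $\bbS^\Syn$, that the space of endomorphisms of $(\Mm[\bbS]^\Syn)_R$ as an abelian monoid stack identifies via the tautological map with the corresponding loop space of $\bbS^\Syn(R)$; full faithfulness then follows formally via \Cref{transff}.

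\textbf{Reduction to the Nygaard-complete locus.} I would first reduce to $\bbS^\Nyghat$ using a stratification by the Frobenius-twisted ``de Rham'' locus, analogous to the use of $\QdRNyghat$ and its complement $\jdR(\Spf \QdRtw)$ in the proof of \Cref{ZpSynmainthm}. Over $\bbS^\Nyghat$, an $R$-point is, by \Cref{evenstacks}, a formal group $\widehat{G}$ over $R$ equipped with a section $s$ (identifying it with $\Guniv$). The pullback of $\Mm[\bbS]^\Nyghat$ to this point should be an appropriate ``monoidal thickening'' of $\widehat{G}$ with an adjoined zero element, twisted by the section and the $\MU$-cosimplicial data.

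\textbf{Key step: recovering $(\widehat{G}, s)$ from the monoid stack.} The crux is to reconstruct both the formal group structure on $\widehat{G}$ and the section $s$ from the bare abelian monoid stack $(\Mm[\bbS]^\Nyghat)_R$. Paralleling how $\Gmhat$ appears as the ``formal invertible locus'' of $\Mm$, the formal group $\widehat{G}$ should be recoverable as a distinguished formal subgroup of $\Mm[\bbS]^\Nyghat$ (namely, its completion at the section). Having recovered $\widehat{G}$, one would then compute endomorphisms by a cosimplicial argument: write
\[ \Map_{\AbMonStk}\bigl((\Mm[\bbS]^\Nyghat)_R, (\Mm[\bbS]^\Nyghat)_R\bigr) \]
as a limit using the $\Spec \pi_{2*} \TCm(\Mm[\MU^{\tensor(\bullet+1)}])/\Gm$ presentation, and identify this limit with $\bbS^\Nyghat(R) \times_{\bbS^\Nyghat(R)_{\text{target}}}\bbS^\Nyghat(R)$, possibly via an $\MU$-theoretic analogue of \Cref{WbigperftoMmperf}.

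\textbf{Gluing and main obstacle.} Finally, glue along the Frobenius twist to pass from $\bbS^\Nyghat$ to $\bbS^\Syn$, just as $\ZpN$ is glued to $\ZpN$ in \Cref{ZpSynmainthm}. The principal obstacle will be the recovery step: unlike the Witt-vector setting where \Cref{WmodMflat} and \Cref{perfdeltamapout} provide extremely rigid control, the construction $\Mm[\bbS]^\Nyghat$ is defined via a cosimplicial object of pure homotopy-theoretic origin, and it is not \emph{a priori} clear that every abelian-monoid endomorphism preserves all the relevant $\MU$-level structure. Ruling out spurious endomorphisms will likely require computing $\Map_{\AbMon}$ out of each $\Spec \pi_{2*} \TCm(\Mm[\MU^{\tensor n}])/\Gm$ explicitly and using the even-periodicity/$\MU$-rigidity of these terms to force compatibility with the cosimplicial structure; this may be where one needs the Drinfeld formal group (\cite[Construction 4.1]{devenfg}) to intervene as a source of rigidity, analogous to the role of the $\delta$-structure on $W$ in the classical argument.
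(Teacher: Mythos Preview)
The statement you are attempting to prove is \Cref{conj:SSyn}, which is a \emph{conjecture} in the paper, not a theorem; there is no proof in the paper to compare against. The paper's only remark on the difficulty is the sentence immediately following the statement: ``The maximal subgroup of $\Mm[\bbS]^\Syn$ is very tractable, so the main difficulty in the conjecture above is extending group homomorphisms to monoid homomorphisms.''

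Your proposal is a reasonable strategic outline in the spirit of \Cref{ZpSynmainthm}, and you correctly locate the core obstruction in your ``Key step'' and ``main obstacle'' paragraphs: one must rule out spurious abelian-monoid endomorphisms of $(\Mm[\bbS]^\Syn)_R$ that do not come from automorphisms of the underlying point of $\bbS^\Syn$. However, the paper's hint suggests a slightly different framing than yours. Rather than attacking the cosimplicial $\MU$-presentation directly, the paper indicates that the group-theoretic part (endomorphisms of the maximal subgroup, i.e.\ the formal group with section) is already under control, and the genuine difficulty is the passage from group maps to monoid maps --- that is, extending across the zero locus. Your plan does not isolate this step; you treat ``recover $\widehat{G}$'' and ``recover $s$'' as the crux, but the paper regards those as tractable and instead flags the extension-to-$0$ problem. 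A sharper attack would therefore first dispatch the formal-group-with-section side (as the paper suggests is feasible) and then focus squarely on why a monoid endomorphism fixing the invertible locus must also behave correctly near $0$; this is where an analogue of the $\Mm$-to-$W$ bootstrap in \Cref{sec:monoidstorings,sec:WperftoW} would be needed, and where no such tool currently exists over $\bbS^\Syn$.
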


The maximal subgroup of $\Mm[\bbS]^\Syn$ is very tractable, so the main difficulty in the conjecture above is extending group homomorphisms to monoid homomorphisms.

\begin{conjecture}[The functor of points of $\bbF_1^\Syn$]
\label{conj:f1syn}
Let $\bbS^\Prismhat$ denote the complement of the zero section in $\Guniv = \bbS^\Nyghat$ (an analytic stack).
Then, for a $p$-nilpotent ring $R$, an $R$-point of the stack $\widehat{\bbF}_1^\Syn$ of \cite{f1syn} is given by a formal group over $R$ together with a $p$-complete strict ring structure on the pullback of the monoid stack $\Mm[\bbS]^\Nyghat$ to $\Spec R \times_{\Mfg} \bbS^\Prismhat$.
\end{conjecture}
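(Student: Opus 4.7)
The strategy mirrors \Cref{ZpSynmainthm}, transposed to the sphere-theoretic strict-ring-stack setting of \Cref{rmk:stringstacks} and incorporating chromatic data via the map to $\Mfg$. I would first construct the forward map: an $R$-point of $\widehat{\bbF}_1^\Syn$ ought to produce a formal group over $R$ through an expected map $\widehat{\bbF}_1^\Syn \to \Mfg$, and the monoid stack $\Mm[\bbS]^\Nyghat$ of \Cref{evenstacks} should pull back to a strict ring stack over the complement of the zero section in this formal group. This part should follow directly from unwinding the definitions in \cite{f1syn} together with the evident functoriality of $\Mm[\bbS]^\Nyghat$ in formal group data.

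For full faithfulness, the key input would be a strict-ring-stack analogue of \Cref{Wperffactorization}: over the prismatic locus $\bbS^\Prismhat$, the canonical strict ring structure on $\Mm[\bbS]^\Nyghat$ should admit no nontrivial endomorphisms compatible with the chromatic data. Combined with a strict-ring version of \Cref{affstkWbigmap}, bootstrapping from monoid data to strict ring data, this would yield the uniqueness portion of the statement in the spirit of \Cref{sec:WperftoW}.

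For essential surjectivity, which should be the hard direction, I would develop sphere-theoretic analogues of the passable module and polyfiltered Cartier--Witt divisor frameworks of \Cref{sec:passable}. Given a formal group $\widehat{G}/R$ together with a strict ring structure on the pullback of $\Mm[\bbS]^\Nyghat$ to $\Spec R \times_{\Mfg} \bbS^\Prismhat$, the task is to glue this data across the zero section to produce a bona fide $R$-point of $\widehat{\bbF}_1^\Syn$. Assuming \Cref{conj:SSyn} (and its strict refinement), the prismatic data should determine enough of the cohomology theory that what remains is a completion-and-gluing argument analogous to the passage from $\ZpN$ to $\ZpNN$.

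The principal obstacle is foundational: one must first develop the theory of strict ring stacks from \Cref{rmk:stringstacks}---including descent, duality, and a sphere-theoretic analogue of \Cref{ZpSynmainthm} itself---in sufficient generality. A secondary obstacle concerns the analytic nature of $\bbS^\Prismhat$ as the complement of the zero section in a formal group; this introduces topological subtleties absent from the algebraic setting of \Cref{sec:syntomic}, requiring systematic work within the framework of \cite{analyticstacks}.
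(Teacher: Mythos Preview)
The statement you are attempting to prove is labeled a \emph{conjecture} in the paper, and the paper does not provide a proof. The only commentary the paper offers is the remark immediately following the statement: assuming analytic versions of \Cref{conj:ZpSynSSyn} and \Cref{conj:SSyn}, \Cref{conj:f1syn} should be equivalent to \Cref{conj:f1syn2}, which is itself left open. There is thus no proof in the paper to compare your proposal against.

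Your outline is a plausible sketch of how one might attack the problem, and it correctly identifies the main obstacles: the undeveloped theory of strict ring stacks from \Cref{rmk:stringstacks} and the analytic-stack subtleties around $\bbS^\Prismhat$. But you should be aware that what you have written is a research program, not a proof; the paper's authors themselves regard the statement as open and contingent on other conjectures. In particular, your appeal to ``a strict-ring-stack analogue of \Cref{Wperffactorization}'' and ``sphere-theoretic analogues of the passable module and polyfiltered Cartier--Witt divisor frameworks'' names objects and results that do not yet exist, and whose existence is precisely what is at stake.
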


Assuming (analytic versions of) \Cref{conj:ZpSynSSyn,conj:SSyn}, \Cref{conj:f1syn} should be equivalent to the following conjecture purely about formal groups and Cartier--Witt divisors.

\begin{conjecture}
\label{conj:f1syn2}
For a $p$-nilpotent ring $R$, an $R$-point of the stack $\widehat{\bbF}_1^\Syn$ of \cite{f1syn} is given by a formal group over $R$ and a Cartier--Witt divisor on the pullback $\Spec R \times_{\Mfg} \bbS^\Prismhat$, along with an isomorphism over this pullback between the given formal group and the Drinfeld formal group compatible with the natural sections of each.\footnotemark
\footnotetext{%
  Here we mean the section $\tilde{\hspace{0.1ex}s}$ of \cite[\S 2.10.7]{drinfeldfg}, which lands in the Drinfeld formal group (rather than just its algebraization) when restricted to the appropriate analytic localization $\Zp^\Prismhat$ of $\Zp^\Nyghat$.
  We are also assuming the existence of a reasonable theory of Cartier--Witt divisors on the relevant stacks.
}
\end{conjecture}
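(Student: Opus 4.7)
The plan is to deduce this statement from \Cref{conj:f1syn} together with the analytic versions of \Cref{conj:SSyn} and \Cref{conj:ZpSynSSyn}, by translating between strict ring structures on monoid stacks and Cartier--Witt divisors. Throughout, fix a $p$-nilpotent ring $R$ and write $Y \defeq \Spec R \times_{\Mfg} \bbS^\Prismhat$, where the map $\Spec R \to \Mfg$ is given by a formal group $G$ over $R$.

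By \Cref{conj:f1syn}, an $R$-point of $\widehat{\bbF}_1^\Syn$ consists of such a formal group $G$ together with a $p$-complete strict ring structure on the restriction of the monoid stack $\Mm[\bbS]^\Nyghat$ to $Y$. I would first apply the analytic version of \Cref{conj:SSyn}: full faithfulness of the map from $\bbS^\Syn$ to the stack of abelian monoid stacks classifying $\Mm[\bbS]^\Syn$ upgrades the strict ring structure on $(\Mm[\bbS]^\Nyghat)|_Y$ to a factorization of the tautological map $Y \to \bbS^\Nyghat$ through $\bbS^\Syn$, compatibly with the descent to the syntomification. Next, invoke the analytic version of \Cref{conj:ZpSynSSyn}: since $Y$ is $p$-nilpotent over $\Spf \Zp$, this factorization upgrades further to a map $Y \to \ZpSyn$. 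Because $Y$ lies in the prismatic locus $\bbS^\Prismhat \subset \bbS^\Nyghat$, the resulting map factors through the image of $\ZpPrism$ in $\ZpSyn$, which by definition parametrizes Cartier--Witt divisors on $Y$.

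The compatibility with the Drinfeld formal group is then automatic from the diagram $\ZpSyn \to \bbS^\Syn \to \Mfg$ recalled in \Cref{evenstacks}, whose composite classifies the Drinfeld formal group: pulling this identification back along $Y \to \ZpSyn$ yields the required isomorphism with $G$, while the section compatibility comes from tracking the unit section of $\Mm[\bbS]^\Nyghat$ through the equivalences above. The main obstacle is proving the analytic versions of \Cref{conj:SSyn} and \Cref{conj:ZpSynSSyn}: one must develop an analytic theory of strict ring stacks in the sense of \Cref{rmk:stringstacks}, carefully handle Nygaard completion in that framework, and treat the pullback $Y$ (which is not a scheme) as a reasonable analytic stack over which transmutation behaves as expected. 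A secondary subtlety is verifying that the section $\tilde{s}$ of \cite[\S 2.10.7]{drinfeldfg} indeed corresponds to the unit section of $\Mm[\bbS]^\Nyghat$ under the chain of identifications, which requires unpacking how the analytic localization from $\Zp^\Nyghat$ to its prismatic locus interacts with the monoid structure.
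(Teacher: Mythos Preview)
The paper does not prove this statement: it is stated as a conjecture, and the only justification offered is the sentence immediately preceding it, namely that ``assuming (analytic versions of) \Cref{conj:ZpSynSSyn,conj:SSyn}, \Cref{conj:f1syn} should be equivalent to'' \Cref{conj:f1syn2}. There is no argument in the paper beyond this one-line heuristic. Your proposal is essentially an attempt to expand that heuristic into a sketch, and in that sense it is aligned with what the paper says; but since \Cref{conj:f1syn}, \Cref{conj:SSyn}, and \Cref{conj:ZpSynSSyn} are all themselves open conjectures, what you have written is at best a conditional reduction, not a proof.

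One point in your sketch is garbled: you invoke \Cref{conj:SSyn} to ``upgrade the strict ring structure \dots\ to a factorization through $\bbS^\Syn$'', but \Cref{conj:SSyn} is a full-faithfulness statement into \emph{monoid} stacks, not ring stacks, so it does not by itself produce such a factorization. The work of passing from a ring structure on the monoid stack to a point of $\ZpSyn$ (and hence, on the prismatic locus, to a Cartier--Witt divisor) is really carried by \Cref{conj:ZpSynSSyn}; the role of \Cref{conj:SSyn} is rather to control the remaining formal-group-with-section data once the ring structure has been accounted for. You also correctly flag, but do not resolve, the genuine difficulties: setting up strict ring stacks in the analytic setting, and matching the section $\tilde{s}$ with the tautological one.
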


\appendix
\addtocontents{toc}{\protect\setcounter{tocdepth}{1}}

\section{Pseudodescendable sheaves and divided powers}
\label{sec:LGamma}

The following definition is inspired by the descent condition used in Clausen--Scholze's theory of analytic stacks \cite[Lecture 19, 00:01:20]{analyticstacks}, and of course by \cite{descendability}.

\begin{definition}
\label{pdescobj}
Let $A^\bullet$ be a cosimplicial $\bbE_0$-algebra in a presentably $\bbE_0$-monoidal stable category $\calC$, with unit $A^{-1}$.
Then we say that $A^\bullet$ is \emph{pseudodescendable} if the natural map
\[ A^{-1} \to \flim_n \Tot^n A^\bullet \]
is an equivalence in $\Pro(\calC)$.
Note that this notion is preserved under exact $\bbE_0$-monoidal functors.

We say that an augmented cosimplicial $\bbE_1$-ring $A^\bullet$ in a presentably monoidal stable category $\calC$ is (left) pseudodescendable if the induced cosimplicial $\bbE_0$-algebra in $\RMod_{A^{-1}}(\calC)$ is.
\end{definition}

\begin{remark}
\label{rmk:pdescdesc}
Note that a map of $\Einfty$-rings in a presentably symmetric monoidal stable category is descendable if and only if its Čech conerve is pseudodescendable.

Note also that if $A^\bullet$ is a pseudodescendable augmented cosimplicial $\Einfty$-ring in some presentably symmetric monoidal stable category, then $A^{-1} \to A^0$ is descendable.
Indeed, each term of the cosimplicial diagram $A^\bullet$ is an $A^0$-module, so, as $A^{-1}$ is a retract of some finite stage of the $\Tot$ tower, it lies in the thick $\tensor$-ideal generated by $A^0$.
\end{remark}

\begin{remark}
\Cref{pdescobj} is somewhat too broad for many purposes.
For example, although one can easily show that $\LMod_{(-)}$ satisfies ineffective descent along pseudodescendable augmented cosimplicial $\bbE_1$-rings, this descent is not effective.

To see this, consider the augmented cosimplicial ring $A^\bullet$ obtained by applying $\RGamma$ to the Čech nerve of the cover $\bbA^1 \sqcup \bbA^1 \to \bbP^1$ over a field.
Then the object
\[ \cofib(A^{-1} \to \Tot^n A^\bullet) \]
is equivalent for each $n$ to a vector space concentrated in cohomological degree $n$, so the tower
\[ \cofib(A^{-1} \to \flim_n \Tot^n A^\bullet) \]
is pro-zero.
Thus $A^\bullet$ is pseudodescendable, even though it clearly does not have descent for modules.
\end{remark}


\begin{definition}
\label{pdesctop}
Given an animated ring $R$, we define the category $\Sh(R_\pdesc)$ of \emph{pseudodescendable sheaves} over $R$ as the localization of $\PSh(\aCAlg_R^\opp)$ generated by the following morphisms:
\begin{enumerate}
\item
  $i(S) \union i(T) \to i(S \times T)$, where $i$ is the Yoneda embedding;
\item
  $\colim_{\Delta^\opp} i(S^\bullet) \to i(S^{-1})$, where $S^\bullet$ is a pseudodescendable augmented cosimplicial animated $R$-algebra.
\end{enumerate}
\end{definition}

\begin{warning}
The category $\Sh(R_\pdesc)$ is presumably not a topos.
\todo*[add citation to paper Peter sent when it's available]
\end{warning}

\begin{remark}
\label{rmk:affstkpdesc}
Note that if $A$ is a derived $R$-algebra then $\Spec A$ is a pseudodescendable sheaf.
\end{remark}

\begin{lemma}
\label{excisivetot}
Let $F : \calC \to \calD$ be a $k$-excisive functor between stable categories, and let $X^\bullet \in \Fun(\Delta, \calC)$ be a cosimplicial object of $\calC$. Then the natural map
\[ \flim_n F(\Tot^n X^\bullet) \to \flim_n \Tot^n F(X^\bullet) \]
is an equivalence in $\Pro(\calD)$.
\end{lemma}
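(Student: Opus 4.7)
The plan is to induct on $k$, reducing the inductive step via Goodwillie's calculus to the case of a symmetric $k$-multilinear functor, where the claim can be deduced from a cofinality argument in $\Pro(\Cat)$.

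First, replacing $F$ by $\fib(F \to F(0))$, I may assume $F$ is reduced, since both $F(\Tot^n X^\bullet)$ and $\Tot^n F(X^\bullet)$ are equal to $F(0)$ when $F$ is constant. The base case $k = 1$ is then immediate: a reduced $1$-excisive functor between stable $\infty$-categories is exact, so preserves the finite limit $\Tot^n$ on the nose.

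For the inductive step, the Goodwillie--Taylor fiber sequence $D_k F \to F \to P_{k-1} F$, combined with the inductive hypothesis applied to $P_{k-1} F$, will reduce to the case where $F$ is $k$-homogeneous. Such $F$ takes the form $F(X) \simeq B(X, \ldots, X)_{h\Sigma_k}$ for a symmetric $k$-multilinear cross-effect $B$. As $\Sigma_k$-coinvariants are a finite colimit in $\calD$ and therefore preserve pro-equivalences levelwise, I can further reduce to the case of the functor $X \mapsto B(X, \ldots, X)$. Exactness of $B$ in each variable then gives
\[ B(\Tot^n X^\bullet, \ldots, \Tot^n X^\bullet) \simeq \lim_{(p_1, \ldots, p_k) \in (\Delta_{\leq n})^k} B(X^{p_1}, \ldots, X^{p_k}), \]
while $\Tot^n B(X^\bullet, \ldots, X^\bullet)$ is the restriction of this limit along the diagonal $\Delta_{\leq n} \hookrightarrow (\Delta_{\leq n})^k$.

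The final and main step will be to verify that this diagonal restriction induces a pro-equivalence in $\calD$. Classically, the diagonal $\Delta \hookrightarrow \Delta^k$ is limit-initial, since the slice $\Delta \downarrow (q_1, \ldots, q_k)$ identifies with the category of simplices of $\Delta^{q_1} \times \cdots \times \Delta^{q_k}$, whose geometric realization is contractible. For the finite truncations I would upgrade this to a pro-equivalence in $\Pro(\Cat)$ by exhibiting retractions via a join or concatenation functor $(\Delta_{\leq n})^k \to \Delta_{\leq kn+k-1}$ and checking that the resulting diagrams commute up to the tolerance permitted by the pro-structure. The hard part will be this pro-cofinality verification; the Goodwillie-theoretic reductions are essentially formal.
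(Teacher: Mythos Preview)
Your Goodwillie-calculus reduction to the symmetric multilinear case is correct and is a genuinely different route from the paper's. The identifications
\[
B(\Tot^n X^\bullet,\ldots,\Tot^n X^\bullet)\;\simeq\;\lim_{(\Delta_{\leq n})^k} B(X^{p_1},\ldots,X^{p_k}),
\qquad
\Tot^n B(X^\bullet,\ldots,X^\bullet)\;\simeq\;\lim_{\Delta_{\leq n}} d^*B
\]
are also fine. The gap is in the last step. The join functor $j:(\Delta_{\leq n})^k\to\Delta_{\leq kn+k-1}$ only supplies a natural transformation $\iota \Rightarrow d\circ j$ (the block inclusions $[p_i]\hookrightarrow [p_1]\star\cdots\star[p_k]$); there is no natural transformation $d\circ j \Rightarrow \iota$ in $\Delta^k$, since a natural retraction $[p_1]\star\cdots\star[p_k]\to[p_i]$ fails to commute with maps that do not hit the top vertex. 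Consequently the join produces maps $A_n \to \lim j^*d^*Y \leftarrow B_{kn+k-1}$, not the map $B_m\to A_n$ you need for a pro-inverse. The statement you want---that the diagonal and box $\Tot$ towers of a multi-cosimplicial object in a stable category are pro-equivalent---is true, but one proves it by comparing the normalized filtrations via Eilenberg--Zilber (the graded pieces are indexed by $\{\vec m:\max m_i\le n\}$ versus $\{\vec m:\sum m_i\le n\}$, and these exhaust $\bbN^k$ cofinally), not by a categorical retraction.

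For comparison, the paper does not pass through the Goodwillie tower at all. It instead invokes the known identity $F(\Tot^n X^\bullet)\simeq \Tot^{nk}F(\sk^n X^\bullet)$ for $k$-excisive $F$ (from \cite{Kthypoly} and \cite{bmdefthy}) to factor the map as
\[
\flim_n F(\Tot^n X^\bullet)\;\xrightarrow{\sim}\;\flim_n \Tot^{nk}F(\sk^n X^\bullet)\;\xrightarrow{f}\;\flim_n \Tot^n F(X^\bullet),
\]
and then shows $\fib(f)$ is pro-zero by an explicit diagram chase: for each $n$ one writes down a commuting square in which the transition map $\fib(f_{nk})\to\fib(f_n)$ visibly factors through zero, using only the universal property of $\sk^n$. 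Your approach is more conceptual and would give a cleaner statement at the end (pro-cofinality of the diagonal), but the hard step is of comparable difficulty to the paper's entire argument; the paper's approach is more hands-on but self-contained modulo the cited black box.
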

\begin{proof}
We have a natural factorization
\[ \flim_n F(\Tot^n X^\bullet) \to \flim_n \Tot^{n k} F(\sk^n X^\bullet) \xrightarrow{f} \flim_n \Tot^n F(X^\bullet) \mpunc, \]
where the first map is an equivalence by \cite[Proposition 2.10]{Kthypoly} and \cite[Proposition 3.37]{bmdefthy}.
It therefore suffices to see that the fiber of $f$ is pro-zero.

Fix some $n \in \bbN$ and consider the map
\[ g : \sk^{n k} F(X^\bullet) \to \sk^{n k} F(\sk^n X^\bullet) \mpunc. \]
We claim that the diagram
\[\xymatrix{
  \sk^{n k^2} F(\sk^{n k} X^\bullet) \ar[rr]^{f_{n k}} \ar[d] && \sk^{n k} F(X^\bullet) \ar[lld]_{g} \ar[d] \\
  \sk^{n k} F(\sk^n X^\bullet) \ar[rr]^{f_n} && \sk^n F(X^\bullet) \\
}\]
commutes.
The bottom triangle commutes because maps to $\sk^n F(X^\bullet)$ are determined by their restrictions to degree $\leq n$, where both maps in question are the identity.
The top triangle commutes because the left vertical map is by definition given by the composite
\[ \sk^{n k^2} F(\sk^{n k} X^\bullet) \to \sk^{n k} F(\sk^{n k} X^\bullet) \to \sk^{n k} F(\sk^n X^\bullet) \mpunc. \]

Taking limits and rearranging, we obtain a diagram
\[\xymatrix{
  \Tot^{n k^2} F(\sk^{n k} X^\bullet) \ar[rr]^{f_{n k}} \ar[d]^{f_{n k}} && \Tot^{n k} F(X^\bullet) \ar[d]^{\id} \\
  \Tot^{n k} F(X^\bullet) \ar[rr]^{\id} \ar[d]^g && \Tot^{n k} F(X^\bullet) \ar[d] \\
  \Tot^{n k} F(\sk^n X^\bullet) \ar[rr]^{f_n} && \Tot^n F(X^\bullet)
  \mpunc,
}\]
so the map on fibers
\[ \fib\prn[\big]{\Tot^{n k^2} F(\sk^{n k} X^\bullet) \xrightarrow{f_{n k}} \Tot^{n k} F(X^\bullet)} \to \fib\prn[\big]{\Tot^{n k} F(\sk^n X^\bullet) \xrightarrow{f_n} \Tot^n F(X^\bullet)} \]
factors through $0$.
As this holds for all $n \in \bbN$, we conclude.
\end{proof}

\begin{lemma}
\label{LGammalax}
For a ring $R$, the functor
\[ \LGamma_R^k : \calD(R)^{\leq 0} \to \calD(R) \]
admits a lax symmetric monoidal structure.
\end{lemma}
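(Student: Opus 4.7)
The plan is to construct the lax symmetric monoidal structure on $\LGamma_R^k$ by transporting the classical one along the sifted cocompletion, using Lurie's Day convolution machinery. The starting point is the observation that on the ordinary symmetric monoidal category $\mathrm{Proj}_R^\omega$ of finitely generated projective $R$-modules under $\otimes_R$, the classical divided power functor $\Gamma_R^k$ admits a canonical lax symmetric monoidal structure, with structure maps
\[ \Gamma_R^k(M) \otimes_R \Gamma_R^k(N) \to \Gamma_R^k(M \otimes_R N) \]
determined on decomposable elements by $\gamma_k(m) \otimes \gamma_k(n) \mapsto \gamma_k(m \otimes n)$; the compatibilities with unit and symmetry are immediate from the universal property of the divided power algebra.

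Next, I would identify the connective derived category $\calD(R)^{\leq 0}$ with $\PSigma(\mathrm{Proj}_R^\omega)$ and, crucially, identify its standard symmetric monoidal structure (derived tensor product) with the Day convolution extending the usual tensor product on $\mathrm{Proj}_R^\omega$. Both symmetric monoidal structures preserve sifted colimits separately in each variable and restrict to the ordinary tensor product on $\mathrm{Proj}_R^\omega$, so the identification is forced by the universal property of Day convolution (cf.\ \cite[Proposition 4.8.1.10]{ha}).

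With these identifications in place, the desired structure on $\LGamma_R^k$ follows from the universal property of Day convolution on the sifted cocompletion, as in \cite[Corollary 4.8.1.12]{ha}: the lax symmetric monoidal functor $\Gamma_R^k : \mathrm{Proj}_R^\omega \to \calD(R)$ extends essentially uniquely to a lax symmetric monoidal, sifted-colimit-preserving functor $\PSigma(\mathrm{Proj}_R^\omega) \to \calD(R)$, and the underlying functor of this extension agrees with $\LGamma_R^k$ by definition of the latter as the nonabelian left derived functor of $\Gamma_R^k$.

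The main obstacle is not the construction itself, which is essentially formal, but rather the bookkeeping needed to produce all the higher operadic coherences from the $1$-categorical lax structure. Since $\Gamma_R^k$ is already strictly lax symmetric monoidal as a $1$-functor on $\mathrm{Proj}_R^\omega$, and $\mathrm{Proj}_R^\omega$ is a $1$-category (viewed as an $\infty$-category in the usual way), these coherences are automatic, and the entire argument reduces to invoking the abstract machinery of \cite[\S 4.8.1]{ha}.
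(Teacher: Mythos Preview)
Your proposal is correct and follows essentially the same approach as the paper: both start from the classical lax symmetric monoidal structure on $\Gamma_R^k$ on finitely generated projective $R$-modules, identify $\calD(R)^{\leq 0}$ as the sifted cocompletion, and then invoke \cite[Proposition~4.8.1.10]{ha} to extend. The paper's proof is just a terser version of yours, citing part~(4) of that proposition directly rather than spelling out the Day convolution identification and the explicit formula for the structure maps.
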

\begin{proof}
Recall that $\LGamma_R^k$ is defined by left Kan extension of the lax symmetric monoidal functor $\Gamma_R^k$ along the inclusion $\Mod_R^\proj \subseteq \calD(R)^{\leq 0}$.
Because $\calD(R)^{\leq 0}$ is the free sifted cocompletion of $\Mod_R^\proj$, we can apply \cite[Proposition 4.8.1.10(4)]{ha} to obtain a lax symmetric monoidal structure on $\LGamma_R^k$ as well.
\end{proof}

\begin{corollary}
\label{LGammaMod}
Let $R$ be a ring, and let $S$ be an $\Einfty$-$R$-algebra.
Then the functor
\[ \LGamma_R^k : \calD(R) \to \calD(R) \]
lifts to a $k$-excisive functor
\[ \calD(S) \to \calD(\LGamma_R^k(S)) \mpunc, \]
where the ring structure on $\LGamma_R^k(S)$ is obtained from \Cref{LGammalax}.
\end{corollary}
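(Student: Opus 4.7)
The plan is to deduce everything formally from Lemma~\ref{LGammalax}. First, I would apply the standard theory of modules over lax symmetric monoidal functors (see \cite[\S 7.1.2]{ha}): the lax symmetric monoidal structure on $\LGamma_R^k$ produces, for each $\Einfty$-algebra $A \in \calD(R)^{\leq 0}$, an $\Einfty$-ring structure on $\LGamma_R^k(A)$ together with a natural functor
\[ \Mod_A(\calD(R)^{\leq 0}) \to \Mod_{\LGamma_R^k(A)}(\calD(R)) \]
lifting $\LGamma_R^k$ along the forgetful functors on both sides. Taking $A = S$ (viewed as a connective $\Einfty$-$R$-algebra) yields the desired lift on connective $S$-modules, together with the asserted $\Einfty$-ring structure on $\LGamma_R^k(S)$.

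Second, I would verify $k$-excisiveness of this lift. Since the forgetful functor $\calD(\LGamma_R^k(S)) \to \calD(R)$ is exact and conservative, the question reduces to $k$-excisiveness of $\LGamma_R^k : \calD(R)^{\leq 0} \to \calD(R)$ itself. This is classical: $\Gamma_R^k$ is a polynomial functor of degree $\leq k$ on $\Mod_R^\proj$ (its $(k+1)$-fold cross-effects vanish, essentially because $\Gamma^k$ splits off a direct summand of the $k$-fold tensor power), and left Kan extension along $\Mod_R^\proj \inj \calD(R)^{\leq 0}$ preserves this cross-effect vanishing, which by Goodwillie calculus is equivalent to $k$-excisiveness.

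Finally, I would extend from $\calD(S)^{\leq 0}$ to all of $\calD(S)$ by invoking the universal property of Goodwillie's $P_k$ construction: any $k$-excisive functor from the connective part of a stable $\infty$-category into a stable $\infty$-category admits an (essentially unique) $k$-excisive extension to the entire category. The main obstacle is really the coherent bookkeeping in the first step, i.e.\ producing the module lift naturally in the $\infty$-categorical sense with all the required coherences; once one accepts the formalism of \cite[\S 7.1.2]{ha} this is automatic, and steps 2 and 3 are then essentially mechanical consequences of well-known facts about polynomial functors and Goodwillie calculus.
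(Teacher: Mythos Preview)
Your approach is essentially the paper's: lift to modules via the lax monoidal structure on connectives, reduce $k$-excisiveness to that of $\LGamma_R^k$ via the conservative forgetful functors, then extend to nonconnective modules. The first two steps match the paper almost verbatim.

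The third step, however, is not correct as you have stated it. The claim that ``any $k$-excisive functor from the connective part of a stable $\infty$-category into a stable $\infty$-category admits an essentially unique $k$-excisive extension to the entire category'' is not a general fact, and it is not what the $P_k$ construction provides (that produces the $k$-excisive approximation of a given functor, which is a different problem). A stabilization-type argument only extends a $k$-excisive functor on $\calD(S)^{\leq 0}$ to bounded-below objects, not to all of $\calD(S)$. The paper handles this by first restricting to $\Perf(S)^{\leq 0}$, where \cite[Theorem~3.36]{bmdefthy} does give a unique $k$-excisive extension to $\Perf(S)$ (perfect complexes are bounded, so the stabilization really is $\Perf(S)$), checking via \cite[Theorem~3.26]{bmdefthy} that this extension still lifts the usual $\LGamma_R^k$, and then left Kan extending along $\Perf(S)\subseteq\calD(S)$ using that $\LGamma_R^k$ commutes with filtered colimits. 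That last ingredient---filtered-colimit preservation---is precisely what your sketch is missing to pass from bounded-below to arbitrary $S$-modules.
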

\begin{proof}
It follows from \Cref{LGammalax} that $\LGamma_R^k$ lifts to a functor from $\calD(S)^{\leq 0}$ to $\calD(\LGamma_R^k(S))$.
This functor is still $k$-excisive, as the forgetful functors to $\calD(R)^{\leq 0}$ on the left and $\calD(R)$ on the right are conservative and preserve colimits.
Restricting this functor to $\Perf(S)^{\leq 0}$ and applying \cite[Theorem 3.36]{bmdefthy}, we obtain an extension to all of $\Perf(S)$, which lifts the usual $\LGamma_R^k$ by another application of \cite[Theorem 3.26]{bmdefthy}.
Finally, as $\LGamma_R^k$ commutes with filtered colimits, left Kan extension along $\Perf(S) \subseteq \calD(S)$ yields the desired functor.
\end{proof}

\begin{corollary}
\label{LGammadesc}
Let $R$ be a ring, and let $S \to T$ be a descendable map of $\Einfty$-$R$-algebras.
Then for each $k \in \bbN$, the augmented cosimplicial $\Einfty$-algebra
\[ \LGamma_R^k(S) \to \LGamma_R^k(T^{\tensor_S \bullet+1}) \mpunc, \]
with algebra structure obtained from \Cref{LGammalax}, is pseudodescendable.
In particular, by \Cref{rmk:pdescdesc}, the map
\[ \LGamma_R^k(S) \to \LGamma_R^k(T) \]
is descendable.
\end{corollary}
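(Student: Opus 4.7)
The plan is to apply the $k$-excisive lift $\LGamma_R^k : \calD(S) \to \calD(\LGamma_R^k(S))$ of \Cref{LGammaMod} termwise to the Čech conerve of $S \to T$, which I will denote $X^\bullet$ (with augmentation $X^{-1} = S$). By \Cref{rmk:pdescdesc}, the descendability of $S \to T$ is equivalent to pseudodescendability of $X^\bullet$ in $\calD(S)$. The case $k = 0$ is trivial since $\LGamma_R^0$ is the constant functor $R$, so I may assume $k \geq 1$, in which case $\LGamma_R^k$ is reduced.

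The first move is to invoke \Cref{excisivetot}, which identifies $\flim_n \Tot^n \LGamma_R^k(X^\bullet)$ with $\flim_n \LGamma_R^k(\Tot^n X^\bullet)$ in the pro-category. This reduces the desired pseudodescendability of $\LGamma_R^k(X^\bullet)$ to showing that the natural map
\[ \LGamma_R^k(S) \to \{\LGamma_R^k(\Tot^n X^\bullet)\}_n \]
is a pro-equivalence in $\calD(\LGamma_R^k(S))$. Pseudodescendability of $X^\bullet$ already tells us that $S \to \{\Tot^n X^\bullet\}_n$ is a pro-equivalence in $\calD(S)$, so what remains is to show that $\LGamma_R^k$ carries this specific pro-equivalence to a pro-equivalence.

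The main obstacle is the general claim that a reduced $k$-excisive functor between stable categories sends pro-equivalences to pro-equivalences; this is subtle precisely because $k$-excisive functors do not preserve cofibers on the nose. I would prove this by induction along the Goodwillie tower of $F = \LGamma_R^k$. The linear bottom layer $P_1 F$ preserves cofibers outright, so preserves pro-equivalences. Each $j$-homogeneous layer can be written as $D_j F(X) \simeq B_j(X, \ldots, X)_{h \Sigma_j}$ with $B_j$ $j$-multilinear; applied to a pro-equivalence $Y^\bullet \to Y'^\bullet$ with pro-zero cofiber $C^\bullet$, the resulting cofiber admits a finite filtration by homotopy coinvariants of expressions $B_j(\ldots, C^\bullet, \ldots)$ having at least one $C^\bullet$-slot, and multilinearity of $B_j$ forces each such piece to be pro-zero (as a zero transition map in any slot kills the whole term). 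Pro-zero systems are closed under extensions and homotopy coinvariants, so $D_j F$ preserves pro-equivalences; the inductive step then follows from the fiber sequence $D_j F \to P_j F \to P_{j-1} F$.

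Combining this with \Cref{excisivetot} yields the pseudodescendability of $\LGamma_R^k(X^\bullet)$, and the final descendability statement for $\LGamma_R^k(S) \to \LGamma_R^k(T)$ is then immediate from \Cref{rmk:pdescdesc}.
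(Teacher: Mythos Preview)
Your overall strategy matches the paper's: invoke the $k$-excisive lift of \Cref{LGammaMod} and then apply \Cref{excisivetot} to the Čech conerve. However, the step you flag as ``the main obstacle'' is not one. Any functor $F : \calC \to \calD$ whatsoever induces a functor $\Pro(F) : \Pro(\calC) \to \Pro(\calD)$ by termwise application, and functors preserve equivalences; hence the pro-equivalence $S \simeq \flim_n \Tot^n X^\bullet$ in $\calD(S)$ is carried to a pro-equivalence $\LGamma_R^k(S) \simeq \flim_n \LGamma_R^k(\Tot^n X^\bullet)$ in $\calD(\LGamma_R^k(S))$ automatically, with no appeal to excisivity beyond what \Cref{excisivetot} already uses. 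Your Goodwillie-tower argument is therefore unnecessary, and as written it is also somewhat delicate: to conclude that $\{D_j F(Y'_n)\}$ has pro-zero cofiber you need the relevant nullhomotopies to be $\Sigma_j$-equivariant before taking $(-)_{h\Sigma_j}$, and pro-zeroness in the underlying category does not guarantee this.

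You also omit one small point the paper addresses. The pseudodescendability asserted in the statement is for the augmented cosimplicial $\Einfty$-algebra with structure coming from \Cref{LGammalax}, so one must verify the pro-equivalence in $\RMod_{\LGamma_R^k(S)}$ with the module structure induced by that $\Einfty$-structure; your argument instead uses the module structure furnished by the lift of \Cref{LGammaMod}. The paper closes this gap by invoking the naturality of \cite[Corollary 3.4.1.5]{ha} to identify the two module structures.
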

\begin{proof}
Because $S \to T$ is descendable, the natural map
\[ S \to \flim_n \Tot^n T^{\otimes_S \bullet+1} \]
is an equivalence in $\Pro(\calD(S))$, so by \Cref{LGammaMod} and \Cref{excisivetot} we find that
\[ \LGamma_R^k(S) \to \flim_n \Tot^n \LGamma_R^k(T^{\otimes_S \bullet+1}) \]
is an equivalence in $\Pro(\calD(\LGamma_R^k(S)))$.
By naturality of the map in \cite[Corollary 3.4.1.5]{ha}, the module structure obtained from \Cref{LGammaMod} agrees with that obtained from the $\Einfty$-structure, so we conclude.
\end{proof}

\begin{corollary}
\label{flatGammadesc}
Let $R$ be a ring, and let $S \to T$ be a flat descendable map of flat $R$-algebras.
Then for every $k \in \bbN$, the natural map
\[ \colim_{\Delta^\opp} \Spec \Gamma_R^k(T^{\otimes_S \bullet+1}) \to \Spec \Gamma_R^k(S) \]
is an equivalence in $\Sh(R_\pdesc)$.
\end{corollary}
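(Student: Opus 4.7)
The plan is to reduce this directly to \Cref{LGammadesc} combined with the definition of $\Sh(R_\pdesc)$ in \Cref{pdesctop}, using flatness to bridge the gap between the derived and classical divided powers functors.

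First, I would observe that since $S$ and $T$ are flat $R$-algebras and $S \to T$ is flat, the iterated tensor products $T^{\tensor_S n}$ are honest (discrete) flat $R$-algebras, so the augmented cosimplicial $\Einfty$-$R$-algebra $S \to T^{\tensor_S \bullet + 1}$ is levelwise discrete. Because $\LGamma_R^k$ agrees with the classical $\Gamma_R^k$ on flat modules (it is left Kan extended from $\Mod_R^\proj$, on which the two agree, and $\Gamma_R^k$ preserves flat filtered colimits), we obtain a natural identification of augmented cosimplicial $\Einfty$-$R$-algebras
\[ \LGamma_R^k(S) \to \LGamma_R^k(T^{\tensor_S \bullet + 1}) \quad\simeq\quad \Gamma_R^k(S) \to \Gamma_R^k(T^{\tensor_S \bullet + 1}) \mpunc, \]
where the right-hand side is levelwise discrete. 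Because the lax symmetric monoidal structure of \Cref{LGammalax} restricts on projective (hence flat) inputs to the classical one, the $\Einfty$-structure here coincides with the usual ring structure on classical divided power algebras, and in particular lifts uniquely to an animated $R$-algebra structure on each level.

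Next, I would apply \Cref{LGammadesc} to the descendable map $S \to T$ of $\Einfty$-$R$-algebras, which yields that the augmented cosimplicial $\Einfty$-$R$-algebra $\LGamma_R^k(S) \to \LGamma_R^k(T^{\tensor_S \bullet + 1})$ is pseudodescendable. Transporting along the identification above, we conclude that $\Gamma_R^k(S) \to \Gamma_R^k(T^{\tensor_S \bullet + 1})$ is a pseudodescendable augmented cosimplicial animated $R$-algebra.

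Finally, by the second class of generating morphisms in \Cref{pdesctop}, this pseudodescendability precisely states that the natural map
\[ \colim_{\Delta^\opp} \Spec \Gamma_R^k(T^{\tensor_S \bullet+1}) \to \Spec \Gamma_R^k(S) \]
is an equivalence in $\Sh(R_\pdesc)$, as desired. The only place requiring genuine care — though not a serious obstacle — is verifying that the $\Einfty$-ring structure produced by \Cref{LGammalax} really does match the classical animated ring structure on $\Gamma_R^k$ of a flat $R$-algebra; this follows because both are computed by the same functor on $\Mod_R^\proj$ (where $\Gamma_R^k$ was originally defined) and the categories of discrete $\Einfty$-rings and discrete animated rings coincide.
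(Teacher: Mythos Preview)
Your proposal is correct and follows essentially the same route as the paper: apply \Cref{LGammadesc} to obtain pseudodescendability of $\LGamma_R^k(S) \to \LGamma_R^k(T^{\tensor_S \bullet+1})$, use flatness to identify $\LGamma_R^k$ with $\Gamma_R^k$, and conclude by the definition of $\Sh(R_\pdesc)$. You spell out more carefully than the paper does why the $\Einfty$-structure from \Cref{LGammalax} matches the animated one (via discreteness), which is a reasonable point of care but not a genuine divergence in strategy.
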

\begin{proof}
By \Cref{LGammadesc} we have an equivalence
\[ \LGamma_R^k(S) \bij \flim_n \Tot^n \LGamma_R^k(T^{\otimes_S \bullet+1}) \]
in $\Pro(\calD(\LGamma_R^k(S)))$.
Since everything involved is a flat $R$-algebra, we are free to replace $\LGamma_R^k$ with $\Gamma_R^k$, so the map in question is an equivalence by the definition of $\Sh(R_\pdesc)$.
\end{proof}

\begin{recollection}
\label{abmon}
Recall that the category $\Mod_{\bbN}(\calC)$ of abelian monoids in a category $\calC$ with finite products is defined as $\PSigma(\Lat_{\bbN}; \calC)$, where $\Lat_{\bbN}$ is the full subcategory of $\CMon(\Set)$ on those monoids isomorphic to $\bbN^n$ and $\PSigma$ is the category of product-preserving presheaves (see e.g.\ \cite[\S 7]{marckuenneth}).
\end{recollection}

\begin{construction}
\label{Ntrdef}
Let $R$ be a ring.
Consider the functor
\begin{align*}
  \Ntr(-) : (\CAlg_R^\mathrm{flat})^\opp &\to \Mod_{\bbN}(\Sh(R_\pdesc)) \\
  S &\mapsto \bigsqcup_n \Spec \Gamma^n(S)
\end{align*}
on the category of flat $R$-algebras sending an algebra $S$ to the free commutative monoid ind-scheme on $\Spec S$.
It follows from \Cref{flatGammadesc} together with the fact that $\Ntr$ preserves coproducts that it is a cosheaf for the flat descendable topology, i.e.\ the topology generated by flat descendable morphisms.
We will also consider the right Kan extension of $\Ntr$ to the category of flat descendable sheaves on $(\CAlg_R^\mathrm{flat})^\opp$, which we will denote using the same notation.
\end{construction}

\begin{remark}
\label{rmk:Ntrlaxmonoidal}
We claim that $\Ntr$ is lax symmetric monoidal.
Indeed, writing $F$ for the free $\Einfty$-monoid functor on prestacks, it is clear that $F$ is lax monoidal, so we in particular have a bilinear natural transformation $F(X) \times F(Y) \to F(X \times Y)$ for any flat affine schemes $X$ and $Y$.
Taking $\Spec H^0(-)$ (separately in each weight for the natural grading) and using the fact that $\Gamma^m(M) \tensor \Gamma^n(N) \isom (M^{\tensor m} \tensor N^{\tensor n})^{\Sigma_m \times \Sigma_n}$ for flat modules $M$ and $N$ (so $\Spec H^0(-)$ is symmetric monoidal on the relevant subcategory), we obtain the necessary bilinear natural transformation $\Ntr(X) \times \Ntr(Y) \to \Ntr(X \times Y)$ for $X$ and $Y$ representable.\todo*[better way of doing this?]
We then conclude by \cite[Proposition 4.8.1.10]{ha} and the fact that the inclusion of sheaves into presheaves commutes with products.
\end{remark}

\begin{construction}
\label{Ntrptddef}
In the situation of \Cref{Ntrdef}, we also obtain a functor $\Ntr(-, -)$ on the category of pointed flat descendable sheaves on $(\CAlg_R^\mathrm{flat})^\opp$ given by the pushout
\[\xymatrix{
  \Ntr(*) \ar[d] \ar[r] & {*} \ar[d] \\
  \Ntr(X) \ar[r] & \pullbackcorner[ul][-1.8pc] \Ntr(X, *)
  \mpunc.
}\]
Note in the above that $\Ntr(*) \isom \bbN$.
Note also that $\Ntr(-, -)$ is lax symmetric monoidal, since it can be rewritten as $\Ntr(-) \tensor_{\bbN[\bbN]} \bbN$.
\end{construction}

\begin{remark}
\label{rmk:Ntrmonoid}
For a flat commutative affine monoid $R$-scheme $M$, there is by construction a natural monoid map $\Ntr(M, e) \to M$ in $\Mod_{\bbN}(\Sh(R_\pdesc))$, where $e$ is the unit.
If $M$ is a rig scheme, then this is in fact a rig map, as follows from the construction of the rig structure via \Cref{rmk:Ntrlaxmonoidal}.
\end{remark}

\begin{remark}
\label{rmk:Wratplus}
It follows from \Cref{pushoutcolim} that, given a pointed flat affine $R$-scheme $(X, *)$, the stack $\Ntr(X, *)$ identifies naturally with
\[ \colim_n \mkern2mu \Sym^n X \mpunc, \]
where $\Sym^n$ is the symmetric power scheme and the transition maps are induced by $* \to X$.

In particular, the stack $\Ntr(\Mm, 0)$ identifies with the rig of \emph{positive rational Witt vectors} $\Wratplus$, which sends a ring $R$ to the submonoid $1 + t R[t] \subseteq 1 + t R \Brk{t} = \Wbig(R)$ of big Witt vectors with finitely many nonzero terms (see \cite[Construction A.4]{akhilrh}).
\end{remark}

\begin{lemma}
\label{pushoutcolim}
Let $M_* \in \Mod_\bbN(\Fun(\bbN, \Ani))$ be an abelian monoid in $\bbN$-graded anima and $\bbN \to M_*$ a homomorphism classified by an element $x \in M_1$.
Then the underlying anima of the pushout $M_*/x$ of $\coprod_i M_i$ along $\bbN \to *$ is naturally equivalent to $\colim_i M_i$, where the colimit is taken along $(-) + x$.
\todo*[would be nice to shrink this]
\end{lemma}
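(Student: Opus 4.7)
The plan is to reduce to the case of free graded abelian monoids via a sifted-colimit argument, and then verify the equivalence directly.

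First, I would show that both functors $G, H : \Mod_\bbN(\Fun(\bbN, \Ani))_{\bbN/} \to \Ani$, defined by $G(M_*, x) \defeq |M_*/x|$ and $H(M_*, x) \defeq \colim_i M_i$ (along $+x$), preserve sifted colimits. For $G$, the pushout functor itself commutes with sifted colimits in $(M_*, x)$ (colimits commute with colimits), and the underlying anima functor factors as evaluation at $\bbN \in \Lat_\bbN$ in the $\PSigma$ description of $\Mod_\bbN$ (which preserves sifted colimits) followed by coproduct-over-grading (a left adjoint); for $H$ the claim is immediate since colimits commute with colimits. The natural map $H \to G$ would then arise from the universal property of the colimit: the composites $M_i \to |M_*| \to |M_*/x|$ are compatible with the transitions $+x$ because $x$ becomes the unit in $M_*/x$.

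By the sifted-colimit preservation, it then suffices to verify the equivalence on a family of objects generating $\Mod_\bbN(\Fun(\bbN, \Ani))_{\bbN/}$ under sifted colimits. The natural choice is the free objects $(F(Y_*), y)$, where $Y_*$ is a graded anima with a distinguished point $y \in Y_1$ and $F$ denotes the free graded abelian monoid functor. A further sifted-colimit reduction brings us to the case in which $Y_* = \{y\} \sqcup X_*$ for a finite discrete graded set $X_*$; then $F(Y_*)/y = F(X_*)$, and the degree-$n$ part of $F(Y_*)$ decomposes as $\bigsqcup_{a=0}^n F(X_*)_{n-a}$ with $+y$ acting by the index shift $a \mapsto a+1$. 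A short combinatorial argument shows that the colimit along $+y$ identifies with $\bigsqcup_k F(X_*)_k = |F(X_*)|$, matching $|F(Y_*)/y|$ as desired.

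The main obstacle will be the initial bookkeeping: correctly setting up the sifted-cocompletion description of $\Mod_\bbN(\Fun(\bbN, \Ani))$ so that the graded abelian monoid structure includes the degree-mixing multiplication $M_i \times M_j \to M_{i+j}$ (as is evidently intended by the lemma), identifying the appropriate free objects in the over-category, and verifying the claimed sifted-colimit preservations. Once that framework is in place, the final combinatorial verification is routine.
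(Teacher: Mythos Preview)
Your proposal is correct and follows essentially the same strategy as the paper: construct a comparison map, observe that both sides commute with sifted colimits, and reduce to a ``freely pointed'' case where the verification is elementary. The only notable difference is that the paper reduces in one step to objects of the form $M'_* \times \bbN$ with $x = (0,1)$ for \emph{arbitrary} $M'_*$ (using that the undercategory $(\Mod_\bbN)_{\bbN/}$ is generated under sifted colimits by such coproducts), whereas you reduce further to the case $M'_* = F(X_*)$ free on a finite discrete graded set; the paper's coarser reduction makes the final check slightly cleaner, but the content is the same.
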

\begin{proof}
Recall that $\colim_i M_i$ can be computed by the coequalizer
\[\xymatrix{
  \coprod_i M_i \ar@<-.4ex>[r]_{+x} \ar@<.4ex>[r]^{\id} & \coprod_i M_i \mpunc.
}\]
To construct a map from $\colim_i M_i$ to $M_*/x$, it suffices to produce a map from this diagram to the bar construction whose colimit computes $M_*/x$.
The diagram
\[\xymatrix{
  \coprod_i M_i \ar@<-.4ex>[rrr]_{+x} \ar@<.4ex>[rrr]^{\id} \ar[d]^{\id \times \{1\}} &&& \coprod_i M_i \ar[d]^{\id} \\
  \coprod_i M_i \times \bbN \ar@<-.4ex>[rrr]_{(y, n) \mapsto y + n x} \ar@<.4ex>[rrr]^{(y, n) \mapsto y} &&& \coprod_i M_i \\
}\]
gives a map from the above coequalizer diagram to the first two terms of the semisimplicial bar construction computing $M_*/x$.
Composing this with the inclusion of the bottom coequalizer into the full semisimplicial bar construction and taking colimits, we obtain a natural map $\colim_i M_i \to M_*/x$.

Since both sides commute with sifted colimits, we may now assume $M_*$ is ``freely pointed'', i.e.\ of the form $M'_* \times \bbN$ with $x = (0, 1)$.
In this case, the pushout $M_*/x$ is given by $\coprod_i M'_i$, while the colimit is computed by $\colim_i \coprod_{j=0}^i M'_j$, and the map constructed earlier is the usual equivalence
\[ \colim_i \coprod_{j=0}^i M'_j \bij \coprod_i M'_i \mpunc. \]
\\[-2.1em]
\end{proof}

\section{Affine stacks}
\label{sec:affstk}

\subsection{Generalities}

\begin{recollection}[{\cite[\S 4.2]{arpondR}}]
A \emph{derived algebraic context} consists of a presentably symmetric monoidal stable category $\calC$ with a compatible t-structure together with a small full subcategory $\calC^0 \subseteq \calC^\heart$, satisfying a certain set of conditions described in \cite[Definition 4.2.1]{arpondR}.\footnotemark
\footnotetext{We will generally elide the subcategory $\calC^0$ in the notation.}

Given a derived algebraic context $\calC$, one defines a category $\DAlg(\calC)$ of \emph{derived (commutative) rings} refining the usual category of $\Einfty$-rings in $\calC$.
The construction $\calC \mapsto \DAlg(\calC)$ is functorial in derived algebraic contexts \cite[Remark 4.2.25]{arpondR}.
\end{recollection}

\begin{lemma}
\label{DAlginternalvsexternal}
Let $\calC$ be a presentably symmetric monoidal stable category with compatible t-structure, and let $A \to B$ be a map of commutative rings in $\calC^\heart$ such that
\[ (\Mod_A(\calC), (\Mod_A(\calC)^{\leq 0})^{\omega,\proj}) \to (\Mod_B(\calC), (\Mod_B(\calC)^{\leq 0})^{\omega,\proj}) \]
is a morphism of derived algebraic contexts.\footnotemark
\footnotetext{Here $(-)^{\omega,\proj}$ denotes the full subcategory of compact projective objects.}
Then the natural map
\[ \DAlg(\Mod_B(\calC)) \to \DAlg(\Mod_A(\calC))_{B/} \]
obtained from the right adjoint to the base change functor is an equivalence.
\end{lemma}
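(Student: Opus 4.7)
The plan is to reduce the statement to the analogous (and standard) equivalence $\CAlg(\Mod_B(\calC)) \simeq \CAlg(\Mod_A(\calC))_{B/}$ for $\Einfty$-rings by exploiting the construction of $\DAlg$ as a nonabelian derived functor. Specifically, recall from \cite{arpondR} that for a derived algebraic context $\calD$ one has $\DAlg(\calD) \simeq \PSigma(\DAlg^{\mathrm{poly}}(\calD))$, where $\DAlg^{\mathrm{poly}}(\calD)$ is the small category of free derived rings $\operatorname{Sym}_\calD^\ast P$ on objects $P \in \calD^0$. A morphism of derived algebraic contexts induces a functor on $\DAlg^{\mathrm{poly}}$, and the functoriality of $\DAlg$ through $\PSigma$ is inherited from this.

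First I would observe that $B \simeq \operatorname{Sym}_A^\ast(0)$ lies in the essential image of $\DAlg^{\mathrm{poly}}(\Mod_A(\calC)) \hookrightarrow \DAlg(\Mod_A(\calC))$, and coslicing $\PSigma$ at an object in the essential image commutes with the $\PSigma$ construction. Thus
\[ \DAlg(\Mod_A(\calC))_{B/} \simeq \PSigma\bigl(\DAlg^{\mathrm{poly}}(\Mod_A(\calC))_{B/}\bigr), \]
and the desired equivalence reduces to an equivalence of small categories
\[ \DAlg^{\mathrm{poly}}(\Mod_A(\calC))_{B/} \simeq \DAlg^{\mathrm{poly}}(\Mod_B(\calC)). \]

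Next I would construct this equivalence directly. In one direction, the lax symmetric monoidal restriction of scalars $\Mod_B \to \Mod_A$ sends $\operatorname{Sym}_B^\ast Q$ to an $A$-algebra under $B$. In the other direction, base change along $A \to B$ — which is the given morphism of derived algebraic contexts — sends $(B \to \operatorname{Sym}_A^\ast P)$ to $\operatorname{Sym}_B^\ast(P \otimes_A B)$. Essential surjectivity uses the hypothesis that the compact projective generators of $\Mod_B(\calC)^{\leq 0}$ arise (under finite coproducts) from those of $\Mod_A(\calC)^{\leq 0}$ via base change, which is part of the derived algebraic context morphism axioms; fully faithfulness reduces to computing morphism spaces via the free/forgetful adjunction against underlying objects of $\calC$, which yields the same answer on either side.

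The main obstacle I anticipate is carefully handling the coslice at the level of $\PSigma$ and pinning down the polynomial-ring comparison as a true equivalence rather than a fully faithful embedding. It may in practice be cleaner to bypass the explicit polynomial description and argue via the universal property of $\DAlg(-)$: given $(R, B \to R) \in \DAlg(\Mod_A(\calC))_{B/}$, the map $B \to R$ together with the derived algebraic context morphism should functorially upgrade $R$ to an object of $\DAlg(\Mod_B(\calC))$, providing an inverse to restriction of scalars and mirroring the standard proof of the corresponding statement for $\Einfty$-algebras from \cite[Corollary 3.4.1.7]{ha}. Either way, the essential content is the naturality of $\DAlg$ as a functor of derived algebraic contexts, which \cite[Remark 4.2.25]{arpondR} supplies.
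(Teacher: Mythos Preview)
Your main argument contains a concrete error: you write $B \simeq \operatorname{Sym}_A^\ast(0)$, but $\operatorname{Sym}_A^\ast(0) = A$, not $B$. More to the point, $B$ need not lie in $\DAlg^{\mathrm{poly}}(\Mod_A(\calC))$ at all --- the hypothesis only asks that $B$ be a commutative ring in $\calC^\heart$, not a polynomial $A$-algebra. Consequently the identification $\DAlg(\Mod_A(\calC))_{B/} \simeq \PSigma\bigl(\DAlg^{\mathrm{poly}}(\Mod_A(\calC))_{B/}\bigr)$ cannot be obtained by ``coslicing $\PSigma$ at a representable.'' One could try to repair this by identifying the compact projective generators of the coslice as the free objects $B \otimes_A \operatorname{Sym}_A^\ast(P)$ and then comparing these to $\operatorname{Sym}_B^\ast(Q)$, but that is already the heart of the matter and your sketch does not carry it out. (Your claim that surjectivity of $\calC^0_A \to \calC^0_B$ under base change is ``part of the derived algebraic context morphism axioms'' is also not correct as stated.)

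The paper's route is monadicity. After replacing $\calC$ by $\Mod_A(\calC)$ to reduce to $A$ being the unit, one observes that both $\DAlg(\Mod_B(\calC))$ and $\DAlg(\Mod_A(\calC))_{B/}$ are monadic over $\calC$ --- the former by construction, the latter by Barr--Beck--Lurie --- and that the comparison functor commutes with the forgetful functors. It therefore suffices to check that the free functors agree, which is exactly the base-change identity $\LSym_B(B \otimes_A M) \simeq B \otimes_A \LSym_A(M)$ supplied by \cite[Remark 4.2.25]{arpondR}. Your final paragraph gestures in this direction, but the monadicity framing makes the reduction precise and sidesteps the $\PSigma$-of-coslice bookkeeping entirely.
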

\begin{proof}
Note that we have a natural equivalence $\Mod_B(\calC) \isom \Mod_B(\Mod_A(\calC))$ of derived algebraic contexts, so we may as well replace $\calC$ by $\Mod_A(\calC)$.

Now note that the source of the functor in question is monadic over $\calC$ by definition, and the target is monadic over $\calC$ by Beck--Lurie monadicity \cite[Corollary 4.7.3.5]{ha} (see \cite[Notation 4.2.28(c)]{arpondR}).
Since this functor commutes with the forgetful functors to $\calC$, to see that it is an equivalence it suffices to verify that it also commutes with the free functors.
This amounts to the claim that the natural map
\[ \LSym_B(B \otimes_A M) \bij B \otimes \LSym_A(M) \]
is an equivalence for all $M \in \calC$, which is contained in \cite[Remark 4.2.25]{arpondR}.
%
\end{proof}

\begin{lemma}
\label{DAlgdescent}
Let $\calC^\bullet$ be a cosimplicial derived algebraic context satisfying the (opposite) conditions of \cite[Corollary 4.7.5.3]{ha} (including conservativity) on underlying categories.
Then $\DAlg(\calC^\bullet)$ is a limit diagram.
\end{lemma}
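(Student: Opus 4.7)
The plan is to apply (the opposite of) \cite[Corollary 4.7.5.3]{ha} to the cosimplicial category $\DAlg(\calC^\bullet)$ that arises by functoriality of $\DAlg$ in derived algebraic contexts. The three hypotheses to verify are the existence of right adjoints to the coface maps, Beck--Chevalley for those adjoints, and conservativity of the augmentation $\DAlg(\calC^{-1}) \to \DAlg(\calC^0)$.

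First I would construct the right adjoints on $\DAlg$. By hypothesis, each coface map $\calC^m \to \calC^n$ is a symmetric monoidal left adjoint whose right adjoint (automatically lax symmetric monoidal) exists and satisfies Beck--Chevalley. The induced coface on $\DAlg$ is again a left adjoint, and the right adjoint is obtained by transferring derived-ring structures along the lax symmetric monoidal right adjoint on the underlying category; this lift exists because the forgetful functor $U^n \colon \DAlg(\calC^n) \to \calC^n$ is monadic with left adjoint $\LSym_{\calC^n}$, and the compatibility $\LSym_B(B \otimes_A M) \simeq B \otimes_A \LSym_A(M)$ established in the proof of \Cref{DAlginternalvsexternal} shows that $\LSym$ is compatible with base change. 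Conservativity of $\DAlg(\calC^{-1}) \to \DAlg(\calC^0)$ is then immediate from conservativity of $\calC^{-1} \to \calC^0$ together with conservativity of each $U^n$.

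For Beck--Chevalley, the key observation is that the forgetful functors $U^\bullet$ are jointly conservative and preserve all limits and sifted colimits (being monadic right adjoints to $\LSym$); hence a candidate Beck--Chevalley square for $\DAlg(\calC^\bullet)$ is an equivalence if and only if the underlying square in $\calC^\bullet$ is, and the latter holds by assumption.

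The main obstacle will be carefully setting up the passage of symmetric-monoidal-adjunction data through the $\DAlg$ construction, verifying that the constructed right adjoint on $\DAlg$ really is adjoint to the coface map and satisfies the required coherences. A cleaner alternative may be to argue directly by monadic descent: the levelwise monads $\LSym_{\calC^n}$ assemble into a cosimplicial monad, compatible with the coface base-change functors by (the proof of) \Cref{DAlginternalvsexternal}, whose totalization agrees with $\LSym_{\calC^{-1}}$; algebras for this limit monad over $\lim_n \calC^n \simeq \calC^{-1}$ then recover both $\DAlg(\calC^{-1})$ and $\lim_n \DAlg(\calC^n)$, yielding the desired equivalence.
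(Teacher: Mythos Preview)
Your proposal is correct and takes essentially the same approach as the paper: verify that the hypotheses of \cite[Corollary 4.7.5.3]{ha} transfer from $\calC^\bullet$ to $\DAlg(\calC^\bullet)$, then apply that result. The paper compresses this into a single citation of \cite[Remark 4.2.25]{arpondR}, which already packages the needed functoriality of $\DAlg$ and the base-change compatibility of $\LSym$ that you spell out by hand; your more detailed unpacking (and the alternative monadic-descent sketch) is sound but not needed once that reference is invoked.
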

\begin{proof}
By \cite[Remark 4.2.25]{arpondR}, the conditions of \emph{loc.\ cit.\@} for $\DAlg(\calC^\bullet)$ follow from those for $\calC^\bullet$, so we conclude.
\end{proof}

\begin{definition}
Given a (possibly derived) c-stack $X$, we define the category $\DAlg(X)$ by right Kan extension from affines.
This category comes with a natural forgetful functor $\DAlg(X) \to \CAlg(X)$ which admits both left and right adjoints.
\end{definition}

\begin{remark}
\label{gradedDAlgwelldefined}
By virtue of \Cref{DAlginternalvsexternal} and \Cref{DAlgdescent}, we see that the category of graded derived rings of \cite[Construction 4.3.4]{arpondR} agrees with $\DAlg(\BGm)$, and similarly for $\BGmperf$, $\BMm$, $\BMmperf$, etc.\footnotemark
\footnotetext{The equivalences between the derived categories of these c-stacks and the relevant categories of graded modules follow from the argument of \cite[Theorem 4.1]{tasosa1modgm}.}
\end{remark}

\begin{remark}
\label{gradedDAlgrightadjoint}
It is easy to see that the natural functor from $\bbN$-graded derived rings to $\bbN[\tfrac{1}{p}]$-graded derived rings is fully faithful and commutes with all limits and colimits.
In particular, it has a right adjoint, given by restriction of the grading.
\end{remark}

\begin{definition}[{\cite[Remark 4.5]{mmaffstk}}]
We say that a map of derived c-stacks $f : Y \to X$ is a \emph{relative affine stack} if its pullback to any affine derived scheme over $X$ is an affine stack in the sense of \cite[Definition 3.8]{mmaffstk}.
We may also say \emph{relatively affine} etc.\ if the meaning is clear from context.
Note that if $X$ and $Y$ are classical and the formation of $f_* \calO_Y$ is compatible with base change, then \cite[Corollary 3.6]{mmaffstk} shows that $Y$ is determined by $f_* \calO_Y$ as an object of $\DAlg(X)$.
\end{definition}

\begin{remark}
It follows from \cite[Proposition 4.2]{mmaffstk} that relative affineness may be checked after an fpqc cover.
\end{remark}

\subsection{Affineness of the prismatization}

The purpose of this section is to prove the following statement.

\begin{theorem}[Bhatt--Lurie]
\label{synrelaffine}
The syntomification of a $p$-complete animated ring is relatively affine over $\ZpSyn$.
\end{theorem}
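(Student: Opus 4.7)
The plan is to reduce to the case of a perfectoid ring, where the Nygaard-filtered prismatization admits an explicit presentation.

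First I would reduce the base from $\ZpSyn$ to $\ZpN$. Since relative affineness is fpqc-local on the target, and $\ZpSyn$ is obtained by gluing two copies of $\ZpN$ along $\jdR, \jHT : \ZpPrism \rightrightarrows \ZpN$, it suffices to show that the pullback $R^\Nyg$ is relatively affine over $\ZpN$.

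Second, I would reduce to the perfectoid case. Every $p$-complete animated ring $R$ admits a quasi-syntomic hypercover $R \to S^\bullet$ with each $S^n$ perfectoid, for instance by first resolving $R$ by animated polynomial $\Zp$-algebras and then applying the usual construction for static rings via absolutely integrally closed perfectoid covers. The functor $(-)^\Nyg$ satisfies quasi-syntomic descent, which follows from the analogous descent for Nygaard-filtered prismatic cohomology (Bhatt--Scholze) via the definition of $R^\Nyg$ by transmutation with $\Ga^\Nyg$. Hence $R^\Nyg \isom \lim_\Delta S^{\bullet,\Nyg}$ as stacks over $\ZpN$, and by \Cref{DAlgdescent} the totalization of a cosimplicial diagram of relatively affine stacks is again relatively affine, reducing us to the perfectoid case.

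Finally, let $R$ be perfectoid with associated perfect prism $(A, I)$, so $R \isom A/I$. By \Cref{perfdeltaprismatization} the natural map $\Spf A \times \ZpN \to A^\Nyg$ is an equivalence, and combining this with \Cref{relNygcompat} yields an identification of $R^\Nyg$ over $\ZpN$ with the stack $(R/A)^\Nyg$ of \Cref{prismNyg}. The latter is explicitly the quotient
\[ \prn*{\Spf_{(p,d)} A^{(1)}\ang{u, t}/(u t - \phi^{-1}(d))}/\Gm, \]
with $u$ of weight $1$ and $t$ of weight $-1$. The $\Gm$ being quotiented matches the $\Gm$ inherent to $\ZpN = \AomodGm$ via the Rees map, so this presents $R^\Nyg$ as a $\Gm$-quotient of an affine formal scheme over the base, hence as relatively affine over $\ZpN$.

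The most delicate point will be the descent step: one must verify quasi-syntomic descent for $(-)^\Nyg$ at the level of stacks (rather than just at the level of cohomology) and that the totalization preserves relative affineness. The latter uses \Cref{DAlgdescent} applied to the cosimplicial diagram of derived algebraic contexts associated with the cover.
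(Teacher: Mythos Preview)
Your descent step has the arrows backwards, and this is fatal. Transmutation is contravariant in the ring: a quasi-syntomic hypercover $R \to S^\bullet$ (a coaugmented \emph{cosimplicial} object) is sent to a \emph{simplicial} diagram $(S^\bullet)^\Nyg$ augmented towards $R^\Nyg$, and the descent statement one could hope for is $R^\Nyg \simeq |(S^\bullet)^\Nyg|$ as a geometric realization, not a totalization. But affine stacks are not closed under colimits, so this does not reduce you to the perfectoid case. Your appeal to \Cref{DAlgdescent} does not help: that lemma concerns limits of categories of derived algebras, not colimits of stacks. The cohomological descent you cite (for Nygaard-filtered prismatic cohomology) only tells you that global sections behave well, which is useless until you already know the stacks are affine.

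The paper's argument exploits contravariance in the opposite direction. One first shows directly that $\Ga^\Syn \to \ZpSyn$ is relatively affine: after pullback to the affine cover $\ZpcycNyg \times_{\BGm} *$ it is $W/M$ for an explicit filtered Cartier--Witt divisor, and one invokes results of Mathew--Mondal (\cite[Proposition 4.11, Corollary 4.4]{mmaffstk}) on unipotent group schemes over regular noetherian bases to see that $\bfB M$, and hence $W/M$, is an affine stack. Then for arbitrary $R$ one writes $R$ as a colimit of polynomial rings, so $R^\Syn$ is a \emph{limit} of products of $\Ga^\Syn$; since affine stacks are closed under limits, the result follows. In other words, the correct reduction is to the free case $\Ga$, not the perfectoid case. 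Your step 3 also misstates $\ZpN = \AomodGm$ (there is only a Rees map between them), though the underlying idea that the $\Gm$-quotients are compatible is salvageable with more care.
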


We will deduce the \namecref{synrelaffine} from the following result.

\begin{proposition}
\label{GaSynaffineness}
The stack $\Ga^\Syn \to \ZpSyn$ is relatively affine.
\end{proposition}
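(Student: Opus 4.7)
Relative affineness is fpqc-local on the base, so I would begin by pulling back along the natural surjection $\ZpN \sqcup \ZpN \to \ZpSyn$ coming from $\jHT$ and $\jdR \circ \phi$. Since $\Ga^\Syn$ is glued from two copies of $\Ga^\Nyg$ along $\Ga^\Prism$ over $\ZpPrism$, it suffices to prove that $\Ga^\Nyg \to \ZpN$ is relatively affine. Fixing an affine test scheme $\Spec R \to \ZpN$ classifying a filtered Cartier--Witt divisor $M \to W$, the transmutation description identifies $\Ga^\Nyg \times_{\ZpN} \Spec R$ with the quotient stack $[W/M]$ for the action of $M$ on $W$ through the quasi-ideal map.

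Next, I would exploit the bar-construction fiber sequence
\[ W \to [W/M] \to \bfB M \mpunc. \]
The first term is affine, and the first map is an $M$-torsor with $M$ an affine scheme, hence is a relatively affine morphism. So the problem reduces to showing that $\bfB M$ is relatively affine over $\Spec R$. Applying the same logic to the passable sequence $0 \to L^\sharp \to M \to F_* I \to 0$ (with $I$ invertible and $L$ a line bundle on $R$) gives a further fiber sequence
\[ \bfB L^\sharp \to \bfB M \to \bfB(F_* I) \mpunc, \]
so, after a further fpqc cover that trivializes $L$ and orients $I = (d)$, it would be enough to prove that $\bfB \Gasharp$ and $\bfB(F_* W)$ are relative affine stacks.

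For this last step, I would present each of $\Gasharp$ and $W$ as a sequential limit of iterated extensions of copies of $\Ga$ -- the former via its standard divided-power presentation, and the latter via its $V$-adic filtration -- and combine the classical computation that $\bfB \Ga$ is a relative affine stack over any base (via $\RGamma(\bfB \Ga, \calO) \simeq \LSym(\calO[-2])$ or its divided-power analogue in positive characteristic) with the stability of relative affineness under fiber sequences, as in Mondal--Mathew, and under appropriate sequential limits of affine group schemes.

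The main obstacle is this last stability point: one needs to verify that the totalizations computing $\RGamma(\bfB(\Gasharp), \calO)$ and $\RGamma(\bfB(F_* W), \calO)$ genuinely produce coconnective derived rings whose Spec recovers the stack, rather than stacks displaying some pathological higher cohomology. A careful analysis of the bar spectral sequences, using flatness of all groups involved and the compactness-type properties of $\Gasharp$ and $W$ that guarantee the limits interact well with $\RGamma$, should close this final gap.
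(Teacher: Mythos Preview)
Your reduction is the same as the paper's: check after an fpqc cover, write the pullback as $W/M$, use the fiber sequence $W \to W/M \to \bfB M$, and reduce to showing $\bfB M$ is an affine stack. The divergence is entirely in how this last step is handled.

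The paper does not attempt to prove that $\bfB M$ is affine over an arbitrary $p$-nilpotent base. Instead it pulls back to the \emph{specific} affine formal scheme $\ZpcycNyg \times_{\BGm} *$, and then makes the key observation that the admissible $W$-module $M$ arising there is base-changed from a unipotent group scheme defined over the \emph{regular noetherian} ring $\bbZ_{(p)}[u]$. Over such a base, \cite[Proposition~4.11]{mmaffstk} gives directly that $\bfB G$ is an affine stack for any unipotent affine group scheme $G$; one then concludes by stability of affineness under base change and \cite[Corollary~4.4]{mmaffstk}. No limits, no spectral sequences.

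Your route---decompose $M$ via the passable sequence and reduce to $\bfB\Gasharp$ and $\bfB(F_* W)$ over a general base---is reasonable, but the step you flag as the obstacle is a real one, and your proposed fix is shakier than you suggest. The presentation of $\Gasharp$ as ``a sequential limit of iterated extensions of copies of $\Ga$'' is not standard: $\Gasharp$ is more naturally an ind-object (a colimit of finite flat subgroups) than a pro-object, and there is no obvious tower of quotient group schemes with $\Ga$-graded pieces whose limit recovers it. For $W = \lim_n W_n$ the tower exists, but you would still need $\bfB(\lim_n W_n) \simeq \lim_n \bfB W_n$, which amounts to a $\lim^1$-vanishing statement you have not supplied. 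The paper's regular-noetherian trick sidesteps all of this in one line.
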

\begin{proof}
It suffices to check this after pulling back to the affine formal scheme $\ZpcycNyg \times_{\BGm} *$.
After this pullback $\Ga^\Syn$ is given as $W/M$ for a filtered Cartier--Witt divisor $M \to W$ described in \cite[Example 5.5.6]{fgauges}.
Note that the pushout yielding $M$ is in fact defined over the regular noetherian ring $\bbZ_{(p)}[u]$, so by \cite[Proposition 4.11]{mmaffstk} and unipotence of $M$ (which follows from unipotence of $W$ and $\Gasharp$) the stack $\bfB M$ is affine.
Thus by \cite[Corollary 4.4]{mmaffstk} we find that $W/M$ is affine as well.
\end{proof}

\begin{proof}[Proof of \Cref{synrelaffine}]
In light of \Cref{GaSynaffineness}, this follows from \cite[Remark 3.9]{mmaffstk} and the fact that the transmutation of a ring is contained in the category generated under limits by that of $\Ga$.
\end{proof}

\section{Base change}

Here we record a base change result for algebraic stacks with affine diagonal over formal affine schemes.
For the proof we refer to \cite[Proposition A.36]{prismaticdelta}, which in turn essentially follows \cite[Proposition 5.5.5]{dag}; see also \cite[\S 3.2]{bzfn}.

\begin{proposition}
\label{basechange}
Let $R$ be a ring complete with respect to a finitely-generated ideal $I$ and of bounded $I$-torsion, and let $R \to S$ be a morphism such that $S$ is of bounded $I$-torsion and is the $I$-completion of an $R$-module of finite $\Tor$-amplitude.
Let $f : X \to \Spf R$ be a c-stack with affine diagonal admitting a faithfully flat morphism from an $I$-completely flat affine formal $R$-scheme, and write $X'$ for the pullback
\[\xymatrix{
  X' \ar[d]^{f'} \ar[r]^{g'} \pullbackcorner & X \ar[d]^f \\
  \Spf S \ar[r]^g & \Spf R
  \mpunc.
}\]
Then for $M \in \calD(X)$ eventually-coconnective the base change map
\[ g^* f_* M \to f'_* g'^* M \]
is an equivalence.
Further, for $M \in \calD(X)$ eventually-coconnective and $N \in \calD(\Spf R)$ a filtered colimit of uniformly-eventually-coconnective perfect complexes,\todo*[change when new version of \cite{prismaticdelta} is available] the natural map
\[ f_* M \tensor N \to f_* (M \tensor f^* N) \]
is an equivalence; i.e.\ the projection formula holds for such $N$.
\end{proposition}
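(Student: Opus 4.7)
The plan is to follow the strategy of \cite[Proposition 5.5.5]{dag} and \cite[Proposition A.36]{prismaticdelta}, namely to reduce to the affine case by Čech descent along the given faithfully flat cover and then control the interaction between derived $I$-completion and cosimplicial totalization using the bounded-torsion and finite-$\Tor$-amplitude hypotheses.

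First I would set up the Čech nerve $Y^\bullet \to X$ of the given faithfully flat morphism $Y \twoheadrightarrow X$. Since $X$ has affine diagonal and $Y$ is affine, each $Y^n$ is an $I$-completely flat affine formal $R$-scheme, and faithfully flat descent yields $\calD(X)^+ \equiv \lim_\Delta \calD(Y^\bullet)^+$ on eventually-coconnective objects. For $M \in \calD(X)$ eventually coconnective, this gives $f_* M \equiv \Tot(p^\bullet_* M^\bullet)$, where $p^n : Y^n \to \Spf R$ is the structure map and $M^\bullet$ is the cosimplicial object obtained by pulling $M$ back to $Y^\bullet$. Crucially, $I$-complete flatness of each $Y^n$ over $R$ ensures that this cosimplicial object is \emph{uniformly} eventually coconnective.

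For the base change claim, I would apply $g^*$ termwise and note that on affine formal $R$-schemes the base change map $g^* p^n_* \to p'^n_* g'^{n,*}$ is an equivalence essentially by construction (both sides compute the derived $I$-completion of a relative tensor product). The remaining question is whether $g^*$ commutes with the cosimplicial limit $\Tot$. Because $g_*$ is computed by $I$-completed tensor product with a module of finite $\Tor$-amplitude, and because the cosimplicial object $p^\bullet_* M^\bullet$ is uniformly eventually coconnective, a truncation/Postnikov argument together with the bounded-$I$-torsion of $R$ and $S$ bounds the relevant spectral sequence and yields the commutation. For the projection formula, I would write $N = \colim_\alpha N_\alpha$ with $N_\alpha$ perfect and of coconnectivity uniform in $\alpha$. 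The formula is trivial for each individual perfect $N_\alpha$ (tensoring with a perfect complex commutes with arbitrary limits), and then the same uniform-coconnectivity-plus-bounded-Čech-amplitude argument permits commuting the filtered colimit past $\Tot$.

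The main obstacle is precisely this commutation of $g^*$ (respectively, filtered colimits) with the $\Tot$ computing $f_*$: derived $I$-completion does not commute with general limits, so one must lean heavily on the hypothesis that the Čech resolution is uniformly eventually coconnective and that $g^*$ has finite $\Tor$-amplitude to bound the error in a truncation argument. This is the step that requires the full strength of all of the bounded-$I$-torsion, $I$-complete flatness, and finite-$\Tor$-amplitude hypotheses in the statement.
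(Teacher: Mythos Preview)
Your proposal is correct and matches the paper's approach: the paper simply states that the proof of \cite[Proposition A.36]{prismaticdelta} (which in turn follows \cite[Proposition 5.5.5]{dag}) already establishes the statement in the given generality, and your sketch accurately unpacks that argument via Čech descent along the given $I$-completely flat cover and the uniform-coconnectivity bound needed to commute $g^*$ and filtered colimits past $\Tot$.
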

\begin{proof}
This is what the proof of \cite[Proposition A.36]{prismaticdelta} shows, although the result there is given only for a special case.
\end{proof}

%

\section{Transmutation}

We give here some general results on transmutation.
These are independent of the specific sort of algebraic geometry under consideration, so ``commutative ring'' here can be taken to refer to the classical notion or the animated version, and similarly for ``stack''.

\begin{lemma}
\label{accessiblerightadjoint}
If $\calC$ and $\calD$ are coaccessible categories, then a functor $F : \PShAcc(\calD) \to \calC$ admits a right adjoint if and only if it preserves small colimits and its restriction to $\calD$ is accessible.
\end{lemma}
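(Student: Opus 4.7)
The plan is to construct the putative right adjoint $G : \calC \to \PShAcc(\calD)$ explicitly via the Yoneda-style formula $G(c)(d) := \Map_\calC(F|_\calD(d), c)$, and verify everything by reducing to the case of representables. Throughout, write $y : \calD \to \PShAcc(\calD)$ for the Yoneda embedding.

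For the forward implication, if $F$ admits a right adjoint $G$, then $F$ preserves all small colimits by general nonsense. To see that $F|_\calD$ is accessible, I would observe the chain of equivalences
\[ G(c)(d) \simeq \Map_{\PShAcc(\calD)}(yd, G(c)) \simeq \Map_\calC(F(yd), c) = \Map_\calC(F|_\calD(d), c). \]
Since each $G(c)$ lies in $\PShAcc(\calD)$ by hypothesis, the presheaf $d \mapsto \Map_\calC(F|_\calD(d), c)$ is accessible for every $c$, which is precisely what accessibility of $F|_\calD$ is supposed to mean.

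Conversely, suppose $F$ preserves small colimits and $F|_\calD$ is accessible. I would first check that the candidate $G$ defined above actually factors through $\PShAcc(\calD) \subseteq \PSh(\calD)$; this is the assumed accessibility condition read object-by-object. To verify the adjunction isomorphism $\Map_\calC(F(X), c) \simeq \Map_{\PShAcc(\calD)}(X, G(c))$, the case $X = yd$ is tautological from the definition of $G$. For general $X \in \PShAcc(\calD)$, one writes $X$ as a small colimit of representables — available because $X$ is an accessible presheaf on the coaccessible category $\calD$ — and then colimit-preservation of $F$ converts the left-hand side into a limit over the indexing diagram, while the universal property of colimits does the same on the right, propagating the representable case.

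The main obstacle will be to match precisely the definition of accessibility of $F|_\calD$ as a functor out of a coaccessible category with the pointwise accessibility of each presheaf $\Map_\calC(F|_\calD(-), c)$. Both should express that $F|_\calD$ is controlled by a small portion of its data, but some care with the regular cardinals involved is needed to show the two formulations agree. A secondary ingredient is the functorial expression of any $X \in \PShAcc(\calD)$ as a small colimit of representables, which should already be built into the foundational setup of $\PShAcc$ via \cite[\S A]{dirac2}.
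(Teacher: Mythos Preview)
Your approach is essentially the same as the paper's, unpacked by hand. The paper compresses your argument into two moves: it invokes the representability criterion for adjoints (Kerodon 02FV), reducing to showing that $c \mapsto \Map_\calC(F(-),c)$ is representable in $\PShAcc(\calD)$; then it uses the defining property of $\PShAcc(\calD)$ (a limit-preserving presheaf is representable iff its restriction to $\calD$ is accessible) to reduce to checking that $\calD^\opp \to \Ani$, $d \mapsto \Map_\calC(F(d),c)$ is accessible, which it gets as the composite of $F^\opp|_{\calD^\opp}$ with $h_c$. Your explicit construction of $G$ and reduction to representables via ``every accessible presheaf is a small colimit of representables'' is the same mechanism written out longhand.

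The obstacle you flag is genuine and worth a comment. The paper takes ``$F|_\calD$ accessible'' to mean that $F^\opp|_{\calD^\opp}$ preserves $\kappa$-filtered colimits for some single $\kappa$; your pointwise formulation (each $\Map_\calC(F|_\calD(-),c)$ accessible) is a priori weaker. For the ``if'' direction this costs nothing: uniform accessibility implies pointwise since $h_c$ is accessible and composites of accessible functors are accessible. For the ``only if'' direction, which the paper does not spell out, your argument yields only the pointwise statement; upgrading to the uniform one requires the standard cardinal argument (take $\kappa$ with $\calC^\opp$ $\kappa$-accessible, bound the accessibility ranks over the small set of $\kappa$-compacts, and use that these jointly detect $\kappa$-filtered colimits). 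So your identification of this as the main technical point is accurate, and the fix is routine.
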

\begin{proof}
By passing to opposite categories in \citekerodon{02FV}, it suffices to show that for every $c \in \calC$, the composite
\[ \PShAcc(\calD)^\opp \xrightarrow{F^\opp} \calC^\opp \xrightarrow{h_c} \Ani \]
is representable.
Since $F^\opp$ and $h_c$ preserve limits, it suffices by the definition of $\PShAcc(\calD)$ to verify that the induced functor
\[ \calD^\opp \to \Ani \]
is accessible.
This is the composition of $F^\opp|_{\calD^\opp}$, assumed to be accessible, and $h_c$, which is accessible because it is representable and $\calC$ is coaccessible.
\end{proof}

\begin{proposition}
\label{translim}
Let $S$ be a prestack, and write $\PStk_{/S}$ for the category of prestacks over $S$.
A functor $F : \CAlg_A^\opp \to \PStk_{/S}$ arises by transmutation if and only if it preserves small limits.
\end{proposition}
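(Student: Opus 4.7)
The ``only if'' direction is immediate: if $F$ arises as transmutation by some ring stack $\calR$ over $S$, then pointwise $F(R)(T \to S) = \Map_{\CAlg_A}(R, \calR(T))$, and $\Map_{\CAlg_A}(-, \calR(T))$ sends colimits in $\CAlg_A$ to limits, so $F$ preserves limits in $\CAlg_A^\opp$.

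For the converse, the plan is to define $\calR \defeq F(\bbA^1_A) \in \PStk_{/S}$ and equip it with a ring structure. Since $\bbA^1_A$ is a commutative ring object in $\AffSch_A = \CAlg_A^\opp$ (its addition, multiplication, and unit being the $\Spec$'s of the corresponding Hopf structure maps on $A[t]$), and since $F$ preserves finite products, $\calR$ naturally inherits the structure of a commutative ring object in $\PStk_{/S}$, which is to say a ring stack over $S$.

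The remaining task is to exhibit a natural equivalence between $F$ and transmutation by this $\calR$. The key input is that $\CAlg_A$ is the free sifted cocompletion of its full subcategory of finitely generated polynomial algebras, so any $R \in \CAlg_A$ is canonically a sifted colimit $R \isom \colim_\alpha A[x_1, \ldots, x_{n_\alpha}]$. Passing to $\CAlg_A^\opp$ and applying the limit-preserving $F$ yields
\[ F(\Spec R) \isom \lim_\alpha F(\bbA^{n_\alpha}_A) \isom \lim_\alpha \calR^{n_\alpha} \]
in $\PStk_{/S}$. Evaluating at any $T \to S$ then gives $\lim_\alpha \calR(T)^{n_\alpha} \isom \Map_{\CAlg_A}(R, \calR(T))$, which is exactly the transmutation of $R$ by $\calR$; naturality of the whole construction in $R$ packages this into the required equivalence $F \isom (-)^\calR$.

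The one point requiring a brief check is that the ring structure on $\calR$ used in the Yoneda-style identification of the last display agrees with the one constructed in the preceding paragraph; this is automatic, since both are obtained by applying $F$ to the same Hopf structure maps on $\bbA^1_A$. I do not expect any serious obstacle, as the argument is essentially a standard Tannakian-style reconstruction from the fact that $\AffSch_A$ is generated under limits, in the appropriate $\infty$-categorical sense, by the ring object $\bbA^1_A$.
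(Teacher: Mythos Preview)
Your argument is correct, but it takes a different route from the paper's. The paper proceeds via the adjoint functor theorem and the accessible-presheaf machinery: it observes that $F^\opp : \CAlg_A \to \PStk_{/S}^\opp$ preserves colimits, invokes presentability of $\CAlg_A$ to produce a left adjoint to $F$, and then identifies the category of limit-preserving functors $\CAlg_A^\opp \to \PStk_{/S}$ with the category of $A$-algebra prestacks over $S$ via a chain of abstract equivalences (using the lemma on right adjoints out of $\PShAcc$ together with \cite[Proposition A.13]{dirac2}).

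Your approach instead exploits the Lawvere-theoretic structure directly: $\CAlg_A$ is the free sifted cocompletion of finitely generated polynomial $A$-algebras, so a limit-preserving functor out of $\CAlg_A^\opp$ is determined by its restriction to $\mathrm{Poly}_A^\opp$, and a product-preserving functor on the latter is exactly an $A$-algebra object in the target. This is more elementary and makes the ring stack $\calR = F(\bbA^1_A)$ visible from the outset, whereas the paper's argument hides it inside the adjoint. The paper's approach, on the other hand, is agnostic to the specific algebraic theory and packages the accessibility bookkeeping (which is genuinely needed, since the paper's prestacks are accessible presheaves) more cleanly. One small point to tighten in your write-up: the step ``naturality of the whole construction in $R$'' deserves to be phrased as an appeal to the universal property of $\PSigma(\mathrm{Poly}_A)$ rather than as an objectwise check, since in the $\infty$-categorical setting one cannot simply assemble pointwise equivalences into a natural one without such an argument.
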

\begin{proof}
Given such a functor $F$, the opposite functor $F^\opp : \CAlg_A \to \PStk_{/S}^\opp$ preserves colimits.
By presentability of $\CAlg_A$, $F^\opp$ therefore has a right adjoint, so $F$ has a left adjoint.
Applying \Cref{accessiblerightadjoint} and \cite[Proposition A.13]{dirac2}, we get a chain of equivalences
\[ \FunR(\CAlg_A^\opp, \PStk_{/S})^\opp \equiv \FunL(\PStk_{/S}, \CAlg_A^\opp) \equiv \FunAcc(\CAlg_{/S}, \CAlg_A) \]
from limit-preserving functors $\CAlg_A^\opp \to \PStk_{/S}$ to $A$-algebra prestacks over $S$.
Unwinding definitions, the map from right to left is given by transmutation.
\end{proof}

\begin{lemma}
\label{transff}
Let $A$ be a commutative ring, and let $\calR$ be an $A$-algebra stack over a c-stack $X$ inducing a fully faithful map from $X$ to the stack of $A$-algebra stacks.
Then for any $A$-algebra $B$, the natural map from the transmutation $B^\calR$ to the stack of $B$-algebra stacks is fully faithful as well.
\end{lemma}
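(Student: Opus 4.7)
The plan is to realize $B^\calR$ as a pullback of c-stacks and then invoke stability of fully faithful maps under pullback. Specifically, I claim there is a natural pullback square of c-stacks
\[\xymatrix{
  B^\calR \pullbackcorner \ar[d] \ar[r] & \AlgStk{B} \ar[d] \\
  X \ar[r] & \AlgStk{A}
  \mpunc,
}\]
in which the bottom horizontal is the given fully faithful map classifying $\calR$, the right vertical is the forgetful functor that restricts scalars along $A \to B$ (equivalently, forgets the canonical map from $B$), and the top horizontal is the map whose full faithfulness we want to prove.

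To justify this pullback identification, I would unwind the functor of points. An $S$-point of $\AlgStk{B}$ is an $A$-algebra stack $\calS$ over $S$ equipped with an $A$-algebra stack map $B \to \calS$, so an $S$-point of the fiber product is an $S$-point $x$ of $X$ together with an $A$-algebra stack map $B \to \calR_S$, where $\calR_S$ denotes the pullback of $\calR$ along $x$. This is precisely an $S$-point of the transmutation $B^\calR$ by its defining universal property (see \Cref{translim}), and naturality in $S$ then yields the claimed square.

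Granting this pullback presentation, full faithfulness of the top horizontal follows from that of the bottom horizontal via the standard categorical fact that the pullback of a fully faithful functor of $\infty$-categories along any functor is again fully faithful, applied pointwise in $S$. I do not anticipate any serious obstacle: the entire content of the lemma is the identification of the pullback square, after which everything is formal. The mild subtlety that one is working with c-stacks (valued in possibly-large categories) rather than g-stacks does not affect the argument, as the relevant stability property of fully faithful functors is purely formal and compatible with limits computed levelwise.
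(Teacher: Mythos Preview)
Your proposal is correct. The paper leaves this lemma as an exercise, so there is no proof to compare against; your argument via the pullback square $B^\calR \simeq X \times_{\AlgStk{A}} \AlgStk{B}$ together with stability of full faithfulness under base change is the natural one and is presumably what the authors had in mind.
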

\begin{proof}
Exercise.
\end{proof}

\begin{lemma}
\label{ringstksheaf}
For a ring $A$, the functor sending a ring $R$ to the category of $A$-algebra stacks (for some finitary topology) is a sheaf (for the same topology).
\end{lemma}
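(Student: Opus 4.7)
The plan is to apply \cite[Corollary 4.7.5.3]{ha} to an appropriate cosimplicial diagram of presentable $\infty$-categories, as already indicated in the footnote to the Conventions. Since the topology is finitary, the sheaf condition reduces to descent along a single morphism $R \to S$ in the chosen topology; write $T^\bullet \defeq S^{\otimes_R \bullet + 1}$ for the \v{C}ech conerve.

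First I would identify $\AlgStk{A}(B) \isom \CAlg_A(\Stk_B)$ for each ring $B$, which is immediate from the definition of an $A$-algebra stack as an accessible sheaf valued in animated $A$-algebras. Standard fpqc (and hence finitary) descent for sheaves of anima gives an equivalence
\[ \Stk_R \isom \lim_\Delta \Stk_{T^\bullet} \]
of presentable $\infty$-categories, with base-change functors that are symmetric monoidal, colimit-preserving, and admit pushforward right adjoints. By \cite[Corollary 4.7.5.3]{ha} applied to the cosimplicial diagram $\CAlg_A(\Stk_{T^\bullet})$, the totalization commutes with taking $\CAlg_A$-objects, yielding
\[ \AlgStk{A}(R) \isom \CAlg_A(\Stk_R) \isom \lim_\Delta \CAlg_A(\Stk_{T^\bullet}) \isom \lim_\Delta \AlgStk{A}(T^\bullet). \]

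The main obstacle is verifying the Beck--Chevalley hypotheses of \cite[Corollary 4.7.5.3]{ha} for this diagram; these should follow from the standard behavior of the restriction--pushforward adjunctions along the flat morphisms comprising $T^\bullet$. A minor additional check is that the accessibility condition on sheaves descends correctly along the cover, which should follow from the fact that both restriction and pushforward preserve accessibility.
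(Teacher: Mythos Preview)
Your approach is essentially the paper's: reduce to descent along a single cover and invoke \cite[Corollary 4.7.5.3]{ha}. Two points of comparison.

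First, the detour through $\Stk_R \isom \lim_\Delta \Stk_{T^\bullet}$ is not needed, and your write-up is slightly muddled here: if you already have that equivalence of (cartesian monoidal) categories, then the fact that taking animated $A$-algebra objects commutes with limits of categories finishes the argument without a second appeal to \cite[Corollary 4.7.5.3]{ha}; conversely, if you are going to apply \cite[Corollary 4.7.5.3]{ha} directly to the diagram of $A$-algebra-stack categories (as you then say), the preliminary descent for $\Stk$ plays no role. The paper takes the latter route and skips the detour.

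Second, and more substantively, you flag the Beck--Chevalley condition as ``the main obstacle'' and leave it as ``should follow from the standard behavior''. The paper's proof is precisely this verification, and it is one line: for ring maps $f : R \to S$ and $g : R \to T$ and an $A$-algebra stack $\calR$ over $S$, both $g^* f_* \calR$ and $(f \tensor_R T)_* (g \tensor_R S)^* \calR$ are given on a $T$-algebra $T'$ by $\calR(T' \tensor_R S)$. This also disposes of your accessibility worry, since pushforward is just precomposition with a base-change functor. (The paper also notes explicitly that preservation of finite products is clear, which you should mention when reducing to single covers.)
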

\begin{proof}
It is clear that this functor preserves products, so it remains to see descent along single-element covers.
For this, we use \cite[Corollary 4.7.5.3]{ha}.
The first condition of \emph{loc.\ cit.\@} is clear.
The second follows from the fact that, for maps $S \xleftarrow{f} R \xrightarrow{g} T$, the natural map $g^* f_* \calR \to (f \tensor_R T)_* (g \tensor_R S)^* \calR$ is an equivalence for any $A$-algebra stack $\calR$ over $T$, as both are given by
\[ S' \mapsto \calR(S' \tensor_R T) \mpunc. \]
\end{proof}

\section{Maps of algebras}
\label{sec:algmaps}

\begin{lemma}
\label{funlim}
Let $\calC$ and $\calD$ be categories, with $\calC$ small, and let $F, G \in \Fun(\calC, \calD)$ be given. Then the anima
\[ \Map_{\Fun(\calC, \calD)}(F, G) \]
can, functorially in $F$, be written as a limit of the anima
\[ \Map_\calD(F(c), G(c')) \]
for $c, c' \in C$.
\end{lemma}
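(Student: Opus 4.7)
The plan is to realize the mapping anima as an end, and then express that end as an ordinary limit over the twisted arrow category of $\calC$. Concretely, I will use the twisted arrow construction $\mathrm{Tw}(\calC)$ (whose objects are arrows $f : c \to c'$ in $\calC$, and whose morphisms $(f : c \to c') \to (g : d \to d')$ consist of a morphism $c \to d$ and a morphism $d' \to c'$ fitting into a commutative square), together with its canonical functor $\mathrm{Tw}(\calC) \to \calC^\opp \times \calC$ given by $(c \to c') \mapsto (c, c')$.

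First, I would observe that the assignment $(c, c') \mapsto \Map_\calD(F(c), G(c'))$ lifts naturally to a functor
\[ \calC^\opp \times \calC \to \Ani \mpunc, \]
contravariant in the first variable via precomposition with $F$ and covariant in the second variable via postcomposition with $G$. Precomposing with $\mathrm{Tw}(\calC) \to \calC^\opp \times \calC$ gives a functor $H : \mathrm{Tw}(\calC) \to \Ani$ sending $(c \to c')$ to $\Map_\calD(F(c), G(c'))$, and this whole construction is functorial in $F$ (with $F$ varying in $\Fun(\calC, \calD)^\opp$).

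Next, I would invoke the fact, proved in \cite[\S 5.2.1]{ha} (see also the twisted arrow material in Kerodon), that the end of a functor on $\calC^\opp \times \calC$ is computed as the limit of its restriction along $\mathrm{Tw}(\calC) \to \calC^\opp \times \calC$, and that for $\Hom$-like functors this end computes the mapping anima in $\Fun(\calC, \calD)$:
\[ \Map_{\Fun(\calC, \calD)}(F, G) \simeq \int_{c \in \calC} \Map_\calD(F(c), G(c)) \simeq \lim_{\mathrm{Tw}(\calC)} H \mpunc. \]
The desired formula then holds with the limit indexed by $\mathrm{Tw}(\calC)$, whose objects are exactly pairs $(c, c')$ equipped with a morphism $c \to c'$; the functoriality in $F$ is inherited from the functoriality of $H$ in $F$.

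The main (mild) obstacle is simply to cite the correct form of the twisted-arrow/end identification for $\infty$-categories and to check that the $F$-variance is the one we want; once the right reference is in hand, the proof is essentially a rewriting of the standard end formula for natural transformations.
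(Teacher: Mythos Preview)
Your proof is correct but takes a genuinely different route from the paper's. You invoke the end formula for natural transformations via the twisted arrow category, so that
\[
\Map_{\Fun(\calC, \calD)}(F, G) \simeq \lim_{\mathrm{Tw}(\calC)} \Map_\calD(F(-), G(-)),
\]
which immediately gives the desired shape with an explicit indexing category. The paper instead argues that $\Delta^1$ generates $\Cat$ under colimits, so $\Fun(\calC, \calD)$ is a limit of copies of $\Fun(\Delta^1, \calD)$; since mapping anima in a limit of categories are computed by a limit, it then suffices to write $\Map_{\Fun(\Delta^1, \calD)}(F, G)$ explicitly as a pullback of three anima of the required form. Your approach is more direct and yields a canonical indexing diagram, at the cost of importing the twisted-arrow machinery; the paper's approach is more elementary but leaves the indexing category implicit. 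Either suffices for the only application in the paper (\Cref{mapobjcalg}), which needs nothing more than that the terms in the limit have the form $\Map_\calD(F(c), G(c'))$.
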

\begin{proof}
Recall that the category $\Delta^1$ generates $\Cat$ under colimits (see e.g.\ \cite[Example 2.7]{monadictower}).
As $\Fun(-, \calD)$ sends colimits to limits, and mapping anima in a limit of categories are computed by a limit, it suffices to prove the claim for $\Delta^1$.
In this case, the desired anima is given by the pullback of the natural diagram
\[\begin{gathered}[b]\xymatrix{
  & \Map_\calD(F(0), G(0)) \ar[d] \\
  \Map_\calD(F(1), G(1)) \ar[r] & \Map_\calD(F(0), G(1))
  \mpunc.
}\\[-1.5\dp\strutbox]\end{gathered}
\qedhere
\]
\end{proof}

\begin{lemma}
\label{mapobjcalg}
Let $\calC$ be a symmetric monoidal category, and suppose we have $X, Y, Z \in \CAlg(\calC)$.
If we are given a map $X \to Y$ in $\CAlg(\calC)$ such that, for each $n$, the induced map
\[ \Map_\calC(Y^{\tensor n}, Z) \to \Map_\calC(X^{\tensor n}, Z) \]
is an equivalence, resp. injective, then the induced map
\[ \Map_{\CAlg(\calC)}(Y, Z) \to \Map_{\CAlg(\calC)}(X, Z) \]
is an equivalence, resp. injective, as well.

Similarly, if we are given a map $Y \to Z$ in $\CAlg(\calC)$ such that, for each $n$, the induced map
\[ \Map_\calC(X^{\tensor n}, Y) \to \Map_\calC(X^{\tensor n}, Z) \]
is an equivalence, resp. injective, then the induced map
\[ \Map_{\CAlg(\calC)}(X, Y) \to \Map_{\CAlg(\calC)}(X, Z) \]
is an equivalence, resp. injective, as well.
\end{lemma}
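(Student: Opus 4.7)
The plan is to realize $\Map_{\CAlg(\calC)}(X, Z)$ as a limit of anima of the form $\Map_\calC(X^{\tensor n}, Z)$ via the monadic bar resolution of $\CAlg(\calC)$ over $\calC$. Granting the relevant monadicity (which may be arranged by passing to a presheaf enlargement of $\calC$ and descending via Yoneda), the forgetful functor $U : \CAlg(\calC) \to \calC$ admits a left adjoint $\mathrm{Free}$ sending $A$ to $\bigsqcup_n A^{\tensor n}_{h\Sigma_n}$. Setting $G \defeq \mathrm{Free} \circ U$, Barr--Beck monadicity presents every $X \in \CAlg(\calC)$ as the geometric realization $\abs{G^{\bullet+1} X}$ in $\CAlg(\calC)$.

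Applying $\Map_{\CAlg(\calC)}(-, Z)$ and using the free--forgetful adjunction yields
\[ \Map_{\CAlg(\calC)}(X, Z) \isom \Tot_{\Delta} \Map_\calC\bigl((UG)^\bullet UX, Z\bigr). \]
Iterating the formula for $U\mathrm{Free}$, each $(UG)^k UX$ is built from the tensor powers $X^{\tensor n}$ by coproducts and homotopy orbits under symmetric group actions, functorially in $X$. Correspondingly, $\Map_\calC((UG)^k UX, Z)$ is a limit---a product of homotopy fixed-point anima---of the anima $\Map_\calC(X^{\tensor n}, Z)$ for varying $n$.

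Under the first hypothesis, passage to limits preserves equivalences (resp.\ $(-1)$-truncated maps), so each induced map $\Map_\calC((UG)^k UY, Z) \to \Map_\calC((UG)^k UX, Z)$ is an equivalence (resp.\ injective); totalizing over $\Delta$ proves the first claim. For the second claim, I would instead take the bar resolution of $X$ and apply $\Map_{\CAlg(\calC)}(X, -)$: the same computation expresses this as a limit of $\Map_\calC(X^{\tensor n}, -)$ evaluated at $Y$ or at $Z$, and naturality in the second argument gives the conclusion. The main technical obstacle is convergence of the monadic bar resolution in the stated generality; since the final expression uses only mapping anima in $\calC$, the enlargement sketched above resolves this without altering the conclusion.
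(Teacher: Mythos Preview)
Your argument is correct, but the paper takes a different and somewhat lighter route. Rather than passing through monadicity and the bar resolution, the paper works directly with the operadic definition: $\CAlg(\calC)$ is a full subcategory of $\Fun_{\Finp}(\Finp, \calC^\otimes)$, and since the identity functor of $\Finp$ has no nontrivial self-transformations, this embeds further into $\Fun(\Finp, \calC^\otimes)$. A short auxiliary lemma then says that mapping anima in any functor category $\Fun(\calI, \calD)$ are, functorially in the source, limits of pointwise mapping anima $\Map_\calD(F(c), G(c'))$. Applying this with $\calD = \calC^\otimes$ and unwinding the definition of $\calC^\otimes$, one finds that $\Map_{\CAlg(\calC)}(W, Z)$ is a limit of anima which are \emph{coproducts of finite products} of $\Map_\calC(W^{\tensor k}, Z)$, with indexing independent of $W$. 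The conclusion then follows since equivalences and $(-1)$-truncated maps are stable under limits and coproducts.

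Compared to your approach, the paper's argument avoids the need to enlarge $\calC$ or invoke convergence of a bar resolution, and the building blocks are plain products rather than homotopy fixed-point anima under symmetric group actions. Your approach, on the other hand, is perhaps more conceptually transparent if one is already thinking in terms of free resolutions, and the presheaf enlargement you sketch does handle the technical obstacle you flag. Both land on the same closure property of equivalences and injections under limits.
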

\begin{proof}
The category $\CAlg(\calC)$ is, by definition, a full subcategory of $\Fun_\Finp(\Finp, \calC^\otimes)$.
Because the group of automorphisms of the identity functor of $\Finp$ is trivial, this embeds further as a full subcategory of $\Fun(\Finp, \calC^\otimes)$.
The functor corresponding to a commutative algebra $W \in \CAlg(\calC)$ sends $\langle n \rangle$ to the pair $(\langle n \rangle, (W, \ldots, W)) \in \calC^\otimes$.
By \Cref{funlim}, we therefore have that $\Map_{\CAlg(\calC)}(W, Z)$ can, functorially in $W$, be written as a limit of the anima
\[ \Map_{\calC^\otimes}((\langle n \rangle, (W, \ldots, W)), (\langle m \rangle, (Z, \ldots, Z))) \mpunc, \]
with indexing category independent of $W$.
This mapping anima in $\calC^\otimes$ is a coproduct of products of mapping anima of the form $\Map_\calC(W^{\otimes k}, Z)$.

It follows that the map
\[ \Map_{\CAlg(\calC)}(Y, Z) \to \Map_{\CAlg(\calC)}(X, Z) \]
can be written as a limit of coproducts of products of the induced maps
\[ \Map_\calC(Y^{\otimes k}, Z) \to \Map_\calC(X^{\otimes k}, Z) \mpunc. \]
As equivalences and injective maps of anima are preserved under limits and coproducts, the first statement follows.
The second statement follows similarly.
\end{proof}

\printbibliography

\end{document}